\newcommand{\Z}{\mathbb Z}
\newcommand{\Q}{\mathbb Q}
\newcommand{\R}{\mathbb R}
\newcommand{\C}{\mathbb C}
\newcommand{\I}{\mathbb I}
\newcommand{\N}{\mathbb N}
\newcommand{\mc}{\mathcal}
\newcommand{\lb}{\lbrace}
\newcommand{\rb}{\rbrace}
\newcommand{\la}{\langle}
\newcommand{\ra}{\rangle}
\renewcommand{\phi}{\varphi}
\DeclareMathOperator{\rk}{rk}
\DeclareMathOperator{\conv}{conv}
\DeclareMathOperator{\inter}{int}
\DeclareMathOperator{\cut}{cut}
\DeclareMathOperator{\Ann}{Ann}
\DeclareMathOperator{\cone}{cone}
\DeclareMathOperator{\conconv}{con}
\DeclareMathOperator{\vertex}{vert}
\DeclareMathOperator{\lin}{lin}
\DeclareMathOperator{\ind}{ind}
\renewcommand{\leq}{\leqslant}
\renewcommand{\geq}{\geqslant}
\theoremstyle{plain}
\newtheorem{thm}{Theorem}[section]
\newtheorem{lm}[thm]{Lemma}
\newtheorem{cor}[thm]{Corollary}
\newtheorem{pr}[thm]{Proposition}
\newtheorem{conj}[thm]{Conjecture}
\newtheorem*{prob}{Problem}
\theoremstyle{remark}
\newtheorem{rem}[thm]{Remark}
\newtheorem{ex}[thm]{Example}
\theoremstyle{definition}
\newtheorem{definition}[thm]{Definition}
\tikzset{
  every picture/.append style={
    execute at begin picture={\shorthandoff{"}},
    execute at end picture={\shorthandon{"}}
  }
}
\begin{document}
\date{}

\author{
	Grigory Solomadin\thanks{The work was done at the Steklov Institute of Mathematics RAS and
supported by the Russian Science Foundation, grant 14-11-00414. E-mail: \texttt{grigory.solomadin@gmail.com}}
}
\title{Quasitoric stably normally split manifolds}

\maketitle
\abstract{
A smooth stably complex manifold is called a totally tangentially/normally split manifold (TTS/TNS-manifold, for short, resp.) if the respective complex tangential/normal vector bundle is stably isomorphic to a Whitney sum of complex linear bundles, resp. In this paper we construct manifolds $M$ s.t. any complex vector bundle over $M$ is stably equivalent to a Whitney sum of complex linear bundles. A quasitoric manifold shares this property iff it is a TNS-manifold. We establish a new criterion of the TNS-property for a quasitoric manifold $M$ via non-semidefiniteness of certain higher-degree forms in the respective cohomology ring of $M$. In the family of quasitoric manifolds, this generalises the theorem of J. Lannes about the signature of a simply connected stably complex TNS $4$-manifold. We apply our criterion to show the flag property of the moment polytope for a nonsingular toric projective TNS-manifold of complex dimension $3$.
}

\section{Introduction}

TTS- and TNS-manifolds appeared in the works of Arthan and Bullet \cite{ar-bu-82}, Ochanine and Schwartz \cite{oc-sch-85}, Ray \cite{ra-88} and \cite{so-17} related to a representation of a given complex cobordism class with a manifold from a prescribed family. A naturally arising problem here is to study TTS/TNS-manifolds in well-known families of manifolds, for example, quasitoric manifolds (see \cite{bu-pa-15}, \cite{da-ja-91}). Remind that any quasitoric manifold can be endowed with the natural stably complex structure. The stably complex manifold obtained in this way is a TTS-manifold. (See \cite[Section 7.3]{bu-pa-15} and formula \eqref{eq:tangent} below.) In this way, the problem boils out to the study of quasitoric TNS-manifolds. (In other words, we want to describe TNS-manifolds in a special family of TTS-manifolds.)

Let us remind some facts about quasitoric TNS-manifolds (see \cite{so-17}). The complex projective space $\C P^n$ is a TNS-manifold iff $n<2$. Any invariant submanifold $Z\subset M$ of a quasitoric TNS-manifold $M^{2n}$ is again a TNS-manifold. These two facts together imply that the moment polytope of a quasitoric TNS-manifold has no triangular faces. (A quasitoric manifold over a polytope without triangular faces may be non-TNS, generally speaking, see Example \ref{ex:3-belt}.) If $M$ is a smooth projective toric TNS-manifold of complex dimension $n$ and $Z^{2(n-2)}\subset M^{2n}$ is \textit{any} smooth closed subvariety of complex codimension $2$, then the respective blow-up $Bl_Z M$ of the variety $M$ along $Z$ is a TNS-variety. Any Bott tower is a TNS-manifold. Successive blow-ups of any invariant submanifolds of complex codimension $2$ starting from any Bott tower give different toric TNS-manifolds. Any polytope from famous families of simple polytopes such as flag nestohedra, graph-cubohedra and graph-associahedra admits a realisation as a $2$-truncated cube with the canonical Delzant structure (see \cite{bu-vo-11}, \cite{bu-vo-12}). The respective toric varieties are obtained from any Bott tower (of the necessary dimension) by consequent blow-ups of invariant subvarieties of complex dimension $2$. Consequently, any combinatorial flag nestohedron, graph-cubeahedron or graph-associahedron admits a toric TNS-variety over it. Another operations in the family of stably complex closed TNS-manifolds are cartesian product and connected sum of manifolds. In toric topology there are well-defined equivariant and box sum of any two quasitoric manifolds of the same dimension. We remark that equivariant and box sum of quasitoric TNS-manifolds are also TNS-manifolds.

The complete description of quasitoric TNS-manifolds of dimension $4$ (toric surfaces, in particular) is given by the following Theorem in terms of the signature. (Remind that any quasitoric manifold is simply connected.)

\begin{thm}[\cite{oc-sch-85}, J. Lannes]\label{thm:lannes}
Let $M^{4}$ be a stably complex simply connected closed manifold.

a) If the intersection form of two-dimensional cycles of $M^{4}$ is non-definite then the complex normal bundle of $M^4$ is stably equivalent to the sum $\xi_{1}\oplus\xi_{2}$ for some complex linear bundles $\xi_{1},\xi_{2}\to M^{4}$.

b) If the intersection form of $M^{4}$ is definite, then $M^{4}$ is not a TNS-manifold.
\end{thm}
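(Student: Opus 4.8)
The plan is to extract from the stably complex structure a single numerical identity tying $\sigma(M)$ to the Chern classes of the normal bundle; part (b) will then be immediate, and part (a) will reduce to an arithmetic statement about indefinite unimodular forms. Since $M^4$ is closed, oriented and simply connected, $H^1(M;\Z)=H^3(M;\Z)=0$, the group $H^2(M;\Z)\cong\Z^b$ is free, and the cohomology ring is encoded by the unimodular symmetric intersection form $Q(x,y)=\la xy,[M]\ra$, of signature $\sigma(M)$. Fixing the stably complex structure, let $\tau,\nu$ be the stable complex tangent and normal bundles, so $c(\tau)c(\nu)=1$, and put $c:=c_1(\nu)\in H^2(M;\Z)$ and $d:=\la c_2(\nu),[M]\ra\in\Z$ (so $Q(c,c)=\la c^2,[M]\ra$). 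Then $c_1(\tau)=-c$ and $c_2(\tau)=c^2-c_2(\nu)$, hence $p_1(M)=p_1(\tau_\R)=c_1(\tau)^2-2c_2(\tau)$ evaluates to $2d-Q(c,c)$, and Hirzebruch's signature theorem gives
\[
3\,\sigma(M)=2d-Q(c,c).
\]

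For (b), assume $M$ is TNS, with $\nu$ stably $\xi_1\oplus\dots\oplus\xi_m$, and set $a_i:=c_1(\xi_i)$. Then $c=\sum_i a_i$ and $d=\sum_{i<j}Q(a_i,a_j)=\tfrac12\bigl(Q(c,c)-\sum_i Q(a_i,a_i)\bigr)$, so the displayed identity becomes $3\,\sigma(M)=-\sum_i Q(a_i,a_i)$. If $Q$ is positive definite then $\sigma(M)=b\ge1$ while the right-hand side is $\le0$; if $Q$ is negative definite then $\sigma(M)=-b\le-1$ while the right-hand side is $\ge0$. Either way this is a contradiction, which proves (b).

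For (a), I would first record two facts. The vector $c$ is \emph{characteristic} for $Q$: indeed $c\equiv c_1(\tau)\equiv w_2(TM)\pmod 2$, and by Wu's formula $w_2=v_2$ satisfies $Q(x,x)\equiv Q(x,w_2)\pmod 2$ for all $x$; van der Blij's lemma then gives $Q(c,c)\equiv\sigma(M)\pmod 8$, so $2d=3\,\sigma(M)+Q(c,c)\equiv4\,\sigma(M)\pmod 8$ and in particular $d$ is even. Secondly, over a closed simply connected $4$-manifold a stable complex vector bundle is determined up to stable isomorphism by the pair $(c_1,c_2)\in H^2(M;\Z)\oplus H^4(M;\Z)$, and every such pair occurs; this is routine from the Atiyah--Hirzebruch spectral sequence, where $H^1=H^3=0$ leaves no nonzero differentials and no extension problem, and $c_1,c_2$ detect the associated graded. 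Granting these, it suffices to find $a\in H^2(M;\Z)$ with $Q(a,c)-Q(a,a)=d$: then line bundles $\xi_1,\xi_2$ with $c_1(\xi_1)=a$ and $c_1(\xi_2)=c-a$ satisfy $c_1(\xi_1\oplus\xi_2)=c=c_1(\nu)$ and $\la c_2(\xi_1\oplus\xi_2),[M]\ra=Q(a,c-a)=Q(a,c)-Q(a,a)=d=\la c_2(\nu),[M]\ra$, so $\nu$ is stably $\xi_1\oplus\xi_2$.

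The remaining step, which I expect to be the main obstacle, is to show that $\phi(a):=Q(a,c)-Q(a,a)$ attains the value $d$ over $\Z^b$. Because $c$ is characteristic, $\phi$ takes only even values, which is consistent with $d$ being even; the claim is that, $Q$ being indefinite and unimodular, $\phi$ attains \emph{every} even integer. To prove this I would use the classification of indefinite unimodular symmetric forms over $\Z$: such a $Q$ splits off an orthogonal summand $\la1\ra\oplus\la-1\ra$ when odd, and a hyperbolic plane $H$ when even. In the even case $c=2c'$ for some $c'$ (a characteristic vector of an even unimodular form is divisible by $2$), and completing the square gives $\phi(a)=Q(c',c')-Q(a-c',a-c')$; since the $H$-summand makes $Q$ represent every even integer and $Q(c',c')$ is even, $\phi$ is onto $2\Z$. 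In the odd case, pick a basis $e,f$ of an $\la1\ra\oplus\la-1\ra$-summand with $Q(e,e)=1$, $Q(f,f)=-1$, $Q(e,f)=0$, and restrict $a=se+tf$ to it; then $\phi(a)=(\alpha s-s^2)-(\beta t-t^2)$ with $\alpha=Q(c,e)$, $\beta=-Q(c,f)$ odd (again by characteristicity). As $\alpha,\beta$ are odd, $s\mapsto\alpha s-s^2$ runs over $\{\tfrac{\alpha^2-1}{4}-j(j+1):j\ge0\}$ and $t\mapsto\beta t-t^2$ over the analogous set, so $\phi(a)$ runs over $\tfrac{\alpha^2-\beta^2}{4}+\{\,i(i+1)-j(j+1):i,j\ge0\,\}$, which is $2\Z$ by the elementary identity $\{\,i(i+1)-j(j+1):i,j\ge0\,\}=2\Z$ (factor the left side as $(i-j)(i+j+1)$). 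In both cases $d\in2\Z$ lies in the range of $\phi$, and any preimage $a$ finishes (a). The genuinely delicate ingredients are thus the evenness of $d$ — which is precisely what lets the parity count close up — and this surjectivity lemma for $\phi$; the rest is standard characteristic-class and $K$-theory bookkeeping.
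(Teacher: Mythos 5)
Your argument is correct, but there is nothing in the paper to compare it with: Theorem \ref{thm:lannes} is imported from \cite{oc-sch-85} (Lannes' appendix) and the paper gives no proof of it, using it only as a black box (e.g.\ in Proposition \ref{pr:class}). Your proof is the classical one, done self-containedly: the identity $3\sigma(M)=2d-Q(c,c)$ from $c(\tau)c(\nu)=1$ and the signature theorem kills the definite case, and in the indefinite case the problem is correctly reduced, via the fact that stable complex bundles on a simply connected closed $4$-manifold are classified by $(c_1,c_2)$ (AHSS/Chern character, since everything is torsion-free), to the surjectivity of $a\mapsto Q(a,c-a)$ onto $2\Z$, which you settle using that $c$ is characteristic (Wu), van der Blij for the evenness of $d$, and the classification of indefinite unimodular forms (splitting off $\la 1\ra\oplus\la -1\ra$ or $H$). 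Two small points worth writing out: in the odd case the asserted equality of $\tfrac{\alpha^2-\beta^2}{4}+\lb i(i+1)-j(j+1)\rb$ with $2\Z$ needs the (easy) observation that $\alpha,\beta$ odd forces $\alpha^2\equiv\beta^2\equiv 1\pmod 8$, so the shift $\tfrac{\alpha^2-\beta^2}{4}$ is even; and in part (b) your contradiction uses $\sigma(M)=\pm b$ with $b\geq 1$, i.e.\ ``definite'' must be read as a nonzero definite form (for $b=0$, e.g.\ $S^4$, the manifold is trivially TNS) --- an edge case of the statement itself rather than of your argument. Otherwise the characteristic-class bookkeeping, the parity count, and the quadratic-form arithmetic are all sound.
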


One can use Theorem \ref{thm:lannes} to show that a smooth projective toric surface with the moment polygone $P^2\subset\R^2$ is a TNS-manifold iff $P^2\neq \Delta^2$, where $\Delta^2$ is the moment polygone of $\C P^2$, i.e. the Delzant triangle. (See Proposition \ref{pr:class}.) However, the TNS-property for quasitoric manifolds is shown to depend not only on the combinatorial type of the moment polytope but also on the respective characteristic matrix in all dimensions greater than $2$. (See Subsection \ref{ssec:4folds}.) Due to that reason, in this paper we focus on the study of toric TNS-manifolds.

For any element $a\in H^{2(n-k)}(M^{2n};K),\ a\neq 0,\ 0<k\leq n$, of the cohomology ring of a quasitoric manifold $M^{2n}$ we define a homogeneous (non-trivial) $k$-form $Q_a: H^2(M^{2n};K)\to K$, by the formula $Q_a(x)=\la a\cdot x^{k},[M^{2n}]\ra$ where $K$ is $\Z,\Q$ or $\R$. This definition is $K$-linear w.r.t. $a$.
\begin{definition}
The form $Q_a$ is called \textit{admissible}, if it is not semi-definite. In other words, the admissible form $Q_a$ has to take values of different signs
\end{definition}

The main result of this paper is as follows.

\begin{thm}\label{thm:crit}
Let $M^{2n}$ be a quasitoric manifold of dimension $2n$. Then $M^{2n}$ is a TNS-manifold iff the $2k$-form $Q_a$ is admissible for any integer $0<k\leq n/2$ and any $a\in H^{2(n-2k)}(M^{2n};\Z),\ a\neq 0$.
\end{thm}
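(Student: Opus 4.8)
The plan is to use the standard description of the cohomology ring of a quasitoric manifold $M^{2n}$ via the Stanley–Reisner ring modulo the linear system of parameters coming from the characteristic function, together with the Borel–Hirzebruch-type formula \eqref{eq:tangent} for the tangent bundle, which exhibits $TM$ stably as a sum of line bundles $\bigoplus_{i} \xi_i$ indexed by the facets of the polytope, with $c_1(\xi_i)=v_i$ the Stanley–Reisner generators. Since $M$ is a TTS-manifold, the normal bundle $\nu$ satisfies $TM\oplus\nu\cong\underline{\C}^N$ stably, and I would first reduce the TNS-property to a purely cohomological statement: $M$ is TNS iff there exist classes $u_1,\dots,u_m\in H^2(M;\Z)$ with $\prod_i(1-v_i)=\prod_j(1+u_j)$ in $H^*(M;\Z)$, i.e. iff the total Chern class of $\nu$ splits into linear factors. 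The key classical input (essentially the argument behind Theorem \ref{thm:lannes} and its use in \cite{so-17}) is that over such a space, where $H^*(M;\Z)$ is generated in degree $2$ and torsion-free, a virtual bundle is stably a sum of line bundles if and only if its total Chern class is a product of linear forms; moreover the obstruction is detected rationally. So the theorem becomes: the normal Chern class splits iff every form $Q_a$ with $a\in H^{2(n-2k)}$ is non-semidefinite.

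Next I would carry out the two implications. For the "only if" direction (TNS $\Rightarrow$ all $Q_a$ admissible): if $\nu$ splits as $\bigoplus_j\xi_j$ with $c_1(\xi_j)=u_j$, then for any invariant submanifold — or more generally by naturality — one gets constraints; the cleanest route is to restrict to the subvariety dual to $a$ when $a$ is a product of the $v_i$'s, reducing to a lower-dimensional TNS-manifold, and then invoke Theorem \ref{thm:lannes}b) after further cutting down to a $4$-dimensional invariant submanifold on which a suitable $Q_a$ restricts to the intersection form; definiteness of that form would contradict TNS-ness of the submanifold. For general $a$ one uses $K$-linearity of $a\mapsto Q_a$ and a density/positivity argument: if some $Q_a$ were semidefinite, a limiting/convexity argument produces a semidefinite form in the cone generated by the geometrically meaningful ones, again contradicting the submanifold version. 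For the "if" direction (all $Q_a$ admissible $\Rightarrow$ TNS) I would argue contrapositively: if $\nu$ does not split, then $c(\nu)$ fails to factor into linear forms over $\R$, so some Chern class $c_k(\nu)\in H^{2k}(M;\R)$ is a genuinely obstructing symmetric form; concretely the failure of real-splitting is governed by the top-degree pairing, and I would show it forces the existence of $a\in H^{2(n-2k)}$ with $Q_a$ semidefinite — this is where one needs that a real virtual bundle over $M$ splits iff the associated higher forms coming from the "discriminant" of the Chern polynomial are sign-indefinite, an algebraic fact about when a degree-$m$ polynomial with coefficients in $H^*(M;\R)$ has all roots "real" in the appropriate sense.

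The main obstacle, and the technical heart of the proof, is the algebraic characterization in the "if" direction: translating "$c(\nu)$ does not split into linear factors over $H^*(M;\R)$" into "some $Q_a$ is semidefinite." In dimension $4$ this is exactly Lannes' theorem, where non-splitting forces the intersection form to be definite; in higher dimensions one must handle the whole Chern polynomial and all degrees $2k\le n$ simultaneously, and control how the even-degree forms $Q_a$ (with the restriction to $k\le n/2$ and the even exponent $2k$) capture the obstruction. I expect to need a careful induction on $n$, peeling off one line bundle at a time: given that all $Q_a$ are admissible, first split off a single linear factor from $c(\nu)$ — this is the step where admissibility of the top forms $Q_a$, $a\in H^0=\Z$, i.e. non-semidefiniteness of $x\mapsto\langle x^{2k},[M]\rangle$, is used to find a class whose dual submanifold behaves well — and then apply the inductive hypothesis to the complementary bundle on (a modification of) $M$ or to a submanifold. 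Ensuring the inductive setup stays within the class of spaces to which the hypothesis applies, and that the restricted forms remain non-semidefinite, will require the submanifold-heredity of the TNS-property quoted from \cite{so-17} together with a Lefschetz-type surjectivity of restriction in the needed degrees.
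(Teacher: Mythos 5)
Your proposal has genuine gaps in both directions, and neither matches the mechanism the paper actually uses. For the direction TNS $\Rightarrow$ all $Q_a$ admissible, your plan (restrict to the submanifold dual to $a$, cut down to a $4$-dimensional invariant submanifold, invoke Theorem \ref{thm:lannes}) only makes sense when $a$ is represented by an invariant submanifold and essentially only for $k=1$; a general nonzero $a\in H^{2(n-2k)}(M^{2n};\Z)$ is not effective, and the ``density/positivity'' step you invoke to pass from geometric classes to arbitrary $a$ has no content: admissibility is \emph{not} preserved under sums (the paper explicitly notes that $Q_{a+b}$ may fail to be admissible even when $Q_a,Q_b$ are), so linearity in $a$ cannot close this gap. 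The argument that actually works is direct and does not mention submanifolds at all: given $a$ with $Q_a$ semidefinite and $x$ with $Q_a(x)>0$, take a line bundle $\xi$ with $c_1(\xi)=x$; by the reformulation of the TNS-property (Theorem \ref{thm:equiv}) the stable inverse of $\xi$ splits as $\bigoplus_i\alpha_i$, the degree-$4k$ component of the Chern character of $l+1-[\xi]=\sum_i[\alpha_i]$ gives $-x^{2k}=\sum_i a_i^{2k}$ with $a_i=c_1(\alpha_i)$, and multiplying by $a$ and evaluating on $[M^{2n}]$ yields $-Q_a(x)=\sum_i Q_a(a_i)$, contradicting semidefiniteness. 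Nothing in your outline produces this (or an equivalent) identity.

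The more serious gap is in the converse. You propose to translate ``$c(\nu)$ does not factor into linear forms'' into ``some $Q_a$ is semidefinite'' via a discriminant-type criterion and an induction that peels off one linear factor at a time, but no such criterion is stated or known in this generality, and the induction has no engine: admissibility of the forms does not by itself produce a degree-$2$ class splitting off a factor of the Chern polynomial, and you give no way to keep the inductive step inside the class of spaces where the hypothesis applies. The paper's proof of this direction is convex-geometric, not factorization-theoretic: non-TNS is equivalent (Theorem \ref{thm:equiv}, Lemma \ref{lm:conv}, Corollary \ref{cor:app}) to the cone $S(M^{2n})\subseteq\widetilde{K}(M^{2n})\otimes\R$ generated by the classes $[\xi]-1$ being proper, hence admitting a supporting hyperplane; the whole analysis of $S(\C P^n)$ via cyclic polytopes, the $*$-product of cones and the surjection $R$ is there precisely to guarantee a supporting functional $H$ with \emph{rational} values on the classes $\underline{\theta}^{\underline{v}}$ (Corollary \ref{cor:rat}) — needed because the hypothesis is stated for integral $a$. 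One then symmetrizes, $L(x)=H(ch^{-1}(x)+\overline{ch^{-1}(x)})$, shows $L\not\equiv 0$, that its top nonvanishing degree $2k$ has $k$ even, identifies $L|_{H^{2k}}$ with a form $Q_a$ via Poincar\'e duality (Proposition \ref{pr:corr}), and derives a contradiction from $H(\xi^a+\xi^{-a})\geq 0$ as $a\to+\infty$. Your outline contains no counterpart of the supporting-hyperplane step, of its rationality, or of the parity/limit argument that singles out the even-degree forms, and without these the implication ``all $Q_a$ admissible $\Rightarrow$ TNS'' is not established.
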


The conditions for odd $k$ are always satisfied so not included in the above Theorem. In the case of even $n$, among these homogeneous forms one has the $n$-form corresponding to the volume polynomial of the multifan of $M^{2n}$ \cite{ay-15}. There is a caveat: for any elements $a,b\in H^{2(n-2k)}(M^{2n};\Z)$ with admissible $2k$-forms $Q_a,Q_b$ the sum $Q_{a+b}$ is not admissible, generally speaking. That is why Theorem \ref{thm:crit} requires admissibility of an infinite set of forms. In the case of $n=3$ we reduce the condition of Theorem \ref{thm:crit} to the study of a finite set of quadratic forms (see Theorem \ref{thm:comp}). Due to that reason one can check the TNS-property for a $6$-dimensional quasitoric manifold using PC. We show that the TNS-property for a quasitoric manifold $M^{2n}$ is equivalent to the total splitting of any complex vector bundle over $M^{2n}$ after stabilisation (Theorem \ref{thm:equiv}). We also study some operations on quasitoric TNS-manifolds, namely: the equivariant blow-up of an invariant submanifold of real codimension $4$ of a given quasitoric TNS-manifold (Proposition \ref{pr:blowup}) and equivariant connected sum of two given quasitoric TNS-manifolds at fixed points. We prove that the equivariant connected sum $M^{2n}_1\sharp M^{2n}_2$ of quasitoric manifolds of dimension $2n$, where $n$ is odd, is a TNS-manifold iff $M_1, M_2$ are TNS-manifolds (Proposition \ref{pr:eqsum}). This claim has an interesting generalisation to the case of even $n$ (ibid.). As an example, we show that the equivariant connected sum of \textit{any} quasitoric $4$-manifold $M^4$ with the fixed quasitoric $4$-manifold $B^4$ at fixed points of opposite signs is a TNS-manifold (Proposition \ref{pr:eqTNS}). (The moment polygone of $B^4$ is rectangular.) We also remark that this claim is easily generalised to the case of arbitrary even $n$.

It is natural to suppose that the TNS-property of a toric manifold depends only on the face lattice of the respective moment polytope, or is even equivalent to the flag property of the latter. These conjectures are discussed in Section \ref{sec:concl}. The cohomology algebra $H^*(M^{2n};\R)$ of a quasitoric manifold $M^{2n}$ is isomorphic to the quotient algebra $DOp_{\R}(\R^m)/\Ann V_{\mc{F}}$ of the algebra of differential operators with constant coefficients $DOp_{\R}(\R^m)$ by the annihilator ideal $\Ann V_{\mc{F}}$ of the volume polynomial $V_{\mc{F}}$ of the multifan $\mc{F}$ of $M^{2n}$ (see Theorem \ref{thm:volcoh}). This fact allows us to reformulate the TNS-criterion for a quasitoric manifold in terms of the respective volume polynomial, see Theorem \ref{thm:volpol}. We also pose a $17$ Hilbert's problem-type question for some finite dimensional $\R$-algeras (see Theorem \ref{thm:algform}) and a give a conjecture about real psd-forms. Finally, we remark that the TNS-criterion for a quasitoric manifold in terms of the respective volume polynomial is algorithmically verifiable, see Corollary \ref{cor:alg}.

\section{TNS-criterion}
The following Section contains some basic tools for the study of a TNS-property for quasitoric manifolds. The TNS-condition of a quasitoric manifold $M^{2n}$ is expressed in terms of the support functions of the cone $S(M^{2n})\subseteq\widetilde{K}^{0}(M^{2n})$ generated by the elements of the form $[\xi]-1$, where $\xi\to M^{2n}$ is a complex linear bundle, in Subsection \ref{ssec:basic}. In the Subsection \ref{ssec:supp} we study this cone for products of complex projective spaces $\C P^n$, then for arbitrary quasitoric manifolds. The necessary definitions are given in Subsection \ref{ssec:conepre}. The name of Subsection \ref{ssec:main} is self-explanatory.

\subsection{TNS-property in terms of $K$-theory and cohomology ring}\label{ssec:basic}


A quasitoric $2n$-dimensional smooth manifold $M^{2n}$ has the canonical stably complex structure
\begin{equation}\label{eq:tangent}
TM^{2n}\oplus\underline{\C}^{m-n}\simeq \bigoplus_{i=1}^{m} \theta_{i}
\end{equation}
for the complex line bundles $\theta_{i}\to M^{2n}, i=1,\dots,m$. (See \cite{bu-pa-15}.) Let $x_{i}:=c_{1}(\theta_{i})$.



\begin{thm}\cite{at-hi-60}\label{thm:lat}
Let $X$ be a finite CW-complex. Then the Chern character map\\
$ch\otimes \Q: K^{*}(X)\otimes \Q\to H^{*}(X;\Q)$ is a $\Z/2\Z$-graded ring isomorphism.
\end{thm}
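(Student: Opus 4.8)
The plan is to run the classical cell-by-cell induction, using that both functors involved are $\Z/2\Z$-graded cohomology theories on finite CW-complexes and that $ch$ is a natural, multiplicative transformation between them which is an isomorphism on the spheres.

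First I would recall the construction and two formal properties of $ch$. On a finite CW-complex $Y$ the Chern character $ch\colon K^{0}(Y)\to\prod_{k\ge 0}H^{2k}(Y;\Q)$ is defined via the splitting principle by $ch([L])=e^{c_{1}(L)}=\sum_{k\ge0}c_{1}(L)^{k}/k!$ for a complex line bundle $L$, extended additively and multiplicatively; on the odd part $K^{1}(Y)$ one transports this through the suspension isomorphism. Two facts are then standard, and I would only indicate their proofs: (i) $ch$ is a ring homomorphism --- by the splitting principle it suffices to verify $ch([L\otimes L'])=ch([L])\,ch([L'])$ for line bundles, and this follows from $c_{1}(L\otimes L')=c_{1}(L)+c_{1}(L')$ together with $e^{a+b}=e^{a}e^{b}$; (ii) $ch$ is natural in $Y$ and commutes with the connecting homomorphisms of the long exact sequence of a pair, i.e. $ch$ is a morphism of $\Z/2\Z$-graded cohomology theories on finite CW-pairs (it respects the even/odd grading by construction).

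Next I would set up the induction. Since $\Q$ is flat over $\Z$, tensoring the exact sequences of $K$-theory with $\Q$ preserves exactness, so $Y\mapsto K^{*}(Y)\otimes\Q$ is again a $\Z/2\Z$-graded cohomology theory and $ch\otimes\Q$ is a natural transformation from it to $H^{*}(-;\Q)$. For the base case $Y=S^{n}$ one uses Bott periodicity: $\widetilde K^{0}(S^{n})$ and $\widetilde K^{1}(S^{n})$ are, respectively, $\Z$ and $0$ for $n$ even, and $0$ and $\Z$ for $n$ odd, matching $\widetilde H^{*}(S^{n};\Q)$ degreewise, and the Chern character of a Bott generator is a generator of $\widetilde H^{n}(S^{n};\Z)\subset\widetilde H^{n}(S^{n};\Q)$; hence $ch\otimes\Q$ is an isomorphism on $S^{n}$, and, by additivity, on any finite wedge of spheres. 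Now I induct on skeleta: assuming $ch\otimes\Q$ is an isomorphism on $X^{(n-1)}$, the cofibre sequence $X^{(n-1)}\hookrightarrow X^{(n)}\to X^{(n)}/X^{(n-1)}$, whose cofibre is a finite wedge of $n$-spheres, yields long exact sequences in both theories together with a map between them induced by $ch\otimes\Q$; this map is an isomorphism on $X^{(n-1)}$ by hypothesis and on the wedge of spheres by the previous step, so the five lemma forces it to be an isomorphism on $X^{(n)}$. Since $X$ is a finite complex, $X=X^{(N)}$ for some $N$, and the induction terminates. Being simultaneously an additive isomorphism and a ring homomorphism that preserves the $\Z/2\Z$-grading, $ch\otimes\Q$ is a $\Z/2\Z$-graded ring isomorphism.

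The only non-formal input is the computation on spheres, that is, Bott periodicity for complex $K$-theory together with the assertion that the Chern character of the Bott generator $\beta_{2k}\in\widetilde K^{0}(S^{2k})$ generates $\widetilde H^{2k}(S^{2k};\Z)$ (equivalently, that $ch$ sends the canonical generator of $\widetilde K^{0}(\C P^{1})$ to a generator of $\widetilde H^{2}(\C P^{1};\Z)$ up to sign); the remainder is bookkeeping with the axioms of a cohomology theory and the five lemma.
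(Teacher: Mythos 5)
The paper does not prove this statement at all: it is quoted verbatim from Atiyah--Hirzebruch \cite{at-hi-60} as a known classical result, so there is no in-paper argument to compare against. Your proof is the standard one and is correct: $ch$ is a natural, multiplicative transformation of $\Z/2\Z$-graded cohomology theories (after tensoring $K^*$ with the flat module $\Q$), it is an isomorphism on spheres because $ch$ of the Bott generator generates $\widetilde H^{2k}(S^{2k};\Z)$, and induction over skeleta with the five lemma, using that $X^{(n)}/X^{(n-1)}$ is a finite wedge of $n$-spheres, finishes the finite complex $X$. You correctly isolate the only non-formal inputs --- compatibility of $ch$ with the connecting homomorphisms (equivalently with suspension) and the sphere computation via Bott periodicity --- so the argument as outlined is complete for the purposes of this cited theorem.
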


Let $\Lambda=(\lambda_i^j)$ be the characteristic $(n\times m)$-matrix of the quasitoric manifold $M^{2n}$. Let $P^{n}\subset\R^n$ be the moment polytope of $M^{2n}$ with facets $F_1,\dots,F_m$.

\begin{thm}\cite{da-ja-91}\label{thm:coh}
\[
H^*(M^{2n};\Z)\simeq\Z[x_1,\dots,x_m]/(I_{H}+J_{H}),\ \deg x_i=2,\ i=1,\dots,m,
\]
where $I_{H}=(x_{i_1}\cdots x_{i_k}|\ F_{i_1}\cap\dots\cap F_{i_k}=\varnothing)$, $J_{H}=(\lambda_i^1 x_{1}+\dots +\lambda_i^m x_m|\ i=1,\dots,n)$ are the ideals of the polynomial ring. The ring $H^*(M^{2n};\Z)$ is torsion-free and all respective odd-graded groups vanish.
\end{thm}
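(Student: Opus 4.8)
The plan is to separate the claim into its additive content---freeness, concentration in even degrees and the Betti numbers---and its multiplicative content, the explicit ring presentation, following the original argument of Davis and Januszkiewicz. For the additive part I would fix a linear functional on $\R^n$ that is injective on the vertex set of $P^n$ and pull it back along the projection $\pi\colon M^{2n}\to P^n$ to a $T^n$-invariant function $f$ on $M^{2n}$. Its critical set is exactly the fixed-point set $\pi^{-1}(\vertex P^n)$, and at the fixed point over a vertex $v=F_{j_1}\cap\dots\cap F_{j_n}$ the characteristic data identifies the tangent space equivariantly with $\C^n$ in such a way that $f$ is a nondegenerate quadratic form whose negative index is $2\ind(v)$, where $\ind(v)$ counts the edges of $P^n$ at $v$ along which $f$ decreases. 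Since every index is even, $M^{2n}$ carries a CW-structure with exactly one cell of dimension $2\ind(v)$ per vertex $v$; hence $H^{*}(M^{2n};\Z)$ is a free abelian group, vanishes in odd degrees, and has $\rk H^{2k}(M^{2n};\Z)=h_k$, where $(h_0,\dots,h_n)$ is the $h$-vector of $P^n$ (the polynomial $\sum_v t^{\ind(v)}$ being independent of the generic functional). This gives the last sentence of the Theorem.

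For the multiplicative part I would set $x_i:=c_1(\theta_i)\in H^2(M^{2n};\Z)$, which is the Poincar\'e dual of the characteristic submanifold $M_i:=\pi^{-1}(F_i)$, and check that $x_i\mapsto x_i$ defines a ring homomorphism $\rho\colon \Z[x_1,\dots,x_m]/(I_H+J_H)\to H^{*}(M^{2n};\Z)$: the Stanley--Reisner monomials vanish because $M_{i_1}\cap\dots\cap M_{i_k}=\pi^{-1}(F_{i_1}\cap\dots\cap F_{i_k})=\varnothing$ forces the product of the corresponding Euler classes to be zero, while the linear relations in $J_H$ hold because $\Lambda$ records the $T^m$-weights of the $\theta_i$, which are pulled back from the quotient torus $T^n=T^m/\ker\Lambda$, so each $\sum_j\lambda_i^j x_j$ is the first Chern class of a trivial bundle. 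To see that $\rho$ is surjective I would use that the fundamental classes of the facial submanifolds $M_G:=\pi^{-1}(G)$, $G$ a face of $P^n$, span $H_{*}(M^{2n};\Z)$---this is read off from the CW-filtration above, whose cell closures carry fundamental classes that are integral combinations of the $[M_G]$---so by Poincar\'e duality $H^{*}(M^{2n};\Z)$ is spanned by the classes $\mathrm{PD}[M_G]=\prod_{F_i\supseteq G}x_i$, which lie in the image of $\rho$. Finally, since $P^n$ is simple the abstract simplicial complex on $\{1,\dots,m\}$ whose faces are the sets $\{i_1,\dots,i_k\}$ with $F_{i_1}\cap\dots\cap F_{i_k}\neq\varnothing$ is a simplicial $(n-1)$-sphere, so by Reisner's theorem the face ring $\Z[x_1,\dots,x_m]/I_H$ is Cohen--Macaulay of Krull dimension $n$, and the nonsingularity ($\pm1$ minor) condition on $\Lambda$ at each vertex makes the $n$ generators of $J_H$ a regular sequence over $\Z$; hence $\Z[x_1,\dots,x_m]/(I_H+J_H)$ is a free abelian group, concentrated in even degrees, with degree-$2k$ rank $h_k$. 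Thus $\rho$ is a surjection between free $\Z$-modules of equal finite rank in every degree, therefore an isomorphism.

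The main obstacle is the surjectivity step, i.e.\ proving that $H^{*}(M^{2n};\Z)$ is generated in degree two, equivalently that the facial submanifolds span its homology; arranging the Morse-theoretic CW-structure to be compatible with the facial stratification is where the real work lies. A secondary technical point is the \emph{integral} regular-sequence (linear system of parameters) property of $J_H$: over a field it follows merely from $\Lambda$ having full rank at each vertex, but over $\Z$ it genuinely requires unimodularity of the vertex minors, and this must be invoked both for the freeness of $\Z[x_1,\dots,x_m]/(I_H+J_H)$ and for the matching of its Hilbert series with the $h$-vector. One could instead argue via the Borel-construction fibration $M^{2n}\to ET^n\times_{T^n}M^{2n}\to BT^n$, whose total space is the Davis--Januszkiewicz space of $P^n$ with cohomology $\Z[x_1,\dots,x_m]/I_H$ free over $H^{*}(BT^n;\Z)$, so that the Eilenberg--Moore spectral sequence collapses; then the obstacle migrates to justifying that collapse and the module identification over $\Z$.
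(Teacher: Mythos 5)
The paper itself offers no argument for Theorem \ref{thm:coh}: it is quoted from \cite{da-ja-91} (with a remark that it could also be deduced from the complex cobordism computation), so there is no internal proof to compare with; your proposal should be judged against the standard proofs, and it holds up, though your primary route is not the one Davis and Januszkiewicz take. They establish the Betti-number statement by the same Morse-theoretic argument you give, but they obtain the ring presentation through the Borel construction: $ET^n\times_{T^n}M^{2n}$ has the homotopy type of the Davis--Januszkiewicz space with cohomology $\Z[x_1,\dots,x_m]/I_H$, evenness of $H^*(M^{2n};\Z)$ forces the Serre spectral sequence of the fibration over $BT^n$ to collapse, hence $H^*_{T^n}(M^{2n};\Z)$ is a free $H^*(BT^n;\Z)$-module and $H^*(M^{2n};\Z)\simeq\Z[x_1,\dots,x_m]/(I_H+J_H)$ --- exactly the alternative you sketch at the end. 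Your main route (Poincar\'e duals of facial submanifolds generate; an integral l.s.o.p.\ computation gives the graded ranks of the abstract quotient; a surjection of free $\Z$-modules of equal finite rank is an isomorphism) is closer to later textbook treatments and is sound provided the two points you flag are handled as follows: (1) for surjectivity, choose a $T^n$-invariant metric so that the unstable cells lie in the facial submanifolds; then each $M_{G_v}$ is a subcomplex of the CW-structure containing a unique cell of top dimension, so $[M_{G_v}]=\pm[e_v]$ in cellular homology and the classes $[M_G]$ indeed span $H_*(M^{2n};\Z)$; (2) Reisner's criterion is a statement over a field, so the clean way to the $\Z$-statement is to observe that over $\Q$ and over every prime field $\mathbb{F}_p$ the face ring of the boundary sphere of $P^n$ is Cohen--Macaulay and the unimodularity of the vertex minors of $\Lambda$ makes the rows of $J_H$ a linear system of parameters over each such field, so the Hilbert function equals $(h_0,\dots,h_n)$ independently of the field, which forces torsion-freeness and the rank count you need. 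The trade-off between the two routes: the Borel-construction argument buys surjectivity and freeness simultaneously from the collapse of one spectral sequence, while your argument is more elementary and makes explicit the geometric basis of facial submanifold classes and the identity $\mathrm{PD}[M_G]=\prod_{F_i\supseteq G}x_i$ (which requires the standard transversality of the characteristic submanifolds), facts that the paper itself relies on elsewhere.
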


Using the Atiyah and Hirzebruch spectral sequence and Theorem \ref{thm:coh} one justifies the following
\begin{pr}\label{pr:ch}
Let $M^{2n}$ be a quasitoric manifold. Then $K(M^{2n}):=K^{0}(M^{2n})$ is a free abelian group (w.r.t. the sum) of rank equal to the Euler characteristic $\chi(M^{2n})$ of $M^{2n}$. Next, $K^1(M^{2n})=0$.
\end{pr}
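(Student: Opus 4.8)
The plan is to apply the Atiyah--Hirzebruch spectral sequence for complex $K$-theory to the finite CW-complex $M^{2n}$ and extract everything from Theorem \ref{thm:coh}. Recall that this spectral sequence has $E_2^{p,q}=H^p(M^{2n};K^q(\mathrm{pt}))$ and converges to $K^{p+q}(M^{2n})$. By Bott periodicity $K^q(\mathrm{pt})=\Z$ for $q$ even and $0$ for $q$ odd, so $E_2^{p,q}=H^p(M^{2n};\Z)$ when $q$ is even and $E_2^{p,q}=0$ when $q$ is odd. Theorem \ref{thm:coh} now tells us that $H^*(M^{2n};\Z)$ is torsion-free and concentrated in even degrees; hence $E_2^{p,q}$ is a free abelian group when both $p$ and $q$ are even and vanishes otherwise.

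Next I would check collapse at the $E_2$-page by a parity count. The differential $d_r\colon E_r^{p,q}\to E_r^{p+r,\,q-r+1}$ shifts $p$ by $r$ and $q$ by $1-r$; if its source is nonzero, then $p$ and $q$ are both even, and for $r$ even the target index $q-r+1$ is odd, while for $r$ odd the target index $p+r$ is odd, so in either case the target is zero. Running the same argument for the differentials landing in $E_r^{p,q}$ shows all incoming differentials vanish as well. Therefore $E_\infty^{p,q}=E_2^{p,q}$ for all $p,q$.

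It then remains to reassemble $K^0$ and $K^1$. Since $E_\infty^{p,q}\neq 0$ forces $p+q$ even, the associated graded of $K^1(M^{2n})$ is trivial and so $K^1(M^{2n})=0$. For $K^0(M^{2n})$ the finite decreasing filtration has associated graded $\bigoplus_{p\text{ even}}H^p(M^{2n};\Z)$, a finite direct sum of free abelian groups. A finite filtration with free abelian subquotients forces the ambient group to be free abelian: in each extension $0\to F'\to F\to F''\to 0$ with $F''$ free abelian the sequence splits, giving $F\cong F'\oplus F''$, and one inducts on the filtration length. Hence $K^0(M^{2n})$ is free abelian of rank $\sum_{p\text{ even}}\rk H^p(M^{2n};\Z)=\sum_p b_p(M^{2n})$, which equals $\sum_p(-1)^p b_p(M^{2n})=\chi(M^{2n})$ because the odd Betti numbers vanish. (Alternatively, the rank assertions alone follow immediately from Theorem \ref{thm:lat}.)

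There is no serious obstacle here: the whole argument is formal once Theorem \ref{thm:coh} is available. The only points that need a moment's care are the parity bookkeeping that forces degeneration at $E_2$ and the observation that the extension problem for the $K^0$-filtration is trivial since all graded pieces are free abelian.
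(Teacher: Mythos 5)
Your argument is correct and is exactly the route the paper intends: it states that the Proposition is justified ``using the Atiyah and Hirzebruch spectral sequence and Theorem \ref{thm:coh}'', and your write-up simply fills in the parity-collapse and trivial-extension details of that same argument.
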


Remind that
\begin{equation}\label{eq:kcp}
K(\C P^n)\simeq \Z[x]/(x-1)^{n+1},
\end{equation}
where $x$ is a class of the complex conjugate $\bar{\eta}\to \C P^n$ to the tautological line bundle, and $1$ is the class of the trivial linear vector bundle. (See \cite{at-67}.)

\begin{pr}\label{pr:nilp}
Let $\xi\to M^{2n}$ be a complex linear vector bundle over $M^{2n}$. Then in $K(M^{2n})$ one has the relation $([\xi]-1)^{n+1}=0$.
\end{pr}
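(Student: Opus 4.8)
The plan is to reduce the statement to the known relation \eqref{eq:kcp} for $\C P^n$ by exhibiting $\xi$ as a pullback. Recall that every complex line bundle over a CW-complex $X$ is classified by a map $X\to \C P^\infty$, and if $X$ has cells only in dimensions $\le 2n$ — which holds for $M^{2n}$ by Theorem \ref{thm:coh} — then by cellular approximation this classifying map factors (up to homotopy) through a map $f\colon M^{2n}\to \C P^n$. More precisely, one may take $f$ to classify the bundle $\bar\xi$ (the complex conjugate), so that $f^*(x) = [\xi]$ in $K(M^{2n})$, where $x\in K(\C P^n)$ is the class appearing in \eqref{eq:kcp} and we use that conjugation matches the convention there. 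First I would make this factorisation precise, and note that $f^*(1)=1$ since pullback is a ring homomorphism sending the trivial bundle to the trivial bundle.

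Next I would apply the ring homomorphism $f^*\colon K(\C P^n)\to K(M^{2n})$ to the relation $(x-1)^{n+1}=0$ that holds in $K(\C P^n)$ by \eqref{eq:kcp}. Since $f^*$ is a homomorphism of (unital) rings, it sends $(x-1)^{n+1}$ to $(f^*(x)-f^*(1))^{n+1} = ([\xi]-1)^{n+1}$, and this must therefore vanish in $K(M^{2n})$. That completes the argument.

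The one point requiring care — and the main obstacle — is the cellular approximation step: one must ensure that the classifying map $M^{2n}\to \C P^\infty = K(\Z,2)$ can genuinely be pushed down to $\C P^n$, i.e. that the inclusion $\C P^n \hookrightarrow \C P^\infty$ is $(2n+1)$-connected so that a map from a CW-complex of dimension $2n$ lifts. This is standard (the pair $(\C P^\infty, \C P^n)$ is $2n$-connected, in fact $(2n+1)$-connected), and combined with the fact from Theorem \ref{thm:coh} that $M^{2n}$ has a CW-structure with cells only in even dimensions $\le 2n$, it yields the desired factorisation. An alternative, purely $K$-theoretic route that avoids homotopy theory: write $[\xi] = 1 + t$ with $t = [\xi]-1 \in \widetilde K^0(M^{2n})$; since $\xi$ is a line bundle, $t$ lies in the image of the reduced $K$-theory and one can argue directly that $t$ is nilpotent of order $\le n+1$ using the filtration of $K(M^{2n})$ by skeleta together with the fact that the associated graded is $H^*(M^{2n};\Z)$, in which any element of $H^2$ satisfies $x^{n+1}=0$ for dimension reasons. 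I would present the first (pullback) argument as the main proof since it is the shortest, and perhaps remark on the second.
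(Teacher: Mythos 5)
Your argument is correct, but it is not the route the paper takes. The paper's proof is a one-liner resting on the machinery it has already set up: it observes that $ch\bigl(([\xi]-1)^{n+1}\bigr)=0$ — since $ch([\xi]-1)=e^{c_1(\xi)}-1$ has components only in cohomological degrees $\geq 2$, the $(n+1)$-st power lands in degrees $>2n$ and hence vanishes on $M^{2n}$ — and then invokes Theorem \ref{thm:lat} (with the torsion-freeness of $K(M^{2n})$ from Proposition \ref{pr:ch} implicitly needed to pass from ``rationally zero'' to ``zero''). You instead factor the classifying map of the line bundle through $\C P^n$ by cellular approximation, using that the inclusion $\C P^n\hookrightarrow\C P^\infty$ is $(2n+1)$-connected and that $M^{2n}$ has a CW structure of dimension $2n$, and then pull back the relation \eqref{eq:kcp}; your bookkeeping with the conjugate bundle so that $f^*(x)=[\xi]$ is fine (and even without it, conjugation being a ring automorphism of $K$ would rescue the sign of the convention). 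The trade-off: the paper's argument is shorter given Theorem \ref{thm:lat} and Proposition \ref{pr:ch} are already in place, while yours avoids the Chern character and any appeal to torsion-freeness of $K(M^{2n})$, and proves the stronger statement that the relation holds for any line bundle over any finite CW-complex of dimension at most $2n$. Your sketched second route via the skeletal filtration of $K$-theory is also viable but, as you say, the pullback argument is the cleanest to present.
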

\begin{proof}
Observe that $ch(([\xi]-1)^{n+1})=0$, then use Theorem \ref{thm:lat}.
\end{proof}

\begin{thm}\cite[Proposition 3.2]{sa-um-07}\label{thm:kdesc}
One has
\[
K(M^{2n})\simeq\Z[\theta_1,\dots,\theta_m]/(I_{K}+J_{K}),
\]
where $\underline{\theta}^{\underline{v}}:=\theta_{1}^{v_{1}}\cdots \theta_{m}^{v_{m}}$ for $\underline{v}=(v_1,\dots,v_m)$. $I_{K}=((\theta_{i_1}-1)\cdots(\theta_{i_k}-1)|\ F_{i_1}\cap\dots\cap F_{i_k}=\varnothing)$, $J_{K}=(\theta_{1}^{\lambda_i^1}\cdots\theta_{m}^{\lambda_i^m}-1|\ i=1,\dots,n)$ are the ideals of the polynomial ring. In particular, the classes $[\theta_{i}]$ multiplicatively generate the ring $K(M^{2n})$, $i=1,\dots,m$.
\end{thm}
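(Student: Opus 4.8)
The plan is to present $K(M^{2n})$ as the quotient $\Z[\theta_1,\dots,\theta_m]/(I_K+J_K)$ in three moves: build a surjective ring homomorphism $\phi$, check that $I_K+J_K\subseteq\ker\phi$, and then conclude by matching ranks through a leading-term computation that parallels Davis--Januszkiewicz's presentation of $H^*(M^{2n})$. First I would set $\phi(\theta_i)=[\theta_i]$ and show $\phi$ is onto. Since $H^{\mathrm{odd}}(M^{2n};\Z)=0$ and $H^{\mathrm{even}}(M^{2n};\Z)$ is free (Theorem \ref{thm:coh}), the Atiyah--Hirzebruch spectral sequence for $K^*(M^{2n})$ collapses and endows $K(M^{2n})$ with a finite multiplicative skeletal filtration whose associated graded ring is $H^{\mathrm{even}}(M^{2n};\Z)$, with $[\theta_i]-1$ having leading term $x_i\in H^2(M^{2n};\Z)$. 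As the $x_i$ generate $H^{\mathrm{even}}(M^{2n};\Z)$ as a ring (Theorem \ref{thm:coh}), a downward induction along the filtration shows that the elements $[\theta_i]-1$, and hence the $[\theta_i]$, generate $K(M^{2n})$ as a ring; in particular $\phi$ is surjective and the last assertion of the Theorem holds.

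Next I would verify the relations, using the explicit $\theta_i$ of \eqref{eq:tangent} (see \cite{bu-pa-15}): writing $\mathcal Z_P\to M^{2n}$ for the principal bundle with structure group $K=\ker(\Lambda\colon T^m\to T^n)$, the bundle $\theta_i$ is the associated bundle to the restriction $\chi_i|_K$ of the $i$-th coordinate character of $T^m$, and it carries a tautological section vanishing exactly along the facial submanifold $M_i\subset M^{2n}$ over the facet $F_i$. Consequently, up to the unit $\prod_{l=1}^k[\theta_{i_l}]$, the product $\prod_{l=1}^k([\theta_{i_l}]-1)$ is the $K$-theory Euler class of $\theta_{i_1}\oplus\cdots\oplus\theta_{i_k}$; if $F_{i_1}\cap\cdots\cap F_{i_k}=\varnothing$ the combined section of this sum is nowhere zero and so the Euler class vanishes, giving $I_K\subseteq\ker\phi$. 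Likewise $\bigotimes_j\theta_j^{\otimes\lambda_i^j}$ is associated to the character $\sum_j\lambda_i^j\,\chi_j$, which is the restriction to $K=\ker\Lambda$ of the $i$-th component of $\Lambda$ and hence trivial, so $\prod_j[\theta_j]^{\lambda_i^j}=1$ in $K(M^{2n})$ (the monomials with negative exponents being legitimate since each $[\theta_j]=1+([\theta_j]-1)$ is a unit by Proposition \ref{pr:nilp}); thus $J_K\subseteq\ker\phi$, and $\phi$ descends to a surjection $\bar\phi\colon\Z[\theta]/(I_K+J_K)\twoheadrightarrow K(M^{2n})$.

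Finally I would match ranks. After the substitution $\theta_i=1+t_i$, the generators of $I_K$ become the squarefree monomials generating $I_H$, while $\prod_j(1+t_j)^{\lambda_i^j}-1$ has lowest-degree part $\sum_j\lambda_i^j t_j$, a generator of $J_H$; hence the initial ideal (for the $t$-degree filtration) satisfies $\mathrm{in}(I_K+J_K)\supseteq I_H+J_H$, so $\gr\big(\Z[\theta]/(I_K+J_K)\big)\cong\Z[t]/\mathrm{in}(I_K+J_K)$ is a quotient of $\Z[t]/(I_H+J_H)\cong H^{\mathrm{even}}(M^{2n};\Z)$. Over $\Q$, since $\bar\phi\otimes\Q$ is onto $K(M^{2n})\otimes\Q$, whose dimension is $\chi(M^{2n})$ by Proposition \ref{pr:ch}, the quotient above must already have dimension $\chi(M^{2n})$, forcing $\mathrm{in}(I_K+J_K)\otimes\Q=(I_H+J_H)\otimes\Q$. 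But $\Z[t]/(I_H+J_H)\cong H^{\mathrm{even}}(M^{2n};\Z)$ is torsion-free (Theorem \ref{thm:coh}), hence has no nonzero torsion subgroup, so the torsion group $\mathrm{in}(I_K+J_K)/(I_H+J_H)$ vanishes and $\mathrm{in}(I_K+J_K)=I_H+J_H$ over $\Z$. Therefore $\gr\big(\Z[\theta]/(I_K+J_K)\big)\cong H^{\mathrm{even}}(M^{2n};\Z)$ is torsion-free of rank $\chi(M^{2n})$, so $\Z[\theta]/(I_K+J_K)$ is itself a finitely generated torsion-free, hence free, abelian group of rank $\chi(M^{2n})$; a surjection between free abelian groups of equal finite rank is an isomorphism, so $\bar\phi$ is an isomorphism.

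The step I expect to be the main obstacle is passing from the evident inclusion $\mathrm{in}(I_K+J_K)\supseteq I_H+J_H$ to an equality: a priori the displayed generators of $I_K+J_K$ could admit degree-lowering syzygies producing extra, possibly torsion, relations in the associated graded. The argument above avoids a direct Gröbner-type analysis by first getting equality of the leading ideals over $\Q$ via a dimension count and then killing the integral discrepancy with the torsion-freeness of $H^*(M^{2n};\Z)$. By contrast the collapse of the spectral sequence and the membership $I_K,J_K\subseteq\ker\phi$ are routine, modulo care with the sign and conjugation conventions relating $[\theta_i]-1$ to the $K$-theory Euler class and with the reading of the negative exponents appearing in $J_K$.
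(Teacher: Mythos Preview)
The paper does not prove this theorem: it is quoted from \cite[Proposition 3.2]{sa-um-07}, and the only indication the paper gives of an argument is the subsequent Remark, which says that both Theorem~\ref{thm:coh} and Theorem~\ref{thm:kdesc} can be obtained by first computing the complex cobordism ring $\Omega_U^*(M^{2n})$ of a quasitoric manifold and then specialising the universal formal group law to the additive (cohomology) or multiplicative ($K$-theory) one. Your proof is therefore not being compared to a proof in the paper, but to that brief remark.

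Your approach is correct and is essentially the direct $K$-theoretic argument one would expect (and is close in spirit to the original in \cite{sa-um-07}): collapse of the Atiyah--Hirzebruch spectral sequence gives surjectivity and the leading-term identification with $H^*(M^{2n};\Z)$, the geometric description of the $\theta_i$ gives $I_K,J_K\subseteq\ker\phi$, and the rank/torsion bookkeeping via the initial ideal finishes the job. Compared with the cobordism route sketched in the paper's Remark, your argument is more self-contained (it stays entirely within $K$-theory and ordinary cohomology and uses only Theorem~\ref{thm:coh} and the AHSS), whereas the cobordism approach is more conceptual and uniform: once $\Omega_U^*(M^{2n})$ is known, both presentations drop out by pushing the formal group law forward, and one sees at once why $I_H\leftrightarrow I_K$ and $J_H\leftrightarrow J_K$ under the change $x_i\leftrightarrow \theta_i-1$.

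One small point worth tightening: the generators of $J_K$ involve possibly negative exponents, so strictly speaking they do not live in the polynomial ring $\Z[\theta_1,\dots,\theta_m]$ but in its localisation at the $\theta_i$. You handle this correctly on the $K(M^{2n})$ side by invoking nilpotence of $[\theta_i]-1$, but on the source side you should either pass to $\Z[\theta_1^{\pm1},\dots,\theta_m^{\pm1}]$ from the outset or replace each generator by the polynomial relation $\prod_{\lambda_i^j>0}\theta_j^{\lambda_i^j}-\prod_{\lambda_i^j<0}\theta_j^{-\lambda_i^j}$; either fix leaves your initial-ideal computation unchanged.
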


\begin{rem}
Theorems \ref{thm:coh} and \ref{thm:kdesc} may also be deduced from the computation of the complex cobordism ring of a quasitoric manifold and further specialisation of the universal formal group law (for complex oriented generalised cohomology theories) to the cohomological or $K$-theoretical, respectively.
\end{rem}

\begin{cor}\label{cor:genpow}
For any $k=1,\dots,n$ one has
\[
H^{2k}(M^{2n};\Q)\simeq\Q\la x^k|\ x\in H^{2}(M^{2n};\Z)\ra
\]
\end{cor}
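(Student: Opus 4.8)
The plan is to derive this immediately from the presentation of the cohomology ring in Theorem \ref{thm:coh} together with the classical polarization identity.

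First I would invoke Theorem \ref{thm:coh}: the natural surjection $\Z[x_1,\dots,x_m]\twoheadrightarrow H^*(M^{2n};\Z)$ sends the degree-two variables $x_i$ to classes in $H^2(M^{2n};\Z)$, so after tensoring with $\Q$ the ring $H^*(M^{2n};\Q)$ is generated as a $\Q$-algebra by $x_1,\dots,x_m\in H^2(M^{2n};\Z)$. In particular $H^{2k}(M^{2n};\Q)$ is spanned over $\Q$ by the monomials $x_{i_1}\cdots x_{i_k}$ with $1\le i_1\le\dots\le i_k\le m$.

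Next I would apply the polarization identity, valid in any commutative $\Q$-algebra: for elements $y_1,\dots,y_k$,
\[
y_1\cdots y_k=\frac{1}{k!}\sum_{\varnothing\neq S\subseteq\{1,\dots,k\}}(-1)^{k-|S|}\Big(\sum_{j\in S}y_j\Big)^{k}.
\]
Taking $y_j:=x_{i_j}$ rewrites each spanning monomial as a $\Q$-linear combination of $k$-th powers of the classes $\sum_{j\in S}x_{i_j}\in H^2(M^{2n};\Z)$. This gives the inclusion $H^{2k}(M^{2n};\Q)\subseteq\Q\la x^k\mid x\in H^2(M^{2n};\Z)\ra$; the opposite inclusion is trivial since $x^k\in H^{2k}(M^{2n};\Q)$ whenever $x\in H^2(M^{2n};\Z)$.

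I do not expect a genuine obstacle here: the only point requiring minor care is that the linear forms whose $k$-th powers occur are honest integral classes, which is automatic because they are sums of the integral generators $x_i$. Equivalently, one may phrase the argument by saying that the $k$-th power map $H^2(M^{2n};\Q)\to H^{2k}(M^{2n};\Q)$ has image spanning, by polarization, the $\Q$-span of all degree-$k$ monomials in the $x_i$, which is all of $H^{2k}(M^{2n};\Q)$ by Theorem \ref{thm:coh}.
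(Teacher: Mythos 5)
Your proof is correct, but it takes a genuinely different route from the paper's. The paper argues through $K$-theory: by Theorem \ref{thm:kdesc} the group $K(M^{2n})$ is additively spanned by the line-bundle classes $\underline{\theta}^{\underline{v}}$, Theorem \ref{thm:lat} gives $ch_k(K(M^{2n})\otimes\Q)=H^{2k}(M^{2n};\Q)$, and $ch_k(\underline{\theta}^{\underline{v}})=c_1(\underline{\theta}^{\underline{v}})^k/k!$, so $H^{2k}$ is spanned by $k$-th powers of integral degree-two classes. You instead stay entirely inside cohomology: Theorem \ref{thm:coh} shows $H^*(M^{2n};\Q)$ is generated as a $\Q$-algebra in degree two, and the polarization identity rewrites each monomial $x_{i_1}\cdots x_{i_k}$ as a $\Q$-linear combination of $k$-th powers of the classes $\sum_{j\in S}x_{i_j}$, which are honest integral classes even when indices repeat; your identity is stated with the correct signs, and the reverse inclusion is indeed trivial, so the argument is complete. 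Your route is more elementary (no $K$-theory, no Chern character) and more general --- it applies verbatim to any graded algebra generated in degree two, and in fact the paper uses exactly this polarization trick later, in the proof of Theorem \ref{thm:algform}, to show $A_k(V)=\R\la x^k|\ x\in A_1(V)\ra$. What the $K$-theoretic proof buys is coherence with the rest of the paper: it exhibits the spanning powers as the $2k$-components of Chern characters of the line bundles $\underline{\theta}^{\underline{v}}$, i.e.\ as $ch_k$ applied to the semigroup $C(M^{2n})$, which is precisely the form in which the corollary is exploited afterwards (Corollary \ref{cor:powers} and the proof of Theorem \ref{thm:crit}).
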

\begin{proof}
Theorem \ref{thm:kdesc} tells that $K(M^{2n})\simeq\Z\la \underline{\theta}^{\underline{v}}|\ \underline{v}\in\Z^{m}\ra$. Theorem \ref{thm:lat} implies that $ch_k(K(M^{2n})\otimes\Q)=H^{2k}(M^{2n};\Q)$. It remains to observe that $ch_{k}(\underline{\theta}^{\underline{v}})=(c_1(\underline{\theta}^{\underline{v}}))^{k}/k!$.
\end{proof}

%

Consider the semigroup $C(M^{2n})\subseteq \widetilde{K} (M^{2n}):=\widetilde{K}^0(M^{2n})$ generated by the elements of the form $[\xi]-1$, where $\xi\to M^{2n}$ is any linear vector bundle over $M^{2n}$ (with respect to the Whitney sum operation). 

\begin{cor}\label{cor:ccone}
One has
\[
C(M^{2n})=\Z_{\geq 0}\la \underline{\theta}^{\underline{v}}-1|\ \underline{v}\in\Z^{m}\ra,
\]
where $\Z_{\geq 0}\la*\ra$ denotes the $\Z_{\geq}$-semigroup hull of a given abelian group.
\end{cor}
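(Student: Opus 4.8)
The plan is to reduce the statement to the classification of complex line bundles over $M^{2n}$ together with Theorem~\ref{thm:coh}. First I would check that the set of semigroup generators $\{[\xi]-1\mid \xi\to M^{2n}\text{ a complex line bundle}\}$ coincides with $\{\underline{\theta}^{\underline{v}}-1\mid \underline{v}\in\Z^{m}\}$; the corollary then follows at once, since the subsemigroup of the additive group $\widetilde K(M^{2n})$ generated by a subset $S$ is by definition the collection of all finite sums of elements of $S$, i.e. the $\Z_{\geq 0}$-hull of $S$.

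For the identification of the generating sets, recall that $c_{1}$ induces a bijection from the set of isomorphism classes of complex line bundles over $M^{2n}$ onto $H^{2}(M^{2n};\Z)$ (line bundles being classified by homotopy classes of maps to $\C P^{\infty}=K(\Z,2)$), and that this bijection carries tensor product to addition. By Theorem~\ref{thm:coh} the group $H^{2}(M^{2n};\Z)$ is generated by the classes $x_{i}=c_{1}(\theta_{i})$, $i=1,\dots,m$; hence for an arbitrary line bundle $\xi\to M^{2n}$ there is $\underline{v}=(v_{1},\dots,v_{m})\in\Z^{m}$ with $c_{1}(\xi)=\sum_{i}v_{i}x_{i}=c_{1}(\underline{\theta}^{\underline{v}})$, where $\underline{\theta}^{\underline{v}}:=\theta_{1}^{\otimes v_{1}}\otimes\dots\otimes\theta_{m}^{\otimes v_{m}}$ with negative exponents denoting tensor powers of the conjugate bundles $\bar{\theta}_{i}$. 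Since $\underline{\theta}^{\underline{v}}$ is itself a genuine complex line bundle and $c_{1}$ is injective, we obtain $\xi\simeq\underline{\theta}^{\underline{v}}$, so $[\xi]-1=\underline{\theta}^{\underline{v}}-1$ in $\widetilde K(M^{2n})$; note that $\underline{\theta}^{\underline{v}}$ is precisely the invertible element of the ring $K(M^{2n})$ of Theorem~\ref{thm:kdesc}, because the inverse $\theta_{i}^{-1}$ there is the class of $\bar{\theta}_{i}$. Conversely, every $\underline{\theta}^{\underline{v}}$, $\underline{v}\in\Z^{m}$, is a line bundle, so each $\underline{\theta}^{\underline{v}}-1$ is one of the generators; for $\underline{v}=0$ this element is $0$, so $0\in C(M^{2n})$ in any convention.

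It remains to observe that the semigroup operation on $C(M^{2n})\subseteq\widetilde K(M^{2n})$ is the addition of $\widetilde K(M^{2n})$, induced by the Whitney sum; thus a general element of $C(M^{2n})$ is a finite sum $\sum_{j}([\xi_{j}]-1)$ of not necessarily distinct generators. Replacing $[\xi_{j}]-1$ by $\underline{\theta}^{\underline{v}_{j}}-1$ and collecting equal terms yields $C(M^{2n})=\bigl\{\sum_{j}n_{j}(\underline{\theta}^{\underline{v}_{j}}-1)\mid n_{j}\in\Z_{\geq 0}\bigr\}=\Z_{\geq 0}\la\underline{\theta}^{\underline{v}}-1\mid\underline{v}\in\Z^{m}\ra$. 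There is no genuine obstacle here: all the substance sits in the first step, and the only point requiring a little care is that \emph{negative} tensor powers of the $\theta_{i}$ are legitimate complex line bundles, so that the whole lattice $\underline{v}\in\Z^{m}$ (rather than only $\Z_{\geq 0}^{m}$) is realised, with $K$-theory classes equal to the invertible elements $\underline{\theta}^{\underline{v}}$ of Theorem~\ref{thm:kdesc}.
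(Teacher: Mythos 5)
Your argument is correct and is exactly the reasoning the paper leaves implicit: the corollary is stated without proof as an immediate consequence of the fact that line bundles over $M^{2n}$ are classified by $c_{1}$ and that $H^{2}(M^{2n};\Z)$ is spanned by the $x_{i}=c_{1}(\theta_{i})$ (Theorem \ref{thm:coh}), so every $[\xi]-1$ equals some $\underline{\theta}^{\underline{v}}-1$ and the semigroup generated is the $\Z_{\geq 0}$-hull. Your care about negative tensor powers and the identification $[\bar{\theta}_{i}]=[\theta_{i}]^{-1}$ matches the conventions of Theorem \ref{thm:kdesc}, so nothing is missing.
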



\begin{pr}\cite[Lemma 2.3]{so-17}\label{pr:vx}
Let $X,Y$ be finite CW-complexes. Suppose that $\xi,\eta\to X$ are linear vector bundles with totally split stably inverses. Let $f:Y\to X$ be a continuous map. Then the stably inverse complex vector bundles to  $\overline{\xi},f^*\xi,\xi\oplus\eta,\xi\eta\to X$ are totally split.
\end{pr}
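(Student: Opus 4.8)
The plan is to bypass $K$-theory almost entirely and argue with honest vector bundles, exploiting that complex conjugation, pullback along a continuous map, Whitney sum, and tensoring by a complex line bundle each carry a Whitney sum of complex line bundles to another such sum, and each send trivial bundles to trivial bundles. First I would fix vector bundles $\zeta,\zeta'\to X$ witnessing the hypotheses and normalise them: after absorbing trivial summands (which does not destroy total splitness, since $\underline{\C}$ is itself a line bundle) we may assume $\zeta\cong\bigoplus_{j}\mu_{j}$ and $\zeta'\cong\bigoplus_{k}\nu_{k}$ are genuine Whitney sums of complex line bundles over $X$ with $\xi\oplus\zeta\cong\underline{\C}^{N}$ and $\eta\oplus\zeta'\cong\underline{\C}^{N'}$. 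Since a stably inverse bundle is unique up to stable isomorphism, in each case it suffices to produce one totally split stably inverse bundle.

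For $\overline{\xi}$, apply complex conjugation to $\xi\oplus\zeta\cong\underline{\C}^{N}$: this yields $\overline{\xi}\oplus\overline{\zeta}\cong\underline{\C}^{N}$ with $\overline{\zeta}\cong\bigoplus_{j}\overline{\mu_{j}}$ totally split. For $f^{*}\xi$, pull the same isomorphism back along $f$ to get $f^{*}\xi\oplus f^{*}\zeta\cong\underline{\C}^{N}$ over $Y$, with $f^{*}\zeta\cong\bigoplus_{j}f^{*}\mu_{j}$ totally split. For $\xi\oplus\eta$, form the direct sum of the two defining isomorphisms, obtaining $(\xi\oplus\eta)\oplus(\zeta\oplus\zeta')\cong\underline{\C}^{N+N'}$ with $\zeta\oplus\zeta'$ totally split. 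All three of these are immediate from the bookkeeping above and use nothing special about line bundles.

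The case $\xi\eta:=\xi\otimes\eta$ is the only one carrying real content, and it is where I expect the main obstacle to lie; here the rank-one hypothesis on both $\xi$ and $\eta$ is essential. Tensoring $\xi\oplus\zeta\cong\underline{\C}^{N}$ with the line bundle $\eta$ gives $(\xi\eta)\oplus(\zeta\otimes\eta)\cong\underline{\C}^{N}\otimes\eta\cong\eta^{\oplus N}$, and $\zeta\otimes\eta\cong\bigoplus_{j}(\mu_{j}\otimes\eta)$ is again a Whitney sum of line bundles. The difficulty is that $\eta^{\oplus N}$ is far from trivial; the fix is to re-trivialise it using the hypothesis on $\eta$, adding $(\zeta')^{\oplus N}$ to both sides: $(\xi\eta)\oplus(\zeta\otimes\eta)\oplus(\zeta')^{\oplus N}\cong\eta^{\oplus N}\oplus(\zeta')^{\oplus N}\cong(\eta\oplus\zeta')^{\oplus N}\cong\underline{\C}^{NN'}$. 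Therefore $(\zeta\otimes\eta)\oplus(\zeta')^{\oplus N}$ is a stably inverse bundle of $\xi\eta$, and it is totally split, being the Whitney sum of the line bundles $\mu_{j}\otimes\eta$ together with $N$ copies of each $\nu_{k}$. (Alternatively, one can run the whole argument inside $\widetilde{K}(X)$: the property ``$\xi$ has a totally split stably inverse bundle'' is equivalent to $-([\xi]-\rk\xi)$ lying in the subsemigroup generated by the classes $[\mu]-1$ of line bundles, and the four assertions then follow from identities such as $[\xi\eta]-1=([\xi]-1)+[\xi]\,([\eta]-1)$; but the bundle-level argument is more transparent.)
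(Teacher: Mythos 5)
Your argument is correct: after the legitimate normalisation $\xi\oplus\zeta\cong\underline{\C}^{N}$, $\eta\oplus\zeta'\cong\underline{\C}^{N'}$ with $\zeta,\zeta'$ sums of line bundles, the conjugation, pullback and Whitney-sum cases are immediate, and in the tensor case the isomorphism $(\xi\eta)\oplus(\zeta\otimes\eta)\oplus(\zeta')^{\oplus N}\cong(\eta\oplus\zeta')^{\oplus N}\cong\underline{\C}^{NN'}$ exhibits the totally split stable inverse $(\zeta\otimes\eta)\oplus(\zeta')^{\oplus N}$, correctly exploiting that $\eta$ has rank one. Note that the present paper gives no proof of this statement (it is quoted from \cite[Lemma 2.3]{so-17}); your bundle-level bookkeeping, or equivalently the semigroup identity $[\xi\eta]-1=([\xi]-1)+[\xi]([\eta]-1)$ in $\widetilde{K}(X)$ that you mention, is essentially the standard argument for that lemma, so nothing needs to be added.
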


%


\begin{thm}\label{thm:equiv}
Let $M^{2n}$ be a quasitoric manifold of dimension $2n$. Then the following conditions are equivalent:

$(i)$ $M^{2n}$ is a TNS-manifold;

$(ii)$ For any $i=1,\dots,m$, the stably inverse vector bundle to $\theta_{i}$ totally splits;

$(iii)$ For any element $x\in K(M^{2n})$ there exists $N\in\Z$ s.t.
\begin{equation}\label{eq:dec1}
x=N+\sum_{\underline{v}\in\Z^{m}}c_{\underline{v}}[\underline{\theta}^{\underline{v}}],
\end{equation}
with all integers $c_{\underline{v}}\geq 0$ being non-negative (the sum above is over the semigroup: $\underline{v}=(v_{1},\dots,v_{m})\in\Z^{m}$, only finite number of integers $c_{\underline{v}}\in\Z$ are non-zero);

%

$(iv)$ Any complex vector bundle $\xi\to M^{2n}$ stably totally splits.
%
\end{thm}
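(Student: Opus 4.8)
The plan is to prove the theorem by closing the loop $(i)\Rightarrow(ii)\Rightarrow(iii)\Rightarrow(iv)\Rightarrow(i)$, working throughout inside the ring $K(M^{2n})$. The engine of the argument is an elementary dictionary between total splitting and membership in the semigroup $C(M^{2n})$: a complex vector bundle $\xi\to M^{2n}$ of rank $r$ stably totally splits \emph{if and only if} $[\xi]-r\in C(M^{2n})$. Indeed, if $[\xi]-r=\sum_{k=1}^{s}([\eta_k]-1)$ with the $\eta_k$ linear bundles, then $[\xi]=\bigl[\underline{\C}^{r-s}\oplus\bigoplus_{k}\eta_k\bigr]$ when $r\geq s$ while $[\xi\oplus\underline{\C}^{s-r}]=\bigl[\bigoplus_{k}\eta_k\bigr]$ when $r<s$, and on the finite complex $M^{2n}$ an equality of $K$-classes forces stable isomorphism of the bundles; the converse is immediate. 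The same computation shows that the stably inverse vector bundle to $\theta_i$ is totally split iff $-([\theta_i]-1)\in C(M^{2n})$, since any stable inverse $\zeta$ of $\theta_i$ satisfies $[\zeta]-\rk\zeta=1-[\theta_i]$.

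For $(i)\Rightarrow(ii)$: write $\tau:=\bigoplus_{i=1}^{m}\theta_i$ for the stable complex tangent bundle of \eqref{eq:tangent} and let $\nu$ be a complex bundle with $\tau\oplus\nu$ trivial; then $[\nu]-\rk\nu=-\sum_{i=1}^{m}([\theta_i]-1)$ in $\widetilde{K}(M^{2n})$, so by the dictionary $M^{2n}$ is a TNS-manifold precisely when $-\sum_{i=1}^{m}([\theta_i]-1)\in C(M^{2n})$. As $C(M^{2n})$ is a sub-semigroup of $\widetilde{K}(M^{2n})$ containing each generator $[\theta_i]-1$, the identity
\[
-([\theta_j]-1)=\Bigl(-\sum_{i=1}^{m}([\theta_i]-1)\Bigr)+\sum_{i\neq j}([\theta_i]-1)
\]
displays $-([\theta_j]-1)$ as a sum of elements of $C(M^{2n})$, whence $-([\theta_j]-1)\in C(M^{2n})$ for every $j$; by the dictionary this is $(ii)$. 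This short step is really the heart of the matter: the additive closure of the cone $C(M^{2n})$ forces total splitting of every individual stable inverse the moment the normal bundle of the whole manifold totally splits.

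For $(ii)\Rightarrow(iii)$: by Proposition \ref{pr:vx} the property of possessing a totally split stable inverse is preserved under tensor products; by $(ii)$ it holds for the $\theta_i$ and trivially for the trivial bundle, hence for every monomial bundle $\underline{\theta}^{\underline{w}}$ with $\underline{w}\in\Z_{\geq0}^{m}$. Writing such a stable inverse as $\bigoplus_{l}\xi_l^{(\underline{w})}$ gives $[\underline{\theta}^{\underline{w}}]+\sum_{l}[\xi_l^{(\underline{w})}]=d_{\underline{w}}$ with $d_{\underline{w}}\geq0$, so $-[\underline{\theta}^{\underline{w}}]$ equals the (possibly negative) integer $-d_{\underline{w}}$ plus a non-negative combination of monomial bundle classes. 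By Theorem \ref{thm:kdesc} any $x\in K(M^{2n})$ is a $\Z$-linear combination $\sum_{\underline{w}\in\Z_{\geq0}^{m}}a_{\underline{w}}[\underline{\theta}^{\underline{w}}]$; substituting the preceding expression for each $-[\underline{\theta}^{\underline{w}}]$ that appears with negative coefficient puts $x$ into the form required in $(iii)$. For $(iii)\Rightarrow(iv)$, given a complex vector bundle $\xi$ apply $(iii)$ to $x=[\xi]$, obtaining $[\xi]=N+\sum_{\underline{v}}c_{\underline{v}}[\underline{\theta}^{\underline{v}}]$ with all $c_{\underline{v}}\geq0$ and, by comparing ranks, $N=\rk\xi-\sum_{\underline{v}}c_{\underline{v}}$; if $N\geq0$ then $\xi$ is stably isomorphic to $\underline{\C}^{N}\oplus\bigoplus_{\underline{v}}(\underline{\theta}^{\underline{v}})^{\oplus c_{\underline{v}}}$, and if $N<0$ then $\xi\oplus\underline{\C}^{-N}$ is stably isomorphic to $\bigoplus_{\underline{v}}(\underline{\theta}^{\underline{v}})^{\oplus c_{\underline{v}}}$, so either way $\xi$ stably totally splits, which is $(iv)$. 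Finally $(iv)\Rightarrow(i)$ is immediate, since the complex normal bundle of $M^{2n}$ is a complex vector bundle over $M^{2n}$.

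I expect the only real obstacle to be the bookkeeping in $(ii)\Rightarrow(iii)$ together with $(iii)\Rightarrow(iv)$: converting the closure properties of Proposition \ref{pr:vx} and the multiplicative generation of $K(M^{2n})$ by the classes $[\theta_i]$ (Theorem \ref{thm:kdesc}) into a representation with honestly non-negative coefficients forces one to permit a negative constant $N$ in $(iii)$, and this in turn forces one to stabilise $\xi$ by a trivial summand in $(iii)\Rightarrow(iv)$ before the splitting becomes an actual isomorphism of bundles rather than a mere identity in $K$-theory. Every other implication is formal.
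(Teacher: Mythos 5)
Your proof is correct and follows essentially the same route as the paper: the same cycle $(i)\Rightarrow(ii)\Rightarrow(iii)\Rightarrow(iv)\Rightarrow(i)$, with $(i)\Rightarrow(ii)$ resting on \eqref{eq:tangent} and $(ii)\Rightarrow(iii)$ on Theorem \ref{thm:kdesc} combined with Proposition \ref{pr:vx}, exactly as in the paper's argument. Recasting the steps as membership statements in the semigroup $C(M^{2n})$ and writing out the two ``trivial'' implications are only cosmetic differences; your passing assertion that the line summands of a totally split stable inverse have classes of the monomial form $[\underline{\theta}^{\underline{v}}]$ is precisely the paper's Corollary \ref{cor:ccone} (classification of line bundles by $c_1$ together with Theorem \ref{thm:coh}), so it is not a gap relative to the paper's own standard.
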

\begin{proof}
$(i)\Rightarrow(ii)$. By the condition, there exists a totally split complex vector bundle\\
$\alpha=\bigoplus_{i=1}^{b}\alpha_{i}\to M^{2n}$ and an integer $N\in\N$, s.t.
\[
\theta\oplus\alpha\simeq \underline{\C}^{N}.
\]
The claim now follows from the formula \eqref{eq:tangent}. 

$(ii)\Rightarrow(iii)$. There exist complex vector bundles $\xi, \eta\to M^{2n}$, s.t. $x=[\xi]-[\eta]$ (see \cite{at-67}). Next, for the stably inverse vector bundle $\zeta$ to $\eta$, i.e. $[\zeta]+[\eta]=k,\ k\in \Z$, one has
\[
x=[\xi]+[\zeta]-k.
\]
Hence, w.l.g. we may assume that $x=[\xi]$ is a class of a complex linear bundle. By Theorem \ref{thm:kdesc}, the equality \eqref{eq:dec1} holds with ambient coefficients $c_{\underline{v}}$ and $N=0$ in the ring $K(M^{2n})$. It remains to eliminate the negative coefficients in this identity.
Due to Proposition \ref{pr:vx}, for any $\underline{v}\in\Z^{m}$, the linear vector bundle $\underline{\theta}^{\underline{v}}$ has the totally split stably inverse. Hence, for any $\underline{v}\in\Z^{m}$ s.t. $c_{\underline{v}}<0$ there exists $N_{\underline{v}}\in\Z$ with $N_{\underline{v}}+c_{\underline{v}}[\underline{\theta}^{\underline{v}}]$ represented by a class of some totally split vector bundle. Now the desired statement follows.


$(iii)\Rightarrow(iv)$ and $(iv)\Rightarrow(i)$ follow trivially.%
%
%
\end{proof}


The property $(iv)$ from Theorem \ref{thm:equiv} is homotopy invariant for good topological spaces.

\begin{pr}
Let $X,Y$ be homotopy equivalent finite CW-complexes. Suppose that every complex vector bundle over $X$ is stably totally split. Then every complex vector bundle over $Y$ is also stably totally split.
\end{pr}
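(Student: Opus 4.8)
The plan is to exploit the homotopy invariance of the complex vector bundle pullback: over a finite CW-complex (indeed over any paracompact space), bundles pulled back along homotopic maps are isomorphic, and the pullback respects the Whitney sum, carries trivial bundles to trivial bundles and line bundles to line bundles.

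First I would fix a homotopy equivalence $f\colon Y\to X$ together with a homotopy inverse $g\colon X\to Y$, so that $g\circ f\simeq\mathrm{id}_{Y}$. Let $\xi\to Y$ be an arbitrary complex vector bundle. Applying the hypothesis to $g^{*}\xi\to X$, there are an integer $k\geq 0$ and complex line bundles $\ell_{1},\dots,\ell_{r}\to X$ with $g^{*}\xi\oplus\underline{\C}^{k}\simeq\bigoplus_{j=1}^{r}\ell_{j}$. Pulling this isomorphism back along $f$, and using that $f^{*}$ commutes with $\oplus$ and preserves trivial bundles, I get $(g\circ f)^{*}\xi\oplus\underline{\C}^{k}\simeq\bigoplus_{j=1}^{r}f^{*}\ell_{j}$, where each $f^{*}\ell_{j}\to Y$ is again a complex line bundle. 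Since $g\circ f\simeq\mathrm{id}_{Y}$, homotopy invariance of the pullback gives $(g\circ f)^{*}\xi\simeq\xi$, whence $\xi\oplus\underline{\C}^{k}\simeq\bigoplus_{j=1}^{r}f^{*}\ell_{j}$; that is, $\xi$ stably totally splits. As $\xi$ was arbitrary, every complex vector bundle over $Y$ stably totally splits.

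I do not expect any substantial obstacle here: the one nonformal ingredient is the homotopy invariance of the bundle pullback, which is classical for finite CW-complexes. One could alternatively route the last step through Proposition \ref{pr:vx} applied to the map $f$, but the direct pullback computation above is the shortest path.
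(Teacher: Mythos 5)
Your argument is correct and is essentially the paper's own proof, up to swapping the names of the homotopy equivalence and its inverse: apply the hypothesis to the pullback of $\xi$ to $X$, pull the resulting split stabilisation back to $Y$, and use homotopy invariance of pullbacks (equivalently, of classifying maps) to identify $(g\circ f)^{*}\xi$ with $\xi$. No gaps.
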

\begin{proof}
By the definition, there exist continious maps $f:X\to Y,\ g: Y\to X$ s.t.\\
$g\circ f\simeq_{hot} Id_{X}, f\circ g\simeq_{hot} Id_{Y}$, where $\simeq_{hot}$ denotes homotopy equivalence between maps. Let $\xi\to Y$ be a complex vector bundle over $Y$. By the hypothesis, there exist a totally split complex vector bundle $\alpha=\bigoplus_{i=1}^{k} \alpha_{i} \to X$ s.t.
\[
f^*(\xi)\oplus\underline{\C}^N=\alpha
\]
for some $N\in \N$. Then one has:
\[
g^*(f^*(\xi)\oplus g^*(\underline{\C}^{N}))=(f\circ g)^*(\xi)\oplus\underline{\C}^{N}=g^*(\alpha).
\]
The vector bundle $g^*(\alpha)=\bigoplus_{i=1}^{k} g^*(\alpha_i)$ is totally split. The vector bundles $(fg)^*(\xi)$ and $\xi$ are topologically equivalent, because they have the homotopy equivalent classifying maps. Hence,
\[
\xi\oplus\underline{\C}^N=g^*(\alpha),
\]
and $\xi$ is stably totally split vector bundle. Q.E.D.
\end{proof}


\subsection{Convex cones and operations on them}\label{ssec:conepre}

Recall some standard definitions of convex geometry. From now and on in this paper we consider convex cones only in finite-dimensional real linear spaces.

\begin{definition}
A convex cone $\sigma$ in $\R^n$ (with apex at the origin $0\in\R^n$) is any set of the form $\conconv X:=\lb t_1 v_1+\dots+t_k v_k|\ k\in\N,\ t_1,\dots,t_k\in\R_{\geq 0};\ v_1,\dots,v_k\in X\rb$, where $X\subset\R^n$. The maximal w.r.t. inclusion linear subset of the cone $\sigma$ is called a lineality subspace $\lin \sigma\subset\R^n$ of $\sigma$. The cone $\sigma$ is closed/proper, if it is closed/proper as a subset of $\R^n$, resp., and salient, if $\lin\sigma=0$. The dimension $\dim \sigma$ of the cone $\sigma$ is the dimension of its linear hull $\dim\R\la\sigma\ra$. In case of $\dim\sigma=n$, the cone $\sigma$ is called full-dimensional.
\end{definition}

The following Proposition is clear.
\begin{pr}\label{pr:salcon}
For any convex cone $\sigma\subset\R^n$ there exists a salient cone $\sigma'\subset\R^n$ s.t. one has $\sigma=\sigma'+\lin \sigma$.
\end{pr}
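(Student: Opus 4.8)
The plan is to realise $\sigma'$ as the slice of $\sigma$ by a linear complement of the lineality subspace. Write $L:=\lin\sigma$ for the (unique) maximal linear subspace contained in $\sigma$; recall that this is genuinely a subspace because the sum of two linear subspaces contained in a convex cone is again a linear subspace contained in it, the cone being closed under addition. First I would fix, by elementary linear algebra, a subspace $W\subset\R^n$ with $\R^n=L\oplus W$, and set $\sigma':=\sigma\cap W$. Being an intersection of two convex cones, $\sigma'$ is a convex cone.

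Next I would verify $\sigma=\sigma'+L$. The inclusion $\sigma'+L\subseteq\sigma$ is immediate: $L\subseteq\sigma$, and a convex cone is closed under addition (if $a,b\in\sigma$ then $a+b=2\bigl(\tfrac12 a+\tfrac12 b\bigr)\in\sigma$ by convexity and homogeneity), so $\sigma+L\subseteq\sigma$. For the reverse inclusion, take $v\in\sigma$ and decompose $v=v_L+v_W$ with $v_L\in L$, $v_W\in W$. Since $L$ is a subspace inside $\sigma$, we have $-v_L\in\sigma$, hence $v_W=v+(-v_L)\in\sigma$, so $v_W\in\sigma\cap W=\sigma'$ and $v=v_W+v_L\in\sigma'+L$.

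Finally I would check that $\sigma'$ is salient, i.e. $\lin\sigma'=0$. Suppose $u\in\lin\sigma'$ is nonzero; then $\pm u\in\sigma'\subseteq\sigma$, and $L+\R u$ is a linear subspace contained in $\sigma$: for $t\in L$ and $s\in\R$, $su=|s|(\pm u)\in\sigma$, whence $t+su\in\sigma$. Since $u\in W\setminus\{0\}$ we have $u\notin L$, so $L+\R u\supsetneq L$, contradicting the maximality of $L=\lin\sigma$. Thus $\sigma'$ is salient, which finishes the argument. I do not expect any genuine obstacle here; the only points to keep straight are that a convex cone is closed under addition (used repeatedly) and that $\lin\sigma$ is the maximal linear subspace of $\sigma$, which is exactly what makes the saliency of the slice come out for free.
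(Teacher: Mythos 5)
Your proof is correct, and it is the standard argument the paper has in mind: the paper states this proposition without proof (``The following Proposition is clear.''), and your construction $\sigma'=\sigma\cap W$ for a linear complement $W$ of $\lin\sigma$, together with the checks that $\sigma=\sigma'+\lin\sigma$ and that maximality of $\lin\sigma$ forces $\lin\sigma'=0$, is exactly the intended justification.
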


Fix any basis $e_1,\dots,e_n\in\R^n$.

\begin{lm}\label{lm:supp}
Let $\sigma\subset\R^n$ be a full-dimensional salient cone. Then $\sigma$ has a supporting hyperplane having the normal with only rational coordinates in the basis $e_1,\dots,e_n$.
\end{lm}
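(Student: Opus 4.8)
The plan is to deduce the Lemma from convex duality together with the density of the rationals. Identify $\R^n$ with its dual via the inner product in which $e_1,\dots,e_n$ is orthonormal; then a hyperplane through the origin has a normal with rational coordinates in the basis $e_1,\dots,e_n$ exactly when that normal is a rational vector. Introduce the dual cone
\[
\sigma^{\vee}:=\{\,y\in\R^n\mid \langle y,x\rangle\geq 0\ \text{for all}\ x\in\sigma\,\},
\]
a closed convex cone containing $0$, which depends only on $\overline{\sigma}$. A preliminary observation is that any supporting hyperplane of a cone with apex $0$ passes through the origin: if $\langle y,\cdot\rangle\leq c$ on $\sigma$ with the value $c$ attained, then, since $tx\in\sigma$ for all $x\in\sigma$ and $t\geq 0$, we must have $\langle y,x\rangle\leq 0$ throughout $\sigma$, so $c\leq 0$; as $0\in\sigma$ gives $c\geq 0$, the bound is $c=0$. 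Hence producing a supporting hyperplane of $\sigma$ with rational normal is the same as producing a nonzero rational vector in $\sigma^{\vee}$, and this is what I aim to do.

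Next I claim $\sigma^{\vee}$ is full-dimensional (assuming $\sigma$ closed; see the caveat below). By the bipolar theorem for closed convex cones, $(\sigma^{\vee})^{\vee}=\sigma$. If $\sigma^{\vee}$ were not full-dimensional it would lie in some hyperplane $v^{\perp}$ with $v\neq 0$; then $\langle y,\pm v\rangle=0\geq 0$ for every $y\in\sigma^{\vee}$, so $\pm v\in(\sigma^{\vee})^{\vee}=\sigma$, and since $\sigma$ is a convex cone this forces $\R v\subseteq\sigma$, contradicting $\lin\sigma=0$. Thus $\dim\sigma^{\vee}=n$, so $\inter\sigma^{\vee}\neq\varnothing$; being open, $\inter\sigma^{\vee}$ meets the dense set $\Q^n$, hence contains a rational vector $y_0$. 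Moreover $y_0\neq 0$: since $\sigma$ is full-dimensional we have $\sigma\neq\{0\}$, hence $\sigma^{\vee}\neq\R^n$, so $0\notin\inter\sigma^{\vee}$ (a cone with $0$ in its interior is all of $\R^n$). Finally $H:=y_0^{\perp}$ is a genuine supporting hyperplane of $\sigma$: it passes through $0\in\sigma$, it contains $\sigma$ inside the closed half-space $\{\langle y_0,\cdot\rangle\geq 0\}$, and it does not contain the full-dimensional set $\sigma$; and its normal $y_0$ is rational, as required.

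The only delicate point — the main obstacle — is the reduction to the closed case. One has $\sigma^{\vee}=(\overline{\sigma})^{\vee}$, but $\overline{\sigma}$ need not be salient even when $\sigma$ is, so the bipolar step cannot be transferred to $\overline{\sigma}$ in general; in fact the statement genuinely fails for non-closed cones (e.g. in $\R^2$ the cone $\{x_1+\sqrt2\,x_2>0\}\cup\R_{\geq 0}\cdot(-\sqrt2,1)$ is convex, full-dimensional and salient, yet its unique supporting hyperplane $\{x_1+\sqrt2\,x_2=0\}$ has an irrational normal). The Lemma is therefore to be read with $\sigma$ closed — which is the case for every cone to which it is applied below — and under that hypothesis the argument above is complete, the remaining ingredients being the standard facts that a full-dimensional convex set has interior points, the bipolar theorem, and the density of $\Q^n$ in $\R^n$.
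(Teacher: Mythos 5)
Your proof is essentially the paper's: the paper's own argument likewise identifies supporting-hyperplane normals with nonzero elements of the dual cone $\sigma^*$, asserts that saliency gives $\inter(\sigma^*)\neq\varnothing$, and takes any rational point of that open set, so your bipolar-theorem computation is just the standard justification of the middle step, and the density argument is identical. Your one substantive addition is the closedness caveat: the paper states the lemma (and the implication ``salient $\Rightarrow \inter(\sigma^*)\neq\varnothing$'') for an arbitrary full-dimensional salient cone, and your example of the open half-plane $\{x_1+\sqrt2 x_2>0\}$ together with the boundary ray $\R_{\geq 0}(-\sqrt2,1)$ correctly shows that the literal statement fails when $\overline{\sigma}$ is no longer salient; this is a legitimate correction, and it is harmless for the paper since the lemma is ultimately applied (through Proposition \ref{pr:supplin}) only to the closed cone $S(M^{2n})$, whose salient summand in Proposition \ref{pr:salcon} can be taken closed, e.g. $\sigma\cap(\lin\sigma)^{\perp}$.
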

\begin{proof}
The normal of any supporting hyperplane to $\sigma$ is an element of the dual cone $\sigma^*\subset (\R^n)^*$. The cone $\sigma$ is salient, hence, $\inter (\sigma^*)\neq\varnothing$. The desired normal is any element of the interior $\inter (\sigma^*)$ having all rational coordinates in the dual basis $e^1,\dots,e^n\in(\R^n)^*$.
\end{proof}

\begin{definition}
A linear subspace $U\subset\R^n$ is called rational w.r.t. the basis $e_1,\dots,e_n\in\R^n$, if $U$ is generated by vectors $u_1,\dots,u_k$ of $U$ having only rational coordinates in the basis $e_1,\dots,e_n$.
\end{definition}

\begin{pr}\label{pr:supplin}
Let $\sigma\subseteq \R^n$ be a full-dimensional proper convex cone with rational lineality subspace $\lin \sigma$ w.r.t. the basis $e_1,\dots,e_n\in\R^n$. Then $\sigma$ has a supporting hyperplane with the normal having only rational coordinates in the given basis of $\R^n$.
\end{pr}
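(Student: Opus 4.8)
The plan is to reduce to the salient full-dimensional case, which is precisely Lemma~\ref{lm:supp}, by projecting away the lineality subspace along a rational complement. Write $L:=\lin\sigma$. Since $\sigma$ is proper we have $L\neq\R^n$, so $0\le\dim L<n$. Because $L$ is rational with respect to $e_1,\dots,e_n$, it admits a rational complement $U\subset\R^n$ (one may take $U=L^{\perp}$ for the standard form $\sum x_iy_i$, which is rational because that form is anisotropic over $\Q$, so $\perp$ commutes with the $\Q\hookrightarrow\R$ base change); let $p\colon\R^n\to U$ denote the projection along $L$, whose matrix in the basis $e_1,\dots,e_n$ has rational entries. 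I would then fix once and for all a rational basis of $U$, identifying $U\cong\R^{\,n-\dim L}$.

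The first step is to check that $\tau:=p(\sigma)$ is a full-dimensional salient cone in $U$. It is a convex cone as the linear image of one, and its linear hull is $p(\R\langle\sigma\rangle)=p(\R^n)=U$, so it is full-dimensional. For salience, suppose $\bar v\in U\setminus\{0\}$ with $\pm\bar v\in\tau$; lifting, there are $v,w\in\sigma$ and $\ell_1,\ell_2\in L$ with $v=\bar v+\ell_1$ and $w=-\bar v+\ell_2$. Since $L\subseteq\sigma$ we have $-\ell_1,-\ell_2\in\sigma$, and a convex cone is closed under addition, so $\bar v=v+(-\ell_1)\in\sigma$ and likewise $-\bar v\in\sigma$; hence $\R\bar v\subseteq\sigma$, i.e. $\bar v\in L\cap U=\{0\}$, a contradiction.

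The second step applies Lemma~\ref{lm:supp} to $\tau\subset U$ (with the chosen rational basis of $U$): it yields a supporting hyperplane of $\tau$ whose normal $\bar\nu\in U^{*}$ has rational coordinates, and after normalising signs we may assume $\langle\bar\nu,\bar v\rangle\ge0$ for every $\bar v\in\tau$, with $\bar\nu\neq0$. The final step is to pull this back: set $\nu:=\bar\nu\circ p\in(\R^n)^{*}$. Since $p$ has a rational matrix and $\bar\nu$ is rational, $\nu$ has rational coordinates in the dual basis $e^1,\dots,e^n$; $\nu\neq0$ because $p$ is onto $U$; and $\langle\nu,v\rangle=\langle\bar\nu,p(v)\rangle\ge0$ for all $v\in\sigma$. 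As $0\in\sigma$ and $\sigma$ is full-dimensional, the hyperplane $\{x\in\R^n:\langle\nu,x\rangle=0\}$ is then a supporting hyperplane of $\sigma$ with the required rational normal. (Alternatively one could first write $\sigma=\sigma'+L$ via Proposition~\ref{pr:salcon} and argue on $\sigma'$, but projecting onto $U$ avoids having to track a complement of $\R\langle\sigma'\rangle$.)

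The one point that genuinely needs care is the salience of $p(\sigma)$: the argument uses the full strength of $L\subseteq\sigma$, namely that $\sigma$ actually contains the linear subspace $L$ and not merely that $L$ is its maximal linear subset, to rule out the projection creating new lines. Everything else — existence of a rational complement, rationality of the projection matrix, and the identification of ``supporting hyperplane with rational normal'' with ``nonzero rational vector in the dual cone'' for a cone with apex at the origin — is routine.
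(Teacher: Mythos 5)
Your proof is correct and is essentially the paper's argument made explicit: the paper's one-line proof (combine Proposition~\ref{pr:salcon} with Lemma~\ref{lm:supp}) amounts to exactly your reduction of killing the lineality space $\lin\sigma$ and applying Lemma~\ref{lm:supp} to the resulting full-dimensional salient cone, with rationality of $\lin\sigma$ providing the rational complement/projection needed to pull the rational normal back to $(\R^n)^*$. Your verification that the projected cone is salient and that the projection matrix is rational just fills in the details the paper leaves implicit.
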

\begin{proof}
Follows immediately from Propositions \ref{pr:salcon} and \ref{lm:supp}.
\end{proof}

The next step is to set up two different definitions of products for convex cones. The first one is the tensor product of cones (see \cite{bo-co-gu-13}), and the second corresponds to the cartesian product of manifolds (see Corollary \ref{cor:cart}).

\begin{definition}\label{def:cone}
Let $\sigma\subseteq U,\tau\subseteq V$ be convex cones in $\R$-linear spaces $U,V$, respectively. The convex cone
\[
\sigma\otimes\tau:=\conconv\lb u\otimes v|\ u\in U,v\in V\rb\subseteq U\otimes V,
\]
is called a \emph{tensor product of cones} $\sigma, \tau$. For any $u\in U,v\in V$ define\\
$u*v:=u+v+u\otimes v$. Also define the product of convex cones by the formula
\[
\sigma*\tau:=\conconv\lb u*v|\ u\in U,v\in V\rb\subseteq U\oplus V\oplus U\otimes V.
\]
\end{definition}
By the definition, $(\sigma*\tau,1)=(\sigma,1)\otimes(\tau,1).$ Observe that the $*$-product is commutative and associative but not linear by either of the factors, generally speaking.

\begin{lm}\label{lm:tensspace}
Let $\sigma=\lin \sigma + \sigma'\subseteq U$, where $\sigma,\sigma'\subseteq U,\ \tau\subseteq V$ are convex cones. Then one has $\sigma\otimes\tau=\lin\sigma \otimes\R\la \tau\ra+\sigma'\otimes\tau$.
\end{lm}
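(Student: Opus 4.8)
The plan is to prove the two inclusions separately; almost everything is bookkeeping with generators of conical hulls, and the one genuinely delicate point is a sign‑absorption argument that exploits the linearity of $\lin\sigma$.

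First I would record the elementary facts. The hypothesis $\sigma=\lin\sigma+\sigma'$ is the decomposition furnished by Proposition \ref{pr:salcon} (though salience of $\sigma'$ will play no role). Both sides of the asserted identity are convex cones: by Definition \ref{def:cone} a set $\conconv X$ is closed under addition and nonnegative scaling, so $\sigma\otimes\tau$, $\sigma'\otimes\tau$ and $\lin\sigma\otimes\R\la\tau\ra$ are convex cones, and a Minkowski sum of convex cones is again one. Moreover $\lin\sigma\subseteq\sigma$ by the definition of the lineality subspace, and $\sigma'\subseteq\sigma$ since $0\in\lin\sigma$ forces $\sigma'=\{0\}+\sigma'\subseteq\lin\sigma+\sigma'=\sigma$. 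In particular $\sigma'\otimes\tau\subseteq\sigma\otimes\tau$, because every generator $u'\otimes v$ with $u'\in\sigma'$, $v\in\tau$ of the former is a generator of the latter.

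For the inclusion $\sigma\otimes\tau\subseteq\lin\sigma\otimes\R\la\tau\ra+\sigma'\otimes\tau$, since the right‑hand side is a convex cone it is enough to check it contains every generator $u\otimes v$ with $u\in\sigma$, $v\in\tau$. I would write $u=\ell+u'$ with $\ell\in\lin\sigma$, $u'\in\sigma'$, so that $u\otimes v=\ell\otimes v+u'\otimes v$, where the first term lies in $\lin\sigma\otimes\R\la\tau\ra$ (as $v\in\tau\subseteq\R\la\tau\ra$) and the second in $\sigma'\otimes\tau$. For the reverse inclusion, again because $\sigma\otimes\tau$ is a convex cone it suffices to show each summand cone on the right is contained in $\sigma\otimes\tau$; the inclusion $\sigma'\otimes\tau\subseteq\sigma\otimes\tau$ was noted above, and for $\lin\sigma\otimes\R\la\tau\ra$ I would treat a generator $\ell\otimes w$ with $\ell\in\lin\sigma$, $w\in\R\la\tau\ra$: writing $w=\sum_i t_i v_i$ with $t_i\in\R$, $v_i\in\tau$, one has $\ell\otimes w=\sum_i |t_i|\bigl((\sgn(t_i)\ell)\otimes v_i\bigr)$, and each $\sgn(t_i)\ell\in\lin\sigma\subseteq\sigma$.

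The main (and essentially only) obstacle is this last step: in the right‑hand side of the identity the second tensor factor is the \emph{full} linear span $\R\la\tau\ra$ rather than $\tau$, so one must genuinely use $\lin\sigma=-\lin\sigma$ to absorb the signs of the coefficients $t_i$ and keep $\lin\sigma\otimes\R\la\tau\ra$ inside $\sigma\otimes\tau$. (Equivalently, one may first observe that $\lin\sigma\otimes\R\la\tau\ra$ coincides with the linear subspace $\lin\sigma\otimes_{\R}\R\la\tau\ra$ of $U\otimes V$, but that identification is not actually required for the argument above.)
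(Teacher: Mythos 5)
Your proof is correct and follows essentially the same route as the paper: decompose $u=\ell+u'$ to get $\sigma\otimes\tau=(\lin\sigma)\otimes\tau+\sigma'\otimes\tau$, and then use the linearity of $\lin\sigma$ (the sign-absorption $\sgn(t_i)\ell\in\lin\sigma$, which is the paper's identity $(\lin\sigma)\otimes\tau=(-\lin\sigma)\otimes\tau=(\lin\sigma)\otimes(-\tau)$ together with $\R\la\tau\ra=\tau-\tau$) to replace $\tau$ by its linear span in the first summand. No gaps.
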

\begin{proof}
An equality $\sigma\otimes\tau= (\lin \sigma)\otimes\tau+\sigma'\otimes\tau$ clearly takes place. It remains to show that\\
$(\lin \sigma)\otimes\tau=\lin \sigma\otimes\R\la\tau\ra$. The inclusion $(\lin \sigma)\otimes\tau\subseteq (\lin \sigma)\otimes\R\la\tau\ra$ clearly holds. The inverse inclusion $(\lin \sigma)\otimes\tau\supseteq (\lin \sigma)\otimes\R\la\tau\ra$ follows from the identities $(\lin \sigma)\otimes\tau= (-\lin \sigma)\otimes\tau=(\lin \sigma)\otimes(-\tau)$, $\R\la\tau\ra=\tau-\tau$.
\end{proof}

\begin{pr}\label{pr:tenscone}
For any convex cones $\sigma\subset U,\ \tau\subset V$ one has
\[
\lin (\sigma\otimes\tau)=(\lin \sigma)\otimes\R\la\tau\ra+\R\la\sigma\ra\otimes\lin\tau.
\]
In particular, the tensor product of two salient convex cones is salient.
\end{pr}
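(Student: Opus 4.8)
The plan is to compute $\lin(\sigma\otimes\tau)$ by sandwiching it between the asserted subspace and itself, using the decomposition from Proposition~\ref{pr:salcon} together with Lemma~\ref{lm:tensspace}. Write $\sigma=\lin\sigma+\sigma'$ and $\tau=\lin\tau+\tau'$ for salient cones $\sigma'\subseteq U$, $\tau'\subseteq V$ given by Proposition~\ref{pr:salcon}. Applying Lemma~\ref{lm:tensspace} twice (once in each factor; recall the tensor product of cones is symmetric in the two arguments by Definition~\ref{def:cone}) yields
\[
\sigma\otimes\tau=(\lin\sigma)\otimes\R\la\tau\ra+\R\la\sigma\ra\otimes(\lin\tau)+\sigma'\otimes\tau'.
\]
Denote $W:=(\lin\sigma)\otimes\R\la\tau\ra+\R\la\sigma\ra\otimes(\lin\tau)$; this is a linear subspace of $U\otimes V$, and it is clearly contained in $\lin(\sigma\otimes\tau)$ since $W\subseteq\sigma\otimes\tau$ and $-W=W$. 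So it remains to prove the reverse inclusion $\lin(\sigma\otimes\tau)\subseteq W$.

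For this, the key step is to exhibit a linear functional on $U\otimes V$ that is nonnegative on $\sigma\otimes\tau$ and whose kernel is exactly $W$; equivalently, I would show that the salient cone $\sigma'\otimes\tau'$, viewed inside the quotient space $(U\otimes V)/W$, is itself salient, which forces $\lin(\sigma\otimes\tau)\subseteq W$. Here one uses the last sentence of the proposition as the engine: if $\sigma'$ and $\tau'$ are salient, pick supporting functionals $f\in\R\la\sigma\ra^*$ with $f|_{\sigma'}\ge 0$ and $f=0$ only on $\lin\sigma\cap\R\la\sigma\ra=0$ restricted appropriately, and similarly $g$ for $\tau'$; then $f\otimes g$ is nonnegative on $\sigma'\otimes\tau'$ (every generator $u\otimes v$ with $u\in\sigma'$, $v\in\tau'$ has $(f\otimes g)(u\otimes v)=f(u)g(v)\ge 0$), and I would argue its zero locus meets $\sigma'\otimes\tau'$ only in $0$. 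Extending $f,g$ by zero on the lineality directions, $f\otimes g$ descends to $(U\otimes V)/W$ and certifies salience there. Strictly this gives the ``in particular'' clause; the general statement then follows by combining it with the displayed decomposition of $\sigma\otimes\tau$ and the observation that $\lin$ of a sum of a linear subspace and a salient cone is that linear subspace.

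The main obstacle I anticipate is the claim that $f\otimes g$ vanishes on $\sigma'\otimes\tau'$ only at the origin. A point of $\sigma'\otimes\tau'$ is a finite sum $\sum_i u_i\otimes v_i$ with $u_i\in\sigma'$, $v_i\in\tau'$, and $(f\otimes g)$ of it is $\sum_i f(u_i)g(v_i)$, a sum of nonnegative terms; its vanishing forces $f(u_i)g(v_i)=0$ for each $i$, but that does not immediately make the element zero. The fix is to choose $f$ in the \emph{interior} of $(\sigma')^*$ and $g$ in the interior of $(\tau')^*$ — possible precisely because $\sigma',\tau'$ are salient, as in the proof of Lemma~\ref{lm:supp} — so that $f(u)>0$ for every nonzero $u\in\sigma'$ and $g(v)>0$ for every nonzero $v\in\tau'$. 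Then each summand with $u_i\ne 0$ and $v_i\ne 0$ contributes strictly positively, while summands with $u_i=0$ or $v_i=0$ are themselves $0$, so vanishing of the total forces the element to be $0$. With this strengthening the argument closes cleanly.
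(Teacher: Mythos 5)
Your proposal is correct and takes essentially the same route as the paper: the inclusion $\supseteq$ comes from Lemma \ref{lm:tensspace}, and the inclusion $\subseteq$ is obtained by reducing to the salient case and using the functional $f\otimes g$ (the paper's $H\otimes L$), with your interior-of-the-dual-cone choice simply making explicit the strict positivity the paper asserts as ``clearly''. One small caution: your closing ``observation'' that $\lin$ of a subspace plus a salient cone equals that subspace is false in general unless the cone meets the subspace only at $0$ (e.g.\ $W=\R\la e_1\ra$ and $C=\R_{\geq 0}\la e_1+e_2,\,e_1-e_2\ra$ in $\R^2$ give $W+C$ with larger lineality), so the quotient/functional formulation you gave first — which does certify $W\cap(\sigma'\otimes\tau')=\lb 0\rb$ — is the argument to keep.
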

\begin{proof}
First we check the claim about salient cones. Consider supporting functions $H\in U^*,\ L\in V^*$ of the cones $\sigma,\tau$, resp., such that for any non-zero elements $u\in \sigma,\ v\in\tau$ one has $H(u),L(v)>0$. Define the linear function $H\otimes L\in (U\otimes V)^*$ by the formula $(H\otimes L)(u\otimes v):=H(u)\cdot L(v)$, and extend it further by linearity. Then for any non-zero element $w\in \sigma\otimes\tau$ clearly one has $(H\otimes L)(w)>0$, as required. In the general case, the inclusion $\lin (\sigma\otimes\tau)\supseteq(\lin \sigma)\otimes\R\la\tau\ra+\R\la\sigma\ra\otimes\lin\tau$ follows from Lemma \ref{lm:tensspace}. To prove the inverse inclusion, w.l.g. we may assume that $\sigma,\tau$ are salient. It remains to use the proven salient property of the cone $\sigma\otimes\tau$.
\end{proof}

\begin{cor}\label{cor:ratprod}
Let $\sigma\subseteq U,\ \tau\subseteq V$ be full-dimensional convex cones with rational lineality subspaces $\lin \sigma,\ \lin \tau$ w.r.t. the bases $u_1,\dots,u_k$ and $v_1,\dots,v_l$ of linear spaces $U,V$, respectively. Then the subspace $\lin(\sigma\otimes\tau)\subseteq U\otimes V$ is rational w.r.t. the basis $u_1\otimes v_1,\dots,u_k\otimes v_l$ of the linar space $U\otimes V$.
\end{cor}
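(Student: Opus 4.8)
The plan is to derive this directly from Proposition \ref{pr:tenscone} together with the full-dimensionality hypothesis. First I would invoke Proposition \ref{pr:tenscone} to write
\[
\lin(\sigma\otimes\tau)=(\lin \sigma)\otimes\R\la\tau\ra+\R\la\sigma\ra\otimes\lin\tau.
\]
Since $\sigma\subseteq U$ and $\tau\subseteq V$ are full-dimensional, we have $\R\la\sigma\ra=U$ and $\R\la\tau\ra=V$, so this simplifies to $\lin(\sigma\otimes\tau)=(\lin\sigma)\otimes V+U\otimes(\lin\tau)$. Thus it suffices to check that each of the two summands is rational w.r.t. the basis $\{u_i\otimes v_j\}$ and that a sum of two rational subspaces of $U\otimes V$ is again rational.

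For the first summand, by hypothesis $\lin\sigma$ is spanned by finitely many vectors $w_1,\dots,w_p\in U$ each having rational coordinates in the basis $u_1,\dots,u_k$, and $V$ is (trivially) spanned by $v_1,\dots,v_l$. Hence $(\lin\sigma)\otimes V$ is spanned by the vectors $w_s\otimes v_j$, $1\le s\le p$, $1\le j\le l$. Writing $w_s=\sum_i q_{si}u_i$ with $q_{si}\in\Q$, we get $w_s\otimes v_j=\sum_i q_{si}\,(u_i\otimes v_j)$, which has rational coordinates in the basis $\{u_i\otimes v_j\}$. So $(\lin\sigma)\otimes V$ is rational; the same argument, mutatis mutandis, applies to $U\otimes(\lin\tau)$.

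Finally I would note that if $W_1=\R\la a_1,\dots,a_r\ra$ and $W_2=\R\la b_1,\dots,b_s\ra$ are subspaces of $U\otimes V$ with all $a_i,b_j$ having rational coordinates in $\{u_i\otimes v_j\}$, then $W_1+W_2=\R\la a_1,\dots,a_r,b_1,\dots,b_s\ra$ is generated by rational vectors and hence rational by definition. Combining this with the previous paragraph gives the claim.

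There is essentially no hard step here: the statement is a formal consequence of Proposition \ref{pr:tenscone}, and the only thing to be careful about is the reduction $\R\la\sigma\ra=U$, $\R\la\tau\ra=V$ furnished by full-dimensionality, which is what lets us avoid needing rationality of $\R\la\sigma\ra$ and $\R\la\tau\ra$ themselves (these are the whole spaces, so the issue does not arise).
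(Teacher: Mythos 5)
Your proof is correct and follows exactly the route the paper intends: the corollary is stated as an immediate consequence of Proposition \ref{pr:tenscone}, with full-dimensionality giving $\R\la\sigma\ra=U$, $\R\la\tau\ra=V$, and the rationality of the two summands and of their sum being the routine check you spell out. Nothing to add.
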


We need some facts about cyclic polytopes (see \cite{zieg-95}). Let ${\bf x}_n:\R \to \R^{n}, {\bf x}_n(t):=(t,t^2,\dots,t^{n})$. The image of the real line $\R$ under the map ${\bf x}_n$ is called a moment curve. For any $k>n$ the cyclic polytope $C^n (t_1,\dots,t_k)$ is defined as a convex hull of $k$ distinct points ${\bf x}_n(t_1),\dots,{\bf x}_n(t_k),\ t_1,\dots,t_k\in\R$, of the moment curve.

\begin{thm}\cite{zieg-95}
$(i)$ Cyclic polytope $C^n (t_1,\dots,t_k)$ is a simplicial $n$-polytope;

$(ii)$ $C^n (t_1,\dots,t_k)$ has exactly $k$ vertices;

$(iii)$ The combinatorial type of $C^n (t_1,\dots,t_k)$ does not depend on the choice of $t_1,\dots,t_k$.
\end{thm}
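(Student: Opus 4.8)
The plan is to deduce all three claims from two elementary facts about the moment curve ${\bf x}_n$: the Vandermonde identity, which says that the matrix with rows $(1,s_i,s_i^2,\dots,s_i^n)$ has determinant $\prod_{i<j}(s_j-s_i)$ and is therefore invertible when the $s_i$ are distinct, and the fact that a nonzero real polynomial of degree at most $n$ has at most $n$ real roots. Throughout I take $n\ge 2$, the range in which the notion is of interest. The Vandermonde identity yields the general position property of the curve: any $n+1$ of its points with pairwise distinct parameters are affinely independent. Since $k>n$, this shows at once that $\conv\{{\bf x}_n(t_1),\dots,{\bf x}_n(t_k)\}$ is full-dimensional, i.e. $C^n(t_1,\dots,t_k)$ is an $n$-polytope. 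For $(ii)$, fixing an index $i$ I would use the affine function $f(t)=(t-t_i)^2$, which on the curve equals $t_i^{\,2}+\ell_i({\bf x}_n(t))$ for the linear functional $\ell_i=x_2-2t_i x_1$; since $f\ge 0$ on the curve with equality only at $t=t_i$, the hyperplane $\{\ell_i=-t_i^{\,2}\}$ supports the polytope and meets it exactly in ${\bf x}_n(t_i)$. Hence every ${\bf x}_n(t_i)$ is a vertex, so $C^n(t_1,\dots,t_k)$ has precisely $k$ vertices.

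For the simplicial part of $(i)$ I would argue as follows. A facet $F$ is cut out by a supporting hyperplane $H=\{\ell=c\}$, and $p(t):=\ell({\bf x}_n(t))-c$ is a polynomial of degree at most $n$ which is not identically zero (otherwise the polytope would lie in $H$), whose roots among $t_1,\dots,t_k$ are exactly the parameters of the vertices lying on $F$. Thus $F$ has at most $n$ vertices; since $F$ is $(n-1)$-dimensional it has at least $n$ vertices, hence exactly $n$, so $F$ is a simplex. The same count forces $\deg p=n$, so that $p(t)=c_n\prod_{i\in S}(t-t_i)$ with $c_n\ne 0$, where $S$ is the $n$-element index set of the vertices of $F$.

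This factorisation is the bridge to $(iii)$. Assuming without loss of generality $t_1<\dots<t_k$, I would establish Gale's evenness criterion: an $n$-subset $S\subseteq\{1,\dots,k\}$ indexes a facet of $C^n(t_1,\dots,t_k)$ if and only if for every pair of indices $j<\ell$ outside $S$ the number of elements of $S$ strictly between $j$ and $\ell$ is even. In the direction coming from the previous paragraph, the facet condition forces the numbers $p(t_j)=c_n\prod_{i\in S}(t_j-t_i)$, $j\notin S$, all to have a single sign; since $\sgn\prod_{i\in S}(t_j-t_i)=(-1)^{|\{i\in S:\ t_i>t_j\}|}$, this is exactly the parity statement. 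Conversely, given $S$ satisfying the evenness condition, the polynomial $\gamma\prod_{i\in S}(t-t_i)$ for a suitable sign of $\gamma$ provides a supporting hyperplane through the $n$ affinely independent points $\{{\bf x}_n(t_i):i\in S\}$ with all remaining vertices on one side, so their convex hull is a facet. Since the collection of Gale-even $n$-subsets depends only on $n$, $k$ and the linear order of the parameters, while the vertex set is always the full index set $\{1,\dots,k\}$, the vertex--facet incidences, and hence the entire face lattice, are independent of the choice of $t_1,\dots,t_k$; this is $(iii)$.

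The technical heart is the treatment of $(iii)$: one must be careful that the polynomial cutting out a facet has degree exactly $n$ — which, as noted, follows from combining ``a facet has at least $n$ vertices'' with the bound on real roots — and then carry out the sign-pattern bookkeeping accurately enough to obtain the ``even in every outer gap'' formulation of Gale's criterion. The remaining assertions are immediate from the Vandermonde identity and the root bound.
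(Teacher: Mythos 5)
Your proof is correct: the Vandermonde/general-position argument, the root-counting bound giving simpliciality, and Gale's evenness criterion for the facets together establish (i)--(iii), and the sign bookkeeping is handled accurately. The paper itself gives no proof of this statement --- it is quoted from Ziegler's book --- and your argument is essentially the standard one found there, so there is nothing to reconcile beyond noting that your explicit restriction to $n\ge 2$ matches the implicit assumption in the paper (which treats $C^1_\infty$ separately).
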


For any $k$ let $C^{n}_{k}:=C^n (-1,-1/2,\dots,-1/k,1/k,\dots,1/2,1)$. Also let $C^n_{\infty}$ be the closure of the convex hull of the points $\lb {\bf x}_n(1/k)|\ k\in \Z ,\ k\ne 0\rb\cup\lb {\bf x}_n(0)\rb$ (see Fig. \ref{fig:cyc}).

\begin{cor}\label{cor:vert}
$C^1_{\infty}=[-1,1]$. For $n\geq 2$, the vertices of $C^n_{\infty}$ are $\lb {\bf x}(1/k)|\ k\in \Z\rb\cup\lb {\bf x}(0)\rb$. $C^n_{\infty}=\overline{\bigcup_{k=1}^{\infty} C^{n}_{k}}$ is a compact convex body in $\R^n$.
\end{cor}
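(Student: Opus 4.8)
I would prove the three assertions of the corollary in turn. For $n=1$ the map ${\bf x}_1$ is the identity of $\R$, so the set generating $C^1_\infty$ is $\{1/k\mid k\in\Z\setminus\{0\}\}\cup\{0\}$; it lies in $[-1,1]$ and contains the two endpoints $\pm1$, hence its convex hull is $[-1,1]$, which is already closed, and the first assertion follows.

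Assume now $n\ge 2$ and put $S:=\{{\bf x}_n(1/k)\mid k\in\Z\setminus\{0\}\}\cup\{{\bf x}_n(0)\}$, so that $C^n_\infty=\overline{\conv S}$ by definition. Since $S$ is the image of the compact set $\{1/k\mid k\in\Z\setminus\{0\}\}\cup\{0\}\subset\R$ under the continuous map ${\bf x}_n$, it is compact; by Carath\'eodory's theorem $\conv S$ is then compact, in particular closed, so $C^n_\infty=\conv S$ is a compact convex set. It contains $C^n_k$ for any $k$ with $2k>n$ (all vertices of $C^n_k$ lie in $S$), and such a $C^n_k$ is a full-dimensional $n$-polytope by the cyclic-polytope theorem quoted above; hence $C^n_\infty$ has nonempty interior and is a compact convex body, as claimed.

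For the vertex description I would, for each parameter $s\in\{1/k\mid k\in\Z\setminus\{0\}\}\cup\{0\}\subset[-1,1]$, exhibit a supporting hyperplane of $C^n_\infty$ that meets it only at ${\bf x}_n(s)$. Take the affine functional on $\R^n$ whose value at ${\bf x}_n(t)=(t,t^2,\dots,t^n)$ is $(t-s)^2$; its linear part is $(-2s,1,0,\dots,0)$, which is nonzero since $n\ge 2$. This functional is nonnegative on every generator ${\bf x}_n(t)$ of $C^n_\infty$ and vanishes exactly at $t=s$, so the corresponding hyperplane supports $C^n_\infty$; writing any point of the exposed face as a convex combination of elements of $S$ (legitimate because $C^n_\infty=\conv S$) and using that a nonnegative sum of squares vanishes only when all terms do, we see the face equals $\{{\bf x}_n(s)\}$. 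Thus every point of $S$ is an exposed point, hence a vertex, of $C^n_\infty$. Conversely, any point of $C^n_\infty=\conv S$ not lying in $S$ is a proper convex combination of at least two distinct points of $S$ (a single-point combination would lie in $S$), hence lies in the relative interior of a segment contained in $C^n_\infty$ and is not extreme; this is the relevant special case of Milman's partial converse to Krein--Milman. Consequently the exposed points, extreme points and vertices of $C^n_\infty$ all coincide with $S$.

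Finally, to identify $C^n_\infty$ with $\overline{\bigcup_{k\ge1}C^n_k}$, I would observe that $C^n_k=\conv\{{\bf x}_n(\pm1/j)\mid 1\le j\le k\}$ increases with $k$ and that $\bigcup_{k\ge1}C^n_k=\conv S'$, where $S':=\{{\bf x}_n(1/j)\mid j\in\Z\setminus\{0\}\}$, because every finite subset of $S'$ sits inside some $C^n_k$. As $S'\subseteq S$ the inclusion $\overline{\conv S'}\subseteq\overline{\conv S}=C^n_\infty$ is immediate, while ${\bf x}_n(0)=\lim_{k\to\infty}{\bf x}_n(1/k)$ lies in $\overline{S'}$ and hence in the closed convex set $\overline{\conv S'}$, so the latter contains $\conv(S'\cup\{{\bf x}_n(0)\})=\conv S$ and therefore $C^n_\infty=\overline{\conv S}\subseteq\overline{\conv S'}=\overline{\bigcup_{k\ge1}C^n_k}$; equality follows. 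The only point demanding care is the completeness of the vertex list: each listed point is checked to be a vertex by the elementary hyperplane computation above, but ruling out other vertices is not combinatorial (since $C^n_\infty$ is not a polytope) and relies on the Krein--Milman-type argument just indicated.
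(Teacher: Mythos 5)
Your proof is correct, and it is considerably more detailed than the paper's own argument, which is essentially three observations: $C^n_{\infty}$ is closed by definition, it is bounded because it sits inside the cube $\I^n$ (so it is compact), and the polytopes are nested, $C^{n}_{k}\subset C^{n}_{k+1}$; the vertex description and full-dimensionality are left implicit, resting on the quoted facts about cyclic polytopes (each $C^n_k$ is a simplicial $n$-polytope whose vertices are exactly its $2k$ defining points on the moment curve). Your route differs in several respects and is self-contained: you get compactness of $\conv S$ from Carath\'eodory rather than from $C^n_\infty\subset\I^n$, which has the pleasant by-product that the closure in the definition is superfluous, $C^n_\infty=\conv S$; you prove the vertex claim directly by exhibiting, for each parameter $s$, the exposing affine functional $x_2-2sx_1+s^2$ (whose restriction to the moment curve is $(t-s)^2$), which both yields the stronger statement that all points of $S$ are exposed and makes transparent exactly where the hypothesis $n\geq 2$ enters (the availability of the coordinate $x_2$, consistent with $C^1_\infty=[-1,1]$ having only two vertices); completeness of the vertex list then follows from the elementary Milman-type observation that extreme points of $\conv S$ lie in $S$; and you verify $C^n_\infty=\overline{\bigcup_k C^n_k}$ by an explicit convex-hull computation, noting ${\bf x}_n(0)=\lim_k {\bf x}_n(1/k)$, where the paper only records the nesting $C^n_k\subset C^n_{k+1}$. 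The paper's proof buys brevity by outsourcing to cyclic-polytope combinatorics; yours buys a complete elementary verification, including the parts the paper does not spell out. The only cosmetic point is the indexing: $C^n_k$ is literally defined only when $2k>n$, so the union should be read as starting from such a $k$, which affects nothing since the family is increasing.
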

\begin{proof}
The set $C^n_{\infty}$ is closed. It remains to notice that $C^{n}_{k}\subset C^{n}_{k+1}$ and $C^n_{\infty}\subset \I^n$, where $\I^n=\lb (x_1,\dots,x_n)\in\R^n| -1\leq x_i\leq 1,\  i=1,\dots,n\rb$ is the unitary $n$-hypercube.
\end{proof}
%
%
%
%
%
%
%

\begin{figure}
\centering
\caption{The polytope $C^3_{50}$.\label{fig:cyc}}
\scalebox{1}{
\includegraphics{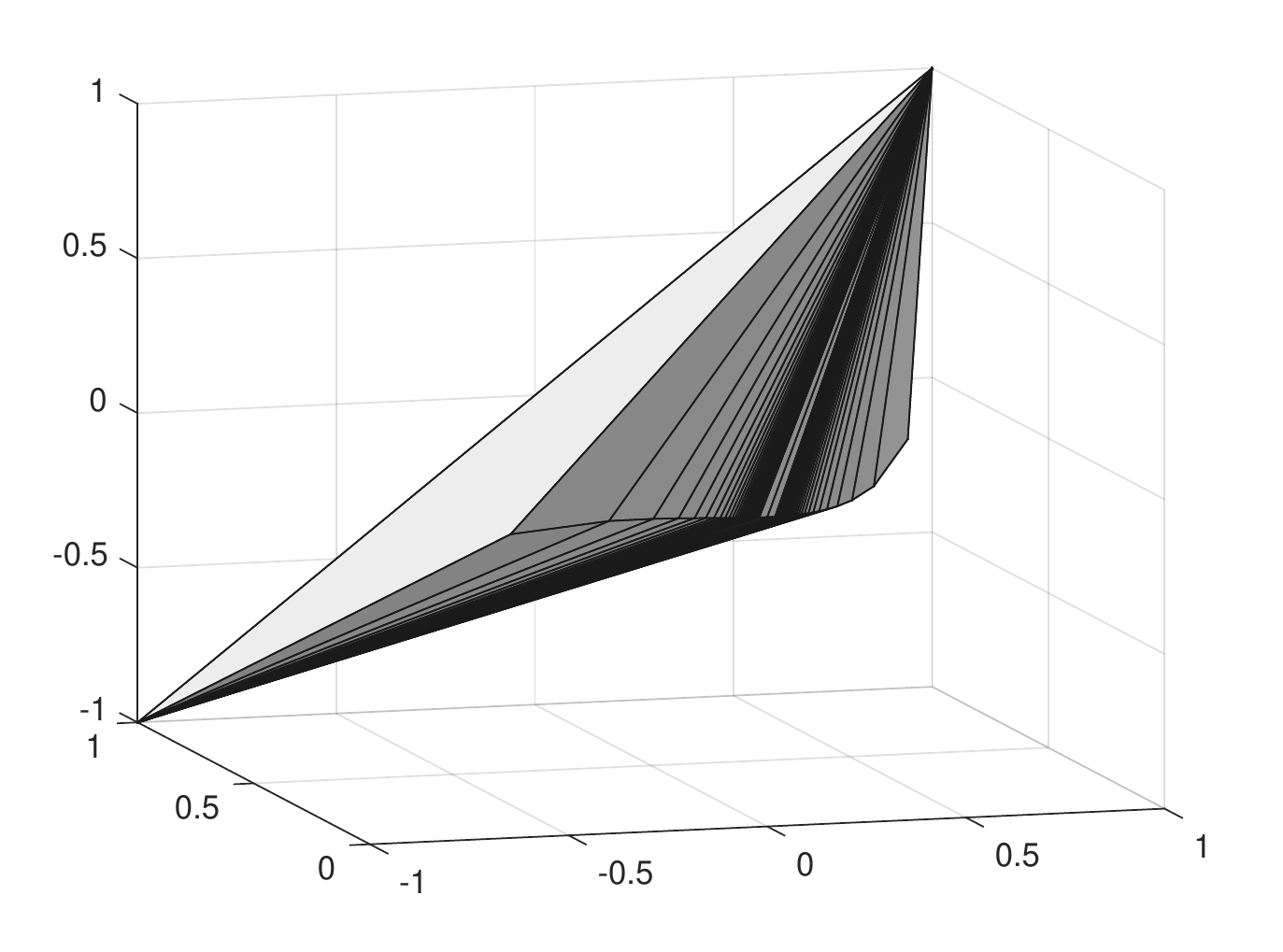}
}
\end{figure}

\subsection{The cone $S(M^{2n})$}\label{ssec:supp}

Denote by $S(M^{2n})\subseteq\widetilde{K}(M^{2n})\otimes\R$ the closure of the convex conical hull of the semigroup\\
$C(M^{2n})\subset\widetilde{K}(M^{2n})$, i.e. $S(M^{2n})=\overline{\conconv C(M^{2n})}$. In other words, $S(M^{2n})$ is the closure of the conical convex hull of elements of the form $[\xi]-1$, where $\xi\to M^{2n}$ is a complex linear bundle. In this Subsection we give a description of $S(M^{2n})$ for an arbitrary quasitoric manifold $M^{2n}$.

%
%
%
%


The natural projection $\widetilde{K}(\C P^{n})\otimes\R\to \widetilde{K}(\C P^{n-1})\otimes\R$ maps the cone $S(\C P^{n})$ onto the cone $S(\C P^{n-1})$. Fix the basis $(x-1),\dots,(x-1)^n$ of the linear space $\widetilde{K}(\C P^n)\otimes\R\simeq \R^n$ and let $e^1,\dots,e^n$ be the corresponding coordinates in $\widetilde{K}(\C P^n)\otimes\R$. Let $A_n:\widetilde{K}(\C P^n)\otimes\R\to \widetilde{K}(\C P^n)\otimes\R$ be the matrix of the linear coordinate change defined uniquely by the conditions
\[
A_n(\binom{k}{1},\binom{k}{2},\dots, \binom{k}{n})= (k,k^2,\dots,k^n),\ k\in\Z,
\]
in the coordinates indicated above. From now and on in this paper, $\binom{a}{b}:=\frac{a(a-1)\cdots(a-b+1)}{b(b-1)\cdots 1}$ for $a\in \Z,\ b\in\Z_{> 0}$; $\binom{a}{0}:=1$ for $a>0$; $\binom{0}{b}:=0$ for $b\geq 0$. The matrix $A_n$ is well-defined for any $n\in\N$, since the polynomials $\binom{k}{1},\binom{k}{2},\dots, \binom{k}{n}$ in $k$ span an $n$-dimensional linear subspace of the finite-dimensional real polynomial algebra $\R[k]$ ($n$ is fixed). 

\begin{pr}\label{pr:scone}
$S(\C P^{2})=\R_{\geq 0}\la (x-1),(\overline{x}-1)\ra$. If $n$ is odd, then\\
$S(\C P^{n})=\R_{\geq 0}\la (x-1)^n,-(x-1)^n,S(\C P^{n-1})\ra$. If $n$ is even, then the image $A_n(S(\C P^{n}))$ is the cone over the compact convex body $P^{n-1}\subset\lb e^n=1\rb$ of dimension $n-1$. Under the natural linear projection $\lb e^n=1\rb\to\R\la (x-1),\dots,(x-1)^n\ra$, the body $P^{n-1}$ maps bijectively onto $C^{n-1}_{\infty}$. In particular, the linear subspace $\lin S(\C P^n)\subseteq\widetilde{K}(\C P^n)\otimes\R$ is rational w.r.t. the above indicated basis of $\widetilde{K}(\C P^n)\otimes\R$.
\end{pr}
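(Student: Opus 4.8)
The plan is to reduce the whole statement to the geometry of the moment curve, by observing that the coordinate change $A_n$ carries the generators $x^k-1$ of $S(\C P^n)$ to the points ${\bf x}_n(k)$, $k\in\Z$, of the moment curve. To set this up: since $\C P^n$ is simply connected with $H^2(\C P^n;\Z)\simeq\Z$, every complex line bundle over $\C P^n$ is $\overline{\eta}^{\otimes k}$ for some $k\in\Z$, and as $x-1$ is nilpotent (Proposition \ref{pr:nilp}) the class $x=[\overline{\eta}]$ is a unit in $K(\C P^n)$ with $[\overline{\eta}^{\otimes k}]=x^k$; hence $C(\C P^n)=\Z_{\geq 0}\la x^k-1\mid k\in\Z\ra$ and $S(\C P^n)=\overline{\conconv\{x^k-1\mid k\in\Z\}}$. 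The binomial theorem together with $(x-1)^{n+1}=0$ gives $x^k-1=\sum_{j=1}^{n}\binom{k}{j}(x-1)^j$ for every $k\in\Z$, so in the fixed coordinates the generator $x^k-1$ has coordinate vector $\big(\binom{k}{1},\dots,\binom{k}{n}\big)$; by the definition of $A_n$ this means $A_n(x^k-1)={\bf x}_n(k)$, and therefore $A_n\big(S(\C P^n)\big)=\overline{\conconv\{{\bf x}_n(k)\mid k\in\Z\}}$.

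For even $n$ (this includes the case $n=2$) one has ${\bf x}_n(0)=0$ and $k^n>0$ for $k\neq 0$, so each generator lies on the ray through its normalisation ${\bf x}_n(k)/k^n\in\{e^n=1\}$; thus $A_n(S(\C P^n))$ is the cone over $P^{n-1}:=\overline{\conv\{{\bf x}_n(k)/k^n\mid k\neq 0\}}$, and this cone is closed because $P^{n-1}$ is bounded (it is contained in $\I^n\cap\{e^n=1\}$, as in the proof of Corollary \ref{cor:vert}) and misses the origin. The affine isomorphism $\{e^n=1\}\to\R^{n-1}$ that deletes the last coordinate and reverses the order of the remaining ones sends ${\bf x}_n(k)/k^n$ to ${\bf x}_{n-1}(1/k)$ and sends $(0,\dots,0,1)$ to ${\bf x}_{n-1}(0)$, hence carries $P^{n-1}$ bijectively onto $C^{n-1}_\infty$ (Corollary \ref{cor:vert}); since $C^{n-1}_\infty$ contains full-dimensional cyclic polytopes it is a full-dimensional compact convex body, so $P^{n-1}$ is a compact convex body of dimension $n-1$. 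A cone over a compact convex body lying in an affine hyperplane that misses the origin is salient, so $\lin S(\C P^n)=A_n^{-1}(0)=0$ is trivially rational. For $n=2$ we have $C^1_\infty=[-1,1]$, so $P^1$ is the segment joining ${\bf x}_2(-1)$ and ${\bf x}_2(1)$, and since $x-1=x^1-1$ and $\overline{x}-1=x^{-1}-1$ (because $\overline{x}=[\eta]=x^{-1}$) we get $A_2(S(\C P^2))=\R_{\geq 0}\la{\bf x}_2(1),{\bf x}_2(-1)\ra=A_2\big(\R_{\geq 0}\la x-1,\overline{x}-1\ra\big)$, i.e.\ $S(\C P^2)=\R_{\geq 0}\la x-1,\overline{x}-1\ra$.

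For odd $n$, let $p_n\colon\widetilde K(\C P^n)\otimes\R\to\widetilde K(\C P^{n-1})\otimes\R$ be the projection induced by $\C P^{n-1}\hookrightarrow\C P^n$ (it kills $(x-1)^n$), with kernel $L=\R\la(x-1)^n\ra$, and let $s$ be the linear section with $s((x-1)^j)=(x-1)^j$ for $j\leq n-1$; the claim becomes $S(\C P^n)=L+s(S(\C P^{n-1}))$, which is exactly $\R_{\geq 0}\la(x-1)^n,-(x-1)^n,S(\C P^{n-1})\ra$. For ``$\subseteq$'', each generator satisfies $x^k-1=\binom{k}{n}(x-1)^n+s(p_n(x^k-1))\in L+s(S(\C P^{n-1}))$, and the latter set is a closed convex cone (a line plus a closed cone in a complementary subspace), so passing to the closed conical hull gives the inclusion. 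For ``$\supseteq$'' I first check $\pm(x-1)^n\in S(\C P^n)$: as $k\to+\infty$ one has $\binom{k}{n}>0$ and $\binom{k}{n}^{-1}(x^k-1)=(x-1)^n+\sum_{j<n}\binom{k}{n}^{-1}\binom{k}{j}(x-1)^j\to(x-1)^n$, while as $k\to-\infty$ one has $\binom{k}{n}<0$ ($n$ odd, a product of $n$ negative factors) and $|\binom{k}{n}|^{-1}(x^k-1)\to-(x-1)^n$; both sequences lie in $\conconv\{x^k-1\}$ and $S(\C P^n)$ is closed, so $L\subseteq S(\C P^n)$. Next, each generator of $s(S(\C P^{n-1}))$ equals $(x^k-1)-\binom{k}{n}(x-1)^n$, which by the previous step is a sum of two elements of the convex cone $S(\C P^n)$; taking the closed conical hull gives $s(S(\C P^{n-1}))\subseteq S(\C P^n)$, hence $L+s(S(\C P^{n-1}))\subseteq S(\C P^n)$. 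Finally $S(\C P^{n-1})$ is salient (by the even case when $n-1\geq 2$, and $S(\C P^0)=0$ when $n=1$), so $\lin S(\C P^n)=L=\R\la(x-1)^n\ra$ is rational in the given basis.

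The step I expect to require the most care is the even case: checking that the normalised generators ${\bf x}_n(k)/k^n$ are carried to exactly the points ${\bf x}_{n-1}(1/k)$ under the correct affine identification, that $C^{n-1}_\infty$ is genuinely $(n-1)$-dimensional (so that $\dim P^{n-1}=n-1$), and -- a point recurring throughout -- that closed conical hulls, closures and the linear maps $A_n$, $s$, $p_n$ interact as used (for instance, that the cone over a compact body avoiding the origin is closed, and that a line plus a closed cone in a complementary subspace is closed). The fact, recalled just before the proposition, that $p_n$ maps $S(\C P^n)$ onto $S(\C P^{n-1})$ is convenient for framing the odd case but, as the argument above shows, is not strictly needed.
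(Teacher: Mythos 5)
Your proof is correct and follows essentially the same route as the paper: expand $x^k-1$ in binomial coefficients, use $A_n$ to identify the generators with moment-curve points, exploit the dominance of $\binom{k}{n}$ as $k\to\pm\infty$ for odd $n$, and normalise by $k^n$ for even $n$ to recognise the cone over $C^{n-1}_\infty$. The only (minor) difference is in the odd case, where the paper runs the same asymptotic argument dually, showing every supporting function kills $(x-1)^n$ and then invoking the surjectivity of the projection $S(\C P^n)\to S(\C P^{n-1})$, whereas you take primal limits of normalised generators and verify both inclusions directly — with somewhat more care about closedness, which is a welcome addition rather than a divergence.
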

\begin{proof}
Due to the isomorphism \eqref{eq:kcp} and Taylor expansion, one has:
\begin{equation}\label{eq:taylor}
x^k-1=\sum_{i=1}^{n}\binom{k}{i}(x-1)^i,\ k\in\Z.
\end{equation}
By the Formula \ref{eq:taylor}, the cone $A_n(S(\C P^n))$ is generated by the vectors $(k,k^2,\dots,k^n)\in\R^n,\ k\in\Z$.

Let $n$ be odd. To prove the statement it is enough to check that any supporting function of the cone $S(\C P^{n})$ vanishes on $(x-1)^n$. Consider the linear function $H:\widetilde{K}(\C P^n)\otimes\R\to\R$ s.t. $H(S(\C P^{n}))\geq 0$. Due to the identity \eqref{eq:taylor}, one has $0\leq\lim_{k\to +\infty}H(x^k-1)= \lim_{k\to +\infty} k^n H((x-1)^n)$. Hence, $H((x-1)^n)\geq 0$. Next, one has $\lim_{k\to -\infty}H(x^k-1)= \lim_{k\to -\infty} k^n H((x-1)^n)$. We conclude that $H((x-1)^n)=0$, as required.

Let $n$ be even. Dividing the generators $(k,k^2,\dots,k^n), k\in\Z$, by $k^n,k\neq 0$, we see that $A_n(S(\C P^{n}))$ is the cone over the convex body $C^{n-1}_{\infty}$ in the respective affine hyperplane. The explicit formula for $S(\C P^2)$ follows from Corollary \ref{cor:vert}.
%
\end{proof}

The following observation belongs to A. Ayzenberg.
\begin{rem}
For any integer $k$ the expansion coefficients of $x^k-1\in \widetilde{K}(\C P^n)$ w.r.t. the basis $(x-1),...,(x-1)^n$ are the binomial coefficients $(\binom{k}{1},\binom{k}{2},\dots, \binom{k}{n})$. A straight-forward induction on $k$ shows that for any non-negative integer $l$ the inequality
\[
\binom{k}{l}^2\geq \binom{k}{l-1}\binom{k}{l+1}
\]
holds. When $0<l<k$, it is a well-known inequality on the binomial coefficients. On the other hand, this inequality means that the sequence $(\binom{k}{1},\binom{k}{2},\dots, \binom{k}{n})$ is log-concave. This property of the projective space may be worth studying due to thesis \cite{ju-14}.
\end{rem}


\begin{lm}\label{lm:*id}
Let $x_1,\dots,x_k\in {K}(M^6), k\geq 2$. Then one has
\[
x_1*x_2*x_3*\dots*x_k=\sum_{q=1}^k \sum_{1\leq i_1<\dots<i_q\leq k} x_{i_1}\cdots x_{i_q},
\]
\begin{equation}\label{eq:*mult}
(x_1-1) *\cdots * (x_k-1)=x_1\cdots x_k-1.
\end{equation}
\end{lm}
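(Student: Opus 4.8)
The plan is to prove both identities in Lemma~\ref{lm:*id} by recalling the definition $u * v := u + v + u\otimes v$, which in the $K$-theory ring of $M^6$ just means $u * v = u + v + uv$ (ring multiplication), since the tensor product of line/vector bundle classes corresponds to multiplication in $K(M^6)$. The key observation is that $1 + (u * v) = 1 + u + v + uv = (1+u)(1+v)$, so the map $u \mapsto 1 + u$ turns the $*$-product into ordinary multiplication. This is exactly the content of the remark following Definition~\ref{def:cone} that $(\sigma * \tau, 1) = (\sigma, 1)\otimes(\tau, 1)$, specialised to elements.

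First I would establish, by induction on $k$, that $1 + x_1 * x_2 * \cdots * x_k = \prod_{j=1}^{k}(1 + x_j)$ in $K(M^6)$; the base case $k = 2$ is the computation above, and the inductive step uses associativity of $*$ together with $1 + (y * x_{k+1}) = (1 + y)(1 + x_{k+1})$ applied to $y = x_1 * \cdots * x_k$. Expanding the right-hand side $\prod_{j}(1 + x_j) = \sum_{q=0}^{k}\sum_{1 \leq i_1 < \cdots < i_q \leq k} x_{i_1}\cdots x_{i_q}$ by the distributive law and subtracting the $q = 0$ term (which is $1$) yields the first displayed identity immediately. For the second identity, I would substitute $x_j \rightsquigarrow x_j - 1$ into the relation just proved: then $1 + (x_1 - 1) * \cdots * (x_k - 1) = \prod_{j=1}^{k}\bigl(1 + (x_j - 1)\bigr) = \prod_{j=1}^{k} x_j$, and subtracting $1$ gives $(x_1 - 1) * \cdots * (x_k - 1) = x_1 \cdots x_k - 1$, which is \eqref{eq:*mult}.

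There is essentially no hard part here — the lemma is a purely formal consequence of the definition of $*$ once one recognises the "shift by $1$" trick. The only point requiring a word of care is the identification of the tensor product of $K$-theory classes with the ring product: strictly speaking $u \otimes v$ in Definition~\ref{def:cone} lives in $U \otimes V$ for abstract cones, but when $U = V = \widetilde{K}(M^6)\otimes\R$ (or the classes lie in $K(M^6)$) and we interpret $\otimes$ as the tensor product of the underlying vector bundles, this coincides with multiplication in the ring $K(M^6)$; I would state this identification explicitly at the start of the proof so the subsequent manipulations are unambiguous. The restriction $k \geq 2$ is only needed so that the $*$-product of at least two elements makes sense; the formulas then hold verbatim, and one could even note they extend formally to $k = 1$ as the trivial identity $x_1 = x_1$.
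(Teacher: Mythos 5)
Your proof is correct: the identification of $\otimes$ with the ring product in $K(M^6)$, the observation $1+(u*v)=(1+u)(1+v)$, the induction, and the substitution $x_j\rightsquigarrow x_j-1$ all work exactly as you describe. The paper gives no proof at all for this lemma (it is treated as an immediate formal consequence of Definition~\ref{def:cone}), and your argument is precisely the intended routine verification, so there is nothing to compare beyond noting that you have supplied the details the author omitted.
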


The following Lemma is clear.
\begin{lm}\label{lm:change}
The transition matrix from the basis $\lb (y_1-1)^{v_1}\cdots(y_m-1)^{v_m}|\ \sum_i v_i\leq n,\ v_i\in\Z_{\geq 0}\rb$ to the basis $\lb y_1^{v_1}\cdots y_m^{v_m}-1|\ \sum_i v_i\leq n,\ v_i\in\Z_{\geq 0}\rb$ of the linear space $\widetilde{K}((\C P^n)^m)\otimes\R$ has only rational matrix elements.
\end{lm}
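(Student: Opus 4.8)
The plan is to write the transition matrix explicitly in one direction, observe that it has integer entries, and deduce rationality in the other direction automatically. First I would record that the monomials $(y_1-1)^{v_1}\cdots(y_m-1)^{v_m}$ with $0\le v_i$ and $0<\sum_i v_i\le n$ are part of the monomial $\Z$-basis of $K((\C P^n)^m)\simeq\bigotimes_{i=1}^{m}\Z[y_i]/(y_i-1)^{n+1}$ supplied by \eqref{eq:kcp} and the Künneth isomorphism (applicable since $K^*(\C P^n)$ is free and concentrated in even degrees), so that these vectors are $\R$-linearly independent; read in $\widetilde K((\C P^n)^m)$, the two lists in the lemma differ from these only by discarding the identically-zero entry indexed by $\underline v=0$.

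Next I would expand. Writing $y_i=(y_i-1)+1$ and using the ordinary binomial theorem, $y_i^{v_i}=\sum_{w_i=0}^{v_i}\binom{v_i}{w_i}(y_i-1)^{w_i}$, so after multiplying over $i$ and cancelling the $\underline w=0$ term against the $-1$,
\[
y_1^{v_1}\cdots y_m^{v_m}-1=\sum_{\substack{0\le w_i\le v_i\ (i=1,\dots,m)\\ \underline w\ne 0}}\Bigl(\prod_{i=1}^{m}\binom{v_i}{w_i}\Bigr)\,(y_1-1)^{w_1}\cdots(y_m-1)^{w_m}.
\]
Every coefficient $\prod_i\binom{v_i}{w_i}$ is a non-negative integer, and every index $\underline w$ that occurs satisfies $0<\sum_i w_i\le\sum_i v_i\le n$, hence lies in the index set of the first basis. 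Thus the matrix $T$ defined by $y^{\underline v}-1=\sum_{\underline w}T_{\underline v\underline w}(y-1)^{\underline w}$ is an integer matrix.

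Finally I would argue that $T$ is unimodular, so that its inverse — the transition matrix in the opposite direction — is again integral, and in particular all entries of the transition matrix (taken either way) are rational, even integral. Ordering the index set by total degree $|\underline v|:=\sum_i v_i$ with ties broken arbitrarily, the displayed formula gives $T_{\underline v\underline v}=\prod_i\binom{v_i}{v_i}=1$ while $T_{\underline v\underline w}=0$ unless $\underline w\le\underline v$ componentwise, and $\underline w\le\underline v$ with $|\underline w|=|\underline v|$ forces $\underline w=\underline v$; hence $T$ is lower-unitriangular, $\det T=1$, and $T^{-1}\in\mathrm{GL}(\Z)$. (Incidentally this also shows the second list is a basis of the span of the first, so the phrasing of the lemma is consistent.) The only step requiring the slightest attention is the degree bookkeeping — checking that expanding $y^{\underline v}-1$ with $|\underline v|\le n$ never produces a monomial $(y-1)^{\underline w}$ with $|\underline w|>n$ — and this is immediate from $w_i\le v_i$; there is no genuine obstacle.
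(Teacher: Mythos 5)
Your proposal is correct: the paper offers no argument at all (the lemma is simply declared clear), and your binomial expansion $y_i^{v_i}=\sum_{w_i}\binom{v_i}{w_i}(y_i-1)^{w_i}$ together with the unitriangularity of the resulting integer matrix is exactly the argument the author evidently has in mind. In fact you prove slightly more than asked, namely that the transition matrices in both directions are integral, not merely rational.
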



\begin{pr}\label{pr:scpmultdesc}
For any $n,m\in\N$ one has
\begin{equation}\label{eq:scpmult}
S((\C P^{n})^m)=S(\C P^{n})*\dots *S(\C P^{n}).
\end{equation}
The subspace $\lin S((\C P^{n})^m)\subseteq \widetilde{K}((\C P^{n})^m)$ is rational w.r.t. the basis\\
$\lb y_1^{v_1}\cdots y_m^{v_m}-1|\ \sum_i v_i\leq n,\ v_i\in\Z_{\geq 0}\rb$ of the linear space $\widetilde{K}((\C P^n)^m)\otimes\R$.
\end{pr}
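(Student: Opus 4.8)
The plan is to prove the two assertions separately. For the identity \eqref{eq:scpmult}, I would use the fact that $(\C P^n)^m$ is a product of quasitoric manifolds, together with the multiplicative structure of $K$-theory (the external tensor product) which realises $\widetilde{K}((\C P^n)^m)\otimes\R$ as built from the rings $\widetilde{K}(\C P^n)\otimes\R$, and the observation recorded just after Definition~\ref{def:cone} that $(\sigma*\tau,1)=(\sigma,1)\otimes(\tau,1)$. Concretely, a complex linear bundle over $(\C P^n)^m$ is a tensor product $\eta_1\boxtimes\cdots\boxtimes\eta_m$ of complex linear bundles $\eta_j\to\C P^n$, since $\mathrm{Pic}$ of a product splits. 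Hence $[\eta_1\boxtimes\cdots\boxtimes\eta_m]-1 = (\eta_1-1)*\cdots*(\eta_m-1)$ by \eqref{eq:*mult} of Lemma~\ref{lm:*id} (applied to the classes $[\eta_j]$ pulled back from the factors, whose products realise the $*$-product because the cross terms $(\eta_{i_1}-1)\cdots(\eta_{i_q}-1)$ are exactly the mixed-degree pieces of the external product). Taking $\Z_{\geq0}$-conical hulls and then closures of both sides, and using that the $*$-product of cones is defined precisely as $\conconv$ of the elementwise $*$-products, gives the containment $S((\C P^n)^m)\supseteq S(\C P^n)*\cdots*S(\C P^n)$ and, conversely, that every generator of the right-hand cone arises from a genuine linear bundle over the product, giving the reverse containment; since both cones are closed the equality \eqref{eq:scpmult} follows.

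For the rationality of $\lin S((\C P^n)^m)$, I would argue by induction on $m$, the base case $m=1$ being the last sentence of Proposition~\ref{pr:scone}. By Proposition~\ref{pr:tenscone},
\[
\lin\bigl((\sigma,1)\otimes(\tau,1)\bigr) \;=\; (\lin(\sigma,1))\otimes\R\la(\tau,1)\ra \;+\; \R\la(\sigma,1)\ra\otimes\lin(\tau,1),
\]
where I take $\sigma=S(\C P^n)*\cdots*S(\C P^n)$ ($m-1$ factors) and $\tau=S(\C P^n)$; here $(\sigma,1)$ denotes the affine slice, so that $(\sigma,1)\otimes(\tau,1)=(S((\C P^n)^m),1)$ by the displayed remark and \eqref{eq:scpmult}. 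The linear hull $\R\la(\sigma,1)\ra$ is rational with respect to the monomial basis $\{(y_1-1)^{v_1}\cdots (y_{m-1}-1)^{v_{m-1}}\}$ (it is a coordinate subspace cut out by the vanishing of basis coordinates, since those are just the coordinates of $\widetilde K$), and by the inductive hypothesis combined with Lemma~\ref{lm:change} $\lin(\sigma,1)$ is rational with respect to that basis; likewise for $\tau$. Therefore the right-hand side above is a sum of two subspaces each rational with respect to the basis $\{(y_1-1)^{v_1}\cdots(y_m-1)^{v_m}\}$ of $\widetilde K((\C P^n)^m)\otimes\R$ — this is where Corollary~\ref{cor:ratprod} is used, applied to the tensor factorisation — so their sum is rational with respect to it as well. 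Finally, Lemma~\ref{lm:change} lets one pass from the $(y-1)$-monomial basis to the stated basis $\{y_1^{v_1}\cdots y_m^{v_m}-1\}$ without losing rationality, since the transition matrix is rational and invertible. This completes the induction.

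The main obstacle, I expect, is bookkeeping the passage between the two descriptions of the ambient space $\widetilde{K}((\C P^n)^m)\otimes\R$: the cone identity \eqref{eq:scpmult} lives most naturally in the "$*$-product space" $U\oplus V\oplus(U\otimes V)$ of Definition~\ref{def:cone}, while the rationality statement is phrased in terms of an explicit monomial basis of $\widetilde K$ of the product. One must check that the canonical identification of these two spaces (restriction of the $K$-theory Künneth isomorphism to reduced, truncated-degree parts) matches the basis $\{(y_1-1)^{v_1}\cdots(y_m-1)^{v_m}\}$ with the natural basis of $U\oplus V\oplus(U\otimes V)$ coming from the $(x-1)$-bases of the factors, up to a rational change of coordinates — and that the relations $(x-1)^{n+1}=0$ (Proposition~\ref{pr:nilp}, cf. \eqref{eq:kcp}) are exactly what make the truncation $\sum v_i\leq n$ consistent with the $*$-product structure. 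Once this identification is pinned down, everything else is a routine combination of Propositions~\ref{pr:tenscone}, Corollary~\ref{cor:ratprod}, Lemma~\ref{lm:change} and the inductive hypothesis.
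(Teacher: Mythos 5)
Your route is the paper's own --- formula \eqref{eq:*mult} plus the K\"{u}nneth isomorphism for \eqref{eq:scpmult}, and Corollary \ref{cor:ratprod}, Proposition \ref{pr:scone}, Lemma \ref{lm:change} for the rationality claim --- but one step of your argument for \eqref{eq:scpmult} fails as written, namely the assertion that ``every generator of the right-hand cone arises from a genuine linear bundle over the product''. By Definition \ref{def:cone}, the cone $S(\C P^n)*\cdots*S(\C P^n)$ is the conical hull of the elements $u_1*\cdots*u_m$ with each $u_i$ an \emph{arbitrary} element of $S(\C P^n)$, not only one of the classes $x^k-1$; and since $*$ is not linear in its arguments (as remarked after Definition \ref{def:cone}), such an element is in general \emph{not} a nonnegative combination of the classes $y_1^{v_1}\cdots y_m^{v_m}-1$, so Lemma \ref{lm:*id} does not reach it. For instance, let $n=m=2$, write $s=y_1-1$, $t=y_2-1$, and take $u=2s\in S(\C P^2)$, $v=2t$; then $u*v=2s+2t+4st$, while the linear function $H$ on $\widetilde{K}((\C P^2)^2)\otimes\R$ defined on the monomial basis by $H(s)=H(t)=\tfrac12$, $H(s^2)=H(t^2)=2$, $H(st)=-1$, $H(s^2t)=H(st^2)=H(s^2t^2)=0$ satisfies $H(y_1^ky_2^l-1)=k^2+l^2-kl-\tfrac{k+l}{2}\geq 0$ for all $k,l\in\Z$ (it equals $k(k-1)$ when $k=l$, and is $\geq\tfrac12$ when $k\neq l$), whereas $H(u*v)=-2<0$. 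So $u*v$ does not lie in the closed conical hull of the line-bundle classes, and the inclusion of the $*$-product into $S((\C P^n)^m)$ cannot be obtained by pointing at line bundles: both inclusions must be run at the level of the canonical generators $x^k-1$ of the factors (equivalently, of the semigroups $C$), with the $*$-product of the cones read through those generators; this is what the paper's appeal to \eqref{eq:*mult} and the K\"{u}nneth formula amounts to, and it is also how the $*$-product is used later, in the proof of Theorem \ref{thm:comp}. (Note also that the two containments in your sentence are interchanged: the line-bundle description of the generators of $S((\C P^n)^m)$ directly gives $S((\C P^n)^m)\subseteq$ the $*$-product, up to closures; it is the opposite inclusion that needs the care just described.)

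For the second claim your plan coincides with the paper's proof (Corollary \ref{cor:ratprod}, Proposition \ref{pr:scone}, Lemma \ref{lm:change}); the induction on $m$ instead of a single application of Corollary \ref{cor:ratprod} to the $m$-fold tensor product is immaterial. The only repair needed is the expression ``$\lin(\sigma,1)$'': the lineality space of the (non-closed) cone generated by the affine slice $\sigma\times\lb 1\rb$ is zero, so you should pass to the closed cone, which for closed $\sigma$ equals $\sigma\times\R_{\geq 0}$ and has lineality $\lin\sigma\times\lb 0\rb$; with that adjustment, Proposition \ref{pr:tenscone} (or Corollary \ref{cor:ratprod}), the base case from Proposition \ref{pr:scone} and the rational change of basis of Lemma \ref{lm:change} give the rationality statement as you intend.
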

\begin{proof}
The Formula \eqref{eq:scpmult} is a straight consequence of the Formula \eqref{eq:*mult} and K\"{u}nneth formula for $K$-theory (see \cite{at-67}). The second claim follows from the Corollary \ref{cor:ratprod}, Proposition \ref{pr:scone} and Lemma \ref{lm:change}.
\end{proof}

Let $M^{2n}$ be a quasitoric manifold with the convex polytope with $m$ facets. Consider the linear map $R: K((\C P^n)^m)\to K(M^{2n})$ mapping the class $y_i$ of the dual to the (pull-back of the) tautological line bundle over the $i$-th multiple in $(\C P^n)^m$ to $\theta_i$. This map is well-defined due to Proposition \ref{pr:nilp} and Theorem \ref{thm:kdesc}. By the definition, $R(y_1^{v_1}\cdots y_m^{v_m})=\underline{\theta}^{\underline{v}}$ holds for any $\underline{v}\in\Z^m$. Hence, $S(M^{2n})=R(S((\C P^n)^m))$ (see Corollary \ref{cor:ccone}).



\begin{lm}\label{lm:basis}
There exist complex linear vector bundles $\xi_i\to M^{2n},\ i=1,\dots,\chi(M^{2n})-1$, such that $e_{1}-1,\dots,e_{\chi(M^{2n})-1}-1$ constitute a basis of the free abelian group $\widetilde{K}(M^{2n})$, where $e_i=[\xi_i],\ i=1,\dots,\chi(M^{2n})-1$. Given any complex linear vector bundle $\xi\to M^{2n}$, the element $[\xi]-1\in \widetilde{K}(M^{2n})$ has rational coordinates in the basis above.
\end{lm}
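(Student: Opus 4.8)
The plan is to build the required basis explicitly out of the line bundles $\underline{\theta}^{\underline{v}}:=\theta_1^{v_1}\cdots\theta_m^{v_m}$, exploiting the lower‑triangular ``binomial'' change of basis relating the family $\{\underline{\theta}^{\underline{v}}-1\}$ to the monomials $\underline{t}^{\underline{v}}:=t_1^{v_1}\cdots t_m^{v_m}$ in the elements $t_i:=[\theta_i]-1$.

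By Proposition \ref{pr:ch}, $\widetilde K(M^{2n})$ is free abelian of rank $r:=\chi(M^{2n})-1$, and by Theorem \ref{thm:kdesc} the $t_i$ generate $K(M^{2n})$ multiplicatively; hence the monomials $\underline{t}^{\underline{v}}$, $\underline{v}\in\Z_{\geq0}^m$, span $K(M^{2n})$ over $\Z$, and those with $\underline{v}\neq\underline 0$ span $\widetilde K(M^{2n})$. The delicate point is that a $\Z$‑spanning set need not contain a $\Z$‑basis, so one cannot simply pick a basis among the $\underline{\theta}^{\underline{v}}-1$. Instead I would first exhibit a $\Z$‑basis of $\widetilde K(M^{2n})$ of the special form $\{\underline{t}^{\underline{v}}:\underline{v}\in\mathcal V\setminus\{\underline 0\}\}$, where $\mathcal V\subset\Z_{\geq0}^m$ is an \emph{order ideal} for the componentwise order, $\underline 0\in\mathcal V$, $|\mathcal V|=\chi(M^{2n})$. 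Such a $\mathcal V$ comes from the structure of the cohomology: the Stanley--Reisner ring $\Z[P^n]$ is Cohen--Macaulay (the nerve of the facets of $P^n$ is a shellable simplicial sphere), the $n$ linear forms furnished by the characteristic matrix form a regular sequence over $\Z$ (the content of Theorem \ref{thm:coh}), and, choosing a term order making these forms have unit leading terms at a fixed vertex of $P^n$, the standard monomials give an order‑ideal monomial $\Z$‑basis $\{\underline{x}^{\underline{v}}:\underline{v}\in\mathcal V\}$ of $H^*(M^{2n};\Z)$. Since the cohomology is torsion‑free and concentrated in even degrees (Proposition \ref{pr:ch}), the Atiyah--Hirzebruch spectral sequence collapses, the topological filtration of $K(M^{2n})$ has associated graded ring $H^*(M^{2n};\Z)$, and $\underline{t}^{\underline{v}}$ has leading term $\underline{x}^{\underline{v}}$; as a filtered lift of a basis of the associated graded is a basis, $\{\underline{t}^{\underline{v}}:\underline{v}\in\mathcal V\}$ is a $\Z$‑basis of $K(M^{2n})$, whence $\{\underline{t}^{\underline{v}}:\underline{v}\in\mathcal V\setminus\{\underline 0\}\}$ is a $\Z$‑basis of $\widetilde K(M^{2n})$.

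Next I would pass to line bundle classes. For $\underline{v}\in\mathcal V$, from $[\underline{\theta}^{\underline{v}}]=\prod_i(1+t_i)^{v_i}$ one obtains
\[
\underline{\theta}^{\underline{v}}-1=\sum_{\underline 0<\underline{u}\le\underline{v}}\Big(\prod_i\binom{v_i}{u_i}\Big)\underline{t}^{\underline{u}},
\]
and every $\underline{u}$ occurring lies in $\mathcal V$ because $\mathcal V$ is an order ideal. Enumerating $\mathcal V\setminus\{\underline 0\}=\{\underline{v}^{(1)},\dots,\underline{v}^{(r)}\}$ along a linear extension of the componentwise order, the matrix $\big(\prod_i\binom{v_i^{(q)}}{v_i^{(p)}}\big)_{q,p}$ expressing the $\underline{\theta}^{\underline{v}^{(q)}}-1$ in the basis $\{\underline{t}^{\underline{v}^{(p)}}\}$ is lower triangular with $1$'s on the diagonal, hence lies in $GL_r(\Z)$. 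Thus the line bundles $\xi_q:=\underline{\theta}^{\underline{v}^{(q)}}$ have the property that, with $e_q:=[\xi_q]$, the classes $e_1-1,\dots,e_r-1$ form a $\Z$‑basis of $\widetilde K(M^{2n})$, which proves the first assertion. The second is then immediate: for any line bundle $\xi\to M^{2n}$ the class $[\xi]-1$ lies in $\widetilde K(M^{2n})$, so it has integer --- in particular rational --- coordinates in this $\Z$‑basis.

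\textbf{Main obstacle.} Everything is formal once the order ideal $\mathcal V$ is in place, so the crux is the existence of an order‑ideal monomial $\Z$‑basis of $H^*(M^{2n};\Z)$ together with its lift through the $K$‑theory filtration. (If a $\Q$‑basis were enough --- and it already suffices for the uses of this lemma, since $\widetilde K(M^{2n})\subset\widetilde K(M^{2n})\otimes\Q$ --- the Cohen--Macaulay input could be dropped and one would just take a maximal $\Q$‑linearly independent subset of $\{\underline{\theta}^{\underline{v}}-1\}$.)
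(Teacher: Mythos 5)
Your overall route is the same one the paper gestures at: its one-line proof (``Theorem \ref{thm:kdesc} and the multivariate Taylor formula for $f(\underline{v})=\underline{\theta}^{\underline{v}}-1$'') is exactly your Newton/binomial expansion $\underline{\theta}^{\underline{v}}-1=\sum_{\underline{0}<\underline{u}\le\underline{v}}\prod_i\binom{v_i}{u_i}\,\underline{t}^{\underline{u}}$ together with the fact that the $\underline{t}^{\underline{u}}$ span $\widetilde{K}(M^{2n})$. Your unitriangularity step, the lifting of a monomial basis of $H^{*}(M^{2n};\Z)$ through the (degenerate) Atiyah--Hirzebruch filtration, and the trivial second assertion are all fine, and you have correctly located the real difficulty: for the triangular argument to work over $\Z$ the exponent set $\mathcal V$ must be an order ideal indexing a monomial $\Z$-basis.

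The gap is that this pivotal existence claim is asserted rather than proved. Standard-monomial/Gr\"obner theory produces an order-ideal monomial basis over a \emph{field}; to get a $\Z$-basis you need the ideal $I_H+J_H$ to admit a Gr\"obner basis all of whose leading coefficients are units, and none of the ingredients you invoke (shellability/Cohen--Macaulayness, the regular sequence, the unimodular vertex minor which makes the $n$ linear forms monic) yields that: once the vertex variables are eliminated, the Stanley--Reisner generators become products of non-monic integral linear forms, and Buchberger completion can introduce non-unit leading coefficients, in which case the $\Q$-standard monomials span only a finite-index subgroup (already $\Z[x,y]/(2x-y)$ with $x\succ y$ shows that freeness of the quotient and matching Hilbert functions do not save the day, and the usual shelling/vertex monomial basis of $H^*(M^{2n};\Z)$ is in general not an order ideal either). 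So the step you yourself call the crux remains unestablished; it needs a genuine argument specific to quasitoric manifolds, not a citation of standard monomial theory. A further small inaccuracy: your closing remark that a $\Q$-basis ``already suffices for the uses of this lemma'' is not true for the paper as written, since Corollary \ref{cor:app} feeds the $\Z$-basis into Lemma \ref{lm:conv}, whose hypothesis demands a genuine $\Z$-basis of $\widetilde{K}(M^{2n})$ inside the semigroup (one could instead weaken Lemma \ref{lm:conv} to require only that the semigroup generate the group, which follows directly from Theorem \ref{thm:kdesc}, but that is a modification of the paper's argument, not a use of the lemma as stated).
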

\begin{proof}
Follows from Theorem \ref{thm:kdesc} and multivariate Taylor formula for $f(v_1,\dots,v_m)=\underline{\theta}^{\underline{v}}-1$.
\end{proof}

\begin{cor}\label{cor:rat}
Suppose that $S(M^{2n})\neq \widetilde{K}(M^{2n})\otimes\R$. Then the cone $S(M^{2n})$ has a supporting hyperplane with the normal vector having only rational coordinates in the basis from Lemma \ref{lm:basis}.
\end{cor}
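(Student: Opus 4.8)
The plan is to reduce to the rationality statement for lineality subspaces that has already been established, and then invoke Proposition \ref{pr:supplin}. First I would observe that by construction $S(M^{2n}) = R(S((\C P^n)^m))$, where $R$ is the surjective linear map of Subsection \ref{ssec:supp} sending $y_1^{v_1}\cdots y_m^{v_m}$ to $\underline\theta^{\underline v}$. By Lemma \ref{lm:basis} this map, written in the basis $\lb y_1^{v_1}\cdots y_m^{v_m}-1\rb$ on the source and the basis $\lb e_i-1\rb$ of Lemma \ref{lm:basis} on the target, has a rational matrix. Since $\lin S((\C P^n)^m)$ is rational with respect to the source basis by Proposition \ref{pr:scpmultdesc}, its image under $R$—which is exactly $\lin S(M^{2n})$, using that $R$ is surjective so that lineality goes to lineality—is spanned by vectors with rational coordinates in the target basis. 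Hence $\lin S(M^{2n})$ is a rational subspace with respect to the basis from Lemma \ref{lm:basis}.

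Next I would handle the easy structural reductions needed to apply Proposition \ref{pr:supplin}. The cone $S(M^{2n})$ is a closed convex cone in the finite-dimensional space $\widetilde K(M^{2n})\otimes\R$ by definition (it is the closure of a conical hull), so it is a proper convex cone once we know $S(M^{2n})\neq\widetilde K(M^{2n})\otimes\R$, which is the hypothesis. If $S(M^{2n})$ is not full-dimensional, replace the ambient space by the (rational) linear span $\R\la S(M^{2n})\ra$; its rationality follows from Lemma \ref{lm:basis}, which says every generator $[\xi]-1$ has rational coordinates, so the span is cut out by a rational subspace and one may choose a rational basis of it in which the cone is full-dimensional. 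With these reductions in place, $S(M^{2n})$ is a full-dimensional proper convex cone with rational lineality subspace, and Proposition \ref{pr:supplin} yields a supporting hyperplane whose normal has rational coordinates; pushing this normal back out to the original space (by extending by zero on a rational complement) gives the desired supporting hyperplane of $S(M^{2n})$ with rational normal in the basis of Lemma \ref{lm:basis}.

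The main obstacle, and the only genuinely non-formal point, is the claim that $R$ maps $\lin S((\C P^n)^m)$ onto $\lin S(M^{2n})$. The inclusion $R(\lin S((\C P^n)^m))\subseteq\lin S(M^{2n})$ is immediate because a linear map sends a linear subspace of a cone into a linear subspace of the image cone. For the reverse inclusion one needs that $R$ is surjective (true, since the classes $[\theta_i]$, hence all $\underline\theta^{\underline v}$, generate $K(M^{2n})$ by Theorem \ref{thm:kdesc}) together with the elementary fact that for a surjective linear map $R$ between finite-dimensional spaces and a convex cone $\sigma$ one has $\lin(R(\sigma)) = R(\lin\sigma) + (\R\la\sigma\ra\cap\ker R)$; since here $S((\C P^n)^m)$ spans the whole source by Proposition \ref{pr:scpmultdesc} applied together with the fact that the $x^k-1$ span $\widetilde K(\C P^n)$, the correction term $\ker R$ is already contained in $\lin S((\C P^n)^m)$—indeed $\ker R$ lies in the lineality space because $S((\C P^n)^m)$ is full-dimensional, so $\ker R = (\ker R)\cap\R\la S((\C P^n)^m)\ra$ and any element of $\ker R$ is a difference of two vectors of the cone mapping to the same point, hence maps to $0\in\lin S(M^{2n})$ while itself being a lineality direction of the source cone only after we note both $\pm$ lie in $\sigma$; the cleanest route is simply: $\lin S(M^{2n}) = \lin R(S((\C P^n)^m)) \supseteq R(\lin S((\C P^n)^m))$ and, since $S(M^{2n})$ being proper forces $\lin S(M^{2n})$ to be spanned by rational generators already exhibited in Lemma \ref{lm:basis}, one does not even need the exact formula—rationality of $\lin S(M^{2n})$ follows directly from Lemma \ref{lm:basis} because lineality directions of a conical hull of rational vectors are rational. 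I would in fact present this last, shortest argument: $\lin S(M^{2n})$ is a subspace of the rational subspace $\R\la S(M^{2n})\ra$ spanned by the rational vectors $[\xi]-1$, and a rational-coefficient subspace intersected appropriately remains rational; combined with properness and Proposition \ref{pr:supplin} this finishes the proof.

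$\Box$
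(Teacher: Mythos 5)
Your overall skeleton is the same as the paper's: rationality of the matrix of $R$ in the bases of Lemma \ref{lm:basis}, rationality of $\lin S((\C P^n)^m)$ from Proposition \ref{pr:scpmultdesc}, transfer of rationality to $\lin S(M^{2n})$, and then Proposition \ref{pr:supplin}. (Your worry about full-dimensionality is unnecessary: $S(M^{2n})$ contains the spanning set $\lb\underline{\theta}^{\underline{v}}-1\rb$, so it is automatically full-dimensional.) The defect is in the one step you yourself flag as ``the only genuinely non-formal point'', and there it is a real gap, not a presentational one.

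First, the identity you propose, $\lin(R(\sigma)) = R(\lin\sigma) + (\R\la\sigma\ra\cap\ker R)$, is not even well-typed (the second summand lives in the source, not the target), and the charitable reading ``$\lin R(\sigma)=R(\lin\sigma)$ when $\sigma$ spans the source'' is false: the lineality of an image (and, worse, of the closure of an image) can strictly exceed the image of the lineality, since two non-lineality vectors of $\sigma$ can map to opposite vectors. Second, the ``shortest argument'' you finally settle on --- that lineality directions of a conical hull of rational vectors are automatically rational --- fails precisely because $S(M^{2n})$ is defined as a \emph{closure}: take $\alpha\in\R\setminus\Q$ and the set of all rational vectors lying in the open half-plane $\lb (x,y): y>\alpha x\rb\subset\R^2$; the closure of their conical hull is the closed half-plane $\lb y\geq\alpha x\rb$, whose lineality is the irrational line $y=\alpha x$. (For the non-closed conical hull the principle is true, but that is not the object at hand.) The phrase ``since $S(M^{2n})$ being proper forces $\lin S(M^{2n})$ to be spanned by rational generators already exhibited in Lemma \ref{lm:basis}'' is likewise a non sequitur: properness says nothing about rationality of the lineality space. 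So the inclusion $\lin S(M^{2n})\subseteq(\text{a rational subspace})$, which is exactly what Proposition \ref{pr:supplin} needs, is not established in your text; this is the content the paper extracts (tersely) from Proposition \ref{pr:scpmultdesc} together with the rational matrix of $R$ and the specific structure of $S((\C P^n)^m)$ (a rational lineality subspace plus a part coming from cones over compact bodies, cf.\ Propositions \ref{pr:scone}, \ref{pr:tenscone}), and it is what your proof must actually supply rather than replace by the false general principle.
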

\begin{proof}
Theorem \ref{thm:kdesc} and Lemma \ref{lm:basis} imply that the matrix of the linear map $R$ has only rational entries in the above bases of the linear spaces $\widetilde{K}((\C P^n)^m)\otimes\R,\ \widetilde{K}(M^{2n})\otimes\R$. It follows from Proposition \ref{pr:scpmultdesc} and Lemma \ref{lm:basis} that the subspace $\lin S(M^{2n})\subseteq\widetilde{K}(M^{2n})\otimes\R$ is rational w.r.t. the above basis. Now the claim follows from Proposition \ref{pr:supplin}.
\end{proof}

\begin{cor}\label{cor:cart}
For any quasitoric manifolds $M_1^{2n_1}, M_2^{2n_2}$ an identity
\[
S(M_1^{2n_1}\times M_2^{2n_2})=S(M_1^{2n_1})*S(M_2^{2n_2})
\]
holds under the K\"{u}nneth isomorphism $K(M_1^{2n_1}\times M_2^{2n_2})\simeq K(M_1^{2n_1})\otimes K(M_2^{2n_2})$.
\end{cor}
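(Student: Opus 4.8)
The plan is to reduce the claim about $S(M_1^{2n_1}\times M_2^{2n_2})$ to the already-established description of $S$ for products of projective spaces in Proposition~\ref{pr:scpmultdesc}, using the naturality of the constructions with respect to the maps $R$. Write $m_1,m_2$ for the numbers of facets of the moment polytopes of $M_1^{2n_1},M_2^{2n_2}$, and $\theta_i^{(1)},\theta_j^{(2)}$ for the corresponding line bundles. Under the K\"{u}nneth isomorphism $K(M_1^{2n_1}\times M_2^{2n_2})\simeq K(M_1^{2n_1})\otimes K(M_2^{2n_2})$, the bundles pulled back from the two factors multiplicatively generate the ring, and every line bundle over the product is (up to stable inverse) a product of pullbacks; consequently $C(M_1^{2n_1}\times M_2^{2n_2})$ is generated by the elements $\underline{\theta}^{(1)\underline{v}}\cdot\underline{\theta}^{(2)\underline{w}}-1$ for $\underline{v}\in\Z^{m_1},\underline{w}\in\Z^{m_2}$, exactly as in Corollary~\ref{cor:ccone}.

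The key identity to invoke is Lemma~\ref{lm:*id}, in particular Formula~\eqref{eq:*mult}, which gives $(\underline{\theta}^{(1)\underline{v}}-1)*(\underline{\theta}^{(2)\underline{w}}-1)=\underline{\theta}^{(1)\underline{v}}\underline{\theta}^{(2)\underline{w}}-1$ under the pairing $(\sigma*\tau,1)=(\sigma,1)\otimes(\tau,1)$ from Definition~\ref{def:cone}; here one must check that the copy of $\widetilde{K}(M_1)\otimes\widetilde{K}(M_2)$ sitting inside $\widetilde{K}(M_1\times M_2)\otimes\R$ under K\"{u}nneth is precisely the tensor-product factor appearing in the ambient space $U\oplus V\oplus U\otimes V$ of the $*$-product, with $U=\widetilde{K}(M_1)\otimes\R$, $V=\widetilde{K}(M_2)\otimes\R$. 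Granting this, the generators of $C(M_1\times M_2)$ are exactly the elements $u*v$ with $u\in C(M_1)$, $v\in C(M_2)$ (after allowing $u$ or $v$ to be $0$, which corresponds to pullbacks from a single factor), so the conical hulls satisfy $\conconv C(M_1\times M_2)=\conconv\{u*v\mid u\in S(M_1),\,v\in S(M_2)\}$; taking closures and comparing with the definition of $*$ for cones yields the stated equality.

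The main obstacle will be the bookkeeping needed to identify the ambient linear spaces correctly: one has $\widetilde{K}(M_1\times M_2)\otimes\R\simeq \big(\widetilde{K}(M_1)\otimes\R\big)\oplus\big(\widetilde{K}(M_2)\otimes\R\big)\oplus\big(\widetilde{K}(M_1)\otimes\widetilde{K}(M_2)\otimes\R\big)$ by the reduced K\"{u}nneth formula (together with Proposition~\ref{pr:ch}, which guarantees that $K$ is free and $K^1=0$ so there are no $\mathrm{Tor}$ terms), and one must verify that this decomposition matches the factors $U$, $V$, $U\otimes V$ in Definition~\ref{def:cone} in a way compatible with the product structure so that Lemma~\ref{lm:*id} applies verbatim. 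Once the ambient spaces are aligned, the argument is essentially the one already used to prove Formula~\eqref{eq:scpmult} in Proposition~\ref{pr:scpmultdesc}: indeed the cleanest route is to note that $M_i^{2n_i}$ is covered by the map $R_i\colon K((\C P^{n_i})^{m_i})\to K(M_i^{2n_i})$ with $S(M_i)=R_i(S((\C P^{n_i})^{m_i}))$, that $R_1\otimes R_2$ realises $M_1\times M_2$ similarly, and that $R_1\otimes R_2$ intertwines the $*$-products because each $R_i$ is a ring homomorphism; then the identity $S((\C P^n)^m)=S(\C P^n)*\dots*S(\C P^n)$ pushes forward to the desired one. $\Box$
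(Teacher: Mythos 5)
Your proposal is correct and follows essentially the route the paper intends: Corollary \ref{cor:cart} is left without an explicit proof because, exactly as in Proposition \ref{pr:scpmultdesc}, it is meant to be a direct consequence of the K\"{u}nneth isomorphism for $K$-theory together with Corollary \ref{cor:ccone} (the line bundles over $M_1^{2n_1}\times M_2^{2n_2}$ are external products of the $\theta_i$'s pulled back from the factors) and the identity \eqref{eq:*mult}, which is precisely the argument you give. Your additional bookkeeping --- identifying $\widetilde{K}(M_1^{2n_1}\times M_2^{2n_2})\otimes\R$ with $U\oplus V\oplus U\otimes V$ via the reduced K\"{u}nneth decomposition and pushing the statement forward along $R_1\otimes R_2$ --- only spells out details the paper treats as immediate.
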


\subsection{Proof of the main theorem}\label{ssec:main}

In order to prove the TNS-criterion we need an auxiliary



\begin{lm}\label{lm:conv}
Let $S\subseteq L$ be a sub-semigroup of the free abelian group $L\simeq\Z^d$ s.t. $S$ contains a $\Z$-basis $x_{1},\dots,x_{d}\in S$ of $L$. Then the following conditions are equivalent:

$(i)$ $S=L$;

$(ii)$ There exists an element $v=\sum_{i}v^{i}x_{i}\in S$ s.t. $v^{i}< 0,\ i=1,\dots,d$;

$(iii)$ $0\in\inter \conv S$, where $\inter$ and $\conv$ denote the interior and the convex hull of a set in $\R^d$, resp.;

$(iv)$ There is no such a linear function $H:L\otimes\R\to\R,H\not\equiv 0$ that $S\subseteq\lb H\geq 0\rb$.
\end{lm}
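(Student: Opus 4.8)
The plan is to prove the chain of implications $(i)\Rightarrow(ii)\Rightarrow(iii)\Rightarrow(iv)\Rightarrow(i)$, which closes the loop. The first implication is immediate: if $S=L$, then $-(x_1+\dots+x_d)\in L=S$ has all coordinates equal to $-1<0$ in the basis $x_1,\dots,x_d$, so $(ii)$ holds.

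For $(ii)\Rightarrow(iii)$, I would argue that $\conv S$ contains the $d+1$ points $x_1,\dots,x_d$ and $v=\sum_i v^i x_i$ with all $v^i<0$. These $d+1$ points form a simplex in $\R^d$: indeed $x_1,\dots,x_d$ are linearly independent (they are a $\Z$-basis of $L$), hence affinely independent together with $v$, since $v$ is a strictly negative combination of them and in particular does not lie on the affine hyperplane through $x_1,\dots,x_d$. The origin $0$ lies in the interior of this simplex because $0$ is the unique convex combination $\lambda_0 v+\sum_i\lambda_i x_i=0$ with $\lambda_0=\frac{1}{1-\sum_i v^i}>0$ and $\lambda_i=\frac{-v^i}{1-\sum_i v^i}>0$, and all barycentric coordinates are strictly positive; hence $0\in\inter\conv S$.

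The implication $(iii)\Rightarrow(iv)$ is a standard separation argument in the contrapositive: if there were a nonzero linear functional $H$ with $S\subseteq\{H\geq 0\}$, then $\conv S\subseteq\{H\geq 0\}$ as well (a half-space is convex), so no point of $\conv S$ can be an interior point of $\R^d$ lying in the open half-space $\{H<0\}$; in particular $0$, which satisfies $H(0)=0$, cannot lie in $\inter\conv S$, because every neighbourhood of $0$ meets $\{H<0\}$. This contradicts $(iii)$.

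The main obstacle is $(iv)\Rightarrow(i)$, i.e.\ producing an actual lattice identity $S=L$ from the mere nonexistence of a supporting functional. I would proceed by contraposition: suppose $S\subsetneq L$. The subgroup $\langle S\rangle$ generated by $S$ is all of $L$ since $S$ already contains the basis $x_1,\dots,x_d$, so $S$ is a proper sub-\emph{semigroup} of $L$ that still spans $L$ as a group. The key point is that the convex cone $\sigma=\conconv S\subseteq L\otimes\R=\R^d$ must then be a proper cone: if $\sigma=\R^d$ we could write $-x_i$ (for each $i$) as a finite nonnegative \emph{real} combination of elements of $S$, and a Carath\'eodory/rationality argument (elements of $S$ are lattice points, so the coefficients can be taken rational, then cleared to integers) would yield $N(-x_i)=-Nx_i\in S$ for some $N\in\N$; combining over all $i$ would force $-Nx_i\in S$ for a common $N$, and since $x_i\in S$ this would let us reach every lattice point, giving $S=L$, a contradiction. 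Hence $\sigma\neq\R^d$, so $\sigma$ has a nonzero supporting functional $H$ with $\sigma\subseteq\{H\geq 0\}$, whence $S\subseteq\{H\geq 0\}$, contradicting $(iv)$. The delicate part is making the "clear denominators to stay in the semigroup" step rigorous — one must check that $-Nx_i\in S$ can be arranged with a single $N$ working simultaneously for all $i$, which follows by taking the least common multiple of the individual denominators and using that $S$ is closed under addition and contains each $x_i$.
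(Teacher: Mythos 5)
Your proposal is correct, but it organizes the cycle differently from the paper. The paper proves $(i)\Rightarrow(ii)\Rightarrow(iii)\Rightarrow(iv)$ as immediate and then walks back down the chain: $(iv)\Rightarrow(iii)$ by the Supporting Hyperplane Theorem, $(iii)\Rightarrow(ii)$ by writing an interior point with negative coordinates as a conic combination of elements of $S$, perturbing the coefficients to positive rationals (possible because only \emph{some} element with all negative coordinates is needed) and clearing denominators, and finally $(ii)\Rightarrow(i)$ by translating an arbitrary lattice point by a large multiple of $v$. You instead close the loop with a single contrapositive $(iv)\Rightarrow(i)$, which compresses the same three ingredients: if $\conconv S=\R^d$ you extract $-Nx_i\in S$ for a common $N$ and conclude $S=L$; otherwise the cone is proper and a nonzero supporting functional violates $(iv)$. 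The price of targeting the specific points $-x_i$ is that the perturbation trick is unavailable, so your rationality step must invoke the fact that a nonempty polyhedron defined by rational data contains a rational point (a Farkas/Fourier--Motzkin-type statement); this is standard and no less rigorous than the paper's ``small variation'' argument, and your clearing of denominators with a common multiple is handled correctly. A further small bonus of your write-up is the explicit simplex with barycentric coordinates $\lambda_0=\tfrac{1}{1-\sum_i v^i}$, $\lambda_i=\tfrac{-v^i}{1-\sum_i v^i}$ verifying $(ii)\Rightarrow(iii)$, which the paper dismisses as clear; the paper's decomposition, on the other hand, records the intermediate implications $(iv)\Rightarrow(iii)$ and $(iii)\Rightarrow(ii)$ as separate, reusable steps.
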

\begin{proof}
The implications $(i)\Rightarrow (ii)\Rightarrow (iii)\Rightarrow (iv)$ are clear. $(iv)\Rightarrow (iii)$ follows from the Supporting Hyperplane Theorem in $\R^n$.

$(iii)\Rightarrow (ii)$. The condition $(iii)$ implies that there exists such an element $u=\sum_{i}u^{i}x_{i}\in\inter\conv S$ that $u^{i}<0,i=1,\dots,d$. Consider elements $u_{1},\dots,u_{d}\in S$ and real positive numbers $a_{1},\dots,a_{d}\in\R_{\geq 0}$ s.t. $u=\sum_{i}a^{i}u_{i}$. Consider a small variation $b^{i}$ of $a^{i},i=1,\dots,d$, s.t. $u':=\sum_{i}b^{i}u_{i}\in\inter\conv S$, all coordinates (w.r.t. the basis $x_{1},\dots,x_{d}$) of $u'$ are negative and all $b^{i}\in\Q_{> 0}$ are rational positive numbers. Let $N\in \N$ be s.t. $Nb^{i}\in\Z_{\geq 0},i=1,\dots,d$. Then all the coordinates of $v:=Nu'\in S$ are negative (w.r.t. the basis $x_{1},\dots,x_{d}$), as required.

$(ii)\Rightarrow (i)$. Let $x\in L$. Consider a decomposition $x=\sum_{i}a^{i}x_{i}$, $a_{i}\in\Z$, $i=1,\dots,d$. Let $L_{\geq 0}\subseteq S$ be a sub-semigroup generated by $x_{1},\dots,x_{d}$. Let $N\in\N$ be a natural number s.t. $Nv< x$ (coordinate-wisely). Then $x\in Nv+L_{\geq 0}$. Hence $x\in S$, as required.
\end{proof}

\begin{cor}\label{cor:app}
A quasitoric manifold $M^{2n}$ of dimension $2n$ is a TNS-manifold iff $S(M^{2n})\neq \widetilde{K}(M^{2n})\otimes\R$.
\end{cor}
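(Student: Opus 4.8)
The plan is to establish that $M^{2n}$ is a TNS-manifold if and only if $S(M^{2n})=\widetilde{K}(M^{2n})\otimes\R$; equivalently, $M^{2n}$ fails to be a TNS-manifold precisely when $S(M^{2n})\neq\widetilde{K}(M^{2n})\otimes\R$. I would prove this through the chain
\[
M^{2n}\text{ is TNS} \;\Longleftrightarrow\; \text{(iii) of Theorem \ref{thm:equiv}} \;\Longleftrightarrow\; C(M^{2n})=\widetilde{K}(M^{2n}) \;\Longleftrightarrow\; S(M^{2n})=\widetilde{K}(M^{2n})\otimes\R .
\]
The first link is immediate from Theorem \ref{thm:equiv} and needs no argument.

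For the second equivalence I would pass to reduced $K$-theory through the augmentation (rank) homomorphism $\varepsilon\colon K(M^{2n})\to\Z$, using that $\widetilde{K}(M^{2n})=\ker\varepsilon$ and that $\varepsilon([\underline{\theta}^{\underline{v}}])=1$ for every $\underline{v}\in\Z^m$, since each $\underline{\theta}^{\underline{v}}$ is a line bundle. If an element $x\in\widetilde{K}(M^{2n})$ admits a representation $x=N+\sum_{\underline{v}}c_{\underline{v}}[\underline{\theta}^{\underline{v}}]$ with all $c_{\underline{v}}\ge 0$ as in condition $(iii)$, then applying $\varepsilon$ forces $N=-\sum_{\underline{v}}c_{\underline{v}}$, so $x=\sum_{\underline{v}}c_{\underline{v}}\bigl([\underline{\theta}^{\underline{v}}]-1\bigr)\in C(M^{2n})$ by Corollary \ref{cor:ccone}; hence $(iii)$ gives $\widetilde{K}(M^{2n})\subseteq C(M^{2n})$, i.e.\ $C(M^{2n})=\widetilde{K}(M^{2n})$. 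Conversely, if $C(M^{2n})=\widetilde{K}(M^{2n})$ and $x\in K(M^{2n})$ has rank $r$, then $x-r\in C(M^{2n})$, so by Corollary \ref{cor:ccone} one has $x-r=\sum_{\underline{v}}c_{\underline{v}}\bigl([\underline{\theta}^{\underline{v}}]-1\bigr)$ with $c_{\underline{v}}\ge 0$, whence $x=\bigl(r-\sum_{\underline{v}}c_{\underline{v}}\bigr)+\sum_{\underline{v}}c_{\underline{v}}[\underline{\theta}^{\underline{v}}]$ is a representation as in $(iii)$ with integer $N=r-\sum_{\underline{v}}c_{\underline{v}}$.

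For the third equivalence I would apply Lemma \ref{lm:conv} with the free abelian group $L=\widetilde{K}(M^{2n})$ (free of rank $\chi(M^{2n})-1$ by Proposition \ref{pr:ch}) and the sub-semigroup $S=C(M^{2n})$; the hypothesis of that lemma, that $C(M^{2n})$ contains a $\Z$-basis of $\widetilde{K}(M^{2n})$, is precisely the content of Lemma \ref{lm:basis}, the classes $e_i-1$ there being classes of line bundles and hence lying in $C(M^{2n})$. Lemma \ref{lm:conv} then identifies $C(M^{2n})=\widetilde{K}(M^{2n})$ with the nonexistence of a nonzero linear functional $H$ on $\widetilde{K}(M^{2n})\otimes\R$ with $C(M^{2n})\subseteq\{H\ge 0\}$, and it remains to match this last condition with $S(M^{2n})=\widetilde{K}(M^{2n})\otimes\R$. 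If $S(M^{2n})$ is the whole space then no such $H$ can exist, because $\{H\ge 0\}$ would be a closed convex cone containing $C(M^{2n})$, hence containing $S(M^{2n})=\overline{\conconv C(M^{2n})}$, yet $\{H\ge 0\}$ is a proper subset of its ambient space. Conversely, if $S(M^{2n})\neq\widetilde{K}(M^{2n})\otimes\R$, then $S(M^{2n})$ is a closed convex cone that is a proper subset of its ambient space, so the Supporting Hyperplane Theorem yields a nonzero $H$ with $S(M^{2n})\subseteq\{H\ge 0\}$, and then $C(M^{2n})\subseteq\{H\ge 0\}$. Chaining the three equivalences completes the proof. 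I foresee no substantive obstacle; the only points deserving attention are the rank bookkeeping in the second equivalence (one must allow $N\in\Z$, not just $N\in\N$, for $\widetilde{K}(M^{2n})\subseteq C(M^{2n})$ to follow) and the fact that Lemma \ref{lm:conv} is genuinely needed for the third one, since it is the lattice-basis hypothesis that upgrades the statement that $\conconv C(M^{2n})$ is the whole space to the statement that $C(M^{2n})$ is the whole group; everything else reduces to the Supporting Hyperplane Theorem and the elementary fact that the closure of a convex cone lies inside every closed half-space that contains the cone.
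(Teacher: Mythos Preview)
Your proof is correct and follows essentially the same approach as the paper's: TNS $\Leftrightarrow$ condition $(iii)$ of Theorem~\ref{thm:equiv} $\Leftrightarrow$ $C(M^{2n})=\widetilde{K}(M^{2n})$ $\Leftrightarrow$ (via Lemmas~\ref{lm:basis} and~\ref{lm:conv}) $S(M^{2n})=\widetilde{K}(M^{2n})\otimes\R$; you simply spell out the second and third links more fully than the paper does. Note that you have proved TNS $\Leftrightarrow$ $S(M^{2n})=\widetilde{K}(M^{2n})\otimes\R$, the negation of the statement as literally printed --- the paper's own argument (and the way the result is used) makes clear that the printed ``$\neq$'' is a typo for ``$=$'', and your version is the intended one.
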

\begin{proof}
The TNS-property of $M^{2n}$ is equivalent to the condition $(iii)$ of Theorem \ref{thm:equiv}. The latter is, in turn, equivalent to the condition
\begin{equation}\label{eq:kcone}
\widetilde{K}(M^{2n})=C(M^{2n}).
\end{equation}
By Lemma \ref{lm:basis}, the semigroup $C(M^{2n})=\Z_{\geq 0}\la \underline{\theta}^{\underline{v}}-1|\ \underline{v}\in\Z^{m}\ra$ contains a $\Z$-basis of the free abelian group $\widetilde{K}(M^{2n})$. Hence, by Lemma \ref{lm:conv}, the equality \eqref{eq:kcone} is equivalent to the desired condition.
\end{proof}

Define the homogeneous $\Q$-form $Q_{a}: H^2(M^{2n};\Q)\to\Q$ of degree $k$ by the formula
\[
Q_{a}(x):=\la x^k a, [M^{2n}]\ra,\ x\in H^{2}(M^{2n};\Q),
\]
where $\la*,*\ra$ is the canonical pairing.

\begin{rem}
For a quasitoric manifold $M^{2n}$, the volume polynomial of the multifan $\mc{F}$ corresponding to $M^{2n}$ is given by the formula
\[
V_{\mc{F}}(c_1,\dots, c_m):=\frac{1}{n!}\la (c_1 x_1+\dots +c_m x_m)^n, [M^{2n}]\ra,\ c_i\in \R,\ i=1,\dots, m.
\]
An identity $Q_1(c_1 x_1+\dots +c_m x_m)=\frac{1}{n!}V_{\mc{F}}(c_1,\dots,c_m)$ clearly holds. Notice that the form $Q_a$ may be degenerate (see Example \ref{ex:cp2cp2}).

\end{rem}

\begin{pr}\label{pr:corr}
Let $K=\Z,\Q$ or $\R$. Then there is a bijective correspondence between $K$-linear functions $L: H^{2k}(M^{2n};K)\to K$ and homogeneous $k$-forms $Q_a: H^{2}(M^{2n};K)\to K$, $a\in H^{2(n-k)}(M^{2n};K)$. 
\end{pr}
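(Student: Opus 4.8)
The plan is to realise the assignment $a\mapsto Q_a$ as the composition of Poincaré duality with ``evaluation on $k$-th powers''. First I would set up Poincaré duality. Since $M^{2n}$ is closed and orientable and, by Theorem \ref{thm:coh}, $H^*(M^{2n};\Z)$ is torsion-free with vanishing odd-degree groups (so $H_*(M^{2n};\Z)$ is free by the universal coefficient theorem), the cup-product pairing
\[
H^{2(n-k)}(M^{2n};K)\times H^{2k}(M^{2n};K)\to K,\qquad (a,c)\mapsto\la a\cdot c,[M^{2n}]\ra,
\]
is perfect for $K=\Z$, and hence (tensoring with $K$) for $K=\Q$ and $K=\R$ as well. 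Thus $a\mapsto L_a$, where $L_a(c):=\la a\cdot c,[M^{2n}]\ra$, is a $K$-linear isomorphism $PD\colon H^{2(n-k)}(M^{2n};K)\xrightarrow{\,\sim\,}\operatorname{Hom}_K\big(H^{2k}(M^{2n};K),K\big)$; in particular every $K$-linear function $L$ on $H^{2k}(M^{2n};K)$ equals $L_a$ for a unique $a$.

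Next, directly from the definition of $Q_a$ one has $Q_a(x)=\la x^k\cdot a,[M^{2n}]\ra=L_a(x^k)$ for all $x\in H^2(M^{2n};K)$; in particular $Q_a$ is indeed homogeneous of degree $k$. Hence the assignment $a\mapsto Q_a$ factors as $PD$ followed by the restriction map $\rho\colon L\mapsto\big(x\mapsto L(x^k)\big)$. If $\rho$ is injective, then $a\mapsto Q_a$ is injective, so it is a bijection from $H^{2(n-k)}(M^{2n};K)$ onto $\{Q_a\mid a\in H^{2(n-k)}(M^{2n};K)\}$; composing with $PD^{-1}$ then yields the asserted bijection $L\mapsto Q_{PD^{-1}(L)}$ between $K$-linear functions on $H^{2k}(M^{2n};K)$ and the set of forms $Q_a$ (its inverse being $Q_a\mapsto L_a$, well defined precisely because $a\mapsto Q_a$ is injective). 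So everything reduces to the injectivity of $\rho$.

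Finally I would prove that $\rho$ is injective, i.e.\ that a $K$-linear $L$ with $L(x^k)=0$ for every $x\in H^2(M^{2n};K)$ is identically zero. By Corollary \ref{cor:genpow} the classes $x^k$ with $x\in H^2(M^{2n};\Z)$ span $H^{2k}(M^{2n};\Q)$ over $\Q$, and hence also span $H^{2k}(M^{2n};\R)=H^{2k}(M^{2n};\Q)\otimes_\Q\R$ over $\R$; for $K=\Q$ or $K=\R$ the function $L$ therefore kills a spanning set and so $L=0$. For $K=\Z$ the induced map $L\otimes\Q\colon H^{2k}(M^{2n};\Q)\to\Q$ kills the same spanning set, hence vanishes; since $H^{2k}(M^{2n};\Z)$ is torsion-free (Theorem \ref{thm:coh}) it embeds into its rationalisation, whence $L=0$ again. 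The only inputs beyond routine bookkeeping are the perfectness of the Poincaré pairing over $\Z$ (resting on the torsion-freeness in Theorem \ref{thm:coh}) and Corollary \ref{cor:genpow}; with those in hand the argument is purely formal, so I do not expect a serious obstacle here.
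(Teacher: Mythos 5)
Your argument is correct and follows essentially the same route as the paper: Poincar\'e duality identifies elements $a\in H^{2(n-k)}(M^{2n};K)$ with linear functions on $H^{2k}(M^{2n};K)$, and Corollary \ref{cor:genpow} (powers $x^k$ spanning $H^{2k}$ rationally) gives the injectivity needed for the bijection. Your write-up merely spells out the perfectness of the pairing and the $K=\Z,\R$ cases, which the paper handles only for $K=\Q$ and leaves implicit.
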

\begin{proof}
We give the proof for the case of $K=\Q$. The Poincar\'e duality implies that any $\Q$-linear function $L: H^{2k}(M^{2n};\Q)\to \Q$ has the form $L(*)=\la *\cdot a,[M^{2n}]\ra$ for some $a\in H^{2(n-k)}(M^{2n};\Q)$. Hence, Corollary \ref{cor:genpow} gives the required bijective correspondence.
\end{proof}

\begin{proof}[Proof of Theorem \ref{thm:crit}]
$\Rightarrow$. Assume the contrary. Then there exists $a\in H^{2(n-2k)}(M^{2n};\Z)$ s.t. the form $Q_a$ is non-trivial and semidefinite. Let $x\in H^{2}(M^{2n};\Z)$ be s.t. $Q_a(x)> 0$. Consider a complex linear vector bundle $\xi\to M^{2n}$ with $c_1(\xi)=x$. The stably inverse vector bundle $\alpha\to M^{2n}$ to $\xi$ is totally split by Theorem \ref{thm:equiv}: $\alpha=\bigoplus_{i=1}^{l}\alpha_i$ for some $\alpha_i\to M^{2n}$. Let $a_i:=c_1(\alpha_i)$. Apply the Chern character map to the identity $l+1-\xi=\alpha$ in $K(M^{2n})$. The respective $4k$-component of the obtained identity is
\[
-x^{2k}=\sum_{i=1}^{l}a_i^{2k}.
\]
Now multiply the left and right parts of the last identity by $a$, then couple the obtained identity with $[M^{2n}]$ to get
\[
-Q_a(x)=\sum_{i=1}Q_a(a_i).
\]
But the latter contradicts the semidefiniteness of $Q_a$.

%
%
$\Leftarrow$. Assume the contrary. Then Corollary \ref{cor:rat} and Lemma \ref{lm:basis} imply that there exists a linear function $H: K(M^{2n})\otimes\R\to \R$ s.t. $H\not\equiv 0,\ H(1)=0$ and $H(\underline{\theta}^{\underline{v}})\in\Q_{\geq 0}$ for any $\underline{v}\in\Z^m$. Define the linear function $L:\ H^*(M^{2n};\R)\to\R$ by the formula
\[
L(x):=H(ch^{-1}(x)+\overline{ch^{-1}(x)}),\ x\in H^*(M^{2n};\R),
\]
where the bar denotes complex conjugation in $K$-theory. Notice, that $L(1)=0$ and $L|_{H^{2(2k+1)}(M^{2n};\R)}\equiv 0$ for any $k=0,\dots,[n/2]$. Suppose that $L\equiv 0$. Substituting $x=ch(\underline{\theta}^{\underline{v}})$ one obtains
\[
H(\underline{\theta}^{\underline{v}})=-H(\underline{\theta}^{-\underline{v}}),
\]
for any $\underline{v}\in\Z^m$. Then $H\equiv 0$ --- a contradiction. Hence, $L\not\equiv 0$.

Let $k$ be the greatest integer s.t. $L|_{H^{2k}(M^{2n};\R)}\not\equiv 0$. It follows from the definition of Chern character and $L\not\equiv 0$ that $k>0$ is even. Due to Proposition \ref{pr:corr}, the linear function $L|_{H^{2k}(M^{2n};\Q)}$ gives a non-zero $k$-form $Q:\ H^{2}(M^{2n};\Q)\to\Q$ of even degree. Using its homogenity w.l.g. we may assume that $Q$ is integer. By the condition, this form is non-semidefinite. Hence, there exists an element $x\in H^2(M^{2n};\Z)$ s.t. $Q(x)=L(x^k)<0$. Let $\xi\to M^{2n}$ be a complex linear vector bundle s.t. $c_1(\xi)=x$. Then one has
\[
0\leq H(\xi^a+\xi^{-a})=L(ch(\xi^a))=\sum_{i=1}^{k/2} \frac{a^{2i}}{(2i)!} L(x^{2i}),\ a\in\Z.
\]
Hence,
\[
0\leq \lim_{a\to+\infty}L(ch(\xi^a))=L(x^k)\cdot(+\infty)=-\infty
\]
--- a contradiction. Q.E.D.

\end{proof}

\begin{rem}
$\Q$- and $\R$-analogues of Theorem \ref{thm:crit} clearly take place.
\end{rem}

\begin{ex}\label{ex:cp2cp2}
Let $M^8=\C P^2\times \C P^2$. The respective cohomology ring is $H^*(M^8;\Q)\simeq\Q[x,y]/(x^3,y^3)$, where $x,y$ are the first Chern classes of the pull-backs of the dual to the tautological bundles over the respective factors in $\C P^2\times \C P^2$. We apply Theorem \ref{thm:crit} to show that $M^8$ is not a TNS-manifold. The $4$-form $Q_1$ is positive semidefinite: $Q_1(ax+by)=6a^2b^2$, where $a,b\in\Q$. (Notice that the intersection form of $M^8$ is non-definite: $\sigma(M^8)=1<3=\dim H^{2}(M^8;\R)$.) Any element of $H^4(M^8;\Q)$ has the form $ax^2+bxy+cy^2$, where $a,b,c\in\Q$. The quadratic form $Q_{ax^2+bxy+cy^2}$ has matrix
\[
\begin{pmatrix}
c & b\\
b & a
\end{pmatrix}
\]
in the basis $x,y\in H^2(M^8;\Q)$. Clearly, $Q_{x^2+y^2}$ is a positive-definite form.
\end{ex}

\begin{cor}\label{cor:powers}
Let $M^{2n}$ be a quasitoric manifold of dimension $2n$. Then $M^{2n}$ is a TNS-manifold iff for any $0<k\leq n/2$ $H^{2k}(M^{2n};\R)=ch_{2k}(S(M^{2n}))=\R_{\geq 0}\la x^{2k}|\ x\in H^{2}(M^{2n};\Z)\ra$.
\end{cor}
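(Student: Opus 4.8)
The plan is to deduce Corollary \ref{cor:powers} from the $\R$-analogue of Theorem \ref{thm:crit} by reinterpreting, for each fixed $0<k\le n/2$, the fullness of the power cone $\kappa_k:=\R_{\geq 0}\la x^{2k}\mid x\in H^2(M^{2n};\Z)\ra$, which lives in $H^{4k}(M^{2n};\R)$ (the cohomology group containing the classes $x^{2k}$, $x\in H^2(M^{2n})$), both as a statement about $ch_{2k}(S(M^{2n}))$ and as a statement about the forms $Q_a$. Abbreviate $V=H^2(M^{2n};\R)$.

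First I would pin down $ch_{2k}(S(M^{2n}))$. Complex line bundles over $M^{2n}$ are classified by $H^2(M^{2n};\Z)$, so $C(M^{2n})$ is the sub-semigroup of $\widetilde K(M^{2n})$ generated by all $[\xi]-1$ with $c_1(\xi)$ running over $H^2(M^{2n};\Z)$. By Theorem \ref{thm:lat} the Chern character is a linear isomorphism, hence a homeomorphism, from $\widetilde K(M^{2n})\otimes\R$ onto $\bigoplus_{j\geq 1}H^{2j}(M^{2n};\R)$; it therefore carries $S(M^{2n})=\overline{\conconv C(M^{2n})}$ onto $\overline{\conconv\{e^{x}-1\mid x\in H^2(M^{2n};\Z)\}}$. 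Composing with the continuous linear projection $u\mapsto ch_{2k}(u)$ and using $ch_{2k}(e^{x}-1)=x^{2k}/(2k)!$ gives the chain $\kappa_k\subseteq ch_{2k}(S(M^{2n}))\subseteq\overline{\kappa_k}$. Since the classes $x^{2k}$, $x\in H^2(M^{2n};\Z)$, span $H^{4k}(M^{2n};\R)$ by Corollary \ref{cor:genpow}, the cone $\kappa_k$ is full-dimensional, so $\kappa_k=H^{4k}(M^{2n};\R)$ is equivalent to $\overline{\kappa_k}=H^{4k}(M^{2n};\R)$; together with the displayed inclusions this makes the three conditions $\kappa_k=H^{4k}(M^{2n};\R)$, $ch_{2k}(S(M^{2n}))=H^{4k}(M^{2n};\R)$ and ``all three sets in the statement coincide'' equivalent.

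Next I would translate $\kappa_k=H^{4k}(M^{2n};\R)$ into admissibility. A full-dimensional convex cone equals the ambient space exactly when no nonzero linear functional on that space is nonnegative on all of its generators. By Poincar\'e duality every linear functional on $H^{4k}(M^{2n};\R)$ has the form $u\mapsto\la u\cdot a,[M^{2n}]\ra$ for a unique $a\in H^{2(n-2k)}(M^{2n};\R)$, it is nonzero iff $a\neq 0$, and on $x^{2k}$ it equals $Q_a(x)$; hence such a functional is nonnegative on all $x^{2k}$ iff $Q_a$ is positive semidefinite, and replacing $a$ by $-a$ covers the negative semidefinite case. So $\kappa_k=H^{4k}(M^{2n};\R)$ holds iff $Q_a$ is admissible for every nonzero $a\in H^{2(n-2k)}(M^{2n};\R)$. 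Running this equivalence over all $0<k\le n/2$ and invoking the $\R$-analogue of Theorem \ref{thm:crit} yields the asserted criterion for the TNS-property.

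I expect the delicate point to be the convex-geometric bookkeeping in the first step rather than anything conceptual. The image of the closed cone $ch(S(M^{2n}))$ under the degree-$4k$ projection need not be closed, which is precisely why one argues through the two-sided inclusion $\kappa_k\subseteq ch_{2k}(S(M^{2n}))\subseteq\overline{\kappa_k}$ and not through an equality; one also wants to note that $\kappa_k$ has the same closure as its real counterpart $\R_{\geq 0}\la x^{2k}\mid x\in V\ra$ --- by density of $H^2(M^{2n};\Q)$ in $V$ and the homogeneity $(tx)^{2k}=t^{2k}x^{2k}$ --- so that the Poincar\'e-duality step can legitimately be carried out over $\R$. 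The remaining ingredients, namely the Chern-character description of $S(M^{2n})$, the duality computation of linear functionals on $H^{4k}(M^{2n};\R)$, and the final appeal to Theorem \ref{thm:crit}, are routine. If one prefers not to quote the $\R$-analogue of Theorem \ref{thm:crit}, the same conclusion follows by combining its integral form with Corollary \ref{cor:rat}, which supplies a rational supporting hyperplane of $S(M^{2n})$ whenever that cone is proper, at the price of repeating part of the proof of Theorem \ref{thm:crit}.
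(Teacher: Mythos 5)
Your argument is correct and is essentially the paper's intended route: the corollary is stated there without proof as a direct consequence of (the $\R$-analogue of) Theorem \ref{thm:crit} together with Proposition \ref{pr:corr}, Corollary \ref{cor:genpow} and the sandwich $\kappa_k\subseteq ch_{2k}(S(M^{2n}))\subseteq\overline{\kappa_k}$, which is exactly what you assemble, including the needed full-dimensionality and density/homogeneity remarks. Your silent reading of the statement's $H^{2k}(M^{2n};\R)$ as $H^{4k}(M^{2n};\R)$ is the correct one, consistent with the paper's own use of the corollary in the proof of Theorem \ref{thm:comp}.
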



\section{TNS-manifolds in low dimensions}
The following Section is devoted to the study of the quasitoric TNS-manifolds in dimensions $4,6$. A reduction of Theorem \ref{thm:crit} to finitely many quadratic forms for $6$-folds is given in Subsection \ref{ssec:6folds}.
\subsection{Quasitoric TNS $4$-folds}\label{ssec:4folds}

%

Here is a complete characterisation of smooth projective toric TNS-surfaces.

\begin{pr}\label{pr:class}
Let $M^4$ be a smooth projective toric surface with the moment polygone $P^2\subset \R^2$. Then $M^4$ is a TNS-manifold iff $P^2$ is distinct from the triangle $\Delta^2$.
\end{pr}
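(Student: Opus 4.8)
The plan is to apply Theorem~\ref{thm:lannes} (Lannes's signature criterion), since every smooth projective toric surface is in particular a simply connected closed stably complex $4$-manifold. By part (b) of that theorem, if $P^2 = \Delta^2$ then $M^4 = \C P^2$, whose intersection form is (up to sign) $(1)$, hence definite, so $M^4$ is not TNS. This gives the ``only if'' direction immediately. For the ``if'' direction, I would show that whenever the moment polygon $P^2$ is not the Delzant triangle, the intersection form of $M^4$ on $H^2(M^4;\R)$ is non-definite, and then invoke part (a) of Theorem~\ref{thm:lannes}.

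For the non-definiteness claim, first I would reduce to the number $m$ of facets of $P^2$. Since $M^4$ is a toric surface, $\dim_\R H^2(M^4;\R) = m - 2$, so if $P^2 \neq \Delta^2$ then $m \geq 4$. Next I would use the standard description of the intersection form of a smooth toric surface in terms of the self-intersection numbers of the torus-invariant divisors $D_1,\dots,D_m$ (the curves over the edges of $P^2$). Ordering the edges cyclically, one has the relations $D_{i-1}\cdot D_i = 1$ and, since the $D_i$ are smooth rational curves whose normal-crossing cycle closes up, $\sum_i D_i$ is (up to the linear equivalences coming from $J_H$) trivial; combined with the classical formula $\sum_{i=1}^m a_i = 3m - 12$ relating the self-intersections $a_i = D_i\cdot D_i$ (equivalently $\chi_{\mathrm{top}}(M^4) = m$ and $c_1^2 = 12 - (m)$ reasoning, or directly Noether's formula $c_1^2 + c_2 = 12\chi(\mathcal O)$). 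When $m \geq 4$ this forces at least one $a_i \leq 1$; in fact a short combinatorial argument shows that for $m\geq 4$ there is always an edge with $D_i\cdot D_i \leq 1$ \emph{and} another invariant curve class on which the form is negative, or more cleanly: the curve class $D_i$ for which $a_i \le 0$ gives $Q(D_i) \le 0$ while some other $D_j$ (or a primitive combination) gives $Q > 0$, so the form takes both signs.

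The cleanest route, and the one I would actually write, is to argue via invariant subvarieties rather than re-deriving the intersection form: by the facts recalled in the introduction, a quasitoric TNS-manifold cannot have a triangular face, and any smooth projective toric surface over a polygon with $m \geq 4$ edges is obtained from a Hirzebruch surface (a Bott tower of height $2$) by successive equivariant blow-ups at fixed points --- each such blow-up replacing a vertex by an edge --- so it is a TNS-manifold by the blow-up stability statement (Proposition~\ref{pr:blowup}), while Hirzebruch surfaces are Bott towers and hence TNS. Thus for $m \ge 4$, $M^4$ is TNS, and for $m = 3$ (i.e. $P^2 = \Delta^2$) it is $\C P^2$, which is not. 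The main obstacle is making precise the ``Hirzebruch surface plus blow-ups'' decomposition of an arbitrary smooth projective toric surface and checking that the blow-up centers are indeed torus-fixed points of codimension handled by Proposition~\ref{pr:blowup}; this is classical toric geometry (every smooth complete toric surface fan is obtained from a fan of a Hirzebruch or projective plane by star subdivisions at cones), so I expect it to be routine but slightly lengthy to state carefully. Alternatively, if one prefers to stay purely within Theorem~\ref{thm:lannes}, the obstacle becomes the explicit sign analysis of the self-intersection sequence $(a_1,\dots,a_m)$ with $\sum a_i = 3m - 12$, which for $m \ge 4$ always contains a non-positive entry and hence makes the form non-definite.
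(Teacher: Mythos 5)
Your main route (blow down to a minimal model, then use blow-up stability) is correct but genuinely different from the paper's argument. The paper stays entirely inside Theorem \ref{thm:lannes}: by Lannes, TNS fails exactly when the intersection form is definite, i.e. when $|\sigma(M^4)|=\dim H^2(M^4;\R)$; since $\dim H^2(M^4;\R)=m-2$ and the standard signature formula for (quasi)toric $4$-manifolds (\cite[Section 9.5]{bu-pa-15}, giving $\sigma=4-m$ here) forces $|\sigma(M^4)|<m-2$ as soon as $m\geq 4$, definiteness occurs only for $m=3$, and the only smooth projective toric surface over the triangle is $\C P^2$ --- three lines, no structure theory needed. Your preferred route instead invokes the classical fact that every smooth complete toric surface with $m\geq 4$ edges is an iterated equivariant blow-up at fixed points of a Hirzebruch surface, then applies Proposition \ref{pr:blowup} (a vertex truncation is exactly a codimension-$4$ characteristic blow-up) together with the TNS-property of Bott towers recalled in the introduction; this is non-circular (neither Proposition \ref{pr:blowup} nor the Bott-tower fact uses Proposition \ref{pr:class}) and is close in spirit to the blow-up constructions of \cite{so-17} cited in the introduction, but it imports the classification of minimal toric surfaces, which the paper's signature count avoids. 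What your route buys is independence from Lannes' theorem for the ``if'' direction; what the paper's buys is brevity and a single numerical criterion. One correction to your fallback route via self-intersections: the relation is $\sum_i a_i = 12-3m$, not $3m-12$ (check $\C P^2$: $m=3$, $\sum a_i=3$); with the right sign, $m\geq 4$ gives some $a_i\leq 0$, and combined with positivity on an ample class and nondegeneracy of the intersection form this does yield indefiniteness --- but at that point you have essentially re-derived the paper's signature argument in a longer form.
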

\begin{proof}
Remind that the polygone $P^2$ has $m$ edges. Due to Theorem \ref{thm:lannes} one has to check that the equality $|\sigma(M^4)|=\dim H^2(M^4;\R)$ holds iff $P^2=\Delta^2$. Due to the well-known formula of the signature and Euler characteristic of a toric manifold (e.g. \cite[Section 9.5]{bu-pa-15}), one has $|\sigma(M^4)|=|4-\dim H^2(M^4;\R)|\leq \dim H^2(M^4;\R),\ \dim H^2(M^4;\R)=m-2$. Clearly, the equality holds here iff $m=3$. It remains to notice that the only smooth projective toric surface over the triangle is $\C P^2$.
\end{proof}

Quasitoric non-TNS manifolds are more diverse starting from dimension $4$ and so on. A straight-forward computation implies the following

\begin{pr}\label{pr:nontns}
Let $M^4$ be a quasitoric non-TNS $4$-fold. Suppose that the moment polygone of $M^4$ is a $4$-gon. Then the characteristic matrix of $M^4$ is $GL_{2}(\Z)$-equivalent to
\[
\begin{pmatrix}
Id_2 & A
\end{pmatrix},
\]
where $A$ is the one of the following matrices:
\[
\begin{pmatrix}
1 & 2\\
1 & 1
\end{pmatrix},\
\begin{pmatrix}
1 & -2\\
1 & -1
\end{pmatrix},\
\begin{pmatrix}
1 & -2\\
-1 & 1
\end{pmatrix},\
\begin{pmatrix}
1 & 2\\
-1 & -1
\end{pmatrix}.
\]
These manifolds have two different oriented diffeomorphism classes.
\end{pr}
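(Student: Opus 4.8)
The plan is to put the characteristic matrix into a normal form, read the intersection form off the cohomology ring, apply Lannes's theorem (Theorem~\ref{thm:lannes}), and finish with a finite enumeration. I would first fix a cyclic labelling $F_1,\dots,F_4$ of the edges of the quadrilateral, so that $F_1\cap F_3=F_2\cap F_4=\varnothing$ are the only empty intersections. Since $F_1,F_2$ are adjacent, the $2\times 2$ matrix $(\lambda_1\mid\lambda_2)$ of their characteristic vectors lies in $GL_2(\Z)$, so after multiplying $\Lambda$ on the left by $(\lambda_1\mid\lambda_2)^{-1}$ I may assume $\Lambda=(Id_2\mid A)$ with $A=\begin{pmatrix}a&b\\c&d\end{pmatrix}$. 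The three remaining vertex conditions, that each of $\det(\lambda_2\mid\lambda_3),\det(\lambda_3\mid\lambda_4),\det(\lambda_4\mid\lambda_1)$ lies in $\{\pm1\}$, then read $a=\pm1$, $d=\pm1$, $ad-bc=\pm1$, which already force $abcd\in\{0,2\}$.

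Next I would compute $H^*(M^4;\Z)$ using Theorem~\ref{thm:coh}: the ideal $J_H$ gives $x_1=-ax_3-bx_4$ and $x_2=-cx_3-dx_4$, so $H^2(M^4;\Z)$ is free on $x_3,x_4$, while $I_H=(x_1x_3,x_2x_4)$ yields the relations $x_3^2=-ab\,x_3x_4$ and $x_4^2=-cd\,x_3x_4$ in $H^4(M^4;\Z)$. Writing $\langle x_3x_4,[M^4]\rangle=\varepsilon\in\{\pm1\}$, the intersection form in the basis $x_3,x_4$ equals $\varepsilon\begin{pmatrix}-ab&1\\1&-cd\end{pmatrix}$, whose determinant is $abcd-1$; it is definite exactly when $abcd-1>0$, i.e. when $abcd=2$. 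By Theorem~\ref{thm:lannes}, $M^4$ is non-TNS if and only if $abcd=2$.

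The remaining task is to enumerate the quadruples with $a,d=\pm1$, $ad-bc=\pm1$ and $abcd=2$ and reduce them modulo the symmetries at hand: lattice changes preserving the normal form, the dihedral symmetry of the square (a relabelling of the $F_i$), and facet reorientations $\lambda_i\mapsto-\lambda_i$ (which leave $M^4$ unchanged). Here $\{|b|,|c|\}=\{1,2\}$; the $90^{\circ}$ rotation of the square interchanges $\det(\lambda_1\mid\lambda_3)=c$ with $\det(\lambda_2\mid\lambda_4)=-b$, so after a further $GL_2(\Z)$-renormalisation I may assume $|b|=2$, $|c|=1$. The condition $(ad)(bc)=2$ then forces $ad$ and $bc$ to have equal sign, which leaves eight matrices; the $180^{\circ}$ rotation pairs these off so that one may further take $a=1$, leaving exactly the four matrices in the statement.

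Finally, over $\Z$ the form $\begin{pmatrix}-ab&1\\1&-cd\end{pmatrix}$ with $abcd=2$ is $\langle1\rangle\oplus\langle1\rangle$ when its trace is positive and $\langle-1\rangle\oplus\langle-1\rangle$ otherwise, so $\sigma(M^4)=\pm2$; inspecting the four matrices shows that both values occur, which gives at least two oriented diffeomorphism classes, while (using that a quasitoric $4$-manifold is a connected sum of copies of $\C P^2$, $\ol{\C P^2}$ and $S^2\times S^2$) one with definite rank-$2$ intersection form must be $\C P^2\sharp\C P^2$ or $\ol{\C P^2}\sharp\ol{\C P^2}$, so exactly two classes occur. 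I expect the main obstacle to be the combinatorial bookkeeping in the reduction step, together with keeping careful enough track of orientations (equivalently, of the sign $\varepsilon$, or of which combinatorial moves are orientation-reversing) to conclude that precisely two oriented diffeomorphism types appear and not just one.
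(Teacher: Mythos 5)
Your proof is correct, and it is precisely the ``straight-forward computation'' that the paper asserts without writing out: normalise $\Lambda$ to $(Id_2\mid A)$, read the intersection form $\varepsilon\begin{pmatrix}-ab&1\\1&-cd\end{pmatrix}$ off Theorem \ref{thm:coh}, invoke Theorem \ref{thm:lannes} to get non-TNS $\Leftrightarrow abcd=2$, and reduce the sixteen solutions of $a,d=\pm1$, $bc=2ad$ to the four listed matrices by cyclic relabelling (the $90^{\circ}$ rotation swapping $|b|,|c|$ and the $180^{\circ}$ rotation $A\mapsto A^{-1}$) followed by renormalisation. The one loose end you flag --- the sign $\varepsilon$ --- is settled by formula \eqref{eq:sign}: $\varepsilon$ equals $\det A$ up to a sign depending only on the labelling conventions and not on $A$, so the matrices with $(\text{sign of definiteness})\cdot\det A=+1$ (the first and second) and those with $-1$ (the third and fourth) give opposite signatures $\pm2$, and the Orlik--Raymond type classification you cite then yields exactly the two oriented diffeomorphism classes $\C P^2\sharp\C P^2$ and $\ol{\C P^2}\sharp\ol{\C P^2}$.
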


\subsection{Quasitoric TNS $6$-folds}\label{ssec:6folds}

Consider a quasitoric manifold $M^6$.

\begin{pr}\label{pr:poly}
The cone $S(M^6)$ is polyhedral.
\end{pr}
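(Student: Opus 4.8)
The plan is to realise $S(M^6)$ as the image under a linear map of the cone $S((\C P^3)^m)$, and to show that the latter is polyhedral. By the discussion preceding Lemma \ref{lm:basis}, one has $S(M^6)=R(S((\C P^3)^m))$, where $R\colon K((\C P^3)^m)\otimes\R\to K(M^6)\otimes\R$ is linear; since a linear image of a polyhedral cone is polyhedral, it suffices to prove that $S((\C P^3)^m)$ is polyhedral. By Proposition \ref{pr:scpmultdesc}, $S((\C P^3)^m)=S(\C P^3)*\cdots*S(\C P^3)$, so by Definition \ref{def:cone} (which identifies the $*$-product with the tensor product after adjoining a coordinate, $(\sigma*\tau,1)=(\sigma,1)\otimes(\tau,1)$) the real work is to show that $S(\C P^3)$ is polyhedral and that the $*$-product (equivalently, tensor product) of finitely many polyhedral cones is again polyhedral.

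First I would settle $S(\C P^3)$: since $n=3$ is odd, Proposition \ref{pr:scone} gives $S(\C P^3)=\R_{\geq 0}\la (x-1)^3,-(x-1)^3,S(\C P^2)\ra$ and $S(\C P^2)=\R_{\geq 0}\la (x-1),(\bar x-1)\ra$, so $S(\C P^3)$ is finitely generated, hence polyhedral by Minkowski--Weyl. Next I would prove the stability of polyhedrality under $*$: by associativity it is enough to treat two factors, and by Proposition \ref{pr:salcon} and Lemma \ref{lm:tensspace} one may reduce to the case of salient cones (the lineality part contributes a linear summand, which is harmless). For salient polyhedral cones $\sigma=\conconv\{u_1,\dots,u_p\}$, $\tau=\conconv\{w_1,\dots,w_q\}$, the tensor product $\sigma\otimes\tau$ is generated by the finitely many vectors $u_i\otimes w_j$ — indeed any $u\otimes v$ with $u\in\sigma$, $v\in\tau$ expands bilinearly into a nonnegative combination of the $u_i\otimes w_j$ — so $\sigma\otimes\tau$ is again finitely generated, hence polyhedral; passing back to the $*$-product via $(\sigma*\tau,1)=(\sigma,1)\otimes(\tau,1)$ and projecting off the auxiliary coordinate preserves polyhedrality.

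The main obstacle is the reduction to the salient case in the tensor-product step: Proposition \ref{pr:tenscone} describes $\lin(\sigma\otimes\tau)$ explicitly, but one must check that writing $\sigma=\lin\sigma+\sigma'$ and $\tau=\lin\tau+\tau'$ with $\sigma',\tau'$ salient (Proposition \ref{pr:salcon}) yields, via Lemma \ref{lm:tensspace}, a decomposition of $\sigma\otimes\tau$ whose "salient part" $\sigma'\otimes\tau'$ is genuinely salient and finitely generated, while the remaining summands are linear subspaces spanned by the finitely many tensors of generators — so the whole cone is a polyhedral cone plus a linear subspace, hence polyhedral. Once this bookkeeping is done, combining it with the polyhedrality of $S(\C P^3)$, the iterated $*$-product formula of Proposition \ref{pr:scpmultdesc}, and the linear-image stability gives that $S(M^6)$ is polyhedral. $\Box$
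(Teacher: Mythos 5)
Your proposal is correct and takes essentially the same route as the paper, whose proof simply invokes Proposition \ref{pr:scone}, Proposition \ref{pr:scpmultdesc} and the identity $S(M^6)=R(S((\C P^3)^m))$; you merely spell out the finite generation of $S(\C P^3)$ (odd case of Proposition \ref{pr:scone}), of the tensor/$*$-product, and the stability of polyhedrality under linear images. The reduction to salient cones is unnecessary --- the bilinear expansion of $u\otimes v$ in the generators works for arbitrary finitely generated cones, the lineality part being finitely generated as well --- but it is harmless.
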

\begin{proof}
Follows immediately from Propositions \ref{pr:scone}, \ref{pr:scpmultdesc} and surjectivity of the map $R$.
\end{proof}

The analogue of Proposition \ref{pr:poly} in higher dimensions does not hold, generally speaking. Due to Corollary \ref{cor:cart}, it is enough to give an example of a $8$-quasitoric manifold $M^8$ with non-polyhedral cone $S(M^8)$.

\begin{ex}
Let $M^8=\C P^2\times\C P^2$. In the denotations of Example \ref{ex:cp2cp2},\\
$ch_2(S(M^8))=\R_{\geq 0}\la (ax+by)^2|\ a,b\in\R\ra\subset H^4(M^8;\R)$. The latter cone becomes the cone $\sigma=\R_{\geq 0}\la (1,0,0),(t^2,t,1)|\ t\in\R\ra$ after the suitable coordinate change in $H^4(M^8;\R)\simeq\R^3$. The cone $\sigma$ is not polyhedral. Hence, $S(M^8)$ is not polyhedral, as well.
\end{ex}

\begin{lm}\label{lm:chid}
For any classes $\alpha_1,\dots,\alpha_k\in K(M^6)$ of complex linear vector bundles over $M^6$ the identity
\[
ch_2((\alpha_{1}-1)*\dots*(\alpha_{k}-1))=\frac{1}{2}(\sum_{i=1}^k c_1(\alpha_i))^2,
\]
holds.
\end{lm}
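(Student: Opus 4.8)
The plan is to reduce the identity to a purely $K$-theoretic computation and then apply the Chern character degreewise. First I would invoke the multiplicativity formula \eqref{eq:*mult} from Lemma \ref{lm:*id}, which gives the closed-form expression $(\alpha_1-1)*\cdots*(\alpha_k-1)=\alpha_1\cdots\alpha_k-1$ in $K(M^6)$. Thus the left-hand side equals $ch_2(\alpha_1\cdots\alpha_k-1)$, i.e. the degree-$4$ component of $ch(\alpha_1\cdots\alpha_k)$ (the constant term is killed by passing to $\alpha_1\cdots\alpha_k-1$, and there is no degree-$2$ contribution to track here). Setting $\beta:=\alpha_1\cdots\alpha_k$, which is again a complex linear bundle, I would use $c_1(\beta)=\sum_{i=1}^k c_1(\alpha_i)$ (first Chern class is additive under tensor product of line bundles).

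Next I would compute $ch(\beta)$ for a line bundle. By definition $ch(\beta)=e^{c_1(\beta)}=1+c_1(\beta)+\tfrac{1}{2}c_1(\beta)^2+\cdots$, so the degree-$4$ part is exactly $ch_2(\beta)=\tfrac12 c_1(\beta)^2$. Combining this with $c_1(\beta)=\sum_i c_1(\alpha_i)$ yields
\[
ch_2\bigl((\alpha_1-1)*\cdots*(\alpha_k-1)\bigr)=ch_2(\beta)=\tfrac12\Bigl(\sum_{i=1}^k c_1(\alpha_i)\Bigr)^2,
\]
which is the claimed formula. Strictly speaking one should note that higher-degree terms $ch_j(\beta)$ for $j\geq 3$ are irrelevant since we only extract the degree-$4$ component, and that over $M^6$ there is no issue with convergence of the exponential series since $H^*(M^6)$ is concentrated in degrees $\leq 6$.

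I do not expect a serious obstacle: the only subtlety is bookkeeping the grading of the Chern character correctly (making sure the subscript convention $ch_2$ refers to the degree-$2\cdot 2$ piece, consistent with its use in Lemma \ref{lm:*id} and elsewhere) and confirming that the $*$-product formula \eqref{eq:*mult} applies verbatim to classes of linear bundles rather than the bundles themselves — but this is immediate since $*$ is defined on $K$-theory classes. So the proof is essentially: apply \eqref{eq:*mult}, then apply $ch$ and read off the degree-$4$ part using $ch=\exp(c_1)$ for line bundles, together with additivity of $c_1$ under tensor product.
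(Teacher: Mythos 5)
Your proof is correct, and it takes a slightly different route from the paper's. The paper uses the first identity of Lemma \ref{lm:*id}, expanding $(\alpha_1-1)*\cdots*(\alpha_k-1)$ as the sum $\sum_{q}\sum_{i_1<\dots<i_q}(\alpha_{i_1}-1)\cdots(\alpha_{i_q}-1)$ and then computing $ch_2$ term by term: only the $q=1$ terms (contributing $\tfrac12 c_1(\alpha_i)^2$) and the $q=2$ terms (contributing $c_1(\alpha_i)c_1(\alpha_j)$) survive in degree $4$, and these assemble into $\tfrac12\bigl(\sum_i c_1(\alpha_i)\bigr)^2$. You instead use the closed form \eqref{eq:*mult}, $(\alpha_1-1)*\cdots*(\alpha_k-1)=\alpha_1\cdots\alpha_k-1$, and then exploit that $\alpha_1\cdots\alpha_k$ is again (the class of) a line bundle, so $ch=\exp(c_1)$ together with additivity of $c_1$ under tensor product gives the degree-$4$ part directly. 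Both arguments are short and rest on the same Lemma \ref{lm:*id}; yours avoids the subset bookkeeping, is independent of the restriction to $\dim M=6$, and in fact computes every component $ch_j$ of the $*$-product at once, while the paper's version makes explicit exactly which terms of the expansion contribute in degree $4$, which is in the same spirit as the way $ch_2$ is manipulated elsewhere in Subsection \ref{ssec:6folds}.
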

\begin{proof}
By Lemma \ref{lm:*id}, one has the chain of identities:
\begin{multline}
ch_2((\alpha_{1}-1)*\dots*(\alpha_{k}-1))=ch_2(\sum_{q=1}^k \sum_{1\leq i_1<\dots< i_q\leq k} (\alpha_{i_1}-1)\cdots (\alpha_{i_q}-1))=\\
\frac{1}{2}\sum_{i=1}^k (c_1(\alpha_i))^2+\sum_{i<j} c_1(\alpha_i)c_1(\alpha_j)=\frac{1}{2}(\sum_{i=1}^k c_1(\alpha_i))^2.
\end{multline}
\end{proof}
\begin{thm}\label{thm:comp}
$M^6$ is a TNS-manifold iff
\[
H^4(M^6;\R)=\R_{\geq 0}\biggl\la \biggl(\sum_{i=1}^m a_i x_i\biggr)^2\bigg|\ a_i=-1,0,1;\ i=1,\dots,m\biggr\ra.
\]
\end{thm}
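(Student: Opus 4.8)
The plan is to combine the $6$-dimensional criterion from Corollary \ref{cor:powers} with the fact (Proposition \ref{pr:poly}) that $S(M^6)$ is polyhedral. By Corollary \ref{cor:powers} (for $n=3$, the only relevant value being $k=1$), $M^6$ is a TNS-manifold iff
\[
H^4(M^6;\R)=ch_2(S(M^6))=\R_{\geq 0}\la x^2\mid x\in H^2(M^6;\Z)\ra.
\]
So it suffices to show that the cone $ch_2(S(M^6))$ coincides with the finitely-generated cone $\R_{\geq 0}\la (\sum_i a_i x_i)^2\mid a_i\in\{-1,0,1\}\ra$ precisely when it equals all of $H^4(M^6;\R)$; equivalently, that if $ch_2(S(M^6))=H^4(M^6;\R)$ then already the finite list of generators $(\sum_i a_i x_i)^2$ with $a_i\in\{-1,0,1\}$ spans $H^4(M^6;\R)$ as a cone.

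First I would use $S(M^6)=R(S((\C P^3)^m))$ together with Propositions \ref{pr:scone} and \ref{pr:scpmultdesc}: since $n=3$ is odd, $S(\C P^3)=\R_{\geq 0}\la (x-1)^3,-(x-1)^3,S(\C P^2)\ra$ and $S(\C P^2)=\R_{\geq 0}\la x-1,\bar x-1\ra$. Applying $R$ and using $ch_2$, the classes $(x-1)^{\pm 3}$ and the bundles contributing degree $>4$ die, so $ch_2(S(M^6))$ is generated by the images under $ch_2$ of $*$-products of the generators $\theta_i-1$ and $\bar\theta_i-1$, i.e. of $(\theta_1^{v_1}\cdots\theta_m^{v_m}-1)$ with $v_i\in\{-1,0,1\}$ by Formula \eqref{eq:*mult}. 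By Lemma \ref{lm:chid}, $ch_2(\underline\theta^{\underline v}-1)=\tfrac12(\sum_i v_i x_i)^2$ for such $\underline v$. Hence
\[
ch_2(S(M^6))=\R_{\geq 0}\biggl\la\biggl(\sum_{i=1}^m a_i x_i\biggr)^2\ \bigg|\ a_i\in\{-1,0,1\}\biggr\ra,
\]
which is exactly the finitely generated cone on the right-hand side of the Theorem. (Strictly, I must also check that no further generators of $S((\C P^3)^m)$ survive $ch_2$: the $*$-product expansion in Lemma \ref{lm:*id} shows every element of $C((\C P^3)^m)$ is a $\Z_{\geq 0}$-combination of monomials $\underline y^{\underline v}-1$, and $ch_2$ kills those with $\sum|v_i|>2$ in degree $4$ only after restricting to $\underline v\in\{-1,0,1\}^m$ — so one argues that the image is still generated by the $v_i\in\{-1,0,1\}$ terms because higher powers $(y_i-1)^k$, $k\ge 3$, vanish and $ch_2((y_i-1)^2)=\tfrac12 x_i^2$ is already among the listed generators.)

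With this identification in hand, the Theorem is immediate: $M^6$ is TNS $\iff$ $ch_2(S(M^6))=H^4(M^6;\R)$ (Corollary \ref{cor:powers}) $\iff$ the explicitly computed cone $\R_{\geq 0}\la(\sum_i a_i x_i)^2\mid a_i\in\{-1,0,1\}\ra$ equals $H^4(M^6;\R)$, which is the asserted equivalence. The main obstacle I anticipate is the bookkeeping in the previous paragraph — carefully justifying that passing through the surjection $R$ and the truncation $ch_2$ really does reduce the (a priori infinite, since $\underline v$ ranges over $\Z^m$) set of conical generators $\tfrac12(\sum_i v_i x_i)^2$ to those with $v_i\in\{-1,0,1\}$. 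The key point making this work is that $ch_2$ only sees the degree-$4$ part, the relations $(\theta_i-1)^2\cdot(\theta_i-1)=0$-type nilpotency from Proposition \ref{pr:nilp} in the $\C P^3$ factors, and the fact that $*$-multiplying by $(\theta_i-1)$ more than once contributes nothing new after applying $ch_2$ to a $6$-manifold; once that is nailed down the statement follows with no further computation.
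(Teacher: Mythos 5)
Your proposal is correct and follows essentially the same route as the paper: Corollary \ref{cor:powers} (only $k=1$ for $n=3$), the description of $S(M^6)$ as the image under $R$ of the $*$-product of the cones $S(\C P^3)$ from Propositions \ref{pr:scone} and \ref{pr:scpmultdesc}, and Lemma \ref{lm:chid} to identify $ch_2$ of the generators with the squares $\tfrac12\bigl(\sum_i a_i x_i\bigr)^2$, $a_i\in\{-1,0,1\}$. The bookkeeping you worry about at the end is handled in the paper by the single observation that $ch_2\bigl(x*(\pm(\theta_i-1)^3)\bigr)=ch_2(x)$, so the cubic generators of each $S(\C P^3)$ factor simply do not affect the degree-$4$ image (your parenthetical reasoning about monomials with $\sum|v_i|>2$ is the only slightly garbled spot, but it is not needed once the cone generators are taken from Proposition \ref{pr:scone}).
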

\begin{proof}
Due to Corollary \ref{cor:powers} it is enough to prove that $ch_2(S(M^6))=\R\la (\sum_{i=1}^m a_i x_i)^2|\ a_i=-1,0,1,i=1,\dots,m\ra$. Corollary \ref{cor:ratprod} and Proposition \ref{pr:scone} imply that\\
$S(M^6)=\R\la t_{i_1}*\dots* t_{i_k}|\ 1\leq i_1<\dots<i_k\leq m,\ t_{i}=\theta_{i}^{\pm 1}-1, \pm (\theta_i-1)^3\ra$. Notice that for any $x\in K(M^6)$, $ch_2(x* (\pm(\theta_i-1)^3))=ch_2(x)$. It remains to use Lemma \ref{lm:chid}.
\end{proof}

\begin{rem}
Let $v$ be a vertex of the polytope $P^3$. W.l.g. assume that $x_1,x_2,x_3$ correspond to the facets of $P^3$ meeting at $v$. The relations in $H^*(M^6;\R)$ (see Theorem \ref{thm:coh}) imply that the right-hand side of the relation in Theorem \ref{thm:comp} coincides identically with
\[
\R_{\geq 0}\biggl\la \biggl(\sum_{i=1}^m a_i x_i\biggr)^2\bigg|\ a_i=-1,0,1;\ i=4,\dots,m\biggr\ra.
\]
This observation gives a further simplification of the TNS-criterion. Theorem \ref{thm:comp} may be deduced also from Proposition \ref{pr:poly}.
\end{rem}

\begin{ex}\label{ex:3-belt}
One can see that the cones from Theorem \ref{thm:comp} corresponding to toric manifolds with combinatorially equivalent moment polytopes are different. Let $P_1^3,P^3_2\subset \R^3$ be the convex polytopes of the same combinatorial type as shown in Fig. \ref{fig:pic} (namely, the connected sum of two cubes along vertices) with normal vectors
\[
\begin{pmatrix}
1 & 0 & 0 & 2 & 2 & 1 & 0 & 0 & -1\\
0 & 1 & 0 & 2 & 1 & 1 & 0 & -1 & -1\\
0 & 0 & 1 & 1 & 1 & 1 & -1 & -1 & -1
\end{pmatrix},
\begin{pmatrix}
1 & 0 & -1 & 1 & 1 & 0 & 1 & 1 & 0\\
0 & 1 & -1 & 1 & 0 & 0 & 1 & 0 & 0\\
0 & 0 & 0 & 1 & 1 & 1 & -1 & -1 & -1
\end{pmatrix},
\]
respectively. (Indices of the normal vectors of these polytopes are shown in Fig. \ref{fig:pic}.) Both of these polytopes are obtained from the corresponding triangular prisms by truncation of two vertices and two edges. One can easily check that $P_1^3,P^3_2$ are Delzant polytopes. Consider the (smooth projective) toric manifolds $M_1^6,M^6_2$ of complex dimension $3$ corresponding to the polytopes $P_1^3,P^3_2$, resp. Theorem \ref{thm:coh} implies that $x_8x_9,x_7x_9,x_7 x_8,x_6^2,x_5x_6,x_5^2$ and $x_9^2,x_8x_9,x_8^2,x_6^2,x_5x_6,x_3^2$ are the bases of $H^4(M_1;\Q)$ and $H^4(M_2;\Q)$, resp. One can calculate the cones from Theorem \ref{thm:comp} using software programs (e.g. Sage, Singular). Namely, the cones $S(M_1),S(M_2)$ corresponding to $M_1,M_2$ have extreme rays (w.r.t. the above bases)
\[
\begin{pmatrix}
0 & 0 & 0 & 0 & 0 & 0 & 0 & 0 & 1 & -1\\
0 & 0 & 0 & 0 & 0 & 0 & 1 & -1 & 0 & 0\\
0 & 0 & 0 & 0 & 1 & -1 & 0 & 0 & 0 & 0\\
1 & 1 & 1 & 0 & 0 & 0 & 0 & 0 & 0 & 0\\
2 & 0 & -2 & 0 & 0 & 0 & 0 & 0 & 0 & 0\\
1 & 0 & 1 & 1 & 0 & 0 & 0 & 0 & 0 & 0
\end{pmatrix},
\begin{pmatrix}
0 & 0 & 1 & 0 & 0 & 1 & 1\\
0 & 0 & 2 & 0 & 0 & -2 & 0\\
0 & 0 & 1 & 0 & 1 & 1 & 0\\
0 & 1 & 0 & 1 & 0 & 0 & 0\\
0 & -2 & 0 & 2 & 0 & 0 & 0\\
1 & 0 & 0 & 0 & 0 & 0 & 0
\end{pmatrix},
\]
and have $4$ and $7$ facets, resp. In particular, $M_1,M_2$ are not TNS-manifolds.
\end{ex}

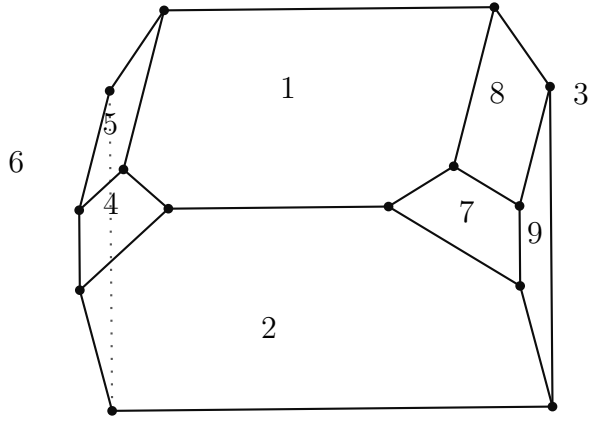
\begin{figure}
\centering
\caption{The combinatorial polytope with facet numbers.\label{fig:pic}}
\begin{tikzpicture}%
	[x={(0.189013cm, -0.711049cm)},
	y={(0.178207cm, 0.703079cm)},
	z={(0.965669cm, 0.009427cm)},
	scale=0.7500000,
	back/.style={loosely dotted, thick},
	edge/.style={color=white!5!black, thick},
	facet/.style={fill=white, fill opacity=0.300000},
	facetlast/.style={fill=white!50!black,fill opacity=0.300000},
	vertex/.style={inner sep=1pt,circle,draw=white!5!black,fill=white!5!black,thick,anchor=base}]	
%
%
\coordinate (4.00000, -2.00000, -4.00000) at (4.00000, -2.00000, -4.00000);
\coordinate (4.00000, -2.00000, 4.00000) at (4.00000, -2.00000, 4.00000);
\coordinate (0.00000, 2.00000, 4.00000) at (0.00000, 2.00000, 4.00000);
\coordinate (0.00000, 2.00000, -4.00000) at (0.00000, 2.00000, -4.00000);
\coordinate (0.00000, -1.00000, 4.00000) at (0.00000, -1.00000, 4.00000);
\coordinate (-1.00000, 3.00000, 3.00000) at (-1.00000, 3.00000, 3.00000);
\coordinate (-1.00000, 3.00000, -3.00000) at (-1.00000, 3.00000, -3.00000);
\coordinate (1.00000, -2.00000, 4.00000) at (1.00000, -2.00000, 4.00000);
\coordinate (-1.00000, -2.00000, 2.00000) at (-1.00000, -2.00000, 2.00000);
\coordinate (-1.00000, -1.00000, 3.00000) at (-1.00000, -1.00000, 3.00000);
\coordinate (-1.00000, -2.00000, -2.00000) at (-1.00000, -2.00000, -2.00000);
\coordinate (-1.00000, -1.00000, -3.00000) at (-1.00000, -1.00000, -3.00000);
\coordinate (1.00000, -2.00000, -4.00000) at (1.00000, -2.00000, -4.00000);
\coordinate (0.00000, -1.00000, -4.00000) at (0.00000, -1.00000, -4.00000);
\draw[edge,back] (4.00000, -2.00000, -4.00000) -- (0.00000, 2.00000, -4.00000);
\fill[facet] (1.00000, -2.00000, 4.00000) -- (4.00000, -2.00000, 4.00000) -- (0.00000, 2.00000, 4.00000) -- (0.00000, -1.00000, 4.00000) -- cycle {};
\fill[facet] (0.00000, -1.00000, -4.00000) -- (-1.00000, -1.00000, -3.00000) -- (-1.00000, -2.00000, -2.00000) -- (1.00000, -2.00000, -4.00000) -- cycle {};
\fill[facet] (1.00000, -2.00000, -4.00000) -- (4.00000, -2.00000, -4.00000) -- (4.00000, -2.00000, 4.00000) -- (1.00000, -2.00000, 4.00000) -- (-1.00000, -2.00000, 2.00000) -- (-1.00000, -2.00000, -2.00000) -- cycle {};
\fill[facet] (-1.00000, -1.00000, 3.00000) -- (0.00000, -1.00000, 4.00000) -- (0.00000, 2.00000, 4.00000) -- (-1.00000, 3.00000, 3.00000) -- cycle {};
\fill[facet] (-1.00000, -1.00000, -3.00000) -- (-1.00000, 3.00000, -3.00000) -- (-1.00000, 3.00000, 3.00000) -- (-1.00000, -1.00000, 3.00000) -- (-1.00000, -2.00000, 2.00000) -- (-1.00000, -2.00000, -2.00000) -- cycle {};
\fill[facet] (0.00000, -1.00000, -4.00000) -- (0.00000, 2.00000, -4.00000) -- (-1.00000, 3.00000, -3.00000) -- (-1.00000, -1.00000, -3.00000) -- cycle {};
\fill[facet] (-1.00000, -1.00000, 3.00000) -- (0.00000, -1.00000, 4.00000) -- (1.00000, -2.00000, 4.00000) -- (-1.00000, -2.00000, 2.00000) -- cycle {};
\draw[edge] (4.00000, -2.00000, -4.00000) -- (4.00000, -2.00000, 4.00000);
\draw[edge] (4.00000, -2.00000, -4.00000) -- (1.00000, -2.00000, -4.00000);
\draw[edge] (4.00000, -2.00000, 4.00000) -- (0.00000, 2.00000, 4.00000);
\draw[edge] (4.00000, -2.00000, 4.00000) -- (1.00000, -2.00000, 4.00000);
\draw[edge] (0.00000, 2.00000, 4.00000) -- (0.00000, -1.00000, 4.00000);
\draw[edge] (0.00000, 2.00000, 4.00000) -- (-1.00000, 3.00000, 3.00000);
\draw[edge] (0.00000, 2.00000, -4.00000) -- (-1.00000, 3.00000, -3.00000);
\draw[edge] (0.00000, 2.00000, -4.00000) -- (0.00000, -1.00000, -4.00000);
\draw[edge] (0.00000, -1.00000, 4.00000) -- (1.00000, -2.00000, 4.00000);
\draw[edge] (0.00000, -1.00000, 4.00000) -- (-1.00000, -1.00000, 3.00000);
\draw[edge] (-1.00000, 3.00000, 3.00000) -- (-1.00000, 3.00000, -3.00000);
\draw[edge] (-1.00000, 3.00000, 3.00000) -- (-1.00000, -1.00000, 3.00000);
\draw[edge] (-1.00000, 3.00000, -3.00000) -- (-1.00000, -1.00000, -3.00000);
\draw[edge] (1.00000, -2.00000, 4.00000) -- (-1.00000, -2.00000, 2.00000);
\draw[edge] (-1.00000, -2.00000, 2.00000) -- (-1.00000, -1.00000, 3.00000);
\draw[edge] (-1.00000, -2.00000, 2.00000) -- (-1.00000, -2.00000, -2.00000);
\draw[edge] (-1.00000, -2.00000, -2.00000) -- (-1.00000, -1.00000, -3.00000);
\draw[edge] (-1.00000, -2.00000, -2.00000) -- (1.00000, -2.00000, -4.00000);
\draw[edge] (-1.00000, -1.00000, -3.00000) -- (0.00000, -1.00000, -4.00000);
\draw[edge] (1.00000, -2.00000, -4.00000) -- (0.00000, -1.00000, -4.00000);
\node[vertex] at (4.00000, -2.00000, -4.00000)     {};
\node[vertex] at (4.00000, -2.00000, 4.00000)     {};
\node[vertex] at (0.00000, 2.00000, 4.00000)     {};
\node[vertex] at (0.00000, 2.00000, -4.00000)     {};
\node[vertex] at (0.00000, -1.00000, 4.00000)     {};
\node[vertex] at (-1.00000, 3.00000, 3.00000)     {};
\node[vertex] at (-1.00000, 3.00000, -3.00000)     {};
\node[vertex] at (1.00000, -2.00000, 4.00000)     {};
\node[vertex] at (-1.00000, -2.00000, 2.00000)     {};
\node[vertex] at (-1.00000, -1.00000, 3.00000)     {};
\node[vertex] at (-1.00000, -2.00000, -2.00000)     {};
\node[vertex] at (-1.00000, -1.00000, -3.00000)     {};
\node[vertex] at (1.00000, -2.00000, -4.00000)     {};
\node[vertex] at (0.00000, -1.00000, -4.00000)     {};

\draw (-2,0) node {$1$};
\draw (0,-4) node {$2$};
\draw (12,14) node {$3$};
\draw (-9,-10) node {$4$};
\draw (-10,-9) node {$5$};
\draw (-14,-14) node {$6$};
\draw (8,7) node {$7$};
\draw (8,10) node {$8$};
\draw (11.5,10) node {$9$};
\end{tikzpicture} 
\end{figure}

\section{Operations on quasitoric TNS-manifolds}

The current Section contains some results on equivariant connected sum and TNS-property (Subsection \ref{ssec:sum}). The blow-up along a complex codimension $2$ invariant submanifold of a TNS toric manifold is generalised for quasitoric manifolds in Subsection \ref{ssec:blow}.

\subsection{Equivariant connected sum and quasitoric TNS-manifolds}\label{ssec:sum}

We begin this Subsection with a recap on signs of fixed points of quasitoric manifolds. (See \cite[Section 7.3]{bu-pa-15}.) Let $M^{2n}$ be a $2n$-dimensional quasitoric manifold, and let $x\in M^{2n}$ be a fixed point under the natural action of the torus $T^n$. The fixed point $x$ is an intersection of pairwise different invariant submanifolds $M_{j_1},\dots,M_{j_n}$ of codimension $2$ in $M^{2n}$. The sign $\sigma(x)$ of $M^{2n}$ at the fixed point $x$ is defined as $1$, if the orientation of the real space $T_{x} M^{2n}\oplus\R^{2(m-n)}$ coincides with the orientation of the (realification) of the vector space $(\theta_{j_{1}}\oplus\dots\oplus\theta_{j_{n}})_{x}$ determined by the orientation of the complex line bundles $\theta_{j_k},\ k=1,\dots,n$, and is equal to $-1$, otherwise. One also has the formula
\begin{equation}\label{eq:sign}
\sigma(x)=\det (\lambda_{j_1},\dots,\lambda_{j_n}),
\end{equation}
provided that the respective inward-pointing normal vectors of the facets $F_{j_1},\dots,F_{j_n}$ of $P^n$ form a positive basis of $\R^n$.

For any two stably complex manifolds $M_1, M_2$ of dimension $2n$ there exists a natural stably complex structure on the connected sum $M_1\sharp_{x_1,x_2} M_2,\ x_i\in M_i,\ i=1,2$ (see \cite[Construction 9.1.11]{bu-pa-15}). Choose a fixed point $x_i\in M_{i}$ of the quasitoric manifold $M_i^{2n}$ under the natural torus action, $i=1,2$. Then the respective connected sum is called an \emph{equivariant connected sum} $M_1\widetilde{\sharp}_{x_1,x_2} M_2$ of the quasitoric manifolds $M_1\widetilde{\sharp}_{x_1,x_2} M_2$ along the fixed points $x_1,x_2$. It admits the action of the torus $(\C^{\times})^{n}$ being a quasitoric manifold. The moment polytope of $M_1\widetilde{\sharp}_{x_1,x_2} M_2$ is equal to the connected sum of the moment polytopes corresponding to $M_1$ and $M_2$, at the vertices corresponding to the fixed points $x_1,x_2$. The characteristic matrix of $M_1\widetilde{\sharp}_{x_1,x_2} M_2$ is given explicitly in \cite{bu-pa-15}. In order to define a (an invariant) stably complex structure on $M_1\widetilde{\sharp}_{x_1,x_2} M_2$, one needs to endow this manifold with an orientation (for example, given by the respective characteristic matrix). The manifold $M_1\widetilde{\sharp}_{x_1,x_2} M_2$ is oriented diffeomorphic either to $M_1\sharp_{x_1,x_2} M_2$ or to $M_1\sharp_{x_1,x_2} \overline{M_2}$. The restriction of the orientation on $M_1\widetilde{\sharp}_{x_1,x_2} M_2$ to $M_{1}$ is equal to the orientation on $M_1$. Restricted to $M_2$, the orientation is equal or opposite to the orientation on $M_2$. In the first case, the introduced orientation on $M_1\widetilde{\sharp}_{x_1,x_2} M_2$ is called \emph{compatible} with the orientations on $M_{i},\ i=1,2$.

\begin{pr}\cite[Lemma 9.1.12]{bu-pa-15}\label{pr:compatiblesum}
The equivariant connected sum $M_1\widetilde{\sharp}_{x_1,x_2} M_2$ of omnioriented quasitoric manifolds admits an orientation compatible with the orientations of $M_{i},i=1,2$, iff $\sigma(x_1)=-\sigma(x_2)$.
\end{pr}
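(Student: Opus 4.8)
The plan is to localise the question near the two fixed points, where both manifolds look standard, and to reduce the existence of a compatible orientation to the orientation behaviour of the single sphere-gluing diffeomorphism used in the construction.

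First I would record the local model. Let $v_i$ be the vertex of $P_i^n$ corresponding to $x_i\in M_i^{2n}$, with adjacent facets $F_{j_1},\dots,F_{j_n}$ and corresponding columns $\lambda^{(i)}_{j_1},\dots,\lambda^{(i)}_{j_n}$ of the characteristic matrix of $M_i$, normalised so that the inward normals of $P_i$ at $v_i$ form a positive basis; then $\det(\lambda^{(i)}_{j_1},\dots,\lambda^{(i)}_{j_n})=\sigma(x_i)$ by \eqref{eq:sign}. A neighbourhood of $x_i$ in $M_i$ is $T^n$-equivariantly diffeomorphic to a neighbourhood of $0$ in $\C^n$ with a coordinatewise torus action whose weights are dual to these columns, and under this identification $(\theta_{j_1}\oplus\dots\oplus\theta_{j_n})_{x_i}$ with its complex orientation becomes $\C^n$ with the standard orientation. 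Hence, by the very definition of the sign, the given orientation of $M_i$ near $x_i$ — stabilised by the orientation-neutral even-rank complex factor $\ul{\C}^{m-n}$ — equals $\sigma(x_i)$ times the standard orientation of this $\C^n$.

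Next I would analyse the gluing. By construction $M_1\widetilde{\sharp}_{x_1,x_2}M_2$ is obtained by deleting small invariant closed balls $D_i\ni x_i$ and identifying $M_1\setminus\inter D_1$ with $M_2\setminus\inter D_2$ along the boundary spheres $\partial D_i\cong S^{2n-1}$ by a torus-equivariant diffeomorphism $\phi$; equivariance determines $\phi$ up to equivariant isotopy, and in the charts above it is modelled by a real-linear isomorphism $A\colon\C^n\to\C^n$ built from complex conjugations, a signed permutation, and the torus automorphism matching the two weight systems (this is precisely the data yielding the characteristic matrix of $M_1\widetilde{\sharp}_{x_1,x_2}M_2$ in \cite{bu-pa-15}). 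On the other hand, an oriented smooth connected sum is the manifold obtained by gluing the two boundary spheres by a diffeomorphism reversing the orientations inherited from the two punctured pieces. Combining this with the first step, the orientation that $\partial D_i$ inherits from $M_i\setminus D_i$ is $\sigma(x_i)$ times the standard one on $S^{2n-1}\subset\C^n$, while $\phi$ acts on the standard boundary orientations with sign $\operatorname{sign}\det_{\R}A$, a sign which by the normalisation above equals $\sigma(x_1)\sigma(x_2)$ up to a universal factor (independent of $n$ and of the $M_i$). So $\phi$ reverses the inherited boundary orientations — equivalently $M_1\widetilde{\sharp}_{x_1,x_2}M_2$ carries an orientation compatible with the orientations of both $M_1$ and $M_2$ — exactly when $\sigma(x_1)\sigma(x_2)=-1$, i.e. $\sigma(x_1)=-\sigma(x_2)$.

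The hard part will be the sign bookkeeping in the last step: checking that equivariance pins $\phi$ down tightly enough that compatibility is a well-posed yes/no question, and fixing the universal factor so that the criterion comes out as $\sigma(x_1)=-\sigma(x_2)$ uniformly in $n$. I would organise this on the level of characteristic matrices, where the characteristic matrix of $M_1\widetilde{\sharp}_{x_1,x_2}M_2$ restricted to the facets inherited from $M_2$ is $B\Lambda_2$ for the unique $B\in\mathrm{GL}_n(\Z)$ matching the vertex star at $v_2$ to that at $v_1$, so that $\det B\cdot\sigma(x_2)=\sigma(x_1)$ and the induced orientation restricts to $\det B$ times the orientation of $M_2$; the remaining universal sign is then pinned down once and for all by the calibrating example $M_1=M_2=\C P^2$, whose equivariant connected sum at fixed points of sign $+1$ is diffeomorphic to $\C P^2\sharp\overline{\C P^2}$ rather than $\C P^2\sharp\C P^2$, i.e. to the incompatible case corresponding to $\sigma(x_1)=\sigma(x_2)$.
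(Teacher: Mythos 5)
First, a remark on the comparison you asked for implicitly: the paper does not prove this Proposition at all --- it is quoted from Buchstaber--Panov \cite[Lemma 9.1.12]{bu-pa-15} --- so your argument can only be judged on its own terms, not against an in-paper proof.

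Your framework is the right one: interpret $\sigma(x_i)$ as the discrepancy between the given orientation of $M_i$ and the canonical complex orientation of the normalised chart at $x_i$, and reduce compatibility to whether the gluing of the boundary spheres reverses the induced boundary orientations. The first two steps are fine. The gap is exactly in the step you flag as hard, and the proposed shortcut does not close it. You never pin down what the equivariant gluing map is: you allow the local model $A$ to be ``built from complex conjugations, a signed permutation, and the torus automorphism'', but then $\sign\det_{\R}A$ is \emph{not} $\sigma(x_1)\sigma(x_2)$ times a well-defined universal constant --- each conjugation contributes $-1$, the permutation contributes its own sign, and both depend on choices and on $n$. What makes the argument work is that in the omnioriented category the merged facets of $P_1\#P_2$ must carry \emph{equal} characteristic vectors after the torus change $B$, so the tangential weight systems at $x_1$ and $x_2$ correspond exactly under $B$ (no conjugations, no sign flips), and the gluing is equivariantly the identity model in the two normalised charts; only with that in hand does the boundary-orientation computation give compatibility iff $\sigma(x_1)=-\sigma(x_2)$, uniformly in $n$. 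That identification of the gluing map is the actual content of the lemma and is missing from your plan. The calibration fallback cannot replace it: $\C P^2\widetilde{\sharp}\,\C P^2$ is an $n=2$ computation, while the independence of your ``universal factor'' from $n$ and from the facet matching is precisely what is unproved --- a priori the local picture contains $n$-dependent signs (reflection of the vertex cone has determinant $(-1)^n$, conjugate inversion has sign $(-1)^{n+1}$, etc.) which must be shown to cancel. Relatedly, the asserted identity $\det B\cdot\sigma(x_2)=\sigma(x_1)$ hides a permutation sign: formula \eqref{eq:sign} computes $\sigma(x_i)$ with the facets at $v_i$ ordered so that the inward normals of $P_i$ form a positive basis, and the facet matching used in the connected sum need not respect these two orderings (already in dimension two it can reverse them). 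So the approach is sound and completable, but as written the decisive sign bookkeeping is assumed rather than carried out.
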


\begin{pr}\label{pr:connsum}
Let $M_1^n,\ M_2^n$ be closed oriented manifolds of dimension $n$. Then the isomorphism of algebras
\[
\tilde{H}^*(M_1\sharp X;\R)\simeq (\tilde{H}^*(M_1;\R)\oplus\tilde{H}^*(X;\R))/I,
\]
holds, where $X=M_{2},\overline{M_2}$, $I=(D[*_{M_1}]-D[*_{X}])$, and $D$ denotes the Poincar\'e duality operator. The multiplication between direct summands above is trivial in all dimensions.
\end{pr}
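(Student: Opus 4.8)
The plan is to realise $M_1\sharp X$ concretely as $N_1\cup_{S^{n-1}}N_2$, where $N_1$ is $M_1$ with an open ball removed, $N_2$ is $X$ with an open ball removed, and the two boundary spheres are glued by an orientation-reversing homeomorphism, so that the orientation of $M_1\sharp X$ restricts to the given orientations on $N_1$ and on $N_2$. Since $N_i$ is a compact $n$-manifold with non-empty boundary, $\tilde H^n(N_i;\R)=0$, and the Mayer--Vietoris sequence of $M_i=N_i\cup(\text{ball})$ gives $\tilde H^j(N_i;\R)\cong\tilde H^j(M_i;\R)$ for $j<n$. Feeding $M_1\sharp X=N_1\cup N_2$ with $N_1\cap N_2\simeq S^{n-1}$ into Mayer--Vietoris and using $\tilde H^j(S^{n-1};\R)=0$ for $0<j<n-1$ shows, for $j\le n-1$, that the restriction map $\tilde H^j(M_1\sharp X;\R)\to\tilde H^j(N_1)\oplus\tilde H^j(N_2)$ is an isomorphism (at $j=n-1$ one uses that the connecting map $\tilde H^{n-1}(S^{n-1})\to\tilde H^n(M_1\sharp X)$ is an isomorphism, both sides being $\R$ and the cokernel injecting into $\tilde H^n(N_1)\oplus\tilde H^n(N_2)=0$), while $\tilde H^n(M_1\sharp X;\R)\cong\R$. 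Composing with $\tilde H^j(N_i)\cong\tilde H^j(M_i)$ this is exactly the additive statement: $\tilde H^j(M_1)\oplus\tilde H^j(X)$ in degrees $j<n$, and in degree $n$ the one-dimensional quotient $(\tilde H^n(M_1)\oplus\tilde H^n(X))/\langle D[\ast_{M_1}]-D[\ast_X]\rangle$.

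For the multiplicative structure I would use the pinch maps $c_i\colon M_1\sharp X\to M_i$ collapsing the complementary summand to the centre of the removed ball. With the chosen orientations each $c_i$ has degree $+1$, hence $c_i^\ast$ is a ring homomorphism, an isomorphism in degree $n$, and in degrees $<n$ it carries $\tilde H^\ast(M_i)$ isomorphically onto the $i$-th summand (because $c_i|_{N_i}$ is the inclusion $N_i\hookrightarrow M_i$ while $c_i|_{N_{3-i}}$ is null-homotopic). Thus the additive isomorphism above is inverse to $(\gamma_1,\gamma_X)\mapsto c_1^\ast\gamma_1+c_2^\ast\gamma_X$, and a product of two classes from the same summand is transported by the ring map $c_i^\ast$ from $H^\ast(M_i)$; compatibility in the top degree is precisely the equality $c_1^\ast(D[\ast_{M_1}])=c_2^\ast(D[\ast_X])=D[\ast_{M_1\sharp X}]$, a degree-one map sending the Poincar\'e dual of a point to the Poincar\'e dual of a point, which is the relation defining $I$. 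The case $X=\overline{M_2}$ merely reverses the orientation of the second summand, replacing $D[\ast_X]$ by $-D[\ast_{M_2}]$ and flipping $c_2$, and yields the same quotient algebra.

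It remains to see that the product of a positive-degree class $\alpha$ from the $M_1$-summand with a positive-degree class $\beta$ from the $X$-summand vanishes. By construction $\alpha|_{N_2}=0$ and $\beta|_{N_1}=0$, so $\alpha$ lifts to $\tilde\alpha\in H^\ast(M_1\sharp X,N_2)$ and $\beta$ lifts to $\tilde\beta\in H^\ast(M_1\sharp X,N_1)$; their cup product lies in $H^\ast(M_1\sharp X,\,N_1\cup N_2)=H^\ast(M_1\sharp X,M_1\sharp X)=0$, whence $\alpha\cup\beta=0$. Assembling the three computations shows the additive isomorphism is an isomorphism of graded algebras with trivial cross-multiplication. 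I expect the only real obstacle to be the top-degree bookkeeping: pinning down the identification of the two fundamental cohomology classes and checking that it is compatible with the Mayer--Vietoris splitting, together with the $j=n-1,n$ corner of the exact sequence; everything in lower degrees is the standard Mayer--Vietoris/pinch-map package.
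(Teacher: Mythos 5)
Your proof is correct and is essentially the paper's argument written out in full: the paper simply says the isomorphism is induced by the collapse map $M_1\sharp X\to M_1\vee X$, and your pinch maps $c_1,c_2$ are exactly the components of that map, with the Mayer--Vietoris bookkeeping, the top-degree identification $c_1^*(D[*_{M_1}])=c_2^*(D[*_X])$, and the relative-cup-product vanishing supplying the details the paper leaves implicit. No gaps worth flagging.
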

\begin{proof}
The desired map is induced by the contraction map of the connected sum of oriented manifolds $M_1\sharp X\to M_1\vee X$.
\end{proof}

\begin{pr}\label{pr:eqsum}
Consider the equivariant connected sum $M^{2n}=M_{1}^{2n}\sharp M_{2}^{2n}$ of quasitoric manifolds $M_{1}^{2n}, M_{2}^{2n}$ with the orientation compatible with the orientations on the manifolds $M_{1}^{2n}, M_{2}^{2n}$.

a) Let $n$ be odd. Then $M^{2n}$ is a TNS-manifold iff $M^{2n}_{i}$ is a TNS-manifold, $i=1,2$;

b) Let $n$ be even. Then $M^{2n}$ is a TNS-manifold iff the homogeneous forms $Q_a,\ a\in H^{2(n-2k)}(M^{2n}_i;\Z)$ of degree $0<k<[n/2]$ are admissible for $M_i$, $i=1,2$, and the sum of the forms $Q+Q'$ is admissible, where $Q:H^2(M^{2n}_{1};\Z)\to \Z,\ Q':H^2(M^{2n}_{2};\Z)\to \Z$ are the homogeneous forms of the top degree corresponding to the units $1_{M^{2n}_{1}}\in H^0(M^{2n}_{1};\Z),\ 1_{M^{2n}_{2}}\in H^0(M^{2n}_{2};\Z)$ in sense of Proposition \ref{pr:corr}, respectively.
\end{pr}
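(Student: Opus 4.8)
The plan is to compute the cohomology ring of $M^{2n}=M_1^{2n}\sharp M_2^{2n}$, identify the homogeneous forms $Q_a$ attached to it, and feed them into Theorem \ref{thm:crit}. Since the chosen orientation on $M^{2n}$ is compatible with the orientations of $M_1,M_2$, Proposition \ref{pr:compatiblesum} lets us take $X=M_2^{2n}$ with its given orientation in Proposition \ref{pr:connsum}. As the equivariant connected sum is again a quasitoric manifold, its cohomology is torsion-free (Theorem \ref{thm:coh}), so Proposition \ref{pr:connsum} yields, for $0<j<n$, a decomposition $H^{2j}(M^{2n};\Z)=H^{2j}(M_1;\Z)\oplus H^{2j}(M_2;\Z)$ induced by the contraction map $M^{2n}\to M_1\vee M_2$, with all products between the two summands zero; moreover $H^{0}(M^{2n};\Z)=\Z$, $H^{2n}(M^{2n};\Z)=\Z$, and compatibility of the orientations forces the pairing $\langle\,\cdot\,,[M^{2n}]\rangle$ to restrict, on the image of $H^{2n}(M_i;\Z)$, to $\langle\,\cdot\,,[M_i]\rangle$ with the correct sign.

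First I would exploit this to describe the relevant forms. For $0<k<n/2$ (and, when $n$ is odd, for every $0<k\le n/2$, since then $k\le (n-1)/2$) any $a\in H^{2(n-2k)}(M^{2n};\Z)$ splits as $a=a_1+a_2$ with $a_i$ coming from $M_i$, and any $x\in H^2(M^{2n};\R)$ splits as $x=x_1+x_2$; vanishing of cross-products gives $x^{2k}=x_1^{2k}+x_2^{2k}$ and $a\cdot x^{2k}=a_1x_1^{2k}+a_2x_2^{2k}$, hence
\[
Q_a^{M^{2n}}(x_1+x_2)=Q_{a_1}^{M_1}(x_1)+Q_{a_2}^{M_2}(x_2),
\]
the orthogonal direct sum of $Q_{a_1}^{M_1}$ and $Q_{a_2}^{M_2}$ (with the convention $Q_0:=0$). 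When $n$ is even and $k=n/2$ one has $a\in H^0(M^{2n};\Z)$, a nonzero integer multiple of the unit, and the same bookkeeping gives $Q_a^{M^{2n}}(x_1+x_2)=c\bigl(\langle x_1^n,[M_1]\rangle+\langle x_2^n,[M_2]\rangle\bigr)=c\,(Q+Q')(x_1+x_2)$ in the notation of the statement.

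Next I would record the elementary observation about such direct sums. For a fixed $k$ in the middle range, the family $\{Q_a^{M^{2n}}:a\neq 0\}$ consists only of admissible forms iff, for each $i$, the family $\{Q_{a_i}^{M_i}:a_i\neq 0\}$ does: if all the $Q_{a_i}^{M_i}$ are admissible and $a=a_1+a_2\neq 0$, pick $i$ with $a_i\neq 0$ and restrict $Q_a^{M^{2n}}$ to $H^2(M_i;\R)$ to see it is not semidefinite; conversely, taking $a=a_1$ (so $a_2=0$) exhibits $Q_{a_1}^{M_1}$ as the restriction of the admissible $Q_{a_1}^{M^{2n}}$, the complementary summand contributing nothing. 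For the top degree, admissibility of $Q_a^{M^{2n}}$ over all nonzero $a\in H^0$ is precisely admissibility of $Q+Q'$, since scaling a form by a nonzero integer preserves non-semidefiniteness.

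Assembling these with Theorem \ref{thm:crit} settles both cases: for $n$ odd there is no top-degree term, and the TNS-criterion for $M^{2n}$ decouples into the TNS-criteria for $M_1$ and $M_2$, giving (a); for $n$ even it becomes the middle-degree ($0<k<n/2$) admissibility conditions for $M_1$ and $M_2$ together with admissibility of $Q+Q'$, giving (b). The step I expect to require the most care is the first paragraph — the integral cohomology splitting (which rests on torsion-freeness) and, above all, the verification that compatibility of the orientations is exactly what makes $\langle a_ix_i^{2k},[M^{2n}]\rangle=\langle a_ix_i^{2k},[M_i]\rangle$ hold with the same sign on both summands; were the orientations incompatible one summand would acquire a sign $-1$, which is the mechanism behind Proposition \ref{pr:eqTNS}.
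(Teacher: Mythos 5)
Your proposal is correct and follows essentially the same route as the paper, whose proof is exactly the combination of the connected-sum cohomology splitting (Proposition \ref{pr:connsum}) with the TNS-criterion; you invoke it in the form of Theorem \ref{thm:crit} while the paper cites the equivalent cone formulation of Corollary \ref{cor:powers}, which amounts to the same decoupling of the forms $Q_a$ into orthogonal sums over the two summands. Your extra care about the integral splitting and the role of the compatible orientation (the sign that would otherwise appear, as in Proposition \ref{pr:eqTNS}) is exactly the content the paper leaves implicit.
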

\begin{proof}
Follows from Corollary \ref{cor:powers} and Proposition \ref{pr:connsum}.
\end{proof}

Now let $n=2$. 

\begin{lm}\label{lm:signa}
Let $(a_i,b_i)$ be the index of the intersection form of a closed oriented simply-connected $4$-manifold $M_i^{4},i=1,2$. Then the indices of the intersection forms of the manifolds $M_1\sharp M_2, M_1\sharp \overline{M_2}$, are equal to $(a_1+a_2,b_1+b_2),(a_1+b_2,b_1+a_2)$, resp.
\end{lm}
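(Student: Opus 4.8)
The plan is to reduce the statement to the elementary fact that the intersection form of a connected sum of closed oriented $4$-manifolds is the orthogonal direct sum of the intersection forms of the summands, together with Sylvester's law of inertia. Throughout, I would use that for a simply-connected closed oriented $4$-manifold $M_i^4$ the ``index'' $(a_i,b_i)$ is the pair $(b_2^+,b_2^-)$ of positive and negative inertia indices of the nondegenerate (by Poincar\'e duality) symmetric bilinear form $Q_{M_i}$ on $H^2(M_i;\R)$, so that $a_i+b_i=\dim H^2(M_i;\R)$ and $a_i-b_i=\sigma(M_i)$.

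First I would apply Proposition \ref{pr:connsum} with $X=M_2$, then later with $X=\overline{M_2}$. It yields an algebra isomorphism $\tilde H^*(M_1\sharp X;\R)\simeq(\tilde H^*(M_1;\R)\oplus\tilde H^*(X;\R))/I$ with $I=(D[*_{M_1}]-D[*_X])$ and with trivial multiplication between the two summands. Since $M_1$ and $X$ are $4$-dimensional, the generator $D[*_{M_1}]-D[*_X]$ of $I$ sits in degree $4$, so $I$ meets neither degree $2$ nor the ($1$-dimensional) top degree $4$; hence the natural maps give $H^2(M_1\sharp X;\R)\simeq H^2(M_1;\R)\oplus H^2(X;\R)$ and identify $H^4(M_1\sharp X;\R)\simeq\R$ with both top classes. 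Under these identifications the cup-product pairing of $M_1\sharp X$ restricts to the pairings of $M_1$ and of $X$ on the respective summands and vanishes on mixed products, so $Q_{M_1\sharp X}=Q_{M_1}\perp Q_X$ as symmetric bilinear forms. By Sylvester's law of inertia the inertia indices of an orthogonal direct sum add, whence $M_1\sharp M_2$ has index $(a_1+a_2,b_1+b_2)$.

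Finally, $\overline{M_2}$ is again simply connected with $Q_{\overline{M_2}}=-Q_{M_2}$, so negating the form swaps the inertia indices and $\overline{M_2}$ has index $(b_2,a_2)$; substituting $X=\overline{M_2}$ in the previous paragraph gives that $M_1\sharp\overline{M_2}$ has index $(a_1+b_2,b_1+a_2)$, as claimed.

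I expect no genuine obstacle here: the only point that requires a line of care is the degree count showing the relation ideal $I$ of Proposition \ref{pr:connsum} is invisible in degree $2$, so that the intersection form really does split as an orthogonal direct sum; everything else is Sylvester's law and the behaviour of a symmetric form under orientation reversal.
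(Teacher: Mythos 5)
Your proposal is correct and follows essentially the same route as the paper: both invoke Proposition \ref{pr:connsum} to split the intersection form of $M_1\sharp X$ as the orthogonal direct sum of those of $M_1$ and $X$, and then use that the form of $\overline{M_2}$ is the negative of that of $M_2$, which swaps the inertia indices. Your extra degree-count justification and the explicit appeal to Sylvester's law only flesh out details the paper leaves implicit.
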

\begin{proof}
Due to Proposition \ref{pr:connsum}, the intersection form of the manifold $M_1\sharp X$ is equal to the direct sum of the respective intersection forms of $M_{1}, X$, where $X=M_2, \overline{M_2}$. It remains to notice that the intersection form of $\overline{M_2}$ is equal to the minus intersection form of $M_2$.
\end{proof}

The indices of the intersection forms of quasitoric manifolds $\C P^{2}, \overline{\C P^{2}}$ are equal to $(1,0),(0,1)$, resp. The signs of all fixed points of $\C P^{2}$ ($\overline{\C P^{2}}$, resp.) are equal to $1$ ($-1$, resp.). Define the quasitoric manifold $B^4:=\C P^{2}\widetilde{\sharp} \overline{\C P^{2}}$. It does not depend on the choice of the fixed points.

Let's explicitly check the following (without using Theorem \ref{thm:crit}).
\begin{pr}
$B^4$ is a TNS-manifold.
\end{pr}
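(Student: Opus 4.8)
The plan is to bypass the general machinery and reduce the claim to Lannes' theorem (Theorem \ref{thm:lannes}), for which it suffices to identify the intersection form of $B^4$. Note first that $B^4$, being quasitoric, is simply connected and carries the canonical stably complex structure, so Theorem \ref{thm:lannes} is applicable; the entire task is to verify that the intersection form of two\,-\,dimensional cycles of $B^4$ is non-definite.

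The first step I would carry out is to pin down the orientation on $B^4$. As recalled just above, every fixed point of $\C P^2$ has sign $+1$ and every fixed point of $\overline{\C P^2}$ has sign $-1$, so for any choice of fixed points $x_1\in\C P^2$, $x_2\in\overline{\C P^2}$ one has $\sigma(x_1)=1=-\sigma(x_2)$. Hence, by Proposition \ref{pr:compatiblesum}, the equivariant connected sum $B^4=\C P^2\,\widetilde{\sharp}\,\overline{\C P^2}$ admits an orientation compatible with those of $\C P^2$ and $\overline{\C P^2}$; with this orientation $B^4$ is oriented-diffeomorphic to the ordinary connected sum $\C P^2\sharp\overline{\C P^2}$ (and, in particular, is independent of the chosen fixed points).

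Next I would compute the index. By the paragraph preceding the statement, the intersection forms of $\C P^2$ and $\overline{\C P^2}$ have indices $(1,0)$ and $(0,1)$, so Lemma \ref{lm:signa} gives that the intersection form of $B^4=\C P^2\sharp\overline{\C P^2}$ has index $(1+0,\,0+1)=(1,1)$. Having one positive and one negative eigenvalue, this form is non-definite, and Theorem \ref{thm:lannes}a) then yields complex line bundles $\xi_1,\xi_2\to B^4$ whose Whitney sum is stably equivalent to the complex normal bundle of $B^4$; that is, $B^4$ is a TNS-manifold.

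The only point requiring care — and the only place something could go wrong — is the orientation bookkeeping: it is essential to invoke Proposition \ref{pr:compatiblesum} so that Lemma \ref{lm:signa} is applied to $\C P^2\sharp\overline{\C P^2}$, with index $(1,1)$, rather than to $\C P^2\sharp\C P^2$, whose index $(2,0)$ is definite (and would give the opposite, false, conclusion). Beyond this there is no genuine obstacle.
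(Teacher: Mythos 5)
Your argument is correct, but it takes a genuinely different route from the paper. The paper deliberately gives a direct, self-contained verification (it even says ``without using Theorem \ref{thm:crit}''): using Theorem \ref{thm:kdesc} it reduces to showing that for every line bundle $\xi\to B^4$ the stable inverse of $\xi\oplus\bar\xi$ totally splits, computes $H^*(B^4;\Z)=\Z[x,y]/(x^3,y^3,xy,x^2+y^2)$, writes $c_1(\xi)^2=ax^2$, and in each case exhibits an explicit totally split complement built from the pullbacks of the tautological bundles of the two summands, checking triviality via the Chern character (Theorem \ref{thm:lat}). You instead reduce everything to Lannes' theorem: the sign bookkeeping via Proposition \ref{pr:compatiblesum} correctly identifies the canonical orientation as the compatible one, so $B^4$ is oriented diffeomorphic to $\C P^2\sharp\overline{\C P^2}$, Lemma \ref{lm:signa} gives index $(1,1)$, and Theorem \ref{thm:lannes}a) yields the TNS property; your caution about distinguishing this from $\C P^2\sharp\C P^2$ (index $(2,0)$, which would even give non-TNS by part b)) is exactly the right point to flag, and you handle it correctly. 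Your proof is shorter and conceptually transparent, and it is in the same spirit as the paper's own use of Lannes in Proposition \ref{pr:class} (and implicitly in Proposition \ref{pr:eqTNS}); what it does not provide, and what the paper's computation buys, is an explicit splitting of the stable inverses (illustrating condition $(ii)$ of Theorem \ref{thm:equiv}) that is independent of the external input of Lannes' theorem.
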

\begin{proof}
By Theorem \ref{thm:kdesc}, it is enough to prove that for any linear bundle $\xi\to B^4$ the stably inverse to $\xi\oplus\bar\xi$ is totally split. Proposition \ref{pr:connsum} implies that
\[
H^*(B^4;\Z)=\Z[x,y]/(x^3,y^3,xy,x^2+y^2).
\]
Let $t:=c_1(\xi)$. One has $t^2=ax^2$ for the generator $x^2\in H^4(B^4;\Z)$ and some $a\in\Z$. Let $\eta_1,\eta_2$ be the pullbacks of the tautological line bundles from $\C P^2,\overline{\C P^2}$ to $B^4$, resp. One has $c_1^2(\bar\eta_1)=x^2,c_1^2(\bar\eta_2)=-x^2$. Consider three cases.

$1)$ If $a=0$, then $\xi\oplus\bar\xi$ is trivial, and there is nothing to prove.

$2)$ If $a<0$ then $ch([\xi\oplus\bar\xi\oplus |a|(\eta_2\oplus\bar\eta_2)])=2+2|a|$.

$3)$ If $a>0$, then $ch([\xi\oplus\bar\xi\oplus a(\eta_1\oplus\bar\eta_1)])=2+2a$.

The statement now follows from Theorem \ref{thm:lat}.
\end{proof}

The manifold $B^4$ has exactly $2$ fixed points of each sign. Choose fixed points $x_{+},x_{-}\in B^4$ s.t. $\sigma(x_+)=1,\sigma(x_-)=-1$.

\begin{pr}\label{pr:eqTNS}
Let $x\in M^4$ be a fixed point of an arbitrary quasitoric manifold $M^4$. Then the equivariant connected sum $M^4\widetilde{\sharp}_{x,x_+} B^4$ ($M^4\widetilde{\sharp}_{x,x_-} B^4$, resp.) is a TNS-manifold, if $\sigma(x)=-1$ ($\sigma(x)=1$, resp.). The respective orientation is compatible with the orientations on $M^4,B^4$.
\end{pr}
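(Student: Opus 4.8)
The plan is to derive the claim from Proposition~\ref{pr:eqsum}~b) applied with $n = 2$. Since $[n/2] = 1$, the condition ``the forms $Q_a$ of degree $0 < k < [n/2]$ are admissible'' is vacuous, so the only requirement is that the sum $Q + Q'$ of the top-degree forms attached (in the sense of Proposition~\ref{pr:corr}) to the units $1_{M^4} \in H^0(M^4;\Z)$ and $1_{B^4} \in H^0(B^4;\Z)$ be admissible. By definition $Q_1(t) = \la t^2, [\,\cdot\,]\ra$, so $Q$ and $Q'$ are precisely the intersection forms of $M^4$ and $B^4$.

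First I would check that Proposition~\ref{pr:eqsum} genuinely applies, i.e.\ that the equivariant connected sum carries an orientation compatible with those of $M^4$ and $B^4$. By Proposition~\ref{pr:compatiblesum} this happens exactly when the two glued fixed points have opposite signs; since $\sigma(x_+) = 1$ and $\sigma(x_-) = -1$, the hypothesis $\sigma(x) = -1$ (resp.\ $\sigma(x) = 1$) is exactly what makes $M^4 \widetilde{\sharp}_{x,x_+} B^4$ (resp.\ $M^4 \widetilde{\sharp}_{x,x_-} B^4$) admit the compatible orientation. With this orientation the manifold is oriented-diffeomorphic to the ordinary connected sum $M^4 \sharp B^4$ formed with the given orientations, so by Lemma~\ref{lm:signa} (equivalently Proposition~\ref{pr:connsum}) its intersection form is the orthogonal direct sum of those of $M^4$ and $B^4$.

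It then remains to see that this direct sum is non-semidefinite. From $H^*(B^4;\Z) \simeq \Z[x,y]/(x^3,y^3,xy,x^2+y^2)$ one reads off that the intersection form of $B^4$ has index $(1,1)$ (alternatively, apply Lemma~\ref{lm:signa} to $\C P^2$ and $\overline{\C P^2}$, whose indices are $(1,0)$ and $(0,1)$). If $(a,b)$ is the index of the intersection form of the arbitrary quasitoric $4$-manifold $M^4$, with $a,b \ge 0$, then $Q + Q'$ has index $(a+1,b+1)$; both entries are $\ge 1$, so $Q+Q'$ takes values of both signs and is admissible. Proposition~\ref{pr:eqsum}~b) then gives the TNS-property. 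I do not expect a real obstacle here: the argument is essentially bookkeeping, the one point needing care being to track the orientations so that Propositions~\ref{pr:compatiblesum} and~\ref{pr:eqsum} both apply, and to observe that adding the indefinite form of $B^4$ forces both Sylvester indices of $M^4$ — which a priori may vanish — to become strictly positive.
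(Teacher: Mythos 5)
Your proposal is correct, and it uses exactly the ingredients the paper lines up immediately before the statement (the paper itself leaves this proposition without a written proof): Proposition \ref{pr:compatiblesum} to match $\sigma(x)=-\sigma(x_\pm)$ and get the compatible orientation, Lemma \ref{lm:signa} / Proposition \ref{pr:connsum} to see the intersection form of the sum is the orthogonal direct sum, and the index $(1,1)$ of $B^4$ to force indefiniteness $(a+1,b+1)$. The only cosmetic difference is that you conclude via Proposition \ref{pr:eqsum}~b) with $n=2$ (where the lower-degree conditions are vacuous and $Q+Q'$ is the direct-sum intersection form), whereas the surrounding text suggests quoting Theorem \ref{thm:lannes}~a) directly; for $4$-manifolds these amount to the same indefiniteness check, so the argument is sound either way.
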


\subsection{Equivariant blow-ups and quasitoric TNS-manifolds}\label{ssec:blow}

Let $M^{2n}=M(P,\Lambda)$ be the model of the quasitoric manifold given by a characteristic pair $(P,\Lambda)$. For any vertex $v=F_{i_1}\cap\dots\cap F_{i_n}\in P$ the column vectors $\lambda_{i_1},\dots, \lambda_{i_n}$ of the characteristic $(n\times m)$-matrix $\Lambda=(\lambda_i^j)$ generate the cone $\sigma_v=\cone(\lambda_{i_{1}},\dots,\lambda_{i_{n}})\subset \Z^n\otimes\R$.

Consider two characteristic pairs $(P,\Lambda),(P',\Lambda')$. Let $N,N'$ be the lattices generated by the column vectors of $\Lambda,\Lambda'$, respectively. A weakly equivariant morphism $\phi:M(P,\Lambda)\to M(P',\Lambda')$ between two quasitoric manifolds is by definition given by the lattice map $\psi: N\to N'$ s.t. for any vertex $v\in P$ there exists a vertex $v'\in P'$ with $\psi(\sigma_v)\subset \sigma_{v'}$.

Let $Z\subset M$ be a characteristic submanifold of $M$ of codimension $2k$ corresponding to the face $G=F_{i_1}\cap\dots\cap F_{i_k}\subset P$. Consider the Delzant polytope $\widetilde{P}=cut_G P$ which is the truncation of the face $G$ of $P$, and a characteristic matrix $\widetilde{\Lambda}$ obtained from $\Lambda$ by adding the vector $\lambda=\lambda_{i_1}+\dots+\lambda_{i_k}$ corresponding to the truncation facet of $\widetilde{P}$. The identity map on the lattices generated by the columns of $\Lambda, \Lambda'$ gives rise to a weakly equivariant morphism $\pi: M(\widetilde{P},\widetilde{\Lambda})\to M(P,\Lambda)$.

\begin{definition}
The weakly equivariant morphism $\pi: Bl_{Z^{2(n-k)}}M^{2n}=M(\widetilde{P},\widetilde{\Lambda})\to M(P,\Lambda)$ of quasitoric manifolds is called an equivariant blow-up of a quasitoric manifold $M$ at a characteristic submanifold $Z\subset M$.
\end{definition}

\begin{pr}\label{pr:blowup}
Let $Z\subset M$ be a characteristic submanifold of codimension $4$ in a quasitoric TNS-manifold $M$. Then the equivariant blow-up $Bl_{Z} M^{2n}$ is a TNS-manifold.
\end{pr}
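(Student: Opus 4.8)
The plan is to reduce the TNS-property of $Bl_Z M^{2n}$ to that of $M^{2n}$ via the explicit cohomological description of a blow-up along a codimension-$4$ (i.e. complex codimension $2$) submanifold, and then invoke the criterion of Theorem \ref{thm:crit} (or, equivalently, Corollary \ref{cor:powers}). Write $\widetilde M = Bl_Z M^{2n} = M(\widetilde P, \widetilde\Lambda)$ with the new facet $F_0$ corresponding to the truncation, so $Z = F_{i_1}\cap F_{i_2}$ and the exceptional divisor is the characteristic submanifold $E$ over $F_0$, with $c_1$ of its associated line bundle denoted $x_0$. The first step is to recall the standard presentation $H^*(\widetilde M;\Z) \simeq H^*(M;\Z)[x_0]/(x_0^2 + x_0(\pi^*x_{i_1}+\pi^*x_{i_2}) + \pi^*(x_{i_1}x_{i_2}),\ x_0\cdot(\text{classes vanishing on }Z))$, together with the fact (Theorem \ref{thm:coh}) that $x_0 x_{i_1} x_{i_2}=0$ and more generally $x_0$ multiplies to zero against any monomial not supported on $Z$; in particular $\pi^*:H^*(M)\to H^*(\widetilde M)$ is a split injection and $[\widetilde M]$ pushes forward to $[M]$.

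Next I would analyze the $2k$-forms $Q_a$ on $\widetilde M$ for each $a\in H^{2(n-2k)}(\widetilde M;\Z)$. Since $H^*(\widetilde M;\Z) = \pi^*H^*(M;\Z)\oplus x_0\cdot H^*(E')$ as groups (where $E'$ records the cohomology of the exceptional divisor, essentially $H^*(Z)$ with a degree shift and an auxiliary $\C P^1$-direction), any degree-$2$ class on $\widetilde M$ has the form $y = \pi^* z + t\,x_0$ with $z\in H^2(M;\Z)$, $t\in\Z$. Expanding $y^{2k}$ and using the relation for $x_0^2$ and the vanishing $x_0\cdot(\text{off-}Z)=0$, one sees that the "new" contributions all factor through restriction to $E\cong \text{(a }\C P^1\text{-bundle, or blow-up locus)}$ over $Z$, and $Z$ is itself a quasitoric TNS-manifold (an invariant submanifold of a TNS-manifold is TNS, as recalled in the introduction). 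So I would split the verification of admissibility of $Q_a$ into two cases according to whether $a$ lies in $\pi^*H^*(M;\Z)$ or is of the form $x_0\cdot b$. In the first case, $Q_{\pi^*a}$ restricted to $\pi^*H^2(M;\Z)$ equals $Q_a$ on $M$ (by the projection formula $\la \pi^*(a x^{2k}),[\widetilde M]\ra = \la a x^{2k},[M]\ra$), hence already takes values of both signs by the hypothesis on $M$; the extra variable $t\,x_0$ only enlarges the set of values, preserving non-semidefiniteness. In the second case $a = x_0 b$, the form $Q_{x_0 b}(y) = \la x_0 b\, y^{2k},[\widetilde M]\ra$ is computed on $E$, i.e. it reduces to a form of the shape $Q_{b'}$ on $Z$ (possibly twisted by the normal $\C P^1$-direction, which contributes a factor making the form non-semidefinite outright, since $\C P^1$-style directions give hyperbolic-type terms), and admissibility follows from TNS-ness of $Z$ together with the observation that any form built from a genuine $(x-1)$-type direction on an exceptional $\C P^1$ is automatically non-semidefinite.

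The main obstacle I anticipate is the bookkeeping in the second case: making precise exactly which form on $Z$ (with which auxiliary twist) one gets as $Q_{x_0 b}$, and checking that the degenerate or boundary sub-cases — e.g. $a$ a mixed class, or $Q_a$ trivial on the $\pi^*$-part but nonzero only through $x_0$ — are all handled, and in particular that one never produces a semidefinite nonzero $Q_a$ on $\widetilde M$ out of admissible data on $M$ and $Z$. A clean way to sidestep the explicit form-chasing is to argue at the level of the cone $S$: by Corollary \ref{cor:cart}-type reasoning and the surjectivity of the restriction-type maps, together with the fact that an invariant submanifold inherits TNS, one shows $S(\widetilde M)\ne \widetilde K(\widetilde M)\otimes\R$ directly from $S(M)\ne \widetilde K(M)\otimes\R$ and $S(Z)\ne \widetilde K(Z)\otimes\R$ — a supporting hyperplane for $S(M)$ pulls back along $\pi$, and one patches it using a supporting hyperplane for the exceptional-divisor contribution. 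I would present the proof in whichever of these two equivalent languages turns out to require the least case analysis, most likely the cone language via Corollary \ref{cor:app}, citing Proposition \ref{pr:vx} to handle the line bundles $\theta_i$ pulled back from $M$ and the new bundle $\theta_0$ associated to the exceptional facet.
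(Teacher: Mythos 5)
Your plan, as written, does not close the hard part of the argument. To prove that $\widetilde{M}:=Bl_{Z}M$ is TNS via Theorem \ref{thm:crit} you must show that $Q_a$ takes both signs for \emph{every} nonzero $a\in H^{2(n-2k)}(\widetilde{M};\Z)$, and in your own decomposition $H^*(\widetilde{M})\simeq \pi^*H^*(M)\oplus x_0\cdot(\mbox{shifted }H^*(Z))$ a general class is mixed, $a=\pi^*a'+x_0b$. Your two cases treat only the pure summands, and admissibility does not pass to sums: the paper warns immediately after Theorem \ref{thm:crit} that $Q_{a+b}$ need not be admissible even when $Q_a$ and $Q_b$ are, and this non-additivity is exactly why the criterion quantifies over infinitely many forms. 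Even the pure case $a=x_0b$ is only asserted, not proved: $Q_{x_0b}(\pi^*z+t\,x_0)$ unwinds to a fibre integral over the $\C P^1$-bundle $E\to Z$ twisted by the normal bundle of $Z$, and ``$\C P^1$-style directions give hyperbolic-type terms'' is not an argument that the result cannot be semidefinite; TNS-ness of $Z$ gives admissibility of forms on $Z$, not of these twisted, degree-shifted forms. Finally, your fallback ``cone'' argument points in the wrong direction: TNS-ness means the cone $S$ exhausts $\widetilde{K}\otimes\R$, i.e.\ there is \emph{no} nonzero supporting functional --- this is how the cone is used in Corollary \ref{cor:powers}, Theorem \ref{thm:comp} and in the proof of Theorem \ref{thm:crit} (the inequality printed in Corollary \ref{cor:app} is a slip). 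Since $M$ is TNS there is no supporting hyperplane of $S(M)$ to pull back, and exhibiting a supporting hyperplane for $S(\widetilde{M})$ would prove that $\widetilde{M}$ is \emph{not} TNS.

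The missing idea is the reduction provided by Theorem \ref{thm:equiv}$(ii)$: it suffices to show that the stable inverse of each facet line bundle of $\widetilde{M}$ totally splits, which replaces the quantification over all cohomology classes by finitely many explicit bundles. For the old facets $\widetilde{\theta}_i=\pi^*\theta_i$ this follows at once from TNS-ness of $M$ and Proposition \ref{pr:vx}. For the bundle $\widetilde{\theta}$ of the exceptional facet one uses only the Stanley--Reisner relation $y_{i_1}y_{i_2}=0$ (the proper transforms of $F_{i_1},F_{i_2}$ are disjoint in $\cut_G P$) together with the linear relations $y_{i_j}=-y+\pi^*x_{i_j}$: the resulting total Chern class identity shows that a twist of $\widetilde{\theta}^{\pm1}$ by bundles pulled back from $M$ is, stably, a sum of line bundles plus a trivial summand, and Proposition \ref{pr:vx} then transfers the total splitting of the stable inverse back to $\widetilde{\theta}$ itself. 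This route involves no case analysis over $H^*(\widetilde{M})$, avoids the non-additivity problem entirely, and does not even need TNS-ness of $Z$.
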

\begin{proof}
Let $\widetilde{\theta},\widetilde{\theta}_1,\dots,\widetilde{\theta}_m\to M^{2n}$ be the complex linear bundles corresponding to the truncation facet and all other facets of $\cut_{G} P$ in the same order as in $P$, respectively. Due to Theorem \ref{thm:equiv} $(ii)$, it is enough to check that any of the linear bundles $\widetilde{\theta},\widetilde{\theta}_1,\dots,\widetilde{\theta}_m$ has a totally split stably inverse. One has $\widetilde{\theta}_i=\pi^*\theta_{i}$ for any $i=1,\dots,m$. Hence, $\widetilde{\theta}_{i}$ has a totally split stably inverse for any $i=1,\dots,m$ (see Proposition \ref{pr:vx}).

It remains to deal with the stably inverse to $\widetilde{\theta}$. Let $y_{i}:=c_{1}(\widetilde{\theta}_i), y:=c_{1}(\widetilde{\theta})$. Consider the face $G=F_{i_1}\cap F_{i_2}\subset P^n$. W.l.g. we may assume that $\lambda_{i_1}=(1,0,0,\dots,0)^T,\lambda_{i_1}=(0,1,0,\dots,0)^T$ by using the necessary coordinate change. One has relations $y_{i_1}y_{i_2}=0, y_{i_1}=-y+a,y_{i_2}=-y+b$ in the cohomology ring $H^{*}(Bl_{Z} M;\Z)$, where $a=-\sum_{i=3}^m \lambda_{i_1}^i y_i,b=-\sum_{i=3}^m \lambda_{i_2}^i y_i$. Indeed, the first relation follows from the void intersection of the facets $F_{i_1},F_{i_2}$ of $\widetilde P$, and the other relations are corollaries of the linear relations in $H^{*}(Bl_{Z} M;\Z)$. Let $\alpha,\beta\to Bl_{Z} M$ be the complex linear bundles s.t. $c_1(\alpha)=a,c_1(\beta)=b$. Observe that $c(\widetilde{\theta}(\alpha\oplus\beta))=(1+y+a)(1+y+b)=1+2y+a+b=c(\widetilde{\theta}^2\alpha\beta)$, hence $\widetilde{\theta}(\alpha\oplus\beta)\simeq \widetilde{\theta}^2\alpha\beta\oplus\underline{\C}$. We conclude that $\widetilde{\theta}(\alpha\oplus\beta)\oplus\overline{\widetilde{\theta}(\alpha\oplus\beta)}\simeq \underline{\C}^4$, i.e. the stably inverse to $\widetilde{\theta}\alpha$ totally splits. By Proposition \ref{pr:vx}, the bundles $\alpha,\beta$ have totally split stably inverses. Applying Proposition \ref{pr:vx} again to the tensor product of $\widetilde{\theta}\alpha$ and $\overline{\alpha}$ we obtain the required.
\end{proof}

\section{Smooth projective toric TNS-manifolds}

The main result of this Section is Theorem \ref{thm:miss} describing the properties of the moment polytope for a smooth projective toric TNS-manifold.

Let $M^{2n}$ be a non-singular projective toric variety of complex dimension $n>2$. Let $P^n\subset \R^{n}$ be the respective moment polytope of $M^{2n}$. Let $F_{i}$, be the facets of $P^n$ with the respective normal vectors $\lambda_{i}=(\lambda_i^1,\lambda_i^2,\dots,\lambda_i^n)^{T}\in\R^n$, $i=1,\dots,m$. Let $K$ be the simplicial complex corresponding to $P^n$ (i.e. the face lattice of the simplicial sphere $P^*\subset(\R^n)^{*}$). Consider a minimal missing face of $K$ on $k$ vertices. W.l.g. we may assume that vertices of that missing face correspond to the facets $F_1,\dots,F_k\subset P$: $F_1\cap\dots\cap\wideparen{F_{i}}\cap\dots\cap F_k\neq\varnothing, 1\leq i\leq k,\ F_1\cap\dots\cap F_k=\varnothing$.
\begin{pr}\label{pr:2cases}
Let $k>2$. Then the matrix $(\lambda_1,\dots,\lambda_k)$ is $GL_n(\Z)$-equivalent either to\\
$\begin{pmatrix}
1 & 0 & \dots & 0 & 0\\
0 & 1 & \dots & 0 & 0\\
\dots & \dots & \dots & \dots & \dots\\
0 & 0 & \dots & 1 & 0\\
0 & 0 & \dots & 0 & 1\\
0 & 0 & \dots & 0 & 0\\
\dots & \dots & \dots & \dots & \dots\\
0 & 0 & \dots & 0 & 0
\end{pmatrix}$
or to
$\begin{pmatrix}
1 & 0 & \dots & 0 & -1\\
0 & 1 & \dots & 0 & -1\\
\dots & \dots & \dots & \dots & \dots\\
0 & 0 & \dots & 1 & -1\\
0 & 0 & \dots & 0 & 0\\
0 & 0 & \dots & 0 & 0\\
\dots & \dots & \dots & \dots & \dots\\
0 & 0 & \dots & 0 & 0
\end{pmatrix}$.
\end{pr}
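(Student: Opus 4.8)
The plan is to deduce everything from the Delzant (smoothness) condition applied to the proper subfaces of $\{1,\dots,k\}$, together with the completeness and projectivity of the fan $\Sigma$ of $M^{2n}$, and to run an induction on $k$. First, since $\{1,\dots,k\}$ is a \emph{minimal} missing face, every $(k-1)$-element subcollection $F_1\cap\dots\cap\widehat{F_j}\cap\dots\cap F_k$ is a nonempty face, so the corresponding cone of $\Sigma$ is smooth; hence each $(k-1)$-element subset of $\{\lambda_1,\dots,\lambda_k\}$ extends to a $\Z$-basis of $\Z^n$, in particular is linearly independent, and $\rk\{\lambda_1,\dots,\lambda_k\}\in\{k-1,k\}$. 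The two cases will give the two normal forms.

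\emph{Case $\rk=k-1$.} The space of linear relations among $\lambda_1,\dots,\lambda_k$ is $1$-dimensional; fix a relation $\sum_i c_i\lambda_i=0$. No $c_i$ vanishes (else a $(k-1)$-subset would be dependent). For each $j$, solving $\lambda_j=-c_j^{-1}\sum_{i\ne j}c_i\lambda_i$ and using that $\{\lambda_i:i\ne j\}$ is part of a $\Z$-basis while $\lambda_j\in\Z^n$ forces $c_j\mid c_i$ for all $i$; hence all $|c_i|$ coincide and, after rescaling, $c_i=\pm1$. If the signs were not all equal, split $\{1,\dots,k\}=A\sqcup B$ into the two sign classes; both are proper subsets of the minimal missing face, hence faces, so $w:=\sum_{i\in A}\lambda_i=\sum_{i\in B}\lambda_i$ lies in $\cone(\lambda_i:i\in A)\cap\cone(\lambda_i:i\in B)$, which is a common face of two cones of $\Sigma$ with disjoint ray sets, hence $\{0\}$; but $w$ is a nonzero sum of linearly independent vectors, a contradiction. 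Therefore $\sum_{i=1}^k\lambda_i=0$, and choosing $A\in GL_n(\Z)$ with $A\lambda_i=e_i$ for $i<k$ yields $A\lambda_k=-(e_1+\dots+e_{k-1})$, i.e. the second normal form.

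\emph{Case $\rk=k$.} It suffices to show that $\{\lambda_1,\dots,\lambda_k\}$ is part of a $\Z$-basis of $\Z^n$; then an $A\in GL_n(\Z)$ with $A\lambda_i=e_i$ for all $i\le k$ gives the first normal form. I would argue by induction on $k$. For $k\ge4$: the facet $F_1$ is itself a smooth projective toric manifold of complex dimension $n-1$ with characteristic lattice $N_1=\Z^n/\Z\lambda_1$, and $\{F_1\cap F_2,\dots,F_1\cap F_k\}$ is a minimal missing face of $F_1$ on $k-1\ge3$ vertices with normals the images $\bar\lambda_2,\dots,\bar\lambda_k\in N_1$, each of which is primitive because $\{\lambda_1,\lambda_j\}$ is part of a basis. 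By the inductive hypothesis $(\bar\lambda_2,\dots,\bar\lambda_k)$ is $GL_{n-1}(\Z)$-equivalent to one of the two normal forms; the second would give $\sum_{i=2}^k\bar\lambda_i=0$ in $N_1$, i.e. $\sum_{i=2}^k\lambda_i=t\lambda_1$ with $t\neq0$ (as $\{\lambda_2,\dots,\lambda_k\}$ is independent), forcing $\lambda_1\in\lin(\lambda_2,\dots,\lambda_k)$ and contradicting $\rk=k$. Hence $(\bar\lambda_2,\dots,\bar\lambda_k)$ has the first form, so $\{\bar\lambda_2,\dots,\bar\lambda_k\}$ is part of a $\Z$-basis of $N_1$; lifting it and adjoining $\lambda_1$ shows $\{\lambda_1,\dots,\lambda_k\}$ is part of a $\Z$-basis of $\Z^n$.

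The remaining base case $k=3$ with $\lambda_1,\lambda_2,\lambda_3$ linearly independent is the main obstacle, and here the conditions on proper subfaces are genuinely insufficient: the configuration $\lambda_1=e_1$, $\lambda_2=e_2$, $\lambda_3=e_1+e_2+2e_3$ in $\Z^3$ has all three $2$-element subsets part of a basis yet $\det[\lambda_1\,\lambda_2\,\lambda_3]=2$. So projectivity of $M^{2n}$ must be used. The plan is to invoke the primitive relation of the primitive collection $\{\lambda_1,\lambda_2,\lambda_3\}$: completeness of $\Sigma$ gives $\lambda_1+\lambda_2+\lambda_3=\sum_s c_s\mu_s$, where $\cone(\mu_1,\dots,\mu_l)$ is a cone of $\Sigma$ disjoint from $\{\lambda_1,\lambda_2,\lambda_3\}$, and smoothness forces $c_s\in\Z_{>0}$; combining this identity with the unimodularity of the three pairs (and, if necessary, with the strict convexity of the support function of the moment polytope) one deduces that $\Z^3/\la\lambda_1,\lambda_2,\lambda_3\ra$ is torsion-free, i.e. $\{\lambda_1,\lambda_2,\lambda_3\}$ is part of a $\Z$-basis. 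Pinning down this last deduction is the hard part; everything else is bookkeeping with changes of basis in $GL_n(\Z)$.
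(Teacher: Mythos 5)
Your treatment of the rank $k-1$ case is correct and complete: unimodularity at a vertex of each face $F_1\cap\dots\cap\widehat{F_j}\cap\dots\cap F_k$ gives that every $(k-1)$-subset of the $\lambda_i$ is part of a $\Z$-basis, your divisibility argument normalizes the relation to $c_i=\pm1$, and your fan-theoretic argument (cones of a simplicial fan on disjoint ray sets meet only in $0$) pins the signs; the paper does the same first steps and fixes the sign instead by the inward-normal geometry of $F_k$, so the two routes are equivalent. Your inductive reduction of the rank $k$ case to $k=3$ via restriction to the facet $F_1$ (which is the paper's Lemma 4.10 plus primitivity of the images in $\Z^n/\Z\lambda_1$) is also sound. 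The genuine gap is the one you admit: the base case $k=3$ with $\lambda_1,\lambda_2,\lambda_3$ independent is never proved, only a plan via primitive relations is sketched. For comparison, this is precisely the point where the paper's own proof is thinnest: it asserts that after a $GL_n(\Z)$-change one may take $\eta_k=ce_k$ and then extracts $c=\pm1$ from one more unimodular vertex, but clearing the components of $\lambda_k$ along $e_1,\dots,e_{k-1}$ while keeping $\lambda_i=e_i$ is exactly what is not justified.

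Moreover, your suspicion is stronger than you state: the base case cannot be closed, because your test configuration is realizable by genuine Delzant data. The triple $(e_1,e_2,e_1+e_2+2e_3)$ is $GL_3(\Z)$-equivalent to $(e_1,e_2,-e_1-e_2+2e_3)$, and the complete fan with rays $e_1,\ e_2,\ -e_1-e_2+2e_3,\ e_3,\ -e_3$ and the six maximal cones $\cone(u_i,u_j,\pm e_3)$, $1\le i<j\le3$, is smooth and projective (Picard number $2$; it is the fan of $\P(\mathcal{O}\oplus\mathcal{O}\oplus\mathcal{O}(2))$ over $\C P^1$, whose moment polytope is a prism). There the three quadrilateral facets form a minimal missing face on $3$ vertices whose normals are independent with determinant $2$, hence extend to no $\Z$-basis; alternatively, your exact triple occurs if one star-subdivides $\cone(e_1,e_2,e_1+e_2+2e_3)$ at its interior lattice point $(1,1,1)$ and completes the fan with the ray $(-1,-1,-1)$, again a smooth projective fan. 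So in the rank $k$ case the first normal form is not forced by smoothness, completeness and projectivity, and no refinement of the primitive-relation bookkeeping will finish your base case: the first alternative of the Proposition fails as stated, and both your proof and the paper's would need an additional hypothesis (for instance, conditions such as the absence of triangular $2$-faces, which do hold where the Proposition is applied later in the paper) or a weaker normal form in the independent case.
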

\begin{proof}
Let $i>k$ be a facet index s.t. $F_1\cap\dots\cap F_{k-1}\cap F_i$ contains a vertex of $P$. Consider the standard unitary basis $e_1,\dots,e_n\in\R^n,\ e_i^j=\delta_{i}^j$. Since $P^n$ is a Delzant polytope, there is a $GL_n(\Z)$-transform taking $\lambda_1,\dots,\lambda_{k-1}$ to the vectors $e_1,\dots,e_{k-1}$, resp. Clearly, the matrix $(\lambda_1,\dots,\lambda_k)$ has rank $k-1$ or $k$. In the first case, one has $\lambda_k=(a_{1},\dots,a_{k-1},0,\dots,0)^T$ for some $a_{i}\in \Z,\ i=1,\dots,k-1$. For any $i=1,\dots,k-1$ there is $j>k$, s.t. $F_1\cap\dots \cap\wideparen{F_i}\cap\dots\cap F_k\cap F_j$ has a vertex. Then the Delzant condition implies $a_{i} c_i=1$ for some $c_i\in\Z$. Hence, $a_{i}=\pm 1$ for $i=1,\dots,k-1$. Since $\eta_{k}$ is an inward-pointing normal vector to $F_{k}$, one has $a_{1}=\dots=a_{k}=-1$.

Now suppose that $\rk (\lambda_1,\dots,\lambda_k)=k$. Applying the corresponding $GL_n(\Z)$-transform to $P^n$ one obtains $\eta_{k}=c e_k$ for some $c\in\Z$. The Delzant condition implies $c d=-1$ for some $d\in\Z$. Hence $c=\pm 1$. Applying the corresponding $GL_n(\Z)$-transform one reduces the matrix to the desired form.
\end{proof}

Let $n=k=3$. Clearly, $\lb 1,\dots ,m \rb=\lb 1,2,3\rb\sqcup S_1\sqcup S_2$, where $S_1,S_2$ are s.t. $\bigcup_{i\in S_1} F_i$ and $\bigcup_{j\in S_2} F_j$ have empty intersection.

Consider the case $(\lambda_1,\lambda_2,\lambda_3)=
\begin{pmatrix}
1 & 0 & 0\\
0 & 1 & 0\\
0 & 0 & 1
\end{pmatrix}$.

\begin{pr}\label{pr:posi}
There exist $i\in \lb 1,2\rb$ s.t. for any $p\in S_{i},\ k=1,2,3$ one has
\[
\lambda_{p}^k\geq 0.
\]
\end{pr}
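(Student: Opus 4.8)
The plan is to pass to the normal fan $\Sigma$ of $P^3$ and exploit that $\lambda_1,\lambda_2,\lambda_3$ are the standard basis vectors $e_1,e_2,e_3$. First I would translate $P^3$ so that the supporting hyperplanes of $F_1,F_2,F_3$ all pass through the origin; then $F_i\subseteq\{x_i=0\}$, hence $P^3\subseteq C:=\R^3_{\geq 0}$, while $0\notin P^3$ since $F_1\cap F_2\cap F_3=\varnothing$. Because $\{1,2,3\}$ is a \emph{minimal} missing face, the three $F_i\cap F_j$ ($i\ne j$) are edges of $P^3$, so the three coordinate quarter-planes $\cone(e_i,e_j)$ are $2$-cones of $\Sigma$, whereas $\cone(e_1,e_2,e_3)=C$ is not a cone of $\Sigma$.

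Next I would set up the topology. The union $A:=F_1\cup F_2\cup F_3$ is an annulus (three $2$-cells glued cyclically along the three pairwise-disjoint edges $F_i\cap F_j$) embedded in $\partial P^3\cong S^2$; its complement is a pair of nonempty open disks $D_1,D_2$ with disjoint closures, tiled respectively by $\{F_p\}_{p\in S_1}$ and $\{F_p\}_{p\in S_2}$. Since the interior of $A$ carries no vertex of $P^3$ (a vertex there would lie on $F_1,F_2,F_3$ at once), the vertex set of $P^3$ is the disjoint union $V(\overline{D_1})\sqcup V(\overline{D_2})$, where $V(\overline{D_i})$ denotes the set of vertices in $\overline{D_i}$; and every facet $F_p$ with $p\in S_i$ has a vertex in $\overline{D_i}$.

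The main step is to consider, for $i=1,2$, the closed full-dimensional set $\Xi_i:=\bigcup_{v\in V(\overline{D_i})}\sigma_v$ (a union of maximal cones of $\Sigma$), and to show $\partial\Xi_i\subseteq\partial C$. Indeed, the topological boundary of $\Xi_1$ lies in the union of those walls $W$ of the cones $\sigma_v$, $v\in V(\overline{D_1})$, whose opposite maximal cone $\sigma_{v'}$ has $v'\notin V(\overline{D_1})$; such a $W$ is dual to an edge $F_a\cap F_b$ of $P^3$ with one endpoint in $\overline{D_1}$ and the other in $\overline{D_2}$. But if $F_a$ or $F_b$ lay in $S_1$ it would be a tile of $D_1$, hence contained in $\overline{D_1}$, forcing both endpoints of $F_a\cap F_b$ into $\overline{D_1}$; symmetrically neither lies in $S_2$; so $F_a,F_b\in\{F_1,F_2,F_3\}$ and $W$ is one of the $\cone(e_i,e_j)\subseteq\partial C$. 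Now $\R^3\setminus\partial C$ has exactly the two components $\inter C$ and $\R^3\setminus C$, so (being regular closed with boundary in $\partial C$) each $\Xi_i$ is one of $C$, $\R^3\setminus\inter C$, $\R^3$. It is not $\R^3$: the opposite disk is nonempty and hence contributes a vertex $w\notin\overline{D_i}$, and the open cone $\inter\sigma_w$ meets no $\sigma_v$ with $v\in V(\overline{D_i})$. And $\Xi_1\cup\Xi_2=\R^3$ because the maximal cones of $\Sigma$, indexed by $V(P^3)=V(\overline{D_1})\sqcup V(\overline{D_2})$, tile $\R^3$; hence $\Xi_1,\Xi_2\in\{C,\R^3\setminus\inter C\}$, and since their union is all of $\R^3$ at least one of them equals $C$. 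If $\Xi_1=C$, then for each $p\in S_1$ a vertex $v\in\overline{D_1}$ of $F_p$ gives $\R_{\geq 0}\lambda_p\subseteq\sigma_v\subseteq\Xi_1=C$, i.e.\ $\lambda_p^k\geq 0$ for $k=1,2,3$; so $i=1$ works (and symmetrically if $\Xi_2=C$).

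I expect the main obstacle to be the topological bookkeeping of the second paragraph — verifying that $A$ really is an embedded annulus whose complementary disks have disjoint closures, that $\inter A$ contains no vertex, and that neither $\Xi_i$ is all of $\R^3$ — rather than the fan-theoretic part, which uses only completeness of $\Sigma$ and not the smoothness of $P^3$.
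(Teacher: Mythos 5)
Your proof is correct, but it is not the route the paper takes, and the comparison is worth recording. The paper argues in the primal space: since $(\lambda_1,\lambda_2,\lambda_3)=(e_1,e_2,e_3)$, the three planes supporting $F_1,F_2,F_3$ meet in a single point $x$ (not a vertex of $P^3$, because $F_1\cap F_2\cap F_3=\varnothing$), the polytope lies in the corner cone with apex $x$, and $S_i$ is chosen as the half of the belt complement \emph{closest} to $x$ (cut off by a separating plane); for $p\in S_i$ the plane of $F_p$ then crosses the interiors of all three facets of that corner cone, which forces $\lambda_p\in\cone(e_1,e_2,e_3)$. You argue dually in the complete normal fan: $\Xi_i=\bigcup_{v\in V(\overline{D_i})}\sigma_v$ has boundary contained in the ``in/out'' walls, which you identify with the quarter-planes $\cone(e_a,e_b)$, so each $\Xi_i$ is $C$, $\R^3\setminus\inter C$ or $\R^3$; ruling out $\R^3$ and using that the $\Xi_i$ cover $\R^3$ forces one of them to equal $C$, and then every $\lambda_p$ with $p\in S_i$ lies in some $\sigma_v\subseteq C$. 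Each approach buys something: the paper's proof is shorter and identifies geometrically \emph{which} side works (the side near the corner $x$), but its key step (``the plane of $F_p$ meets the interiors of all three facets of $\sigma$'') is asserted rather than derived; your version replaces it by a clean separation dichotomy, at the cost of the bookkeeping you flag, namely the annulus/two-disk decomposition (which is essentially the paper's standing assumption that $\bigcup_{i\in S_1}F_i$ and $\bigcup_{j\in S_2}F_j$ are disjoint, so you may simply quote it) and the check that $\partial\Xi_i$ is covered by in/out walls also along rays and at the origin --- both routine here, since around a ray of the normal fan the maximal cones are arranged cyclically (they are indexed by the vertices of the corresponding facet) and at the origin one uses connectedness of the edge graph of $P^3$. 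Neither argument needs the Delzant condition beyond the normalization already provided by Proposition \ref{pr:2cases}, and both remain valid for quasitoric characteristic pairs with this normalization.
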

\begin{proof}
Denote by $x\in\R^3$ the common vertex belonging to planes containing the facets $F_{1},F_{2},F_{3}$. Consider the minimal convex polyhedral cone $\sigma\subset\R^3$ containing $P^3$ with origin in $x$. Choose indices $\lb i,j\rb=\lb 1,2\rb$ s.t. $\bigcup_{p\in S_{i}} F_{p}$ is the closest part to $x$. (More precisely, there exists a dividing plane $\lb H=c\rb$ s.t. $\bigcup_{p\in S_{i}} F_{p}\cap \lb H<c\rb=\bigcup_{p\in S_{i}} F_{p}$ and $\bigcup_{p\in S_{j}} F_{p}\cap \lb H<c\rb=\varnothing$.) For any facet $F_{p},p\in S_{i}$, the corresponding plane intersects the interiors of all $3$ facets of $\sigma$. Hence, the normal vector $\eta_{p}$ belongs to the cone generated by $(1,0,0)^{T},(0,1,0)^{T},(0,0,1)^{T}$, and has only non-negative coordinates.
\end{proof}

Denote by $S_{+},S_{\pm}$ the sets $S_{i},S_{j}$ from the Proposition \ref{pr:posi}, resp. (By abuse of the notation, the values $i,j$ are now undefined.) The characteristic matrix of $M^{6}$ has form $(Id_{3}|A|B)$, where $A=(a_{i}^{j}),\ a_{i}^{j}\geq 0$, $B=(b_{i}^{j})$. By the definition, $M^6$ is the equivariant connected sum of quasitoric manifolds $M_{+}^{6},M_{\pm}^{6}$, where $M_{+}^{6}=M(P_{+},(Id_3|A)),\ M_{\pm}^{6}=M(P_{\pm},(Id_3|B))$, and $P_{+},P_{\pm}\subset\R^3$ are the respective combinatorial convex polytopes. Let $F_{+,i}$ be the facets of $P_{+}$ corresponding to $F_{i}\in P^3$, $i\in \lb 1,2,3\rb\cup S_{+}$.

Consider the characteristic submanifold $M^{4}_{+,i}$ of $M^{6}_{+}$ corresponding to the facet $F_{+,i},\ i=1,2,3$. The corresponding moment polygone and characteristic matrix are denoted by $P^{4}_{+,i}, \Lambda_{+,i}$, resp. The matrix $\Lambda_{+,i}$ is equal to the submatrix of $\Lambda$ consisting of the rows complement to the $i$-th and columns with indices $(\lb 1,2,3\rb\setminus\lb i\rb)\cup \lb k\in S_{+}|\ F_{i}\cap F_{k}\neq\varnothing\rb$ (see \cite{bu-pa-15}). The orientation on $M^6$ is compatible with the orientations on the summands $M^6_{+}$, $M^{6}_{\pm}$ (see Proposition \ref{pr:compatiblesum}).

\begin{pr}\cite[Lemma 7.3.19]{bu-pa-15}\label{pr:torsign}
For any non-singular projective toric variety $X$ and any fixed point $x\in X$ one has $\sigma(x)=1$.
\end{pr}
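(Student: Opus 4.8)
The plan is to deduce the claim directly from the sign formula \eqref{eq:sign} together with the defining property of a non-singular projective toric variety. For such an $X$ of complex dimension $n$ the moment polytope $P^{n}\subset\R^{n}$ is a Delzant polytope, and, regarded as a quasitoric manifold with its canonical stably complex structure, $X$ carries the orientation equal to the complex orientation and the bundles $\theta_{i}\to X$ equal to the line bundles $\mathcal{O}(D_{i})$ of the torus-invariant divisors $D_{i}$. In this presentation the characteristic function sends the facet $F_{i}$ to the primitive generator $\lambda_{i}$ of the corresponding ray of the normal fan of $P^{n}$, i.e.\ to the primitive inward-pointing normal vector of $F_{i}$. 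Hence for a toric variety the vectors $\lambda_{i}$ occurring in \eqref{eq:sign} are literally the inward normals of the facets of $P^{n}$, rather than abstract characteristic data.

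First I would fix a fixed point $x\in X$; it corresponds to a vertex $v=F_{j_{1}}\cap\dots\cap F_{j_{n}}$ of $P^{n}$, and the normals $\lambda_{j_{1}},\dots,\lambda_{j_{n}}$ generate the maximal cone $\sigma_{v}=\cone(\lambda_{j_{1}},\dots,\lambda_{j_{n}})$, so they are linearly independent. Assuming $n\geq 2$ (which is all that is needed below, since in the application $n=3$; the case $n=1$, $X\simeq\C P^{1}$, follows at once from the definition of $\sigma(x)$), after transposing two of the indices $j_{k}$ if necessary we may assume that $\lambda_{j_{1}},\dots,\lambda_{j_{n}}$ form a positively oriented basis of $\R^{n}$. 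With this ordering the hypothesis of formula \eqref{eq:sign} is satisfied, so $\sigma(x)=\det(\lambda_{j_{1}},\dots,\lambda_{j_{n}})$.

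It remains to evaluate this determinant, and this is exactly where non-singularity enters: the fan of $X$ is smooth, equivalently $P^{n}$ is Delzant, which means precisely that at every vertex $v$ the primitive inward normals $\lambda_{j_{1}},\dots,\lambda_{j_{n}}$ form a $\Z$-basis of the lattice $N\simeq\Z^{n}$. Therefore $\det(\lambda_{j_{1}},\dots,\lambda_{j_{n}})=\pm 1$, and combined with the positivity of the chosen basis this gives $\det(\lambda_{j_{1}},\dots,\lambda_{j_{n}})=1$, whence $\sigma(x)=1$.

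There is no serious obstacle here — the statement is \cite[Lemma 7.3.19]{bu-pa-15}, and all the substance is already packaged into \eqref{eq:sign}. The only point requiring genuine care is the orientation bookkeeping at the outset: one must check that the canonical omniorientation of a projective toric manifold is the one for which $\theta_{i}=\mathcal{O}(D_{i})$ and the orientation of $X$ is the complex one, so that the $\lambda_{j_{k}}$ appearing in \eqref{eq:sign} are the geometric inward normals of $P^{n}$; granting this, the computation above is immediate.
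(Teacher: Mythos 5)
Your argument is correct, but note that the paper itself gives no proof of this statement: it is imported verbatim as \cite[Lemma~7.3.19]{bu-pa-15}. Your route — identify the characteristic vectors of a smooth projective toric variety with the primitive inward normals of the Delzant polytope, invoke formula \eqref{eq:sign} after reordering so that the normals form a positive basis, and use the Delzant (smooth fan) condition $\det(\lambda_{j_1},\dots,\lambda_{j_n})=\pm1$ to conclude the determinant is $+1$ — is sound, including the dismissal of $n=1$ and the transposition step. The standard argument in the cited source is shorter and bypasses \eqref{eq:sign} entirely: at a fixed point $x$ the complex structure of $X$ gives a complex-linear (hence orientation-preserving) isomorphism $T_xX\simeq(\theta_{j_1}\oplus\dots\oplus\theta_{j_n})_x$, since the $\theta_{j_k}$ restrict to the weight summands of the isotropy representation and the remaining $\theta_i$ are trivial at $x$; by the very definition of $\sigma(x)$ this yields $\sigma(x)=1$ with no determinant or Delzant bookkeeping. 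Be aware that the one point you explicitly ``grant'' — that the omniorientation coming from the complex structure makes the $\lambda_i$ equal to the geometric inward normals, i.e.\ the ray generators of the normal fan — is exactly where all the content of the lemma sits; it is standard (it is how the characteristic pair of a toric manifold is defined in \cite{bu-pa-15}), so your proof is acceptable, but once that identification is in hand the direct orientation comparison above gives the conclusion even more immediately than the determinant computation.
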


\begin{lm}\label{lm:sgncomp}
For any index $i=1,2,3$ and a vertex $v\in P^{4}_{+,i}$, one has
\[
\sigma(v)=
\begin{cases}
-1, \mbox{ if $v$ is different from } F_{+,1}\cap F_{+,2} \cap F_{+,3} \mbox{ and incident to it},\\
1, \mbox{otherwise}.
\end{cases}
\]
\end{lm}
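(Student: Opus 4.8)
The plan is to evaluate $\sigma(v)$ at every vertex $v$ of $P^{4}_{+,i}$ directly from formula \eqref{eq:sign}, reading off the characteristic vectors from the shape of $\Lambda_{+,i}$, and to split into three cases according to the position of $v$ relative to $v_{0}:=F_{+,1}\cap F_{+,2}\cap F_{+,3}$. By the symmetry between the indices $1,2,3$ I would assume $i=3$. The polygon $P^{4}_{+,3}$ is the facet $F_{+,3}$ of $P_{+}$; its edges are $E_{1}=F_{+,3}\cap F_{+,1}$, $E_{2}=F_{+,3}\cap F_{+,2}$ together with the edges $E_{l}$ ($l\geq 3$) cut off by the facets $F_{k}$, $k\in S_{+}$, meeting $F_{3}$. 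Since $\Lambda_{+,3}$ is obtained from $(Id_{3}\mid A)$ by deleting the third row and keeping the columns indexed by $\{1,2\}\cup\{k\in S_{+}\mid F_{3}\cap F_{k}\neq\varnothing\}$, the characteristic vector of $E_{1}$ is $(1,0)^{T}$, that of $E_{2}$ is $(0,1)^{T}$, and that of each $E_{l}$, $l\geq 3$, is $(\lambda_{k}^{1},\lambda_{k}^{2})^{T}$ with both coordinates $\geq 0$ because $A$ has non-negative entries. In the cyclic order on the edges of $F_{+,3}$ the edges $E_{1},E_{2}$ are adjacent and meet at $v_{0}$; relabelling so that the cyclic order is $E_{1},E_{2},E_{3},\dots,E_{s}$, the two vertices incident to $v_{0}$ are $E_{2}\cap E_{3}$ and $E_{s}\cap E_{1}$.

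I would first fix the orientation normalisation. The vertex $v_{0}$ of $M^{6}_{+}$ carries the characteristic vectors $e_{1},e_{2},e_{3}$, so $\sigma_{M^{6}_{+}}(v_{0})=\det(e_{1},e_{2},e_{3})=1$, and since $M^{4}_{+,3}$ is a characteristic submanifold of $M^{6}_{+}$ one has $\sigma_{M^{4}_{+,3}}(v_{0})=\sigma_{M^{6}_{+}}(v_{0})=1$, which is consistent with the compatibility of the orientation of $M^{6}$ with that of $M^{6}_{+}$. This pins down the convention so that for a vertex $E_{l}\cap E_{l+1}$ the value $\sigma$ is $\det(\mu_{l},\mu_{l+1})$, where $\mu_{l}$ is the characteristic vector of $E_{l}$ and indices are cyclic; in particular $v_{0}$ is of the ``otherwise'' type with $\sigma(v_{0})=1$. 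For the two vertices incident to $v_{0}$ the computation is immediate: at $E_{2}\cap E_{3}$ one gets $\sigma=\det\bigl((0,1)^{T},(\lambda_{k}^{1},\lambda_{k}^{2})^{T}\bigr)=-\lambda_{k}^{1}$; since $E_{2},E_{3}$ meet at a vertex, $(0,1)^{T}$ and $(\lambda_{k}^{1},\lambda_{k}^{2})^{T}$ form a $\Z$-basis, so $\lambda_{k}^{1}=\pm1$, and as $\lambda_{k}^{1}\geq 0$ we get $\lambda_{k}^{1}=1$ and $\sigma=-1$; the computation at $E_{s}\cap E_{1}$ is the same and gives $\sigma=-1$.

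For every remaining vertex $v=E_{l}\cap E_{l+1}$ with $l,l+1\geq 3$, i.e. $v=F_{3}\cap F_{k}\cap F_{k'}$ with $k,k'\in S_{+}$, I would compare with $M^{6}$: such a $v$ is also a vertex of the facet $F_{3}$ of $P^{3}$, the characteristic submanifold $M^{4}_{3}\subset M^{6}$ is a non-singular projective toric variety, and the edges $F_{3}\cap F_{k}$, $F_{3}\cap F_{k'}$ of $F_{3}$ carry the same characteristic vectors $(\lambda_{k}^{1},\lambda_{k}^{2})^{T}$, $(\lambda_{k'}^{1},\lambda_{k'}^{2})^{T}$ as the corresponding edges of $F_{+,3}$ at $v$. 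Hence $\sigma_{M^{4}_{+,3}}(v)=\sigma_{M^{4}_{3}}(v)=1$ by Proposition \ref{pr:torsign}. Together with the previous two paragraphs this yields the asserted case distinction, and the cases $i=1,2$ follow by symmetry.

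The hard part is the orientation/ordering bookkeeping rather than the arithmetic. Formula \eqref{eq:sign} returns $\sigma$ only after the incident facets are ordered so that their geometric inward normals form a positive basis, and one must verify that this ordering is cyclically consistent along the boundary of $F_{+,3}$ (so that the normalisation $\sigma(v_{0})=1$ propagates to exactly the stated values) and that the comparison in the last paragraph respects the orientation $M^{4}_{+,3}$ inherits from $M^{6}_{+}$ — in particular, one must pinpoint why the two vertices adjacent to $v_{0}$ behave differently from the vertices of $F_{+,3}$ lying entirely in the $S_{+}$ part, even though the local determinant expressions look similar. Once these signs of orderings are under control, every determinant computation above is one line, using only the Delzant (unimodularity) conditions at the relevant vertices and the non-negativity of the columns of $A$.
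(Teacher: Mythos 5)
Your plan reproduces the easier half of the paper's argument: for a vertex not touching $v_{0}:=F_{+,1}\cap F_{+,2}\cap F_{+,3}$ you compare with the invariant surface $M^{4}_{i}\subset M^{6}$ and invoke Proposition \ref{pr:torsign}, which is exactly what the paper does. The genuine gap is the part you explicitly postpone in your final paragraph, and it is not a routine check --- it is the entire content of the lemma at the three remaining vertices. Formula \eqref{eq:sign} computes $\sigma$ as a determinant of \emph{characteristic} vectors only after the edges are ordered so that their geometric \emph{inward} normals form a positive basis, for the orientation attached to a convex realisation of $(P_{+,i},\Lambda_{+,i})$. So to evaluate $\sigma$ at $v_{0}$ and at its two neighbours you must know (a) how the induced characteristic vectors of $\Lambda_{+,i}$ sit relative to the inward normals of a realisation of the polygon $P_{+,i}$, and (b) how the orientation of $M^{4}_{+,i}$, induced from the orientation of $M^{6}_{+}$ fixed by compatibility with $M^{6}$, relates to the orientation implicit in \eqref{eq:sign}. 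The paper settles precisely this: the characteristic vectors of the two edges $F_{+,i}\cap F_{+,j}$, $j\in\{1,2,3\}\setminus\{i\}$, are \emph{outward}-pointing, while those of all edges coming from $S_{+}$ are inward-pointing; consequently the determinant in \eqref{eq:sign} acquires one sign reversal at each neighbour of $v_{0}$ and two cancelling reversals at $v_{0}$, which is the asserted case distinction. Your proposal never establishes this, and without it the prescription ``$\sigma(E_{l}\cap E_{l+1})=\det(\mu_{l},\mu_{l+1})$ in cyclic order'' is unfounded, since it silently assumes the characteristic data behave like inward normals of a convex polygon.

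Moreover, the normalisation you offer in place of this analysis is itself doubtful. You declare $\sigma_{M^{6}_{+}}(v_{0})=\det(e_{1},e_{2},e_{3})=1$ and call this ``consistent with the compatibility of the orientation of $M^{6}$ with that of $M^{6}_{+}$''. But under the compatible orientations Proposition \ref{pr:compatiblesum} forces $\sigma_{M^{6}_{+}}(v_{0})=-\sigma_{M^{6}_{\pm}}(v_{0}')$, and in the summand $M^{6}_{\pm}$ the characteristic vectors at the glued vertex are genuine inward normals of a Delzant realisation, so $\sigma_{M^{6}_{\pm}}(v_{0}')=+1$; hence with the compatible orientation one gets $\sigma_{M^{6}_{+}}(v_{0})=-1$, the opposite of what you assert. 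In addition, your chain ``$\sigma_{M^{4}_{+,i}}(v_{0})=\sigma_{M^{6}_{+}}(v_{0})$, therefore the cyclic determinants give all other signs'' mixes two different orientation conventions (the one induced from the ambient manifold and the one attached to a planar realisation of the polygon) without proving they agree; one can check on the smallest example (a $3$-belt with $S_{+}$ consisting of a single facet) that these conventions differ by a global sign, so the issue cannot be waved away. In short, the determinant arithmetic is fine, but the orientation bookkeeping you defer is exactly the lemma's proof, and the one concrete claim you make about it points the wrong way.
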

\begin{proof}
W.l.g. let $i=1$. Proposition \ref{pr:torsign} and Formula \eqref{eq:sign} imply that for any vertex $v\in P^{4}_{+,1}$ different from $F_{+,1}\cap F_{+,2} \cap F_{+,3}$ and not incident with the latter, one has $\sigma(v)=1$. The characteristic vectors of the edges $F_{1}\cap F_{2}, F_{1}\cap F_{3}$ are outward-pointing, whereas the characteristic vectors of all other edges in $P_{+,1}$ are inward-pointing. This implies the desired identities on the signs of the remaining $3$ vertices of $P_{+,1}$.
\end{proof}

Remind that the index $ind_{\nu} (v)$ of a vertex $v=F_{i_1}\cap\dots\cap F_{i_n}\in P^n$ of the moment polytope w.r.t. the generic vector $\nu\in N\otimes\R^n$ is by definition equal to the number of negative scalar products of $\nu$ with the conjugate basis to $\lambda_{i_1},\dots, \lambda_{i_n}$ (see \cite[Section 7.3]{bu-pa-15}).

\begin{pr}\cite[Theorem 9.4.8]{bu-pa-15}\label{pr:sgnform}
\[
\sigma(M^{2n})=\sum_{v\in \vertex P^n } (-1)^{ind_{\nu}(v)} \sigma(v).
\]
\end{pr}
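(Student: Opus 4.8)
The plan is to identify $\sigma(M^{2n})$ with the Hirzebruch $\chi_y$-genus at $y=1$ and to evaluate the latter by equivariant localisation at the fixed points $x_v$, $v\in\vertex P^n$, of the torus action.

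First I would use that for a closed stably complex manifold one has $\sigma(M^{2n})=\chi_1(M^{2n})$: the characteristic power series of $\chi_y$ at $y=1$ is $x\coth x$, which is exactly that of the Hirzebruch $L$-genus, so both the signature and $\chi_1$ are computed from the stable Chern (hence Pontryagin) classes of \eqref{eq:tangent} by the same universal formula. Moreover $\chi_y$ has a $T^n$-equivariant refinement $\chi_y^{T^n}(M^{2n})\in R(T^n)[y]$, obtained by applying the $T^n$-equivariant Gysin map $K^0_{T^n}(M^{2n})\to R(T^n)$ to the class $\lambda_y(T^*_sM^{2n})=\sum_i y^i\Lambda^i(T^*_sM^{2n})$, where $T_sM^{2n}$ is the equivariant stable tangent bundle; it is a genuine Laurent polynomial in the characters of $T^n$, and augmenting the characters to $1$ recovers $\chi_y(M^{2n})$.

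Next I would apply the Atiyah--Bott localisation formula to the torus action, all of whose fixed points $x_v$ are isolated. If $v=F_{i_1}\cap\dots\cap F_{i_n}$ and $w_{v,1},\dots,w_{v,n}\in N^*$ are the weights of the $T^n$-representation $T_{x_v}M^{2n}$ --- the basis conjugate, up to signs, to $\lambda_{i_1},\dots,\lambda_{i_n}$ --- then
\[
\chi_y^{T^n}(M^{2n})=\sum_{v\in\vertex P^n}\sigma(v)\prod_{j=1}^{n}\frac{1+y\,t^{-w_{v,j}}}{1-t^{-w_{v,j}}},
\]
where $\sigma(v)=\pm1$ is exactly the sign comparing the orientation of $M^{2n}$ with the complex orientation of $T_{x_v}M^{2n}$ coming from the $\theta_{i_k}$'s, i.e.\ the number \eqref{eq:sign}. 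Now restrict the characters to a generic one-parameter subgroup $\nu\in N$, so $\la\nu,w_{v,j}\ra\ne0$ for all $v,j$ and the fixed set is unchanged; the right-hand side becomes a rational function $F_\nu(s,y)$ of one variable $s$, equal to the Laurent polynomial $\chi_y^{T^n}(M^{2n})$ along this subgroup. Each factor $\tfrac{1+ys^{-\la\nu,w_{v,j}\ra}}{1-s^{-\la\nu,w_{v,j}\ra}}$ tends to $1$ when $\la\nu,w_{v,j}\ra<0$ and to $-y$ when $\la\nu,w_{v,j}\ra>0$, both as $s\to0$ and as $s\to\infty$; hence each summand, hence $F_\nu$, is regular at $s=0$ and at $s=\infty$, and being also a Laurent polynomial in $s$ it is therefore constant. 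Evaluating the limit and using the definition of $\ind_\nu(v)$ (the number of negative pairings of $\nu$ with the conjugate basis at $v$) gives $\chi_y(M^{2n})=F_\nu(0,y)=\sum_{v\in\vertex P^n}\sigma(v)(-y)^{\ind_\nu(v)}$; setting $y=1$ yields the claim, and independence of the right-hand side from the auxiliary $\nu$ is then automatic.

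The step I expect to be the main obstacle is the bookkeeping in the localisation formula: one must set up the equivariant $\chi_y$-operator for the stably complex --- not genuinely complex --- structure on $M^{2n}$ and check that the sign contributed at $x_v$ by Atiyah--Bott, the ratio of the stable orientation of $M^{2n}$ to the weight orientation of $T_{x_v}M^{2n}$, is precisely $\sigma(v)$ of \eqref{eq:sign}, and correspondingly that $\#\{j:\la\nu,w_{v,j}\ra>0\}$ matches $\ind_\nu(v)$ (interchanging it with $\ind_{-\nu}(v)$ merely replaces $\nu$ by $-\nu$). Once the orientation conventions are reconciled, the rest is the routine limit computation above. An alternative avoiding localisation is to use that $\sigma$ and every $\chi_y$-genus depend only on the $U$-cobordism class and to verify the formula on a set of multiplicative generators of $\Omega^U_*$ realised by quasitoric manifolds; but producing such representatives together with their polytopes, fixed-point signs and indices is itself laborious, so the localisation argument is preferable.
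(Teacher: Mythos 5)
The paper offers no proof of this Proposition---it is quoted verbatim from \cite[Theorem 9.4.8]{bu-pa-15}---and your localisation argument is essentially the proof given in that reference (Panov's computation of the equivariant $\chi_y$-genus of a quasitoric manifold, restricted to a generic circle, with regularity at $s=0,\infty$ forcing the Laurent polynomial to be constant and the limit producing $\sum_v(-y)^{\ind_\nu(v)}\sigma(v)$), so it is the same approach and is sound. The only slip is immaterial: each local factor tends to $1$ or $-y$ as $s\to 0$ and to the \emph{opposite} value as $s\to\infty$ (the two limits are swapped, not equal), which merely interchanges $\ind_\nu$ with $\ind_{-\nu}$ and, as you note, does not affect the conclusion.
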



\begin{lm}\label{lm:indcomp}
Let $i=1,2,3$. Then there exists $\nu\in\R^2$ (in general position) and a vertex $w\in P^{4}_{+,i}$, $w\neq F_{+,1}\cap F_{+,2} \cap F_{+,3}$, s.t. for any vertex $v\in P^{4}_{+,i}$, one has $ind_{\nu} (v)=0$, if $v=F_{+,1}\cap F_{+,2} \cap F_{+,3}$ or $v=w$, and $ind_{\nu} (v)=1$, otherwise.
\end{lm}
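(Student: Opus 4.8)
By relabelling the coordinates of $\R^3$ and the facets we may take $i=1$. Recall that $M^4_{+,1}$ is the quasitoric manifold over the polygon $P^4_{+,1}$ (combinatorially $F_{+,1}$) whose characteristic matrix is the submatrix of $(Id_3|A)$ with the first row deleted and with the columns indexed by $2,3$ and by $\{k\in S_+:\ F_1\cap F_k\neq\varnothing\}$. Since $A\geq 0$, every characteristic vector of $M^4_{+,1}$ lies in the closed first quadrant $\R^2_{\geq 0}$; the edges $E_0:=F_{+,1}\cap F_{+,2}$ and $E_1:=F_{+,1}\cap F_{+,3}$ carry the characteristic vectors $(1,0)^T$ and $(0,1)^T$ and meet at $q:=F_{+,1}\cap F_{+,2}\cap F_{+,3}$. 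List the edges cyclically as $E_0,E_1,\dots,E_{r+1}$ (so $r\geq1$), let $\mu_j\in\R^2_{\geq0}$ be the characteristic vector of $E_j$, and for a vertex $v=E_j\cap E_l$ set $\sigma_v:=\cone(\mu_j,\mu_l)$. By the definition of the index, for $\nu$ in general position one has $\ind_\nu(v)=0$ iff $\nu\in\inter\sigma_v$, $\ind_\nu(v)=2$ iff $-\nu\in\inter\sigma_v$, and $\ind_\nu(v)=1$ otherwise.

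The plan is to take $\nu$ in the open first quadrant, very close to one of the axes. Then $-\nu\in\R^2_{<0}$ lies in no $\sigma_v\subseteq\R^2_{\geq0}$, so $\ind_\nu(v)\in\{0,1\}$ for every $v$; and $\sigma_q=\cone((1,0)^T,(0,1)^T)=\R^2_{\geq0}$, so $\ind_\nu(q)=0$. The nonsingularity condition at the vertex $E_{r+1}\cap E_0$ gives $|\det(\mu_{r+1},(1,0)^T)|=1$, hence $(\mu_{r+1})_2=1>0$, and likewise $(\mu_1)_2=1$. Assume first that the $x$-axis carries no characteristic vector of an interior edge $E_2,\dots,E_r$ (equivalently $(\mu_j)_2>0$ for all $j=1,\dots,r+1$, the characteristic vectors being primitive). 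Take $\nu=(1,\varepsilon)^T$ with $\varepsilon>0$ small and generic. For $w:=E_{r+1}\cap E_0$ one has $\sigma_w=\cone(\mu_{r+1},(1,0)^T)$, which contains $(1,\varepsilon)^T$ in its interior for $\varepsilon$ small; thus $\ind_\nu(w)=0$, and $w\neq q$ since $r\geq1$. For every other vertex $v=E_j\cap E_{j+1}$, $1\leq j\leq r$, both $\mu_j,\mu_{j+1}$ have second coordinate at least $\delta:=\min_{1\leq l\leq r+1}(\mu_l)_2>0$; writing $(1,\varepsilon)^T=a\mu_j+b\mu_{j+1}$ with $a,b\geq0$ would force $\delta(a+b)\leq\varepsilon$ and $1\leq(a+b)\max_l(\mu_l)_1$, which is impossible once $\varepsilon$ is small enough. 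Hence $\ind_\nu(v)=1$, and the lemma holds with $w=E_{r+1}\cap E_0$. If instead the $y$-axis carries no characteristic vector of an interior edge, the symmetric choice $\nu=(\varepsilon,1)^T$ works and yields $w=E_1\cap E_2$.

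The remaining situation — some interior edge carrying $(1,0)^T$ \emph{and} some other interior edge carrying $(0,1)^T$ — is the main obstacle, since then each near-axis choice of $\nu$ meets several of the cones $\sigma_v$. Here one must instead locate a direction lying in the interior of exactly one cone among $\{\sigma_v:\ v\neq q\}$, equivalently a multiplicity-one point of the chain of angular sectors that these cones sweep out between the directions $(0,1)^T$ and $(1,0)^T$. I expect to exclude this degenerate configuration from the standing hypothesis that $M^6$ is a nonsingular projective toric variety: the characteristic vectors of $M^6$ are the pairwise distinct primitive inward normals of the Delzant polytope $P^3$, the facets in $S_+$ do not meet those in $S_\pm$, and $P_+$ has $q$ as a (simple) vertex whose vertex figure is the triangle $F_1F_2F_3$. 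Together with $|\det(\mu_j,\mu_{j+1})|=1$ for consecutive edges, these constraints should force the cyclic sequence $\mu_0,\mu_1,\dots,\mu_{r+1}$ to be monotone enough that at least one coordinate axis carries no characteristic vector of an interior edge, returning us to the case already treated; verifying this is where the real work of the proof lies.
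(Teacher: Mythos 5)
Your cone reformulation of the index is correct, and the two ``good'' cases are handled soundly: for $\nu=(1,\varepsilon)^T$ with $\varepsilon$ small the estimates at the vertices $E_j\cap E_{j+1}$, $1\leq j\leq r$, and at $w=E_{r+1}\cap E_0$ do give the asserted index pattern, and $-\nu$ indeed lies in no cone since all characteristic vectors are in $\R^2_{\geq 0}$. But the proof is not complete: the third case, in which some interior edge carries $(1,0)^T$ and another carries $(0,1)^T$, is left entirely open, and you yourself flag it as ``where the real work of the proof lies.'' That is a genuine gap, not a routine verification: from the combinatorial data $(Id_2|A_i)$ alone the cones $\cone(\mu_j,\mu_{j+1})$ may overlap, and nothing in your argument produces a generic direction of multiplicity one, so some geometric input beyond nonnegativity of $A$ is indispensable. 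This is exactly the point where the paper's proof diverges from yours: it realizes the vertices of $P^4_{+,i}$ other than $v_0=F_{+,1}\cap F_{+,2}\cap F_{+,3}$ as vertices of an honest Delzant polygon $R=P_{+,i}\,\sharp_{v_0,u}\,I^2$ (the normals $(-Id_2|Id_2|A_i)$ being obtained from a Delzant realization of $P_{+}\sharp_{v_0,u}I^3$), and then uses the standard fact that for a generic linear functional on a convex polygon the index of a vertex equals its number of incoming edges; convexity then gives one vertex of index $0$ besides $v_0$ and index $1$ everywhere else, with no case analysis at all.

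For completeness: your missing case can in fact be excluded by a short convexity argument close in spirit to your computation. Every ``interior'' edge of $P^4_{+,i}$ is an actual edge $F_i\cap F_k$, $k\in S_{+}$, of the genuine convex polygon $F_i\subset P^3$, and its characteristic vector is (a positive multiple of) the inward normal of that edge inside the plane of $F_i$; the same polygon already has the two edges $F_i\cap F_j$, $j\in\lb 1,2,3\rb\setminus\lb i\rb$, with inward normal directions $(1,0)^T$ and $(0,1)^T$. Since distinct edges of a convex polygon have pairwise distinct inward normal directions, no interior edge can have its characteristic vector on a coordinate axis, so all interior $\mu_j$ are strictly positive and your first case always applies. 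Had you supplied this observation (or the paper's realization-plus-incoming-edges argument), your proof would be a valid and genuinely different route to the lemma; as submitted, the decisive step is missing.
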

\begin{proof}
Let $v_0:=F_{+,1}\cap F_{+,2}\cap F_{+,3}$. The basis to the columns of $(Id_2)$ is conjugate to itself. Moreover, the scalar products of these columns with any vector $\nu\in\R_{> 0}^2$ having positive coordinates are positive. Hence, for any $\nu\in\R^2_{>0}$ one has
$\ind_{\nu} v_0=0$ w.r.t. the characteristic pair $(P_{+,i},(Id_2|A_i))$.

Now we compute the indices of the remaining vertices of $(P_{+,i},(Id_2|A_i))$. Consider the connected sum $R:=P_{+,i}\#_{v_0,u} I^2$ with the rectangle, where $u\in\vertex I^2$. The corresponding characteristic matrix is equal to $(-Id_2|Id_2|A_i)$. The edges and vertices of the polygon $P_{+,i}$ (except $v_0$) may be identified with the corresponding edges and vertices of the polygon $R$. The columns of the matrix $(-Id_3|Id_3|A)$ are normal vectors to the facets of the Delzant polytope combinatorially equivalent to the connected sum $P_{+}\#_{v_0,u} I^3$, where $u\in\vertex I^3$. Hence, the columns of the matrix $(-Id_2|Id_2|A_i)$ are normal vectors of a Delzant polygon.

Pick a vector $\nu\in\R^2_{>0}$ with positive coordinates s.t.
$\nu$ is not orthogonal to neither of the columns of $(-Id_2|Id_2|A_i)$ and s.t. the linear function $H:\ \R^2\to\R$ dual to $\nu$ takes the only maximum at the vertex $v_0$ and takes the only minimum at some vertex $w$ of the convex polytope $R$. The linear function $H$ induces the direction of the edges of $R$ towards its maximum. The bases consisting of the normal vectors to the edges of $R$ meeting at $v$ and of the vectors along these edges are conjugate to each other. Hence, the index $\ind_{\nu} v$ of any vertex $v$ of $R$ is equal to the number of the incoming edges at $v$ due to \cite[p.252, p.378]{bu-pa-15}. Thus we obtain the required formula w.r.t. the characteristic pair $(P_{+,i},(Id_2|A_i))$ for any vertex $v$ different from $v_0$.
\end{proof}


\begin{pr}\label{pr:nontns}
For any $i=1,2,3$, $M^4_{+,i}$ is not a TNS-manifold.
\end{pr}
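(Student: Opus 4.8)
The plan is to reduce the statement to Lannes's theorem, Theorem \ref{thm:lannes}. The manifold $M^{4}_{+,i}$ is a characteristic submanifold of the quasitoric manifold $M^{6}_{+}$, hence is itself a quasitoric $4$-manifold; in particular it is simply connected and inherits a stably complex structure. By Theorem \ref{thm:lannes}(b) it therefore suffices to prove that the intersection form on $H^{2}(M^{4}_{+,i};\R)$ is definite, i.e. that $|\sigma(M^{4}_{+,i})|=\dim_{\R}H^{2}(M^{4}_{+,i};\R)$. For the right-hand side: if $N$ denotes the number of edges (facets) of the moment polygon $P^{4}_{+,i}$, then Theorem \ref{thm:coh} gives $\dim_{\R}H^{2}(M^{4}_{+,i};\R)=N-2$, since the two rows of $\Lambda_{+,i}$ provide two independent linear relations among the $N$ degree-two generators while $I_{H}$ is concentrated in degrees $\geq 4$.

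For the left-hand side I would use the sign formula, Proposition \ref{pr:sgnform}. Take the generic vector $\nu$ and the distinguished vertex $w$ furnished by Lemma \ref{lm:indcomp}, and put $v_{0}:=F_{+,1}\cap F_{+,2}\cap F_{+,3}$. By Lemma \ref{lm:indcomp} the vertices $v_{0}$ and $w$ have index $0$ and every other vertex has index $1$, so
\[
\sigma(M^{4}_{+,i})=\sum_{v\in\vertex P^{4}_{+,i}}(-1)^{\ind_{\nu}(v)}\sigma(v)=\sigma(v_{0})+\sigma(w)-\sum_{v\neq v_{0},\,w}\sigma(v).
\]
Now I substitute the signs supplied by Lemma \ref{lm:sgncomp}: $\sigma(v_{0})=1$, while $\sigma(v)=-1$ for the vertices incident to $v_{0}$ (and $\sigma(v)=1$ otherwise). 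Since each index-one vertex carrying sign $-1$ contributes $(-1)^{1}\cdot(-1)=+1$ to the sum, the terms combine so that the whole expression equals $\pm(N-2)$. Hence the intersection form of $M^{4}_{+,i}$ has rank $N-2$ and signature of absolute value $N-2$, so it is (positively or negatively) definite, and Theorem \ref{thm:lannes}(b) finishes the proof.

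The most delicate point is precisely the sign/index bookkeeping in the second step: one must verify that the index distribution of Lemma \ref{lm:indcomp} — two vertices of index $0$, the rest of index $1$, none of index $2$, which is possible here exactly because $M^{6}_{+}$ is genuinely quasitoric and not toric — dovetails with the sign pattern of Lemma \ref{lm:sgncomp} so that the alternating sum \emph{saturates} the rank $N-2$ rather than producing something of smaller absolute value. For comparison, for an honest smooth projective toric surface over an $N$-gon one has $\sigma(v)\equiv 1$ and the same formula yields $\sigma=4-N$, which is definite only when $N=3$; this is consistent with Proposition \ref{pr:class}. One should also double-check the trivial-looking reductions — that $M^{4}_{+,i}$ is indeed closed, simply connected and stably complex, and that the relation "definite $\iff$ $|\sigma|=\dim H^{2}$" is being applied to the non-degenerate intersection form — but these are routine.
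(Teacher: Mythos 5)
Your overall route coincides with the paper's: establish $\dim_{\R}H^{2}(M^{4}_{+,i};\R)=m_i-2$, evaluate $\sigma(M^{4}_{+,i})$ via Proposition \ref{pr:sgnform} combined with Lemmas \ref{lm:sgncomp} and \ref{lm:indcomp}, conclude that the intersection form is definite, and finish with the Lannes dichotomy (the paper cites Theorem \ref{thm:crit} instead of Theorem \ref{thm:lannes}(b), which in this degree is the same criterion, so that difference is immaterial). The dimension count and the reduction to ``$|\sigma|=\dim H^2$'' are fine.

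The gap is exactly at the point you yourself flag as ``the most delicate'': you assert that ``the terms combine so that the whole expression equals $\pm(N-2)$'' without performing the count, and the count does not follow formally from the two lemmas as quoted. Substituting their literal content --- $\sigma(v_0)=1$, $\sigma(v)=-1$ at the two vertices incident to $v_0$, $\sigma(v)=1$ elsewhere, and $\ind_\nu=0$ precisely at $v_0$ and at $w$ --- gives $\sigma(M^{4}_{+,i})=4-m_i$ if $w$ is one of the two neighbours of $v_0$, and $8-m_i$ otherwise; neither has absolute value $m_i-2$ in general, so no ``saturation'' is automatic. The paper, by contrast, commits to a specific evaluation, $\sigma(M^4_{+,i})=(m_i-2)(-1)^1\cdot 1+(-1)^0\cdot 1+(-1)^0\cdot(-1)=2-m_i$, which presupposes a very particular configuration (exactly one vertex of sign $-1$, and it is the index-zero vertex $w$); reconciling this with the sign pattern of Lemma \ref{lm:sgncomp} and locating $w$ relative to $v_0$ is precisely the bookkeeping your argument omits. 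Until that count is actually carried out (or the intersection form of $M^{4}_{+,i}$ is computed directly from its characteristic pair, as a sanity check for small $m_i$, where one indeed lands on the definite quadrilateral examples of Subsection \ref{ssec:4folds}), the definiteness claim --- and hence the application of Theorem \ref{thm:lannes}(b) --- is unsupported, so the proof is incomplete at its only nontrivial step.
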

\begin{proof}
Let $m_i$ be the number of vertices of the polygon $P^4_{+,i}$. It follows from Lemmas \ref{lm:sgncomp}, \ref{lm:indcomp} and Proposition \ref{pr:sgnform} that
$\sigma(M^4_{+,i})=(m_i-2)(-1)^1\cdot 1+(-1)^0\cdot 1+(-1)^0\cdot (-1)=2-m_i=-\dim H^2(M^4_{+,i};\R)$. It remains to use Theorem \ref{thm:crit}.
\end{proof}

Let $l_{i}:=\sum_{j\in S_{+}} a_{i}^{j}x_{j}\in H^2(M^6;\Z)$, $i=1,2,3$.

\begin{pr}
The quadratic forms $Q_{l_{i}}$ are non-trivial and positive-semidefinite for $i=1,2,3$.
\end{pr}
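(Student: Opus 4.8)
The plan is to transport the negative\nobreakdash-definiteness of the intersection form of $M^{4}_{+,i}$, established in the proof of Proposition \ref{pr:nontns}, to the form $Q_{l_i}$ via the equivariant connected sum decomposition $M^{6}=M^{6}_{+}\widetilde{\sharp}\,M^{6}_{\pm}$. First I would note that the ideal $I=(D[*_{M^{6}_{+}}]-D[*_{X}])$ of Proposition \ref{pr:connsum} sits in degree $6$, so in degrees $2$ and $4$ the cohomology of $M^{6}$ splits as $\widetilde{H}^{2}(M^{6};\R)\cong\widetilde{H}^{2}(M^{6}_{+};\R)\oplus\widetilde{H}^{2}(M^{6}_{\pm};\R)$ and likewise in degree $4$, with the product of an $M^{6}_{+}$-class and an $M^{6}_{\pm}$-class equal to $0$. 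Each $x_{j}$ with $j\in S_{+}$ is Poincar\'e dual to the characteristic submanifold $M_{j}$, and $F_{j}$ lies in the part of $P^{3}$ away from the $3$-belt $\{F_{1},F_{2},F_{3}\}$, so $M_{j}$ sits inside the $M^{6}_{+}$-half of the connected sum and $x_{j}$ comes from a class $\bar{x}_{j}\in\widetilde{H}^{2}(M^{6}_{+};\R)$; set $\bar{l}_{i}:=\sum_{j\in S_{+}}a_{i}^{j}\bar{x}_{j}$. Then for $y\in H^{2}(M^{6};\R)$ with $M^{6}_{+}$-component $y_{+}$ one has $l_{i}\cdot y^{2}=\bar{l}_{i}\cdot y_{+}^{2}$ in the $M^{6}_{+}$-summand of $H^{6}$, and since the orientation of $M^{6}$ restricts to that of $M^{6}_{+}$, pairing with $[M^{6}]$ gives $Q_{l_{i}}(y)=\langle\bar{l}_{i}\cdot y_{+}^{2},[M^{6}_{+}]\rangle$.

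Next I would use the $i$-th linear relation in $H^{2}(M^{6}_{+};\Z)$: because the characteristic matrix of $M^{6}_{+}$ is $(Id_{3}\,|\,A)$ and $\lambda_{i}=e_{i}$, one has $\bar{x}_{+,i}+\bar{l}_{i}=0$, where $x_{+,i}$ is the class Poincar\'e dual to $M^{4}_{+,i}\subset M^{6}_{+}$. Hence $Q_{l_{i}}(y)=-\langle\bar{x}_{+,i}\cdot y_{+}^{2},[M^{6}_{+}]\rangle=-\langle(y_{+}|_{M^{4}_{+,i}})^{2},[M^{4}_{+,i}]\rangle$, i.e. $Q_{l_{i}}(y)$ is minus the value of the intersection form of $M^{4}_{+,i}$ on the restriction of $y_{+}$. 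As shown in the proof of Proposition \ref{pr:nontns}, $\sigma(M^{4}_{+,i})=-\dim H^{2}(M^{4}_{+,i};\R)$, and together with the non-degeneracy of the intersection form of a closed oriented $4$-manifold this forces that form to be negative definite; therefore $Q_{l_{i}}(y)\ge 0$ for all $y$, which is positive\nobreakdash-semidefiniteness. It is non-trivial because the restriction $H^{2}(M^{6}_{+};\R)\to H^{2}(M^{4}_{+,i};\R)$ is surjective --- immediate from Theorem \ref{thm:coh} --- and a negative definite form on the nonzero space $H^{2}(M^{4}_{+,i};\R)$ is nonzero on some class, which may be lifted to $H^{2}(M^{6}_{+};\R)$ and thence to $H^{2}(M^{6};\R)$.

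The step I expect to be the main obstacle is the bookkeeping in the first paragraph: verifying that the classes $x_{j}$, $j\in S_{+}$, indeed lie in the $M^{6}_{+}$-summand of the splitting (equivalently, that the facets indexed by $S_{+}$ are not incident to the gluing vertex $F_{+,1}\cap F_{+,2}\cap F_{+,3}$ of $P_{+}$), that the product formula $l_{i}\cdot y^{2}=\bar{l}_{i}\cdot y_{+}^{2}$ holds exactly because $I$ does not meet degrees $2$ and $4$, and that no sign slips into the identification $x_{+,i}\cap[M^{6}_{+}]=[M^{4}_{+,i}]$ that would clash with the sign convention behind the signature computation in Proposition \ref{pr:nontns}. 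Once these are pinned down the rest is a one-line computation.
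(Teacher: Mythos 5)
Your argument is correct and follows essentially the same route as the paper's proof: transfer $Q_{l_i}$ to the summand $M^{6}_{+}$ (you via the connected-sum splitting of Proposition \ref{pr:connsum}, the paper via the restriction $\iota^{*}$ and the projection formula), use the linear relation from $(Id_3|A)$ to replace $\bar{l}_i$ by $-\widetilde{x_i}$, identify the result with minus the intersection form of the characteristic submanifold $M^{4}_{+,i}$ by Poincar\'e duality, and invoke the negative definiteness coming from the signature computation in Proposition \ref{pr:nontns}. The points you flag (vanishing of cross-products below the top degree, compatibility of orientations, surjectivity of the restrictions in $H^2$) are exactly the facts the paper also relies on, so no gap remains.
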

\begin{proof}
Let $\iota:\  M^{6}_{+}\to M^6$, $\iota_{i}:\ M^{4}_{+,i}\to M^6_{+}$ be the natural embeddings of the components $M^{6}_{+}$, $M^{4}_{+,i}$ in the equivariant connected sums $M^6$ and $M^6_{+}$, resp. Denote by $\widetilde{x}$ the expression $\iota^*(x)\in H^{*}(M^6_{+};\ \R)$ for any $x\in H^{*}(M^6;\ \R)$. By Proposition \ref{pr:connsum} and Theorem \ref{thm:coh} the induced maps
$\iota^{*}, \iota^{*}_{i}$ in the second cohomology groups are epimorphisms. The orientation on $M^6_{+}$ coincides with the restriction of the orientation on $M^6$. Hence, by the projection formula one has
\begin{equation}\label{eq:projfla1}
Q_{\widetilde{l_i}}(\widetilde{x})=\la \iota^{*}(l_i\cdot x^2),[M^{6}_{+}]\ra= \la
l_i\cdot x^2,[M^{6}]\ra=Q_{l_i}(x),
\end{equation}
for any $x\in H^{2}(M^6;\ \R)$.

By Theorem \ref{thm:coh}, one has $\widetilde{l_{i}}=-\widetilde{x_i}$ in the ring $H^{*}(M^6_{+};\ \R)$. The submanifold $M^{4}_{+,i}\subset M^6_{+}$ is Poincar\'e dual to the cohomology class $\widetilde{x_i}\in H^{2}(M^6_{+};\ \R)$. Hence, by the projection formula one has
\begin{equation}\label{eq:projfla2}
Q_{1}(\iota_{i}^{*}(\widetilde{x}))=\la \iota_{i}^{*}(\widetilde{x})^2,[M^{4}_{+,i}]\ra= \la
\widetilde{x_i}\cdot \widetilde{x}^2,[M^{6}_{+}]\ra=-\la \widetilde{l_i}\cdot \widetilde{x}^2,[M^{6}_{+}]\ra=-Q_{\widetilde{l_i}}(\widetilde{x}),
\end{equation}
for any $x\in H^{2}(M^6;\ \R)$. Remind that we showed the negative-definiteness of the form $Q_1:\ H^{2}(M^{4}_{+,i};\ \R)\to\R$ in the proof of Proposition \ref{pr:nontns}. Thus, it follows from formulas \eqref{eq:projfla1}, \eqref{eq:projfla2} that the form $Q_{l_i}:\ H^{*}(M^6;\ \R)\to\R$ is non-trivial and positive semidefinite.
\end{proof}

Now suppose that $(\lambda_1,\lambda_2,\lambda_3)=
\begin{pmatrix}
1 & 0 & -1\\
0 & 1 & -1\\
0 & 0 & 0
\end{pmatrix}$.
We return to the denotation $S_1,S_2\subset\lb 1,\dots,m\rb$. Let $l:=\sum_{i\in S_1}\lambda_{3}^{i}x_{i},l':=\sum_{i\in S_2}\lambda_{3}^{i}x_{i}$.

\begin{pr}\label{pr:semidef}
The quadratic form $Q_{l}$ has rank $1$. In particular, it is semidefinite.
\end{pr}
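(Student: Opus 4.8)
The plan is to reduce the whole statement to the claim that multiplication by $l$ has one–dimensional image in $H^4(M^6;\R)$. The polar form of $Q_l$ is the symmetric bilinear form $(x,y)\mapsto\la l\,x\,y,[M^6]\ra$ on $H^2(M^6;\R)$. By Theorem \ref{thm:coh} the cohomology of $M^6$ is torsion–free and concentrated in even degrees, so Poincar\'e duality makes the cup pairing $H^2(M^6;\R)\otimes H^4(M^6;\R)\to H^6(M^6;\R)\simeq\R$ perfect; hence the radical of the polar form is precisely the kernel of the multiplication map $m_l\colon H^2(M^6;\R)\to H^4(M^6;\R)$, $x\mapsto l\,x$, and $\rk Q_l=\dim H^2(M^6;\R)-\dim\ker m_l=\dim(\operatorname{im} m_l)$. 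Thus it suffices to prove $\dim(\operatorname{im} m_l)=1$.

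For the bound $\dim(\operatorname{im} m_l)\leq 1$ I would use two consequences of Theorem \ref{thm:coh}. Since $\lambda_3^i=0$ for $i=1,2,3$ in the chosen normal form, the linear relation in $H^*(M^6;\R)$ labelled $i=3$ in $J_H$ reads $l+l'=0$, i.e.\ $l=-l'$; and since $\bigcup_{i\in S_1}F_i$ and $\bigcup_{j\in S_2}F_j$ are disjoint, $x_ix_j=0$ whenever $i\in S_1,\ j\in S_2$. Multiplying $l=-l'$ by $x_k$ for $k\in S_1$, and $l$ by $x_k$ for $k\in S_2$, and using these vanishing products, gives $l\,x_k=0$ for every $k=4,\dots,m$ (in particular $l^2=0$). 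Next, the relations labelled $i=1,2$ in $J_H$ take, in the present normal form, the shape $x_1-x_3=-\sum_{j\geq 4}\lambda_1^j x_j$ and $x_2-x_3=-\sum_{j\geq 4}\lambda_2^j x_j$; multiplying these by $l$ and using the previous step yields $l\,x_1=l\,x_2=l\,x_3=:\xi\in H^4(M^6;\R)$. Since $x_1,\dots,x_m$ generate $H^2(M^6;\R)$ and each $l\,x_i$ equals $\xi$ or $0$, we get $\operatorname{im} m_l=\R\,\xi$, whence $\rk Q_l\leq 1$.

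It then remains to see $\xi\neq 0$, i.e.\ $Q_l\not\equiv 0$, equivalently $l\neq 0$ in $H^2(M^6;\R)$. Here I would argue that the third row of the characteristic matrix is nonzero and that, because $\{F_1,F_2,F_3\}$ is a \emph{minimal} missing face whose characteristic vectors are in the present normal form, no ray of the fan other than $\lambda_1,\lambda_2,\lambda_3$ lies in the plane $\R\la\lambda_1,\lambda_2,\lambda_3\ra$ (each of $\operatorname{cone}(\lambda_1,\lambda_2),\operatorname{cone}(\lambda_2,\lambda_3),\operatorname{cone}(\lambda_1,\lambda_3)$ is a genuine $2$-cone of the fan, since the corresponding pairs of facets meet, and these three cones cover that plane); consequently the nonzero entries of the third row are not confined to one side of the belt, so neither $l$ nor $l'$ vanishes in $H^2(M^6;\R)$ (the degenerate case $S_1=\varnothing$ or $S_2=\varnothing$, where $Q_l\equiv 0$, corresponds to a triangular $2$-face and is excluded in the intended application). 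Given $l\neq 0$, Poincar\'e duality provides $z\in H^4(M^6;\R)$ with $\la l\,z,[M^6]\ra\neq 0$; writing $z$ as a sum of degree–two monomials in the $x_i$ and discarding the terms $l\,x_p$ with $p\geq 4$, one rewrites $l\,z$ as $\xi\cdot w$ for some $w\in H^2(M^6;\R)$, so $\la\xi\,w,[M^6]\ra\neq 0$ and in particular $\xi\neq 0$. Therefore $\rk Q_l=1$, and a rank–one real quadratic form is positive– or negative–semidefinite.

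The rank estimate $\leq 1$ is the robust, essentially formal core of the argument: it rests on the single observation $l=-l'$ together with the shape of the first two linear relations. I expect the only genuine (though minor) obstacle to be the non-vanishing $l\neq 0$, which is the point where the combinatorics of the minimal missing face — and the absence of triangular $2$-faces — actually enters; everything else is bookkeeping inside $H^*(M^6;\R)$.
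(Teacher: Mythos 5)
Your proof is correct and follows essentially the same route as the paper's: the relation $l=-l'$ coming from the third linear relation in $J_H$, the vanishing $x_ix_j=0$ for $i\in S_1$, $j\in S_2$, and Poincar\'e duality to rule out $Q_l\equiv 0$. If anything, you make explicit two points the paper's terse argument leaves implicit --- that multiplication by $l$ annihilates the span of the $x_k$, $k\in S_1\cup S_2$ (which is what actually forces rank $\leq 1$; mere vanishing of $Q_l$ on that hyperplane would only bound the rank by $2$), and the fan-theoretic reason (no ray other than $\lambda_1,\lambda_2,\lambda_3$ lies in their plane, so the third row is nonzero on both sides of the belt) why $l\neq 0$ in $H^2(M^6;\R)$.
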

\begin{proof}
Theorem \ref{thm:coh} implies that one has $l=-l'$ in the cohomology ring of $M^{6}$ and $\dim\la x_3,\dots,x_m\ra=m-3$. Hence, it is the only linear dependence between the elements $x_3,\dots,x_m$ in $H^2(M^6;\R)$ up to multiplication by scalars. Then for any $x\in H^2(M^6;\Z)$, $Q_{l}(x)=-Q_{l'}(x)$ holds. For any $i\in S_1, j\in S_2$ one has $F_i\cap F_j=\varnothing$, thus $x_i x_j=0$ (see Theorem \ref{thm:coh}). Now it follows that for any $x\in\R\la x_i| i\in S_1\cup S_2\ra:\ Q_{l}(x)=0$. The Poincar\'e duality implies that $Q_{l}\not\equiv 0$. We conclude that $Q_{l}$ has rank $1$, taking a non-zero value on $x_3$.
\end{proof}

\begin{lm}\label{lm:indmis}
Let $n,k>2$. Suppose that $P^n$ has facets $F_1,\dots,F_k$ corresponding to a minimal missing face of the corresponding face lattice on $P^n$. Then for any $i=1,\dots,k$ the face lattice on the polytope $F_i$ has a minimal missing face $F_1\cap F_i,\dots,\wideparen{F_i\cap F_i},\dots,F_k\cap F_i$ on $k-1$ vertices.
\end{lm}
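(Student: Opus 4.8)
The plan is to work entirely on the level of the face lattice, translating the statement into purely combinatorial language about the simplicial complex $K$ dual to $P^n$. A missing face of $K$ is a subset $\sigma$ of the vertex set such that $\sigma$ is not a simplex of $K$ but every proper subset of $\sigma$ is; a \emph{minimal} missing face is one that is minimal with respect to inclusion (equivalently, every proper subset is a simplex). In our notation, the vertex of $K$ corresponding to the facet $F_j$ will be written $\hat F_j$, and the simplices of $K$ correspond to faces $F_{j_1}\cap\dots\cap F_{j_r}\ne\varnothing$ of $P^n$. With this dictionary, the hypothesis says $\{\hat F_1,\dots,\hat F_k\}$ is a minimal missing face of $K$, and I want to show, for each $i$, that $\{\hat F_1\cap F_i,\dots,\widehat{F_i\cap F_i},\dots,\hat F_k\cap F_i\}$ — the image of $\{\hat F_1,\dots,\hat F_k\}\setminus\{\hat F_i\}$ under passing to the link/face polytope $F_i$ — is a minimal missing face of the face lattice of the facet $F_i$.

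The key observation is that the simplicial complex corresponding to the facet $F_i$ of $P^n$ is precisely the link $\lk_K(\hat F_i)$ of the vertex $\hat F_i$ in $K$: a face $G\subset F_i$ of $P^n$ corresponds to a set of facets of $P^n$ all containing $F_i$ among their intersection, i.e.\ to a simplex of $K$ containing $\hat F_i$, i.e.\ to a simplex of $\lk_K(\hat F_i)$. So the statement reduces to the following general fact about simplicial complexes: \emph{if $\{v_1,\dots,v_k\}$ is a minimal missing face of a simplicial complex $K$ with $k\ge 3$, and $v_i$ is one of its vertices, then $\{v_1,\dots,v_k\}\setminus\{v_i\}$ is a minimal missing face of $\lk_K(v_i)$.} First I would verify that $\{v_1,\dots,v_k\}\setminus\{v_i\}$ is a non-face of $\lk_K(v_i)$: if it were a face of the link, then $\{v_1,\dots,v_k\}$ would be a simplex of $K$, contradicting that it is a missing face. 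Next, minimality: I must show every proper subset $\tau\subsetneq\{v_1,\dots,v_k\}\setminus\{v_i\}$ is a simplex of $\lk_K(v_i)$, i.e.\ $\tau\cup\{v_i\}\in K$. But $\tau\cup\{v_i\}$ is a proper subset of $\{v_1,\dots,v_k\}$ (proper because $\tau$ omits at least one vertex $v_j$, $j\ne i$), hence is a simplex of $K$ by minimality of the missing face — and it contains $v_i$, so it is a simplex of $\lk_K(v_i)$. That gives exactly what is needed.

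The condition $k>2$ is used only to ensure that, after removing $v_i$, the set $\{v_1,\dots,v_k\}\setminus\{v_i\}$ still has at least two vertices, so that it can be a genuine (nontrivial) missing face of $\lk_K(v_i)$ rather than a single vertex or the empty set; the hypothesis $n>2$ merely guarantees that the facet $F_i$ has high enough dimension for its face lattice to carry such a missing face. The only thing requiring a little care is the bookkeeping in the translation between faces of $P^n$, faces of the facet $F_i$, and simplices of $K$ and $\lk_K(\hat F_i)$ — in particular checking that the $(k-1)$-element set named in the statement really is the image of $\{\hat F_1,\dots,\hat F_k\}\setminus\{\hat F_i\}$ under the identification of $\lk_K(\hat F_i)$ with the face complex of $F_i$, which amounts to noting that $F_j\cap F_i$ is the vertex of that complex corresponding to $\hat F_j\in\lk_K(\hat F_i)$ whenever $F_j\cap F_i\ne\varnothing$, and that $F_j\cap F_i\ne\varnothing$ for all $j\ne i$ since $\{\hat F_1,\dots,\hat F_k\}\setminus\{\hat F_i\}$ — indeed even $\{\hat F_i,\hat F_j\}$ — is a simplex of $K$ by minimality. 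I expect this translation step to be the only potential obstacle; the combinatorial core is a two-line argument once the link interpretation is in place.
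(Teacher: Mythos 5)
Your argument is correct and is essentially an expanded version of the paper's one-line proof: the paper simply cites the fact that in a simple polytope ($n>2$) two facets meet either emptily or in a codimension-$2$ face, which is exactly what makes your identification of the face complex of $F_i$ with $\lk_K(\hat F_i)$ (and of its vertices with the nonempty $F_j\cap F_i$) work. The link lemma you then prove is the natural combinatorial core, so no genuinely different route is taken.
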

\begin{proof}
Follows from the fact that the intersection of any two facets of a simple convex $n$-polytope ($n>2$) is either empty or has codimension $2$.
\end{proof}

\begin{lm}\cite{zieg-95}
A simple convex polyhedron $P^3$ is flag iff $P^3$ has no $3$-belts and $P\neq\Delta^3$.
\end{lm}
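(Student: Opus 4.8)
The plan is to dualise and argue purely combinatorially. Let $K=\partial(P^3)^{*}$ be the boundary complex of the dual polytope, a triangulated $2$-sphere whose vertices are the facets $F_1,\dots,F_m$ of $P^3$ and in which a set of facets spans a simplex iff they have a common vertex of $P^3$; thus edges of $K$ correspond to edges of $P^3$ and $2$-simplices of $K$ to vertices of $P^3$. By definition $P^3$ is flag exactly when $K$ is a flag complex, i.e. every minimal missing face of $K$ has precisely two vertices. First I would note that a minimal missing face of $K$ can have at most four vertices: if $S$ were one with $|S|\ge 5$, then every $4$-element subset of $S$ would have to be a face of $K$, which is impossible since $\dim K=2$. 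Hence $P^3$ is flag iff $K$ has no minimal missing face on $3$ vertices and none on $4$ vertices.

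Next I would unwind these two cases. A minimal missing face $\{F_i,F_j,F_k\}$ of $K$ on three vertices is, by the correspondence above, exactly a triple of facets with $F_i\cap F_j$, $F_j\cap F_k$, $F_k\cap F_i$ all nonempty (edges of $P^3$) but $F_i\cap F_j\cap F_k=\varnothing$ --- that is, a $3$-belt of $P^3$. So ``$K$ has no minimal missing face on $3$ vertices'' is literally ``$P^3$ has no $3$-belt''.

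It remains to show that $K$ has a minimal missing face on four vertices iff $P^3=\Delta^3$, which is the one substantive step. If $\{v_1,v_2,v_3,v_4\}$ is such a face then each of its four $3$-element subsets is a face of $K$, hence (as $K$ is pure $2$-dimensional) a $2$-simplex; so $K$ contains all four triangles spanned by $\{v_1,v_2,v_3,v_4\}$, which together form a subcomplex isomorphic to $\partial\Delta^3\simeq S^2$. Since $K$ is itself a triangulated $2$-sphere and every edge of a triangulated closed surface lies in exactly two triangles, propagating along edge-adjacency out of this subcomplex forces $K=\partial\Delta^3$, i.e. $(P^3)^{*}=\Delta^3$ and $P^3=\Delta^3$; conversely $\partial\Delta^3$ visibly has its whole vertex set as a minimal missing face. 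Combining the three observations, $P^3$ is flag $\iff$ $K$ has no minimal missing face on $3$ or $4$ vertices $\iff$ $P^3$ has no $3$-belt and $P^3\neq\Delta^3$. The only real content is this surface-propagation argument in the last step (and keeping straight the combinatorial translations); the rest is bookkeeping.
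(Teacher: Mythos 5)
Your argument is correct, but note that the paper does not prove this lemma at all: it is quoted from Ziegler's book \cite{zieg-95} as a known fact, so there is no internal proof to compare against. Your dualisation is the natural one and matches the paper's own convention (it defines flagness via minimal missing faces of the dual simplicial sphere $K$), and the three steps check out: minimal missing faces of a $2$-dimensional complex have at most $4$ vertices; a $3$-vertex minimal missing face is precisely a $3$-belt (here you implicitly use simplicity of $P^3$, so that two intersecting facets share an edge and a nonempty triple intersection is a vertex -- worth saying explicitly); and a $4$-vertex minimal missing face forces the four triangles of $\partial\Delta^3$ to sit inside $K$, whence the edge-adjacency propagation (strong connectedness of the dual graph of a triangulated $2$-sphere, with each edge in exactly two triangles) gives $K=\partial\Delta^3$, i.e.\ $P^3=\Delta^3$, and conversely $\Delta^3$ has its full facet set as a minimal missing face. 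So you have supplied a complete elementary proof of a statement the paper treats as a citation; the only thing your write-up should make explicit is the use of simplicity in the $3$-belt translation and the purity/strong connectedness of $K$ in the propagation step, both standard for boundary complexes of simplicial $3$-polytopes.
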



\begin{thm}\label{thm:miss}
Let $M^{2n}$ be a nonsingular projective toric variety of complex dimension $n\geq 3$. Suppose that $M^{2n}$ is a TNS-manifold. Then the face lattice of the moment polytope $P^{n}$ has no minimal missing faces on $n$ vertices. In particular, the moment polytope $P^3$ of a toric $3$-dimensional TNS-manifold is a flag polytope.
\end{thm}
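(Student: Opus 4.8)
The plan is to reduce the general statement to the case of complex dimension $3$, where the needed computations have already been carried out, and then to read off the flag property from the criterion of Theorem \ref{thm:crit} together with the quoted lemma of Ziegler.

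I would argue by contradiction. Suppose $M^{2n}$, $n\ge 3$, is a nonsingular projective toric TNS-variety whose moment polytope $P^n$ has a minimal missing face on $n$ vertices, corresponding to facets $F_1,\dots,F_n$. If $n>3$, pass to the characteristic submanifold over the facet $F_n$. Being an invariant subvariety of $M^{2n}$, it is again nonsingular, projective and toric; it is a TNS-manifold because every invariant submanifold of a quasitoric TNS-manifold is a TNS-manifold (see the Introduction and \cite{so-17}); and, by Lemma \ref{lm:indmis}, its moment polytope $F_n$, of dimension $n-1$, carries a minimal missing face on $n-1$ vertices. Thus the equality ``dimension $=$ number of vertices of the minimal missing face'' is preserved under this step. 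Iterating, I obtain a nonsingular projective toric TNS-manifold $M^6$ whose moment polytope $P^3$ has a minimal missing face $\{F_1,F_2,F_3\}$ on $3$ vertices; so it suffices to exclude this situation.

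For such an $M^6$, Proposition \ref{pr:2cases} applied with $n=k=3$ says that the matrix $(\lambda_1,\lambda_2,\lambda_3)$ of facet normals is $GL_3(\Z)$-equivalent to one of the two matrices displayed there. In the case of the identity matrix, the Proposition proved above asserting the positive-semidefiniteness of the forms $Q_{l_i}$ shows that $Q_{l_1}$ is a non-trivial positive-semidefinite quadratic form with $l_1\in H^2(M^6;\Z)\setminus\{0\}$; in the other case, Proposition \ref{pr:semidef} shows that $Q_l$ is a non-trivial rank-$1$, hence semidefinite, quadratic form with $l\in H^2(M^6;\Z)\setminus\{0\}$. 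Either way there is a non-zero class $a\in H^{2(3-2)}(M^6;\Z)=H^2(M^6;\Z)$ for which the $2$-form $Q_a$ is not admissible; since for $n=3$ the only integer $k$ with $0<k\le n/2$ is $k=1$, Theorem \ref{thm:crit} then forces $M^6$ not to be a TNS-manifold, a contradiction. This proves that the face lattice of $P^n$ has no minimal missing face on $n$ vertices.

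Finally, for $n=3$: a minimal missing face on $3$ vertices of a simple $3$-polytope is exactly a $3$-belt, so by the previous step $P^3$ has no $3$-belts; and $P^3\ne\Delta^3$, because $\Delta^3$ has triangular $2$-faces whereas the moment polytope of a quasitoric TNS-manifold has none (see the Introduction). By Ziegler's lemma quoted above, $P^3$ is therefore flag. The genuine content of the argument is already contained in Proposition \ref{pr:2cases}, Proposition \ref{pr:semidef} and the semidefiniteness of the forms $Q_{l_i}$; the only points that need care are the inductive descent --- checking that nonsingularity, projectivity, toricity, the TNS-property and the ``minimal missing face on (dimension) vertices'' structure all pass to the characteristic submanifold over $F_n$ --- and the remark that the two semidefinite forms produced there do violate Theorem \ref{thm:crit} precisely because $k=1$ is the sole relevant exponent in complex dimension $3$.
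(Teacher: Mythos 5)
Your proof is correct and follows essentially the same route as the paper: reduction to $n=3$ via Lemma \ref{lm:indmis} together with the heredity of the TNS-property under passing to characteristic submanifolds, the two cases of Proposition \ref{pr:2cases} handled by the non-trivial positive-semidefinite forms $Q_{l_i}$ and by Proposition \ref{pr:semidef}, and Ziegler's lemma (with the no-triangles fact ruling out $\Delta^3$) for flagness. The only difference is cosmetic: where the paper phrases the rank-$3$ case via invariant non-TNS submanifolds and cites Lannes' theorem in the rank-$2$ case, you conclude both cases directly from Theorem \ref{thm:crit}, which is precisely what those auxiliary propositions were established to feed.
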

\begin{proof}
By to Lemma \ref{lm:indmis}, w.l.g. let $n=3$. The polytope $P^3$ of the TNS-manifold $M$ has no triangles according to Proposition \ref{pr:class}. Hence, $P\neq \Delta^3$. Assume that the face lattice of $P^3$ has a minimal non-face on $3$ vertices, i.e. the polytope $P^3$ has a $3$-belt. Denote by $r$ the rank of the matrix with column vectors --- the normals to the facets of the $3$-belt of $P$. By the Proposition \ref{pr:2cases}, one has $r=2$ or $r=3$. In case of $r=3$ $M$ has invariant non-TNS manifolds by Proposition \ref{pr:nontns}. But this contradicts the TNS property of $M$. In case of $r=2$ we obtain the contradiction with the TNS property of $M$ due to Proposition \ref{pr:semidef} and Theorem \ref{thm:lannes}.
\end{proof}

\begin{ex}
Consider the toric manifolds $M_1^6,M^6_2$ from Example \ref{ex:3-belt}. Then the indices of the quadratic forms $Q_{x_{4}+x_{5}+x_{6}}, Q_{2x_{4}+2x_{5}+x_{6}}, Q_{2x_{4}+x_{5}+x_{6}}$ are equal to $(2,0),(2,0),(2,0)$ and $(1,0),(2,1),(2,1)$, resp.
\end{ex}

\section{Concluding remarks}\label{sec:concl}

Proposition \ref{pr:class} and Theorem \ref{thm:miss} hint that there is a possible connection between the combinatorial type of the moment polytope of a smooth projective toric variety and the respective TNS-property.

\begin{conj}\label{conj:comb}
Let $X,Y$ be non-singular projective toric varieties of complex dimension $n$ with combinatorially equivalent moment polytopes. Then $X$ is a TNS-manifold iff $Y$ is a TNS-manifold.
\end{conj}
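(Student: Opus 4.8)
The plan is to reduce Conjecture \ref{conj:comb} to the purely combinatorial statement that the TNS-property of a non-singular projective toric variety is equivalent to the flag property of its moment polytope, and then to attack that equivalence. One direction, that TNS implies flag, is Theorem \ref{thm:miss} (proved there for $n=3$, the general case following from the inductive reduction of Lemma \ref{lm:indmis}). So the remaining task is the converse: \emph{every} non-singular projective toric variety over a flag polytope $P^n$ is a TNS-manifold, independently of the characteristic matrix. Granting both directions, the flag property is manifestly an invariant of the combinatorial type of $P^n$, so the conjecture follows at once. I would first record this reformulation, then spend the bulk of the argument on the flag $\Rightarrow$ TNS implication.

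For that implication the natural strategy is induction on $n$ via Corollary \ref{cor:powers}: $M^{2n}$ is TNS iff for every $0<k\le n/2$ one has $H^{2k}(M^{2n};\R)=\R_{\ge 0}\langle x^{2k}\mid x\in H^2(M^{2n};\Z)\rangle$, equivalently (Theorem \ref{thm:crit}) every non-zero $a\in H^{2(n-2k)}(M^{2n};\Z)$ yields a non-semidefinite form $Q_a$. The combinatorial input should enter twice. First, a flag $P^n$ has no triangular $2$-faces, so by Proposition \ref{pr:class} every characteristic submanifold of real codimension $2n-4$ is TNS, its moment polytope being a flag $2$-polytope distinct from $\Delta^2$. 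Second, one wants to upgrade such low-codimension admissibility to admissibility of all the $Q_a$: modulo the Stanley--Reisner relations $I_H$ (which are combinatorial) and the linear relations $J_H$, an arbitrary $a$ can be written as a combination of monomials supported on faces, and for a flag complex the minimal non-faces are edges, so every squarefree monomial either vanishes or factors through a face; one then localises $Q_a$ onto the corresponding invariant subvariety and invokes the inductive hypothesis together with the description $S(M^{2n})=R(S((\C P^n)^m))$ and the $\ast$-product identities of Lemma \ref{lm:*id} and Corollary \ref{cor:cart}. For $n=3$ this should be carried out concretely via Theorem \ref{thm:comp}, reducing everything to the finitely many quadratic forms $Q_{\sum a_i x_i}$, $a_i\in\{-1,0,1\}$, and verifying non-semidefiniteness directly from the flag incidence pattern of $P^3$.

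The main obstacle is precisely the point at which the dependence on the characteristic matrix must be eliminated: the ring $H^*(M^{2n};\R)=\R[x_1,\dots,x_m]/(I_H+J_H)$ and, more seriously, the intersection numbers $\langle x_{i_1}\cdots x_{i_n},[M^{2n}]\rangle$ --- hence the forms $Q_a$ and the volume polynomial of the multifan --- genuinely depend on $\Lambda$ and not merely on the combinatorics of $P^n$. There is no continuous deformation available (the admissible characteristic matrices of a fixed combinatorial type form a discrete set), so one cannot propagate the TNS-property along a path, and weak factorization into blow-ups changes the combinatorial type. The viable route thus seems to be to isolate the \emph{combinatorial shadow} of semidefiniteness: show that semidefiniteness of some $Q_a$ forces a fixed combinatorial sub-configuration of facets (a triangular face, or the $\C P^2$- and $\C P^2\times\C P^2$-type patterns of Examples \ref{ex:3-belt} and \ref{ex:cp2cp2}), which occurs for one characteristic matrix over $P^n$ iff it occurs for all of them, and conversely that the flag property already rules out \emph{every} such obstruction by the inductive localisation above. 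Making this ``only if'' precise --- proving that no genuinely non-combinatorial obstruction to TNS can arise --- is the crux, and is where I expect the real difficulty to lie; a fully satisfactory treatment may well require new input beyond the tools developed in this paper, consistent with the statement being posed as a conjecture.
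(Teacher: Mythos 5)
This statement is posed in the paper as an open conjecture --- the paper contains no proof of it --- and your proposal does not supply one either: it is a programme whose decisive step you explicitly leave unresolved. Concretely, you reduce Conjecture \ref{conj:comb} to the \emph{stronger} statement that TNS is equivalent to flagness of the moment polytope, which is itself the second (also open) conjecture of Section \ref{sec:concl}; the reduction trades the statement to be proved for a harder one. Moreover the half you treat as settled is overstated: Theorem \ref{thm:miss} only excludes minimal missing faces on exactly $n$ vertices (flagness is obtained only for $n=3$), and Lemma \ref{lm:indmis} does not bridge missing faces on $k<n$ vertices down to the $3$-dimensional case, so even ``TNS $\Rightarrow$ flag'' is not available from the paper in general dimension.

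The sketched converse ``flag $\Rightarrow$ TNS'' is where the approach would fail as described. Your induction/localisation argument uses only quasitoric-level tools --- Theorem \ref{thm:crit}, Corollary \ref{cor:powers}, Theorem \ref{thm:comp}, passage to characteristic submanifolds, the cone $S(M^{2n})=R(S((\C P^n)^m))$ --- and these cannot suffice, because the implication is \emph{false} at that level of generality: the paper points out (right after the conjecture) that taking one of the non-TNS quasitoric $4$-folds of Proposition \ref{pr:nontns} and multiplying by $(\C P^1)^{n-2}$ gives a non-TNS quasitoric manifold over the cube $I^n$, which is flag. So semidefiniteness of some $Q_a$ is \emph{not} forced by a combinatorial sub-configuration alone; it genuinely depends on $\Lambda$, and any proof must exploit projectivity/algebraicity of the toric variety (the paper itself suggests algebraic $K$-theoretic descriptions as the likely input). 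You acknowledge this crux honestly, but that acknowledgement is precisely the admission that the proposal does not prove the statement; the gap is the entire content of the conjecture.
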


Proposition \ref{pr:nontns} shows that the analogue of Conjecture \ref{conj:comb} in the category of quasitoric manifolds is, generally speaking, false in any dimension greater than two. Indeed, one has to consider the product of one of the manifolds $M^4$ from Proposition \ref{pr:nontns} with $(\C P^1)^{n-2}$. The obtained manifold then is a quasitoric non-TNS manifold over the cube $I^n$. But $(\C P^1)^n$ is also a (quasi)toric manifold over $I^n$ being a TNS-manifold. Theorem \ref{thm:miss} also allows to pose the following

\begin{conj}
Let $M^{2n}$ be a nonsingular projective toric variety of complex dimension $n$. Then $M^{2n}$ is a TNS-manifold iff the moment polytope $P^{n}$ of $M^{2n}$ is a flag polytope.
\end{conj}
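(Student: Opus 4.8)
The statement is a two-way equivalence; I would attack the two implications separately, expecting \emph{flag $\Rightarrow$ TNS} to be the harder one, and I will indicate at the end why the two halves are intertwined.

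\medskip
\noindent\emph{TNS $\Rightarrow$ flag.} For $n=3$ this is precisely Theorem \ref{thm:miss}. For $n\geq 4$ one must rule out, for a projective toric TNS-manifold $M^{2n}$, every minimal missing face of the face lattice of $P^n$ on $k$ vertices with $3\leq k\leq n$; a missing face on $n+1$ vertices would make $P^n=\Delta^n$, which carries a triangular $2$-face and hence an invariant $\C P^2$, excluded by Proposition \ref{pr:class}. Fix a minimal missing face $F_1,\dots,F_k$. By Proposition \ref{pr:2cases} the matrix $(\lambda_1,\dots,\lambda_k)$ is $GL_n(\Z)$-equivalent to one of two normal forms, of rank $k-1$ or $k$, and I would push through, in codimension $2k$, the chain of constructions performed for $k=3$ in the proof of Theorem \ref{thm:miss}. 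In the rank-$(k-1)$ case the linear relations of Theorem \ref{thm:coh} produce a single linear dependence among $x_1,\dots,x_k$ supported off the missing face (as in Proposition \ref{pr:semidef}); multiplying the associated class by a product of the remaining $x_i$ yields a nonzero $a\in H^{2(n-2)}(M^{2n};\Z)$ with $Q_a$ of rank $1$, hence semidefinite, contradicting Theorem \ref{thm:crit}. In the rank-$k$ case one would realise a neighbourhood of the missing face as an equivariant connected sum and, via the analogues of the fixed-point sign and index computations of the proof of Theorem \ref{thm:miss} (the generalisations of Proposition \ref{pr:sgnform} and of Lemmas \ref{lm:sgncomp}, \ref{lm:indcomp}, together with Proposition \ref{pr:torsign}), single out inside a summand a low-dimensional quasitoric invariant submanifold $N$ with a semidefinite top form --- for $k=3$ this is the connected sum of copies of $\C P^2$ appearing in that proof --- and finally transport its form to $M^{2n}$ by the projection formula. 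The main obstruction in this direction is that every such sign/index computation must be redone in arbitrary even codimension, and that in the rank-$k$ case the naive choice of cutting data, namely intersecting along $F_1,\dots,F_{k-1}$, lands on a face of dimension $n-k+1$ rather than $k$; a more careful descent, preserving the missing-face combinatorics while reaching the right dimension, is needed and is where I expect the real work to lie.

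\medskip
\noindent\emph{flag $\Rightarrow$ TNS.} By Corollary \ref{cor:app} it suffices to exhibit a nonzero linear functional on $\widetilde K(M^{2n})\otimes\R$ that is nonnegative on $S(M^{2n})$; equivalently, by Corollary \ref{cor:powers} and Proposition \ref{pr:corr}, the task is to prove that flagness of $P^n$ \emph{precludes} the existence, for any $k$ with $0<2k\leq n$, of a nonzero $a\in H^{2(n-2k)}(M^{2n};\Z)$ with $Q_a$ semidefinite. The model case is $n=3$: here $2k\leq 3$ forces $k=1$, so Theorem \ref{thm:comp} reduces the claim to showing that for a flag polytope $P^3$ and every admissible characteristic matrix the finite cone
\[
\R_{\geq 0}\biggl\la\Bigl(\textstyle\sum_i a_i x_i\Bigr)^2\ \Big|\ a_i\in\{-1,0,1\}\biggr\ra
\]
equals all of $H^4(M^6;\R)$. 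Using the simplification in the remark after Theorem \ref{thm:comp} (that the cone depends only on the $x_i$ lying off a chosen vertex), one would argue vertex by vertex, exploiting that flagness makes the relations $x_ix_j=0$ for non-adjacent facets abundant so that $H^4$ is spanned by ``hyperbolic'' pairs $x_ix_j$, exactly as in the computation for products of $\C P^1$'s; the only known ways this cone fails to fill $H^4$ --- triangular $2$-faces and $3$-belts, the content of Proposition \ref{pr:class} and of the proof of Theorem \ref{thm:miss} --- are precisely what a flag polytope excludes.

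\medskip
Two remarks on the difficulty. First, the present statement implies Conjecture \ref{conj:comb} for flag polytopes, so any proof of \emph{flag $\Rightarrow$ TNS} must be insensitive to the choice of characteristic matrix, not merely combinatorial; one line of attack is to show that the extremal rays of the cone of Theorem \ref{thm:comp} transform functorially under $GL_n(\Z)$, another is to seek a structural presentation of every flag simple polytope, up to combinatorial equivalence, by a chain of $2$-truncations of a Bott polytope and then to propagate the TNS-property along equivariant blow-ups (Proposition \ref{pr:blowup}) and equivariant connected sums (Subsection \ref{ssec:sum}); the latter route, however, would require a structural theorem about flag polytopes that is not available and is likely false already for $n=3$, so it would have to be supplanted by a direct argument. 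Second, the reduction of \emph{flag $\Rightarrow$ TNS} from general $n$ to $n=3$ cannot be carried out by induction on characteristic submanifolds, because ``every invariant submanifold is TNS'' does \emph{not} imply ``TNS''; one must instead argue in dimension $n$ directly, e.g.\ via Theorem \ref{thm:equiv}$(ii)$, constructing for each $\theta_i$ an explicit totally split stable inverse adapted to the clique structure of the flag complex, or else prove the $n=3$ case and invoke \emph{TNS $\Rightarrow$ flag} for $k\geq 4$ to dispose of higher-dimensional obstructions --- which is why the two implications above are, in practice, inseparable.
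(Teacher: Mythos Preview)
The statement you are addressing is labelled a \emph{Conjecture} in the paper (Section~\ref{sec:concl}) and is not proved there; the only part the paper establishes is the forward implication for $n=3$ (Theorem~\ref{thm:miss}), and the authors explicitly leave both directions open in general. So there is no proof in the paper to compare your proposal against, and what you have written is --- as you yourself signal throughout --- a research programme rather than a proof.

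A few concrete remarks on the outline. For \emph{TNS $\Rightarrow$ flag}, your plan to extend the argument of Theorem~\ref{thm:miss} to arbitrary $3\le k\le n$ is the natural one, and you correctly isolate the rank-$k$ branch of Proposition~\ref{pr:2cases} as the hard case: the connected-sum decomposition and the sign/index computations (Lemmas~\ref{lm:sgncomp},~\ref{lm:indcomp}, Proposition~\ref{pr:nontns}) genuinely exploit $k=n=3$ and do not carry over verbatim. One simplification you may be underusing: invariant divisors of a smooth projective toric variety are again smooth projective toric, and invariant submanifolds of TNS-manifolds are TNS; combined with Lemma~\ref{lm:indmis} this lets you descend along facets, so a careful induction reducing a $k$-missing-face in $P^n$ to a $k$-missing-face in a $k$-dimensional invariant subvariety (where Theorem~\ref{thm:miss} applies) may be more accessible than redoing the full sign calculus in codimension $2k$.

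For \emph{flag $\Rightarrow$ TNS}, your opening sentence has the logic reversed, likely because Corollary~\ref{cor:app} as printed contains a sign slip: TNS is equivalent to $S(M^{2n})=\widetilde K(M^{2n})\otimes\R$, so to prove TNS you must show that \emph{no} nonzero supporting functional exists --- which is exactly what you then correctly restate via Theorem~\ref{thm:crit}. Beyond that, the paragraph is a candid list of possible avenues, each with its acknowledged obstruction; in particular the ``chain of $2$-truncations from a Bott polytope'' route does not cover all flag simple $3$-polytopes, and the direct verification of the cone equality in Theorem~\ref{thm:comp} for an arbitrary flag Delzant $3$-polytope is not carried out anywhere in the paper. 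In short: nothing here constitutes a proof, and the paper does not claim one either.
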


A convex $n$-polytope $P^n\subset\R^n$ is flag iff the corresponding face lattice of the moment polytope $P^n$ has minimal missing faces only on $2$ vertices. So, in order to study the above conjectures in real dimension $8$, one has to find a Delzant $4$-polytope $P^4$ having only facets with no $3$-belts or triangles, and $P^4$ having a minimal missing face on $3$ vertices. Such an example is not known to the author. It is also  plausible to expect the future proofs of the above conjectures to rely on the existence of the complex/algebraic structure on the respective toric variety. In connection with this we mention the different well-known descriptions of the K-theory ring of a toric variety obtained by Pukhlikov and Khovanskii \cite{pu-kh-92}, Morelli \cite{mo-93} and Klyachko. 

The relation between the top-degree form in the TNS-criterion (Theorem \ref{thm:crit}) and the volume polynomial of the multifan of a quasitoric manifold extends to the lower-degree forms. We will use further some auxiliary facts. Recall that for any homogeneous polynomial\\
$V(x_1,\dots,x_m)\in\R[x_1,\dots,x_m]$ of degree $d\geq 0$ the algebra
\[
A(V):=DOp_{\R}(\R^m)/\Ann V
\]
is a Poincar\'e algebra of virtual rank $d$ (see \cite{ti-99}), where $DOp_{\R}(\R^m)$ is the algebra of differential operators in $m$ variables with real constant coefficients and $\Ann V$ is the annihilator ideal of $V$. The natural grading on the algebra $DOp_{\R}(\R^m)$ induces the grading on $A(V)=A_{*}(V)=\bigoplus_{i=0}^{d} A_{i}(V)$. The following theorem was formulated by Pukhlikov and Khovanskii in case of a smooth projective toric variety $M^{2n}$ and its respective fan $\mc{F}$ (in terms of the Chow ring, \cite{pu-kh-93}). It was proved by Timorin \cite{ti-99}. In case of a quasitoric manifold $M^{2n}$ it follows from the results of Ayzenberg and Matsuda \cite{ay-15} about the dual algebra of a multifan, in the particular case of a simplicial complex on a sphere.

\begin{thm}\cite[Theorem 8.2]{ay-15} \label{thm:volcoh}
Let $M^{2n}$ be a quasitoric manifold with the multifan $\mc{F}\subset\R^{n}$ having $m$ rays. Then the isomorphism of algebras
\[
H^{*}(M^{2n};\R)\simeq A_{*}(V_{\mc{F}}),\ a\mapsto D_a,
\]
holds. The canonical pairing $\la a,[M^{2n}]\ra,\ a\in H^{2n}(M^{2n};\R)$, coincides with the evaluating of $D_a V_{\mc{F}}$.
\end{thm}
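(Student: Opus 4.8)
The plan is to construct the isomorphism explicitly from the presentation of $H^*(M^{2n};\R)$ in Theorem~\ref{thm:coh}, using the formula for $V_{\mc{F}}$ recalled in the Remark above. Identify $DOp_{\R}(\R^m)$ with the polynomial algebra $\R[\partial_1,\dots,\partial_m]$ acting on $\R[c_1,\dots,c_m]$ by differentiation, and use the ring isomorphism $\R[x_1,\dots,x_m]\to\R[\partial_1,\dots,\partial_m]$, $x_i\mapsto\partial_i$. First I would show that it descends to a well-defined surjection $\phi:H^*(M^{2n};\R)=\R[x_1,\dots,x_m]/(I_H+J_H)\twoheadrightarrow A_*(V_{\mc{F}})$ by checking $I_H+J_H\subseteq\Ann V_{\mc{F}}$; then that $\phi$ is injective, via Poincar\'e duality; and finally I would read off the assertion about the canonical pairing. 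Under $\phi$ a class $a$ with polynomial representative $p_a$ is sent to the class of the operator $D_a=p_a(\partial_1,\dots,\partial_m)$, which is the map in the statement.

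To prove $I_H+J_H\subseteq\Ann V_{\mc{F}}$ it is enough to annihilate the generators, since $\Ann V_{\mc{F}}$ is an ideal. Differentiating $V_{\mc{F}}(c_1,\dots,c_m)=\tfrac{1}{n!}\la(c_1x_1+\dots+c_mx_m)^n,[M^{2n}]\ra$ termwise, one gets, for any indices $i_1,\dots,i_r$ with $r\le n$, the identity
\[
\partial_{i_1}\cdots\partial_{i_r}V_{\mc{F}}=\frac{1}{(n-r)!}\la\bigl(c_1x_1+\dots+c_mx_m\bigr)^{n-r}\cdot x_{i_1}\cdots x_{i_r},[M^{2n}]\ra .
\]
Hence a monomial generator $x_{i_1}\cdots x_{i_r}$ of $I_H$, for which $F_{i_1}\cap\dots\cap F_{i_r}=\varnothing$ and therefore $x_{i_1}\cdots x_{i_r}=0$ in $H^*(M^{2n};\R)$, annihilates $V_{\mc{F}}$ (and for $r>n$ it does so trivially by degree), while a linear generator $\sum_{j=1}^m\lambda_i^j x_j$ of $J_H$ annihilates $V_{\mc{F}}$ because $\sum_j\lambda_i^j\partial_j V_{\mc{F}}=\tfrac{1}{(n-1)!}\la\bigl(\sum_k c_kx_k\bigr)^{n-1}\cdot\bigl(\sum_j\lambda_i^j x_j\bigr),[M^{2n}]\ra=0$, the forms $\sum_j\lambda_i^j x_j$ being zero in $H^*(M^{2n};\R)$. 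So $\phi$ is well defined, and it is surjective because $\R[\partial_1,\dots,\partial_m]$, hence $A_*(V_{\mc{F}})$, is generated by $\partial_1,\dots,\partial_m$, the images of $x_1,\dots,x_m$.

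For injectivity I would use that $A_*(V_{\mc{F}})$ is a Poincar\'e algebra with socle in degree $n=\deg V_{\mc{F}}$ (Timorin, \cite{ti-99}), together with the fact that $H^*(M^{2n};\R)$ with its halved grading is likewise a Poincar\'e duality algebra with socle in degree $n$: the manifold $M^{2n}$ is closed and oriented, its odd cohomology vanishes by Theorem~\ref{thm:coh}, and $a,b\mapsto\la ab,[M^{2n}]\ra$ is a perfect pairing $H^{2i}\times H^{2(n-i)}\to H^{2n}\simeq\R$. A surjective graded algebra homomorphism between two Poincar\'e duality algebras sharing the same socle degree $n$ is automatically an isomorphism: it is bijective in degree $n$, and a nonzero kernel element $a$ in some degree $i<n$ would admit $b$ in degree $n-i$ with $ab\neq0$ in degree $n$, contradicting $\phi(ab)=\phi(a)\phi(b)=0$. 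For the last assertion, expanding $V_{\mc{F}}$ into monomials in $c_1,\dots,c_m$ and applying an order-$n$ constant-coefficient operator $D_a$ one finds $D_a V_{\mc{F}}=\la a,[M^{2n}]\ra$ for every $a\in H^{2n}(M^{2n};\R)$; this is the stated compatibility, and it re-proves injectivity since $\phi$ then carries the Poincar\'e pairing of $H^*(M^{2n};\R)$ onto that of $A_*(V_{\mc{F}})$.

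Once $V_{\mc{F}}$ is taken as given by the cohomological formula, no step is a serious obstacle beyond invoking Timorin's theorem. The one genuinely subtle point — and the reason it is natural to quote Ayzenberg and Matsuda's study of the dual algebra of a multifan — is to interpret $V_{\mc{F}}$ as the \emph{combinatorial} volume polynomial of the multifan $\mc{F}$ over a simplicial sphere, defined without reference to $M^{2n}$; matching that object with the cohomological expression used here, via the inversion formula for multi-polytopes, is precisely what makes the present statement the specialisation of \cite[Theorem 8.2]{ay-15} to the spherical case.
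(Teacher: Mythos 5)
Your argument is correct, but it is a genuinely different route from the paper's, because the paper offers no proof at all: it simply quotes \cite[Theorem 8.2]{ay-15} (the dual algebra of a multifan, specialised to a simplicial sphere), noting that the statement goes back to Pukhlikov--Khovanskii for projective toric varieties and was proved by Timorin \cite{ti-99}. You instead give a direct verification starting from the Danilov--Jurkiewicz-type presentation of Theorem~\ref{thm:coh} and the cohomological expression $V_{\mc{F}}(c)=\frac{1}{n!}\la(\sum_i c_i x_i)^n,[M^{2n}]\ra$ recorded in the Remark: the differentiation identity $\partial_{i_1}\cdots\partial_{i_r}V_{\mc{F}}=\frac{1}{(n-r)!}\la(\sum_i c_i x_i)^{n-r}x_{i_1}\cdots x_{i_r},[M^{2n}]\ra$ does show $I_H+J_H\subseteq\Ann V_{\mc{F}}$, surjectivity is clear, and the injectivity argument (surjection onto an algebra with one-dimensional top degree $n$, from a Poincar\'e duality algebra with socle degree $n$) is sound; the evaluation claim $D_aV_{\mc{F}}=\la a,[M^{2n}]\ra$ for $a\in H^{2n}$ follows from the same identity at $r=n$. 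What your approach buys is a short, self-contained proof needing only Poincar\'e duality for $M^{2n}$ and the (easy, Macaulay-type) fact that $A_*(V)$ has one-dimensional component in degree $\deg V$; what it does not buy — and you state this honestly — is independence from \cite{ay-15}: if $V_{\mc{F}}$ is defined combinatorially as the volume polynomial of the multifan (as in Hattori--Masuda and Ayzenberg--Matsuda), then identifying it with the cohomological formula is exactly the nontrivial input, so the citation cannot be dispensed with, only relocated to that identification. Two cosmetic points: you may wish to note explicitly that $V_{\mc{F}}\not\equiv 0$ (immediate from Corollary~\ref{cor:genpow} and Poincar\'e duality), so that $A_n(V_{\mc{F}})\simeq\R$, and that the graded kernel argument uses that $\phi$ halves degrees ($\deg x_i=2\mapsto\deg\partial_i=1$), which matches the grading convention implicit in the statement.
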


One has
\begin{lm}\cite[p.19]{ay-15} \label{lm:dval}
Consider a homogeneous polynomial $V(x_1,\dots,x_m)\in\R[x_1,\dots,x_m]$ of degree $d\geq 0$. Let $c_1,\dots,c_m\in\R$. Then for the linear differential operator $D_{\underline{c}}:=c_1\partial_{x_1}+\cdots+c_m\partial_{x_m}$, $\underline{c}=(c_1,\dots,c_m)$, the formula
\[
D_{\underline{c}}^d V=d!V(c_1,\dots,c_m),
\]
holds.
\end{lm}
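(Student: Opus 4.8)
The plan is to combine the (terminating) Taylor expansion of $V$ in the direction $\underline{c}$ with the homogeneity of $V$. Since $V$ is a polynomial of degree $d$, for each fixed point $\underline{x}\in\R^{m}$ the one-variable function $t\mapsto V(\underline{x}+t\underline{c})$ is a polynomial in $t$ of degree at most $d$, so Taylor's formula at $t=0$ terminates and reads
\[
V(\underline{x}+t\underline{c})=\sum_{k=0}^{d}\frac{t^{k}}{k!}\,\bigl(D_{\underline{c}}^{k}V\bigr)(\underline{x}),
\]
which is just the chain rule applied $k$ times (equivalently, the multinomial theorem). The first step is simply to record this identity and to note that the sum is finite because differentiating a polynomial of degree $d$ more than $d$ times gives $0$.

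The second step is to specialise to $\underline{x}=0$ and compare top-order coefficients in $t$. On one side the expansion gives $\sum_{k=0}^{d}\frac{t^{k}}{k!}(D_{\underline{c}}^{k}V)(0)$; on the other side, homogeneity of $V$ gives $V(t\underline{c})=t^{d}V(\underline{c})$. Equating the coefficients of $t^{d}$ yields $\bigl(D_{\underline{c}}^{d}V\bigr)(0)=d!\,V(c_1,\dots,c_m)$.

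Finally, since $V$ has degree $d$, the polynomial $D_{\underline{c}}^{d}V$ has degree $d-d=0$, i.e. it is a constant, so $\bigl(D_{\underline{c}}^{d}V\bigr)(0)=D_{\underline{c}}^{d}V$, which is exactly the asserted formula. There is no real obstacle here: the only point needing a word of justification is the termination of the Taylor series, which is immediate. As an alternative one may argue by linearity in $V$: it suffices to treat a single monomial $\underline{x}^{\underline{\alpha}}$ with $|\underline{\alpha}|=d$, and then the multinomial expansion $D_{\underline{c}}^{d}=\sum_{|\underline{\beta}|=d}\frac{d!}{\underline{\beta}!}\,\underline{c}^{\underline{\beta}}\,\partial^{\underline{\beta}}$ together with $\partial^{\underline{\beta}}\underline{x}^{\underline{\alpha}}=\underline{\alpha}!\,\delta_{\underline{\alpha}\underline{\beta}}$ (valid when $|\underline{\alpha}|=|\underline{\beta}|=d$) gives $D_{\underline{c}}^{d}\underline{x}^{\underline{\alpha}}=d!\,\underline{c}^{\underline{\alpha}}$, and summing over the monomials of $V$ reproduces $d!\,V(c_1,\dots,c_m)$.
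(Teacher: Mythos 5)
Your argument is correct. Note that the paper does not prove this lemma at all: it is quoted from Ayzenberg--Masuda \cite{ay-15}, so there is no in-text proof to compare against. Your two arguments are both sound and essentially the standard ones: the finite Taylor expansion $V(\underline{x}+t\underline{c})=\sum_{k=0}^{d}\frac{t^{k}}{k!}(D_{\underline{c}}^{k}V)(\underline{x})$ combined with homogeneity $V(t\underline{c})=t^{d}V(\underline{c})$ and the observation that $D_{\underline{c}}^{d}V$ is a constant; or, equivalently, linearity plus the multinomial identity $D_{\underline{c}}^{d}\underline{x}^{\underline{\alpha}}=d!\,\underline{c}^{\underline{\alpha}}$ for $|\underline{\alpha}|=d$, which uses $\partial^{\underline{\beta}}\underline{x}^{\underline{\alpha}}=\underline{\alpha}!\,\delta_{\underline{\alpha}\underline{\beta}}$ when $|\underline{\beta}|=|\underline{\alpha}|=d$. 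Either version is a complete, self-contained justification of the cited formula; no gaps.
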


Define the homogeneous form $Q_{\alpha}: A_{1}(V_{\mc{F}})\to\R$ of degree $k$ by the formula $Q_{\alpha}(x):= \alpha x^k V_{\mc{F}}$, where $k=1,\dots,n$ and $\alpha\in A_{d-k}(V_{\mc{F}})$. Theorem \ref{thm:volcoh} and Lemma \ref{lm:dval} allow us to reformulate the TNS-criterion (Theorem \ref{thm:crit}) in the following way.

\begin{thm}\label{thm:volpol}
Let $M^{2n}$ be a quasitoric manifold with the multifan $\mc{F}\subset\R^{n}$ having $m$ fays. Then the following conditions are equivalent:

$(i)$ $M^{2n}$ is a TNS-manifold;

$(ii)$ One has
\[
\R_{\geq 0}\la x^{k}|\ x\in A_{1}(V_{\mc{F}})\ra=A_{k}(V_{\mc{F}}),
\]
where $k=1,\dots,n$;

$(iii)$ The homogeneous $k$-form $Q_{\alpha}$ is admissible for any $k=1,\dots,n$ and $\alpha\in A_{n-k}(V_{\mc{F}})$;

$(iv)$ For any homogeneous differential operator $D\in DOp_{\R}(\R^m)$, $\deg D=k$, if the polynomial $DV_{\mc{F}}$ is non-zero, then $DV_{\mc{F}}$ takes values of opposite signs, where $k=0,\dots,n-1$. 

\end{thm}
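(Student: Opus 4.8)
The plan is to deduce Theorem~\ref{thm:volpol} directly from Theorem~\ref{thm:crit} by translating each of its statements through the isomorphism $H^{*}(M^{2n};\R)\simeq A_{*}(V_{\mc{F}})$ of Theorem~\ref{thm:volcoh}. First I would record the dictionary: under $a\mapsto D_{a}$ the cup product corresponds to composition of differential operators, the graded piece $H^{2k}(M^{2n};\R)$ corresponds to $A_{k}(V_{\mc{F}})$, and the evaluation pairing $\la a,[M^{2n}]\ra$ corresponds to applying $D_{a}$ to $V_{\mc{F}}$ (and evaluating the resulting constant). In particular, for $a\in H^{2(n-k)}$ and $x\in H^{2}$ with images $\alpha\in A_{n-k}(V_{\mc{F}})$ and (abusing notation) $x\in A_{1}(V_{\mc{F}})$, the form $Q_{a}(x)=\la a\cdot x^{k},[M^{2n}]\ra$ becomes exactly $\alpha\, x^{k}V_{\mc{F}}$, i.e.\ $Q_{a}$ is carried to $Q_{\alpha}$. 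This gives at once the equivalence of condition~$(i)$ with condition~$(iii)$ restricted to even $k$; and since the forms for odd $k$ are automatically admissible (as noted after Theorem~\ref{thm:crit}), the statement in~$(iii)$ allowing all $k=1,\dots,n$ is equivalent to the statement for even $k$ only. Likewise Corollary~\ref{cor:powers} (or Theorem~\ref{thm:crit} plus Corollary~\ref{cor:genpow}) gives $(i)\Leftrightarrow(ii)$ once one observes that $ch_{2k}(S(M^{2n}))$ corresponds to $\R_{\geq 0}\la x^{k}\mid x\in A_{1}(V_{\mc{F}})\ra$ and $H^{2k}(M^{2n};\R)$ to $A_{k}(V_{\mc{F}})$; here too the constraint on $k$ being even is harmless for the same reason, so one may as well write $k=1,\dots,n$.

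Next I would handle $(iii)\Leftrightarrow(iv)$, which is the genuinely new ingredient beyond a change of variables. Given a homogeneous operator $D\in DOp_{\R}(\R^{m})$ of degree $k$ with $DV_{\mc{F}}\ne 0$, the polynomial $DV_{\mc{F}}$ has degree $n-k$, and I claim its values are computed by the forms $Q_{\alpha}$: precisely, for $\underline{c}=(c_{1},\dots,c_{m})\in\R^{m}$,
\[
(DV_{\mc{F}})(\underline{c})=\frac{1}{(n-k)!}\,D_{\underline{c}}^{\,n-k}(DV_{\mc{F}})=\frac{1}{(n-k)!}\,Q_{[D]}(x_{\underline{c}}),
\]
where $D_{\underline{c}}=c_{1}\partial_{x_{1}}+\cdots+c_{m}\partial_{x_{m}}$, $x_{\underline{c}}$ is the image of $c_{1}x_{1}+\cdots+c_{m}x_{m}$ in $A_{1}(V_{\mc{F}})$, and $[D]\in A_{k}(V_{\mc{F}})$ is the class of $D$; the first equality is Lemma~\ref{lm:dval} applied to the polynomial $DV_{\mc{F}}$ and the second is the dictionary above together with commutativity of $A_{*}(V_{\mc{F}})$. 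Hence $DV_{\mc{F}}$ takes values of both signs exactly when $Q_{[D]}$ does, i.e.\ when $Q_{[D]}$ is admissible. Running this over all $D$ with $\deg D=k$ sweeps out all $\alpha\in A_{k}(V_{\mc{F}})$ (since the $D$'s surject onto $A_{k}(V_{\mc{F}})$); renaming $k\leftrightarrow n-k$ then matches the index range $k=0,\dots,n-1$ in~$(iv)$ with $n-k=1,\dots,n$ in~$(iii)$. One small point to check is that a \emph{non-semidefinite} polynomial of degree $n-k$ on $\R^{m}$ is the same as one taking strictly positive and strictly negative values somewhere, which is immediate since ``$\geq 0$ everywhere'' and ``$\leq 0$ everywhere'' are precisely the semidefinite cases; I would state this explicitly to align with the Definition of ``admissible''.

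The main obstacle I anticipate is purely bookkeeping rather than conceptual: keeping the homogeneity/degree conventions consistent across the three languages (cohomological degree $2k$ versus polynomial degree $k$ in $A_{*}$ versus differential-operator degree $k$), and making sure the factor $\tfrac{1}{(n-k)!}$ from Lemma~\ref{lm:dval}, which does not affect signs, is correctly carried so that ``non-zero'' and ``takes values of opposite signs'' transfer verbatim. A secondary subtlety is that in~$(iv)$ one quantifies over operators $D$ while in~$(iii)$ one quantifies over classes $\alpha$; I would note that the map $D\mapsto[D]$ from degree-$k$ operators to $A_{k}(V_{\mc{F}})$ is surjective by the very definition $A(V_{\mc{F}})=DOp_{\R}(\R^{m})/\Ann V_{\mc{F}}$, so the two quantifications give the same family of forms. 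With these remarks in place the four equivalences close up: $(i)\Leftrightarrow(ii)$ from Corollary~\ref{cor:powers} via the dictionary, $(i)\Leftrightarrow(iii)$ from Theorem~\ref{thm:crit} via the dictionary, and $(iii)\Leftrightarrow(iv)$ from Lemma~\ref{lm:dval} as above.
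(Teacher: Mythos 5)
Your proposal is correct and follows essentially the same route as the paper: the paper gives no separate proof of Theorem \ref{thm:volpol}, presenting it as an immediate reformulation of Theorem \ref{thm:crit} (and Corollary \ref{cor:powers}) through the isomorphism of Theorem \ref{thm:volcoh}, with Lemma \ref{lm:dval} supplying the equivalence with the sign condition on $DV_{\mc{F}}$ --- exactly the dictionary-plus-evaluation argument you spell out (and which the paper sketches again in the proof of Theorem \ref{thm:algform}). Your extra bookkeeping on the odd-$k$ cases, the surjectivity of $D\mapsto[D]$, and the harmless factor $\tfrac{1}{(n-k)!}$ is consistent with the paper's intended argument.
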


We remark that an analogue of some equivalences in the above Theorem \ref{thm:volpol} takes place for \textit{any} homogeneous polynomial $V(x_1,\dots,x_m)$ of degree $d$ with real coefficients.

\begin{thm}\label{thm:algform}
Let $V(x_1,\dots,x_m)$ be a homogeneous polynomial of degree $d$ with real coefficients. Then the following conditions are equivalent:

$(i)$ One has
\[
\R_{\geq 0}\la x^{k}|\ x\in A_{1}(V)\ra=A_{k}(V),
\]
where $k=1,\dots,d$;

$(ii)$ The homogeneous form $Q_{\alpha}:A_1(V)\to\R$ is admissible for any $k=1,\dots,d$ and $\alpha\in A_{d-k}(V)$;

$(iii)$ For any homogeneous differential operator $D\in DOp_{\R}(\R^m)$, $\deg D=k$, if the polynomial $DV$ is non-zero, then $DV$ takes values of opposite signs, where $k=0,\dots,d-1$.
\end{thm}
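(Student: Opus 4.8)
The plan is to prove the cycle $(i)\Leftrightarrow(ii)\Leftrightarrow(iii)$ by combining the Poincar\'e-duality structure of $A(V)$, elementary convexity, and the evaluation formula of Lemma~\ref{lm:dval}. I may assume $V\neq 0$, since otherwise all three statements are vacuous. Write $C_k:=\R_{\geq 0}\langle x^k\mid x\in A_1(V)\rangle\subseteq A_k(V)$, a convex cone. Since $A(V)$ is a Poincar\'e algebra of virtual rank $d$ (\cite{ti-99}), one has $A_d(V)\simeq\R$ via evaluation on $V$ and the multiplication $A_k(V)\times A_{d-k}(V)\to A_d(V)$ is a perfect pairing; hence $\alpha\mapsto L_\alpha$, $L_\alpha(y):=\alpha y V$, is an isomorphism of $A_{d-k}(V)$ onto the dual space $A_k(V)^{*}$, and $Q_\alpha(x)=L_\alpha(x^k)$ for every $x\in A_1(V)$.

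First I would establish $(i)\Leftrightarrow(ii)$. A convex cone in a finite-dimensional space either equals the whole space or lies in a closed half-space $\{L\geq 0\}$ with $L\neq 0$: if $0$ is an interior point the cone is everything, being a cone over a ball; otherwise $0$ is a boundary point and one applies the Supporting Hyperplane Theorem to its closure. In particular $C_k=A_k(V)$ iff $\overline{C_k}=A_k(V)$, so the absence of a closure in $(i)$ is harmless. Now for a fixed $k$: $C_k\neq A_k(V)$ iff there is a nonzero functional $L$ with $L\geq 0$ on $C_k$, iff (by the perfect pairing) there is a nonzero $\alpha\in A_{d-k}(V)$ with $Q_\alpha(x)=L_\alpha(x^k)\geq 0$ for all $x\in A_1(V)$, i.e.\ with $Q_\alpha$ positive semidefinite. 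Replacing $\alpha$ by $-\alpha$ covers the negative-semidefinite case, so $C_k=A_k(V)$ iff every nonzero $\alpha\in A_{d-k}(V)$ yields a non-semidefinite, i.e.\ admissible, form $Q_\alpha$; letting $k$ run over $1,\dots,d$ gives $(i)\Leftrightarrow(ii)$. This also disposes of the a priori possibility that $Q_\alpha\equiv 0$ for some $\alpha\neq 0$: such an $\alpha$ violates $(ii)$, and the argument shows it forces $C_k\neq A_k(V)$, so $(i)$ fails as well.

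Next I would establish $(ii)\Leftrightarrow(iii)$ by passing to differential operators. Represent $\alpha\in A_{d-k}(V)$ by a homogeneous $D_\alpha\in DOp_{\R}(\R^m)$ of degree $d-k$ and set $P:=D_\alpha V$, homogeneous of degree $k$ (possibly zero). For $x\in A_1(V)$ represented by $D_{\underline c}=c_1\partial_{x_1}+\dots+c_m\partial_{x_m}$, commutativity of constant-coefficient operators gives $Q_\alpha(x)=D_\alpha D_{\underline c}^{k}V=D_{\underline c}^{k}P=k!\,P(\underline c)$ by Lemma~\ref{lm:dval} applied to $P$. Since $\underline c\mapsto[D_{\underline c}]$ is onto $A_1(V)$, the set of values of $Q_\alpha$ on $A_1(V)$ is $k!\{P(\underline c)\mid\underline c\in\R^m\}$; hence $Q_\alpha\equiv 0\Leftrightarrow D_\alpha V\equiv 0\Leftrightarrow\alpha=0$ (using $D\in\Ann V\Leftrightarrow DV=0$), and for $\alpha\neq 0$ the form $Q_\alpha$ is admissible iff $D_\alpha V$ takes values of both signs. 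As $\alpha$ ranges over the nonzero classes of $A_{d-k}(V)$, $D_\alpha$ ranges (modulo $\Ann V$) exactly over the homogeneous operators of degree $d-k$ with $D_\alpha V\neq 0$, and letting $k$ range over $1,\dots,d$ makes $d-k$ range over $0,\dots,d-1$. This is precisely condition $(iii)$, so $(ii)\Leftrightarrow(iii)$.

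I do not expect a genuine obstacle here: the theorem is a reformulation, and the proof is the standard ``cone versus dual cone'' dictionary together with Lemma~\ref{lm:dval}. The only points that require care are the index bookkeeping among $k$, $\deg D_\alpha$ and $\deg D_\alpha V$; the harmless degenerate case $Q_\alpha\equiv 0$ for $\alpha\neq 0$; and the legitimacy of working with the non-closed cone $C_k$, which is settled by the observation $C_k=A_k(V)\Leftrightarrow\overline{C_k}=A_k(V)$.
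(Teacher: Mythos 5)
Your proposal is correct and follows essentially the same route as the paper: the paper obtains $(ii)\Leftrightarrow(iii)$ from Lemma \ref{lm:dval} and $(i)\Leftrightarrow(ii)$ by transplanting the Poincar\'e-duality and cone/supporting-hyperplane arguments of Subsection \ref{ssec:main} to the algebra $A(V)$, which is exactly what you carry out. The only minor difference is that the paper additionally invokes the polarization identity to see that $k$-th powers span $A_k(V)$ (ensuring $Q_\alpha\not\equiv 0$ for $\alpha\neq 0$), whereas you dispose of the degenerate case $Q_\alpha\equiv 0$ directly inside the convexity argument.
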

\begin{proof}
The equivalence $(ii) \Leftrightarrow (iii)$ follows from Lemma \ref{lm:dval}. The equivalence $(i) \Leftrightarrow (ii)$ is straight-forward to show using the arguments of Subsection \ref{ssec:main}. In order to do that, one has to use two facts about the algebra $A(V)$. First, $A(V)$ is a Poincar\'e algebra (see \cite{ti-99}). Second, for any $k=1,\dots,d$ one has $A_k(V)=\R\la x^k|\ x\in A_1(V)\ra $. The last identity is a consequence of the easily shown formula $(-1)^r r! y_1\cdots y_r=\sum_{I\subseteq\lb 1,\dots,r\rb} (-1)^{|I|}(\sum_{i\in I} y_{i})^r$, taking place in the polynomial algebra $\R[y_1,\dots,y_r],\ r\in\N$.
\end{proof}

The next observation was suggested by A. Ayzenberg.

\begin{cor}\label{cor:alg}
The condition $(iii)$ of Theorem \ref{thm:algform} (and the condition $(iv)$ of Theorem \ref{thm:volpol}) is algorithmically verifiable.
\end{cor}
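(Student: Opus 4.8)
The plan is to express the stated condition as a first-order sentence over the ordered field $\R$ and then invoke the Tarski--Seidenberg theorem. First I would fix a degree $k$ with $0\le k\le d-1$ and parametrise a general homogeneous operator of degree $k$ by its coefficients: writing $D=\sum_{|\alpha|=k}c_{\alpha}\,\partial^{\alpha}\in DOp_{\R}(\R^m)$ with indeterminates $c_{\alpha}$, these operators form a real vector space of dimension $\binom{m+k-1}{k}$. The map $D\mapsto DV$ is $\R$-linear, and every coefficient of the homogeneous polynomial $DV$ of degree $d-k$ is an explicit linear form in the $c_{\alpha}$ whose own coefficients are integer-linear in the coefficients of $V$; in the quasitoric case of Theorem \ref{thm:volpol} one has $V=V_{\mc F}$ with rational coefficients, since the characteristic matrix is integral.

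Next I would rewrite the negation of the condition: it fails for this $k$ exactly when there is a choice of the $c_{\alpha}$ with $DV\not\equiv 0$ but $DV$ semidefinite. Each of the assertions ``$\forall x\in\R^m:\ (DV)(x)\ge 0$'', ``$\forall x\in\R^m:\ (DV)(x)\le 0$'' and ``$DV\equiv 0$'' (the last being the conjunction of the finitely many linear equations obtained by annihilating the coefficients of $DV$) is a first-order formula over $\R$ in the parameters $c_{\alpha}$. Hence the full statement that condition $(iii)$ holds is the finite conjunction over $k=0,\dots,d-1$ of the sentences
\[
\forall (c_{\alpha}):\ \Bigl[\bigl(\forall x:\,(DV)(x)\ge 0\bigr)\ \lor\ \bigl(\forall x:\,(DV)(x)\le 0\bigr)\Bigr]\ \Longrightarrow\ \bigl(\forall x:\,(DV)(x)=0\bigr),
\]
a sentence in the language of ordered fields with rational parameters. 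By the Tarski--Seidenberg theorem (decidability of real closed fields, realised e.g. by cylindrical algebraic decomposition applied to the prenex form), its truth can be decided algorithmically, which proves the Corollary; decidability for condition $(iv)$ of Theorem \ref{thm:volpol} then follows from the equivalence $(iii)\Leftrightarrow(iv)$ proved there.

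I expect the only real subtlety to be conceptual rather than computational: one must be explicit that semidefiniteness of a polynomial is the \emph{closed}, universally quantified semialgebraic condition ``$(DV)(x)\ge 0$ for all $x$'', and not a sum-of-squares condition --- the latter would give only a sufficient, incomplete test, since psd forms that are not sums of squares exist already in low degree (Motzkin's example). One should also note that only finitely many degrees $k$, and for each a finite-dimensional family of operators, enter the picture, so nothing escapes the scope of the decision procedure. With rational input data, which is automatic for $V_{\mc F}$, the procedure is genuinely an algorithm; for an arbitrary real polynomial $V$ as in Theorem \ref{thm:algform} the same argument yields decidability relative to an oracle presenting the coefficients of $V$.
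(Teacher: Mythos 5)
Your argument is correct and coincides with the paper's own proof: the paper likewise writes condition $(iii)$ as a closed first-order formula over $\R$ in the coefficients $d_{i_1,\dots,i_m}$ of the differential operators (quantifying over all of them for each degree $k\le d-1$) and then invokes Tarski's decision procedure for real closed fields. Your additional remarks --- the finite parametrisation of operators of each degree, the insistence on the universally quantified semidefiniteness condition rather than a sum-of-squares surrogate, and the caveat that genuine algorithmicity needs effectively given (e.g.\ rational, as for $V_{\mc F}$) coefficients --- are sound refinements of the same approach rather than a different proof.
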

\begin{proof}
It is clear that this condition could be written as a closed arithmetic formula of first order on the coefficients $d_{i_1,\dots,i_m}$ of the differential operators:
\begin{multline}
\forall k\in\lb 0,\dots,n-1\rb\ \forall \lb d_{i_1,\dots,i_m}|i_1+\cdots+i_m=k\rb:\\
\sum_{i_1+\cdots+i_m=k} d_{i_1,\dots,i_m}\partial_{1}^{i_1}\cdots \partial_{m}^{i_m} V_{\mc{F}}\not \equiv 0 \Rightarrow\\
\bigg(\neg \sum_{i_1+\cdots+i_m=k} d_{i_1,\dots,i_m}\partial_{1}^{i_1}\cdots \partial_{m}^{i_m} V_{\mc{F}}\geq 0 \bigg)
\wedge
\bigg(\neg \sum_{i_1+\cdots+i_m=k} d_{i_1,\dots,i_m}\partial_{1}^{i_1}\cdots \partial_{m}^{i_m} V_{\mc{F}}\leq 0 \bigg).
\end{multline}
Hence, the claim follows now from Tarski algorithm \cite{ta-48}.
\end{proof}

A $17$-th Hilbert's problem-type question for finite-dimensional algebras rises (see \cite[Chapter $7$]{pra-04}).

\begin{prob}
Describe explicitly the family of homogeneous polynomials $V(x_1,\dots,x_m)\in\R[x_1,\dots,x_m]$ of degree $d\geq 2$, $m\geq 1$, satisfying the conditions
of Theorem \ref{thm:algform}. 
\end{prob}

It is natural to conjecture that the condition $(iii)$ of Theorem \ref{thm:algform} is equivalent to the condition on the different signs of non-zero \textit{quadratic} forms $DV_{\mc{F}}$, $D\in DOp_{\R}(\R^m)$. This follows from the following Conjecture about real psd-forms.

\begin{conj}
Consider a homogeneous polynomial $V(x_1,\dots,x_m)\in\R[x_1,\dots,x_m]$ of degree $d\geq 3$, $m\geq 1$. Let $V$ be a psd-form, i.e. for any $x_1,\dots,x_m\in\R$, $V(x_1,\dots,x_m)\geq 0$. Then there exists a homogeneous differential operator $D\in DOp_{\R}(\R^m)$ with constant real coefficients, $0<\deg D<d$, s.t. the polynomial $DV$ is a non-zero psd-form.
\end{conj}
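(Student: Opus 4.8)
The plan is to combine two facts --- that a nonzero psd form is never harmonic, and that a spherical averaging identity relates $V$ to its iterated Laplacians --- in order to place a psd form among the $\Delta^kV$, and then to confront the single case that this does not reach. We first record the reductions. We assume $V\not\equiv 0$, and since for odd $d$ the relation $V(-x)=-V(x)$ together with $V\geq 0$ forces $V\equiv 0$, we may take $d$ even. For $x\in\R^m$ and $s=t^2\geq 0$ set $g_x(s):=\int_{S^{m-1}}V(x+tu)\,d\sigma(u)$, where $\sigma$ is the normalised measure on the sphere. Expanding $V(x+tu)=\sum_j\tfrac{t^j}{j!}(u\cdot\nabla)^jV(x)$, averaging, and using that the mean of $(u\cdot\nabla)^{2\ell}$ over $S^{m-1}$ equals $c_{m,\ell}\,\Delta^\ell$ with $c_{m,\ell}>0$ while all odd powers of $t$ average to $0$, one obtains
\[
g_x(s)=\sum_{\ell=0}^{d/2}\lambda_\ell\,s^{\ell}\,\Delta^\ell V(x),\qquad\lambda_0=1,\ \lambda_\ell>0,
\]
and $g_x(s)\geq 0$ for every $x\in\R^m$ and every $s\geq 0$, because $V\geq 0$.

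Set $k^\ast:=\max\{k:\Delta^kV\not\equiv 0\}$, which is well defined with $0\leq k^\ast\leq d/2$; moreover $k^\ast\geq 1$, since a homogeneous harmonic polynomial of positive degree integrates to $0$ over $S^{m-1}$, so a nonnegative one must vanish, and hence a nonzero psd form cannot be harmonic. For any $x$ with $\Delta^{k^\ast}V(x)\neq 0$ the polynomial $g_x$ has degree exactly $k^\ast\geq 1$ and leading coefficient $\lambda_{k^\ast}\Delta^{k^\ast}V(x)$, which must be positive because $g_x\geq 0$ on $[0,\infty)$; thus $\Delta^{k^\ast}V$ is psd and $\not\equiv 0$. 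If $k^\ast\leq d/2-1$ we are finished: $D=\Delta^{k^\ast}$ is homogeneous of degree $2k^\ast$ with $0<2k^\ast<d$. The remaining case $k^\ast=d/2$ is the obstacle: then $\Delta^{d/2}V$ is a positive constant, $\Delta^{d/2}$ has the forbidden degree $d$, and $\Delta^{d/2-1}V$ need not be psd --- for instance $\Delta V=-2x_1^2+133x_2^2$ when $V=(x_1^2-\tfrac{7}{2}x_2^2)^2$.

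In the case $k^\ast=d/2$ one is reduced to the statement that \emph{every psd form $V\not\equiv 0$ of degree $d\geq 4$ admits a symmetric real matrix $A$ with $D_AV:=\sum_{i,j}A_{ij}\partial_i\partial_jV$ psd and $\not\equiv 0$}; this in fact already implies the whole conjecture, as $D_A$ then has degree $2\in(0,d)$. Running the averaging argument after a linear change $x\mapsto Lx$ is partially helpful: for positive-definite symmetric $B$ the operator $D_B=\sum_{i,j}B_{ij}\partial_i\partial_j$ replaces $\Delta$, since $\Delta(V\circ L)=(D_{LL^\top}V)\circ L$, so it would suffice to find one $B\succ 0$ with $D_B^{\,d/2}V=0$. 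But this can fail: for $V=x_1^d+\dots+x_m^d$ the quantity $D_B^{\,d/2}V$ is a positive multiple of $\sum_i\langle LL^\top e_i,e_i\rangle^{d/2}$, strictly positive on the whole open cone of positive-definite matrices, and there one must instead use a \emph{singular} positive-semidefinite matrix such as $A=e_1e_1^\top$, for which $D_AV=\partial_1^2V=d(d-1)x_1^{d-2}$ is psd. I expect this general second-order statement to be the real difficulty. The image $\{D_AV:A\in\mathrm{Sym}^2(\R^m)\}$ is a linear subspace of the space of forms of degree $d-2$, and one must show it meets the cone of psd forms off the origin; by convex separation a failure of this would yield a nonzero linear functional vanishing on that subspace and nonnegative on every psd form of degree $d-2$, after which the plan would be to derive a contradiction with $V\geq 0$. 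I do not see how to carry out this last step, and I believe it carries the essential content of the conjecture; in view of the analogy with the failure of ``psd implies sum of squares'' outside the classical Hilbert cases, a counterexample cannot be excluded a priori either.
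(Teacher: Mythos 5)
The statement you are attacking is posed in the paper as an open conjecture: the paper supplies no proof of it (it is offered only as a plausible strengthening that would imply a quadratic-form version of Theorem \ref{thm:algform}~$(iii)$), so there is no argument of the author's to compare yours against. Judged on its own terms, the portion of your argument that you actually carry out is correct. The Pizzetti-type identity $\int_{S^{m-1}}V(x+tu)\,d\sigma(u)=\sum_{\ell}\lambda_{\ell}t^{2\ell}\Delta^{\ell}V(x)$ with $\lambda_{\ell}>0$, the observation that a nonzero psd form cannot be harmonic (mean value property plus nonnegativity on the sphere), and the leading-coefficient argument on $[0,\infty)$ together show that $\Delta^{k^{\ast}}V$ is a nonzero psd form, where $k^{\ast}$ is the largest $k$ with $\Delta^{k}V\not\equiv 0$; this settles the conjecture whenever $k^{\ast}\leq d/2-1$, and your remark that $\Delta(V\circ L)=(D_{LL^{\top}}V)\circ L$ correctly extends this to every $V$ for which some positive-definite $B$ satisfies $D_{B}^{\,d/2}V\equiv 0$.

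The genuine gap is exactly the one you flag yourself: the case $k^{\ast}=d/2$ for every admissible positive-definite $B$ (your examples $V=\sum_i x_i^d$ and $V=(x_1^2-\tfrac72x_2^2)^2$ show this case is nonvacuous) is not resolved. Your reduction of it --- that every nonzero psd form of degree $d\geq 4$ admits a symmetric matrix $A$, possibly singular, with $\sum_{i,j}A_{ij}\partial_i\partial_j V$ psd and nonzero --- is itself an unproven assertion, in fact a statement at least as strong as the conjecture (it demands $\deg D=2$), and the convex-separation strategy you sketch for it is not carried through: you would still have to show that a linear functional vanishing on the subspace $\lb D_A V\rb$ and nonnegative on the psd cone in degree $d-2$ is incompatible with $V\geq 0$, and no such argument is given. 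So what you have is a correct and genuinely useful partial result (a sufficient condition in terms of iterated Laplacians after linear changes of variables), not a proof; the conjecture remains open, which is consistent with its status in the paper. If you write this up, state it as a partial result and make the unresolved second-order reduction explicit rather than presenting the whole as a proof attempt.
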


\section{Acknowledgements}
V.M.~Buchstaber provided an invaluable assistance on all stages of the conducted research and preparation of the present paper. I express gratitude to the referee for careful reading of the paper and for his valuable comments and detailed suggestions which helped to give a more accurate and faithful exposition. I am grateful to A.A.~Ayzenberg and T.E.~Panov for different valuable remarks, N.Y.~Erokhovets for the communication on the combinatorics of convex polytopes and V.A.~Kirichenko for the remarks on the volume polynomial. Being the ``Young Russian Mathematics'' award winner, I would like to thank its sponsors and jury.

\begin{bibdiv}
\begin{biblist}[\resetbiblist{99}]
\bibselect{biblio_eng}
\end{biblist}
\end{bibdiv}

\end{document}